\newcommand{\mfg}{\mathfrak{g}}
\newcommand{\R}{\mathbb{R}}
\newcommand{\J}{\mathcal{J}}
\newcommand{\T}{\mathbb{T}}
\newcommand{\mcD}{\mathcal{D}}
\newcommand{\C}{\mathbb{C}}
\newcommand{\mcL}{\mathcal{L}}
\newcommand{\mcH}{\mathcal{H}}
\newcommand{\mcO}{\mathcal{O}}
\newcommand{\lb}{\llbracket}
\newcommand{\rb}{\rrbracket}
\newcommand{\del}{\partial}
\newcommand{\delbar}{\bar{\partial}}
\newcommand{\tL}{\widetilde{L}}
\newcommand{\ve}{\varepsilon}
\newcommand{\tve}{{\widetilde{\varepsilon}}}
\newcommand{\oM}{\overline{M}}
\newcommand{\mcF}{\mathcal{F}}
\newcommand{\mcC}{\mathcal{C}}
\numberwithin{equation}{section}
\theoremstyle{plain}
\newtheorem{thm}{Theorem}[section]
\newtheorem{prop}[thm]{Proposition}
\theoremstyle{definition}
\newtheorem{lem}[thm]{Lemma}
\newtheorem{defn}[thm]{Definition}
\newtheorem{defn/thm}[thm]{Definition/Theorem}
\newtheorem{ex}[thm]{Example}
\newtheorem{cor}[thm]{Corollary}
\theoremstyle{remark}
\newtheorem{rem}[thm]{Remark}
\begin{document}

\title{Hodge decompositions for Lie algebroids on manifolds with boundary}

\author{
J.L.van der Leer Dur\'an\thanks{{\tt joeyvdld@math.toronto.edu}}\\
University of Toronto\\
}

\date{\vspace{-6ex}}

\maketitle

\abstract{\noindent We investigate when the Chevalley-Eilenberg differential of a complex Lie algebroid on a manifold with boundary admits a Hodge decomposition. We introduce the concepts of Cauchy-Riemann structures, elliptic and non-elliptic boundary points and Levi-forms, which we use to define the notion of $q$-convexity. We show that the Chevalley-Eilenberg complex of an elliptic, $q$-convex Lie algebroid admits a Hodge decomposition in degree $q$. This generalizes the well-known Hodge decompositions for the exterior derivative on real manifolds and the delbar-operator on $q$-convex complex manifolds. 
We establish the results in a more general setting, where the differential does not necessarily square to zero and moreover varies in a family, including an analysis of the behaviour on the deformation parameter. As application we give a proof of a classical holomorphic tubular neighbourhood theorem (which implies the Newlander-Nirenberg theorem) based on the Moser trick, and we provide a finite-dimensionality result for certain holomorphic Poisson cohomology groups.

\vskip12pt

\tableofcontents

\addcontentsline{toc}{section}{Introduction}

\section*{Introduction}

On a compact, complex manifold without boundary the $\delbar$-operator can be studied in the framework of elliptic operator theory, leading to Hodge decompositions on the level of Hilbert spaces (Weil \cite{MR0111056}). If the manifold has a boundary however, one is faced with a non-elliptic boundary value problem that is considerably more intricate. Already in the case of a bounded domain in $\C^n$ with smooth boundary, the image of the $\delbar$-operator is not necessarily closed (c.f.\ \cite{MR0372241}), illustrating that one needs to impose suitable conditions on the complex manifold in order for it to admit Hodge decompositions. The study of this problem, also referred to as the \lq\lq$\delbar$-Neumann problem\rq\rq\, was eventually solved by Kohn \cite{MR0208200} (see \cite{MR0461588} for an historical overview). As it turns out, the analytical properties of the $\delbar$-operator are closely related to the behaviour of a local invariant of the boundary called the Levi-form (\cite{Levi1910}). A sufficient criterion for having a Hodge decomposition in a specific Dobleault-degree is then given by a pointwise eigenvalue property of the Levi-form. In \cite{MR0461588}, Folland and Kohn give a detailed explanation of this fact and provide several applications, including a proof of the Newlander-Nirenberg theorem (\cite{MR0088770}) and of Grauert's solution of the Levi-problem (\cite{MR0098847}).
\newline
\newline
In this paper we extend the fundamental work of Kohn to the more general setting of complex Lie algebroids. First introduced by Pradines \cite{MR0216409}, Lie algebroids play an important role in contemporary differential geometry. They generalize both Lie algebras and tangent bundles, form the infinitesimal objects associated to Lie groupoids, and provide a unified framework for several geometric structures including symplectic, Poisson, complex, and generalized complex structures. 
All the structural data of a Lie algebroid is contained in its Chevalley-Eilenberg differential, a differential on the exterior algebra of the dual of the Lie algebroid. 
To develop Hodge theory for this differential we will consider Lie algebroids whose differential defines an elliptic complex, a condition that is necessary also when there is no boundary. Such an elliptic Lie algebroid comes with a Cauchy-Riemann structure, as is the case in complex geometry. However, for general Lie algebroids a new phenomenon appears: there turn out to be two types of boundary points, which we call elliptic and non-elliptic, depending on a certain transversality property of the algebroid at the boundary. The elliptic boundary points in our terminology turn out to be exactly the points where the associated Neumann boundary value problem is elliptic in the sense of Lopatinsky \cite{Lop} (see also \cite{MR0161012}). For example, for the exterior derivative all boundary points are elliptic and for the $\delbar$-operator all boundary points are non-elliptic, while in general there will be a mixture of elliptic and non-elliptic boundary points. At non-elliptic points we define the Levi form, a conformal class of Hermitian bilinear forms on the Cauchy-Riemann structure, and use it to introduce the notion of $q$-convexity. Our main theorem states that for a $q$-convex elliptic Lie algebroid, there is a Hodge decomposition for the Chevalley-Eilenberg differential in degree $q$.   
\newline
\newline
Our main application of these Hodge decompositions will appear in \cite{NT} and concerns neighbourhood theorems in generalized complex geometry, a type of geometry introduced by Hitchin \cite{MR2013140} and Gualtieri \cite{MR2811595}. For this application a more general set-up than the one described above is required. We will state the main theorem for pre-Lie algebroids, meaning that we no longer require the differential to square to zero, incorporating e.g.\ almost complex structures. In this context it still makes sense to study the Laplacian of the Chevalley-Eilenberg operator, which we require to be elliptic. 
Furthermore, we will consider families of pre-Lie algebroid structures and analyze how the associated family of Neumann operators (usually called Green's operators on manifolds without boundary) behave with respect to the deformation parameter. The resulting \lq\lq Leibniz-type\rq\rq\ estimates in this context are crucial for the applications in \cite{NT}, where they form the input of an algorithm that originates from the work of Nash \cite{MR0075639} and Moser \cite{MR0199523}. 
We also prove a $C^1$-statement for the Neumann operators varying in a family, allowing us to give a simple proof (based on the Moser trick) of a classical neighbourhood theorem in complex geometry, that contains the Newlander-Nirenberg theorem \cite{MR0088770} as a special case. To the current knowledge of the author, this method of proof is not yet available in the existing literature.  Finally, we discuss in some detail holomorphic Poisson structures. In this setting there generally are elliptic and non-elliptic boundary points, and the Levi-form depends both on the geometry of the underlying complex manifold as well as on the behaviour of the Poisson structure. We use our main theorem to give a finite-dimensionality result for certain Poisson cohomology groups, which is an example of a result that does not follow from the Hodge theory of the underlying complex manifold.

\
\newline
\noindent \textbf{Organization:} Section 1 gives a short introduction to Lie algebroids. We define Cauchy-Riemann structures, elliptic and non-elliptic boundary points and the Levi-form, which we use to introduce the notion of $q$-convexity. We state our main result, concerning Hodge decompositions for the Chevalley-Eilenberg complex, and present as applications a proof of a holomorphic tubular neighbourhood theorem using the Moser trick, and a finite-dimensionality result for Poisson cohomology.

Section 2 provides the functional analytical set-up of linear first-order partial differential operators on manifolds with boundary. We recall the notion of elliptic regularity and its relation to Hodge decompositions, and we state a general theorem that characterizes differential operators satisfying elliptic regularity. We also study some aspects of Hodge decompositions varying in families. 

Section 3 is devoted to the proof of the main result stated in Section 1. We establish the proof by showing that the Lie algebroid differential satisfies the conditions of the more general theorem of Section 2 in degree $q$ if the Lie algebroid is $q$-convex.    
\newline
\newline
\noindent \textbf{Acknowledgments:} The author is thankful to Micheal Bailey, Gil Cavalcanti and Marco Gualtieri for useful conversations. This research is supported by NSERC Discovery Grant 355576.

\section{Lie algebroids}\label{10:41:57}


\subsection{Definitions and examples}

Let $M$ be a compact manifold with boundary $\del M$ and interior $\mathring{M}$. We write $C^\infty(E)$ for the space of smooth sections of a vector bundle $E$ over $M$. 
All vector bundles will be assumed to be complex, with $E_\C$ denoting the complexification of a real bundle $E$.  
\begin{defn}
A \textsl{pre-Lie algebroid} on $M$ is a vector bundle $L$ equipped with a derivation 
\begin{align*}
d_L:C^\infty(\Lambda^\bullet L^\ast)\rightarrow C^\infty(\Lambda^{\bullet+1} L^\ast).
\end{align*} 
If $d_L^2=0$ then $L$ is called a \textsl{Lie algebroid}.
\end{defn}
\noindent Since $d_L$ is a derivation, it is determined by the two maps $d_L:C^\infty(M)\rightarrow C^\infty(L^\ast)$ and $d_L:C^\infty(L^\ast)\rightarrow C^\infty(\Lambda^{2} L^\ast)$. The first induces, after dualizing, an \textsl{anchor map} $\rho:L\rightarrow TM_\C$, while the second induces a bracket $[\cdot, \cdot ]:C^\infty(L)\times C^\infty(L)\rightarrow C^\infty(L)$ defined by 
\begin{align*}
\alpha([u,v]) :=-d_L\alpha (u,v)+\rho(u)\cdot \alpha(v)-\rho(v)\cdot \alpha(u),
\end{align*}
for $u,v\in C^\infty(L)$, $\alpha\in C^\infty(L^\ast)$. The relation between $[\cdot,\cdot]$ and $\rho$ is given by the Leibniz rule 
\begin{align*}
[u,fv]=f[u,v]+(\rho(u)\cdot f)v,
\end{align*}
for $u,v\in C^\infty(L)$, $f\in C^\infty(M)$. Conversely, a bracket $[\cdot,\cdot]$ and an anchor $\rho$ satisfying the above Leibniz rule together define a derivation $d_L$ by
\begin{align*}
d_L\alpha(u_0,\ldots, u_{k}):=&\sum_i (-1)^i \rho(u_i)\cdot \alpha(u_0,\ldots,\widehat{u_i},\ldots, u_{k})\\&+\sum_{i<j} (-1)^{i+j}\alpha([ u_i,u_j],u_0,\ldots ,\widehat{u_i},\ldots,\widehat{u_j},\ldots,u_{k}), 
\end{align*}
where $u_i\in C^\infty(L)$, $\alpha\in C^\infty(\Lambda^kL^\ast)$. Hence, specifying $d_L$ is equivalent to giving the data $([\cdot,\cdot],\rho)$, and one can verify directly that $d_L^2=0$ if and only if $[\cdot,\cdot]$ satisfies the Jacobi identity and $\rho:C^\infty(L)\rightarrow C^\infty(TM_\C)$ preserves brackets\footnote{In fact, one can show that the Jacobi identity for $[\cdot,\cdot]$ itself already implies that $\rho$ is bracket preserving.}. 
\begin{ex}
The tangent bundle $L=TM_\C$ with the usual Lie bracket and identity anchor is a Lie algebroid. The associated differential is the usual exterior derivative.
\end{ex}
\begin{ex}\label{08:32:47}
Let $L=T^{0,1}M\subset TM_\C$ be the $(-i)$-eigenbundle of an almost complex structure $I$ on $M$. Then $d_L=\delbar$ 
endows $L$ with the structure of a pre-Lie algebroid. The anchor is given by the inclusion and the bracket by the $(0,1)$-projection of the ordinary Lie bracket. Here $d_L^2=0$ if and only if $I$ is integrable.  
\end{ex}
\noindent More examples arise from Dirac geometry. Consider the bundle $\T M_\C:=TM_\C\oplus T^\ast M_\C$, endowed with the split-signature metric (called the \textsl{natural pairing}) 
\begin{align*}
\langle X+\xi,Y+\eta  \rangle:=\frac{1}{2}(\xi(Y)+\eta(X)), 
\end{align*}
and a bracket on its space of sections called the \textsl{Courant bracket}, given by
\begin{align*}
\lb X+\xi,Y+\eta\rb:=[X,Y]+\mcL_X\eta-\iota_Yd\xi-\iota_Y\iota_XH,
\end{align*}
for $X,Y\in C^\infty(TM_\C)$ and $\xi,\eta\in C^\infty(T^\ast M_\C)$. Here $H$ is a real closed $3$-form on $M$ that is fixed throughout. The Courant bracket satisfies the Jacobi identity but is not skew-symmetric.
\begin{defn}
An \textsl{almost Dirac structure} on $M$ is a subbundle $L\subset \T M_\C$ which is Lagrangian with respect to $\langle \cdot,\cdot \rangle$. If $L$ is \textsl{integrable} (or \textsl{involutive}), i.e.\ if $\lb C^\infty(L),C^\infty(L)\rb\subset C^\infty(L)$, then $L$ is called a \textsl{Dirac structure}.  
\end{defn}
\noindent An almost Dirac structure $L$ is equipped with an anchor map $\rho : L\rightarrow TM_\C$ induced by the projection of $\T M_\C$ to $TM_\C$. If $\tL$ is another almost Dirac structure which is complementary to $L$ in the sense that $\T M_\C=L\oplus \tL$, we obtain a bracket on $C^\infty(L)$ given by
\begin{align*}
[u,v]:=\lb u,v\rb^L, 
\end{align*}
where $(\cdot)^L$ denotes the component in $L$ with respect to the decomposition $\T M_\C=L\oplus \tL$. Note that $[\cdot,\cdot]$ is skew-symmetric because $L$ is Lagrangian. If $L$ is integrable then taking the $L$-component is redundant, the bracket is independent of $\tL$ and satisfies the Jacobi identity which implies $d_L^2=0$. In general though, $d_L^2=0$ does not imply that $L$ is integrable.

\begin{ex}\label{09:55:15}
Let $\omega\in \Omega^2(M;\C)$ be a complex two-form and consider the almost Dirac structure $L=\text{graph}(\omega)=\{X+\omega(X)| \ X\in TM_\C\}\subset \T M_\C$. We have 
\begin{align}\label{09:17:19}
\lb X+\omega(X),Y+\omega(Y)\rb=[X,Y]+\omega([X,Y])+\iota_Y\iota_X(d\omega-H),
\end{align} 
so $L$ is integrable precisely when $d\omega=H$. One possible complement for $L$ is given by $\tL=T^\ast M_\C$, for which the associated bracket on $L$ coincides with the ordinary Lie bracket on $TM_\C$ via the isomorphism $\rho:L\rightarrow TM_\C$. It follows that $d_L=d$, the usual exterior derivative on $C^\infty(\Lambda^\bullet L^\ast)\cong \Omega^\bullet(M;\C)$, and $d_L^2=0$ regardless of whether $L$ is integrable or not.
\end{ex}

\begin{ex}\label{10:58:57} Consider $L=\text{graph}(\pi)=\{\xi+\pi(\xi)|\ \xi \in T^\ast M_\C\}$ for $\pi\in C^\infty(\Lambda^2TM_\C)$. For $f,g\in C^\infty(M)$ write $\{f,g\}:=\pi(df,dg)$, so that $L$ is involutive if and only if 
\begin{align*}
\{f,\{g,h\}\}+\{h,\{f,g\}\}+\{g,\{h,f\}\}=H(X_f,X_g,X_h), 
\end{align*}
where $X_f:=\pi(df)$. One possible complement is $\tL=TM_\C$, for which the resulting derivation $d_L$ on $C^\infty(\Lambda^\bullet L^\ast)\cong C^\infty(\Lambda^\bullet TM_\C)$ is determined by $d_Lf=[\pi,f]$ and $d_LX=[\pi,X]+\Lambda^2\pi(\iota_XH)$, where $[\cdot,\cdot]$ denotes the Schouten-Nijenhuis bracket on multi-vector fields. In this case $d_L^2=0$ is equivalent to involutivity of $L$.
\end{ex}

\begin{rem}\label{08:21:13}
If $L$ is almost Dirac with almost Dirac complement $\tL$, then $\rho$ is bracket preserving if and only if the \textsl{Nijenhuis tensor}, defined by $N_L(u,v)=\lb u,v\rb^{\tL}$ for $u,v\in L$, has the property that $N_L(u,v)\in \tL\cap T^\ast M_\C$. Hence, if $\tL\cap T^\ast M_\C=0$ as in the previous example, then $d_L^2=0$ implies that $N_L=0$ so that $L$ is integrable.
\end{rem}


A special class of Dirac structures is given by generalized complex structures. 
\begin{defn}
An \textsl{almost generalized complex structure} on $M$ is a complex structure $\J$ on the bundle $\T M$ that is orthogonal with respect to the natural pairing $\langle\cdot,\cdot\rangle$.
\end{defn}
An almost generalized complex structure $\J$ is equivalently described by the decomposition $\T M_\C=L\oplus \overline{L}$, where $L$ is the $(+i)$-eigenbundle of $\J$, and orthogonality of $\J$ amounts to $L$ being Lagrangian. Hence, almost generalized complex structures are equivalent to almost Dirac structures satisfying $L\cap \overline{L}=0$, and we will often call such an $L$ an almost generalized complex structure. In this setting there is a canonical complement $\tL:=\overline{L}$ and hence a canonical choice for $d_L$. If $L$ is integrable we call $\J$ (or $L$) a \textsl{generalized complex structure}. 
\begin{ex}\label{09:51:12}
Consider $L=\text{graph}(B+i\omega)$ as in Ex.\ref{09:55:15}, where $B$ and $\omega$ are real. Then $L\cap \overline{L}=0$ if and only if $\omega$ is non-degenerate, and $\J$ is given by
\begin{align*}
\J=\begin{pmatrix} 1 & 0 \\ B & 1 \end{pmatrix}\begin{pmatrix} 0 & \omega^{-1} \\ -\omega & 0 \end{pmatrix}\begin{pmatrix} 1 & 0 \\ -B & 1 \end{pmatrix}
\end{align*}
with respect to the decomposition $\T M=TM\oplus T^\ast M$. 
\end{ex}
 For the following two examples we take $H=0$.
\begin{ex}\label{08:30:48}
Let $I$ be an almost complex structure on $M$. 
Then 
\begin{align*}
\J=\begin{pmatrix} -I & 0 \\ 0 & I^\ast \end{pmatrix}
\end{align*}
defines an almost generalized complex structure with $L:=T^{0,1}M\oplus T^{\ast 1,0}M$. 
\end{ex}
\begin{ex}\label{11:08:18}
Let $I$ be an almost complex structure on $M$ and $Q \in C^\infty(\Lambda^2 TM)$ a bivector satisfying $QI^\ast=IQ$. Then we can define an almost generalized complex structure by
\begin{align*}
\J=\begin{pmatrix} -I & Q \\ 0 & I^\ast \end{pmatrix}.
\end{align*}
If we define $\sigma:=-\frac{1}{4}(IQ+iQ)\in C^\infty(\Lambda^{2}T^{1,0}M)$, then 
\begin{align*}
L:=T^{0,1}M\oplus \text{graph}(\sigma)=\{X+\sigma(\xi)+\xi| \ X\in T^{0,1}M, \xi\in T^{\ast 1,0}M\}.
\end{align*} 
This structure is integrable if and only if $(I,\sigma)$ is a holomorphic Poisson structure.
\end{ex}
It is also possible to incorporate into $d_L$ coefficients in another vector bundle $V$.
\begin{defn}
An \textsl{$L$-connection} on a vector bundle $V$ is an operator $\nabla : C^\infty(V)\rightarrow C^\infty(L^\ast\otimes V)$ satisfying $\nabla(fs)=d_Lf\otimes s+f\nabla s$ for all $s\in C^\infty(V)$ and $f\in C^\infty(M)$.
\end{defn}
\noindent An $L$-connection induces an operator $d_L^\nabla:C^\infty(\Lambda^\bullet L^\ast \otimes V)\rightarrow C^\infty(\Lambda^{\bullet+1} L^\ast \otimes V)$ satisfying
\begin{align*}
d_L^\nabla(\alpha\otimes v)=d_L\alpha\otimes v+(-1)^q\alpha\wedge \nabla v 
\end{align*}  
 for $\alpha\in C^\infty(\Lambda^{q} L^\ast)$ and $v\in C^\infty(V)$. As for $d_L$, we do not assume that $(d_L^\nabla)^2=0$, i.e.\ the $L$-connection does not have to be flat.
\begin{ex}
Let $L=T^{0,1}M$ be associated to an almost complex structure on $M$. Then the usual $\delbar$-operator
defines $L$-connections on $\Lambda^pT^{\ast 1,0}M$ and $\Lambda^pT^{1,0}M$ for every $p\geq 0$.
\end{ex}

\begin{ex}\label{09:30:04}
Let $L$ be a pre-Lie algebroid and $K\subset L$ a subbundle satisfying $\rho(K)=0$, $[K,K]=0$ and $[L,K]\subset K$. Then $L/K$ naturally inherits the structure of a pre-Lie algebroid, and the vector bundle $K$ comes equipped with an $L/K$-connection 
\begin{align*}
\nabla_u v:=[s(u),v],
\end{align*}
where $s:L/K\rightarrow L$ is a section. Since $K$ is abelian, the connection $\nabla$ is independent of $s$. This induces connections on the exterior powers of $K^\ast$, and the resulting derivations 
\begin{align*}
d_{L/K}^\nabla:C^\infty(\Lambda^q (L/K)^\ast\otimes \Lambda^p K^\ast)\rightarrow C^\infty(\Lambda^{q+1} (L/K)^\ast\otimes \Lambda^p K^\ast)
\end{align*} 
coincide with $d_L$ on $\Lambda^\bullet L^\ast=\bigoplus \Lambda^\bullet L/K^\ast\otimes \Lambda^\bullet K^\ast$. An example of this is given by $L=T^{0,1}M\oplus T^{\ast 1,0}M$ and $K=T^{\ast 1,0}M$ for an almost complex structure on $M$. The relevance of this construction will be explained in Rem.\ref{09:32:20}.
\end{ex} 
An important invariant of a Lie algebroid $L$ are its cohomology groups
$$H^\bullet(L):=\frac{\text{Ker}(d_L)}{\text{Im}(d_L)}.$$
For instance, $H^0(L)$ 
corresponds to the functions on $M$ that are constant along the image of the anchor. For generalized complex structures, $H^1(L)$ can be identified with infinitesimal symmetries of $L$ modulo those that are Hamiltonian, while $H^2(L)$ gives the set of infinitesimal generalized complex deformations of $L$, modulo those obtained from $L$ by symmetries of $\T M$.  

\subsection{Cauchy-Riemann structures and the Levi form}

Our aim is to develop Hodge theory for the operator $d_L$ of a pre-Lie algebroid $L$ (Thm.\ref{08:54:49} below). 
On a compact manifold without boundary this can be done if $d_L$ defines an \textsl{elliptic complex}, i.e.\ $d_L^2=0$ and for each real nonzero $\xi\in T^\ast M$ the symbol sequence 
\begin{align}\label{12:11:39}
\begin{CD}
\cdots \Lambda^{q-1}L^\ast     @>\sigma(d_L,\xi)>>   \Lambda^qL^\ast @>\sigma(d_L,\xi)>> \Lambda^{q+1}L^\ast \cdots
\end{CD}
\end{align}
is exact, where $\sigma(d_L,\xi)=\rho^\ast\xi\wedge (\cdot)$ denotes the principal symbol of the first-order operator $d_L$. Note that this last condition makes sense even when $d_L^2\neq 0$. 
\begin{defn}\label{08:56:59}
A pre-Lie algebroid $L$ is called \textsl{elliptic} if (\ref{12:11:39}) is exact, or equivalently if 
\begin{align}\label{10:38:35}
\rho(L)+\overline{\rho(L)}=TM_\C.
\end{align}
\end{defn}
\begin{ex} All the examples of the previous section, except for Ex.\ref{10:58:57} in general, are elliptic. Note that almost generalized complex structures are always elliptic. 
\end{ex}
 Ellipticity is the first natural requirement to impose on a pre-Lie algebroid for admitting Hodge decompositions, but if $\del M\neq \emptyset$ we also need information about $L$ along the boundary. 

Let $L$ be an elliptic pre-Lie algebroid. Since $T\del M_\C=\overline{T\del M_\C}$, both $\rho(L)$ and $\overline{\rho(L)}$ are transverse to $T\del M_\C$ and so the pre-image $\rho^{-1}(T\del M_\C)$ defines a co-rank $1$ subbundle of $L|_{\del M}$. 
\begin{defn}
The \textsl{Cauchy-Riemann structure} of $L$ is the bundle $\mathcal{C}\mathcal{R}:=\rho^{-1}(T\del M_\C)$. 
\end{defn}
\begin{rem} If $\rho:L\rightarrow TM_\C$ preserves brackets (e.g.\ if $L$ is a Lie-algebroid) then $\mathcal{C}\mathcal{R}$ is itself a pre-Lie algebroid on $\del M$ called the \textsl{restriction} of $L$ to the submanifold $\del M$.
\end{rem}
\begin{ex}
If $L=T^{0,1}M$ for an almost complex structure $I$, then $\mathcal{C}\mathcal{R}=T^{0,1}M\cap T\del M_\C$. 
Note that usually $\overline{\mathcal{C}\mathcal{R}}$ is called the Cauchy-Riemann structure of $(M,I)$. 
\end{ex}
 As mentioned in the introduction, the exterior derivative defines an elliptic boundary value problem while the $\delbar$-operator does not. Below we will point out what the crucial difference is between these two operators, and show that a general Lie algebroid mixes these two different types of behaviour. 
We will need the following lemma.
\begin{lem}\label{12:31:25}
For $x\in \del M$ define 
\begin{align*}
\mu_{x}:=\frac{T_x\del M_\C}{\rho(\mathcal{C}\mathcal{R}_x)+\overline{\rho(\mathcal{C}\mathcal{R}_x)}}.
\end{align*}
Then $\dim_\C(\mu_{x})\leq 1$, and $\mu_x=0$ if and only if $\rho(L_x)\cap\overline{\rho(L_x)}\nsubseteq T_x\del M_\C$.
\end{lem}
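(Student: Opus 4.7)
The statement is essentially a dimension-count in linear algebra at the fiber $x$, so my plan is to strip away the bundle language and work purely with the subspaces $A := \rho(L_x)$, $B := \overline{\rho(L_x)}$ of $V := T_xM_\C$, together with the real hyperplane $W := T_x\partial M_\C$ (complexified), which satisfies $\overline{W}=W$ and has complex codimension $1$ in $V$. Ellipticity (Def.\ \ref{08:56:59}) translates to $A+B=V$, and the thing I need to identify with the denominator in $\mu_x$ is $(A\cap W)+(B\cap W)$; indeed, unpacking definitions, $\rho(\mathcal{CR}_x)=A\cap W$, and applying complex conjugation gives $\overline{\rho(\mathcal{CR}_x)}=\overline A\cap \overline W=B\cap W$.

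The first key observation is that neither $A$ nor $B$ is contained in $W$: if $A\subseteq W$ then $B=\overline A\subseteq \overline W=W$, contradicting $A+B=V$. Consequently both $A\cap W\subset A$ and $B\cap W\subset B$ are hyperplanes, i.e.\ $\dim(A\cap W)=\dim A-1$ and $\dim(B\cap W)=\dim B-1$.

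Next I would compute the dimension of the sum via the standard formula
\begin{align*}
\dim\bigl((A\cap W)+(B\cap W)\bigr)=\dim(A\cap W)+\dim(B\cap W)-\dim(A\cap B\cap W),
\end{align*}
and combine it with $\dim A+\dim B-\dim(A\cap B)=\dim V$. The term $\dim(A\cap B\cap W)$ equals either $\dim(A\cap B)$ or $\dim(A\cap B)-1$, depending on whether $A\cap B$ lies in $W$ or not. A short calculation shows that in the first case $(A\cap W)+(B\cap W)$ has complex codimension $1$ in $W$, while in the second case it equals all of $W$.

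This gives both conclusions at once: $\dim_\C\mu_x\leq 1$, with equality if and only if $A\cap B\subseteq W$, i.e.\ $\rho(L_x)\cap\overline{\rho(L_x)}\subseteq T_x\partial M_\C$; equivalently, $\mu_x=0$ iff this containment fails. I do not foresee any real obstacle; the only thing to be careful about is keeping track of which intersections sit inside which, and using $\overline W=W$ at the one place it is needed (to get $A\not\subseteq W$).
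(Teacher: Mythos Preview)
Your argument is correct. The dimension count goes through exactly as you outline: once you know $A\not\subseteq W$ (and hence $B\not\subseteq W$) from ellipticity and $\overline W=W$, the two inclusion--exclusion identities pin down $\dim\bigl((A\cap W)+(B\cap W)\bigr)$ to be either $\dim W$ or $\dim W-1$, according to whether $A\cap B$ is transverse to $W$ or contained in it.

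The paper argues differently and more constructively. It fixes an outward vector $\nu$, writes $L_x=\C u\oplus\mathcal{CR}_x$ with $\rho(u)-\nu\in T_x\partial M_\C$, and then observes directly that $\mu_x$ is spanned by the class of $\rho(u)-\overline{\rho(u)}$; this class vanishes precisely when $\rho(L_x)\cap\overline{\rho(L_x)}$ is transverse to $T_x\partial M_\C$. What this buys is an explicit generator of $\mu_x$, which the paper immediately reuses: it is exactly this real element $i(\overline{\rho(u)}-\rho(u))$ that fixes the identification $\mu_x\cong\C$ (up to positive scale) needed to speak of eigenvalues of the Levi form, and it reappears in Rem.~\ref{12:51:01}. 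Your linear-algebra route proves the lemma cleanly but does not hand you that generator; if you proceed with the paper you will want to recover it separately.
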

\begin{proof}
Let $\nu\in T_x M$ be outward pointing. Since $\rho(L_x)$ is transverse to $T_x\del M_\C$, we can write $L_x=\C u\oplus \mathcal{C}\mathcal{R}_x$ where $\rho(u)-\nu\in T_x\del M_\C$. It follows that $\mu_{x}$ is spanned by the element $\rho(u)-\overline{\rho(u)}$, which is zero in $\mu_{x}$ precisely when 
$\rho(L_x)\cap\overline{\rho(L_x)}$ is transverse to $T_x\del M_\C$.
\end{proof}
\begin{defn}
A point $x\in \del M$ is called \textsl{elliptic} if $\mu_{x}=0$ and \textsl{non-elliptic} otherwise. 
\end{defn}
The reason for this terminology is that elliptic boundary points are precisely those points where $d_L$ defines an elliptic boundary value problem in the sense of Lopatinsky \cite{Lop} (c.f.\ \cite{MR0161012}). They are also precisely the points where $\mathcal{C}\mathcal{R}$ itself is elliptic over $\del M$, in the sense that $\rho(\mathcal{C}\mathcal{R}_x)+\overline{\rho(\mathcal{C}\mathcal{R})_x}=T_xM_\C$. Note that the set of elliptic points is open in $\del M$.  
\begin{ex}
If $L=TM_\C$ (corresponding to the exterior derivative) then $\rho(L)\cap \overline{\rho(L)}=TM_\C$ and so every boundary point is elliptic. Hodge theory in this setting was established by Conner \cite{MR0078467}. If $L=T^{0,1}M$ (corresponding to the $\delbar$-operator) then $\rho(L)\cap \overline{\rho(L)}=0$ and every boundary point is non-elliptic.
\end{ex}
\begin{ex}\label{11:21:28}
An almost generalized complex structure $\J$ induces a bi-vector $\pi_\J$ on $M$ by $\pi_\J(\xi):=\pi_{TM}(\J\xi)$, where $\pi_{TM}:\T M \rightarrow TM$ is the projection. One can verify that 
$$\rho(L)\cap \overline{\rho(L)}=\pi_{\J}(T^\ast M_\C),$$ so $x\in \del M$ is non-elliptic if $\pi_\J(T^\ast M)\subset T\del M$, or equivalently if 
$\pi_\J(N^\ast_{\del M})=0$ ($N^\ast_{\del M}$ denotes the conormal bundle of $\del M$). 
If $\J$ is complex (Ex.\ref{08:30:48}) then $\pi_\J=0$ and every point is non-elliptic, while if $\J$ is symplectic (Ex.\ref{09:51:12}) then $\pi_\J$ is invertible and every point is elliptic. 
\end{ex}
In order to develop Hodge theory for $d_L$ we need extra information about $L$ at the non-elliptic boundary points. Before stating the relevant definition we need a lemma.  
\begin{lem}\label{12:21:10}
Let $u,v\in \mathcal{C}\mathcal{R}_x$ and let $\tilde{u},\tilde{v}\in C^\infty(\mathcal{C}\mathcal{R})$ be local extensions of $u$ and $v$. Then $[\rho(\tilde{u}),\overline{\rho(\tilde{v})}]|_x\in \mu_{x}$ is independent of the choice of $\tilde{u}$ and $\tilde{v}$.
\end{lem}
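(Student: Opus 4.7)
The plan is to show that the assignment $(\tilde u,\tilde v)\mapsto [\rho(\tilde u),\overline{\rho(\tilde v)}]|_x$ is $C^\infty(\partial M)$-bilinear once we pass to the quotient $\mu_x$; this tensorial character immediately implies the value depends only on the pointwise data $u=\tilde u(x)$, $v=\tilde v(x)$.

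First I would note the bracket is well-defined on $\partial M$: since $\tilde u,\tilde v\in C^\infty(\mathcal{CR})$, the images $\rho(\tilde u)$ and $\rho(\tilde v)$ are complex vector fields on $\partial M$ tangent to $\partial M$ (this being the defining property of $\mathcal{CR}=\rho^{-1}(T\partial M_\C)$). Hence $[\rho(\tilde u),\overline{\rho(\tilde v)}]$ is an honest complex vector field on $\partial M$ near $x$, whose value at $x$ lies in $T_x\partial M_\C$ and can be projected to $\mu_x$.

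Next I would verify bilinearity modulo $\mu_x$. For $f\in C^\infty(\partial M)$, applying the Leibniz rule to the Lie bracket of complex vector fields on $\partial M$ gives
\begin{align*}
[\rho(f\tilde u),\overline{\rho(\tilde v)}]\bigr|_x &= f(x)\,[\rho(\tilde u),\overline{\rho(\tilde v)}]\bigr|_x - \bigl(\overline{\rho(\tilde v)}\cdot f\bigr)(x)\,\rho(\tilde u)\bigr|_x,\\
[\rho(\tilde u),f\,\overline{\rho(\tilde v)}]\bigr|_x &= f(x)\,[\rho(\tilde u),\overline{\rho(\tilde v)}]\bigr|_x + \bigl(\rho(\tilde u)\cdot f\bigr)(x)\,\overline{\rho(\tilde v)}\bigr|_x.
\end{align*}
The correction term in the first line belongs to $\rho(\mathcal{CR}_x)$ and that in the second line belongs to $\overline{\rho(\mathcal{CR}_x)}$, so in both cases the correction vanishes in $\mu_x$. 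Thus the map into $\mu_x$ is $C^\infty(\partial M)$-linear in each argument separately.

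From here, independence of the extension is standard: if $\tilde u'$ is another section of $\mathcal{CR}$ extending $u$, choose a local frame $\{e_i\}$ for $\mathcal{CR}$ near $x$, write $\tilde u-\tilde u'=\sum_i f_i e_i$ with $f_i(x)=0$, and expand using the $C^\infty$-linearity in the first argument. Each term either picks up the factor $f_i(x)=0$ or contributes an element of $\rho(\mathcal{CR}_x)\subset \rho(\mathcal{CR}_x)+\overline{\rho(\mathcal{CR}_x)}$, hence vanishes modulo $\mu_x$; the identical argument applies in the second slot with the roles of $\rho$ and conjugation reversed. I do not anticipate any serious obstacle: the only point requiring care is keeping track of which terms land in $\rho(\mathcal{CR}_x)$ versus $\overline{\rho(\mathcal{CR}_x)}$, both of which are quotiented out in $\mu_x$, so the asymmetry between the two slots causes no trouble.
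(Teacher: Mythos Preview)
Your proof is correct and follows essentially the same approach as the paper's. Both arguments observe that $\rho(\tilde u)$ and $\overline{\rho(\tilde v)}$ are tangent to $\partial M$ so the bracket is well-defined there, and then use the Leibniz rule to show that passing to the quotient $\mu_x$ makes the expression tensorial in each slot; you simply spell out the Leibniz computation that the paper leaves implicit.
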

\begin{proof}
Since $\rho(\tilde{u})$ and $\overline{\rho(\tilde{v})}$ are tangent to $\del M$, their Lie bracket is well-defined in $T_x\del M_\C$ and projects to an element of $\mu_{x}$. By definition of $\mu_{x}$ as a quotient we see that $[\rho(\tilde{u}),\overline{\rho(\tilde{v})}]|_x\in \mu_{x}$ is $C^\infty(M)$-linear in both $\tilde{u}$ and $\tilde{v}$, from which the lemma immediately follows.
\end{proof}
\begin{defn}
Let $L$ be an elliptic pre-Lie algebroid and $x\in \del M$ non-elliptic. The \textsl{Levi-form} of $L$ at $x$ is the Hermitian bilinear form on $\mathcal{C}\mathcal{R}_x$ with values in $\mu_{x}$ given by
\begin{align}\label{12:20:40}
\mathcal{L}_{x}(u,v):=-i[\rho(u),\overline{\rho(v)}].
\end{align}  
\end{defn}
\begin{rem} Here we are implicitly choosing local extensions of $u$ and $v$ as in Lem.\ref{12:21:10} to compute the right hand side of (\ref{12:20:40}). 
\end{rem}
If $\nu\in T_xM$ is outward pointing, we can choose $u\in L_x$ so that $\rho(u)-\nu\in T_x\del M_\C$, implying that $\mu_x$ is spanned by the real element $i(\overline{\rho(u)}-\rho(u))$. The latter depends only on the choice of $\nu$, while varying $\nu$ only changes it by a positive multiple inside $\mu_x$. These identifications $\mu_x\cong \C$, differing from each other only by positive rescalings, allow us to identify $\mcL_x$ with a conformal class of Hermitian bilinear forms on $\mcC\mathcal{R}_x$, and hence we can talk about the number of positive and negative eigenvalues of $\mcL_x$.


\begin{rem}\label{12:51:01} The following gives a explicit realization of the Levi form. Let $x\in \del M$ and $\nu$ an outward pointing vector field near $x$. Choose a frame $w_1,\ldots,w_l$ for $L$ around $x$ such that $\mcC\mathcal{R}$ is spanned by $w_1|_{\del M},\ldots,w_{l-1}|_{\del M}$ and such that $\rho(w_l)|_{\del M}-\nu\in T\del M_\C$. 
If we set $v_i:=\rho(w_i)$, then there exist (not necessarily unique) smooth functions $a^k_{ij}, b^k_{ij}$ such that 
\begin{align}\label{15:39:25}
[\overline{v_j},v_i]=\sum_{k=1}^l( a^k_{ij}v_k+b^k_{ij}\overline{v_k}).
\end{align} 
Since $\rho(L)$ is transverse to $T\del M_\C$ we have $\overline{v_l}\in \rho(L)+T\del M_\C$, hence if $x$ is elliptic (so that $T_x\del M_\C=\rho(\mcC\mathcal{R}_x)+\overline{\rho(\mcC\mathcal{R}_x)}$) we can arrange for $b^l_{ij}|_{\del M}=0$ around $x$ for all $i$ and $j$. If $x$ is non-elliptic, then $[\overline{v_j},v_i]|_{\del M}\in T\del M_\C$ for $i,j<l$ implies that $a^l_{ij}=-b^l_{ij}$ on $\del M$, and hence 
\begin{align*}
[\overline{v_j},v_i]=\sum_{k<l} (a^k_{ij}v_k+ b^k_{ij}\overline{v_k})-ib^l_{ij}(i(\overline{v_l}-v_l))
\end{align*}
on $\del M$. It follows from the definition (see (\ref{12:20:40})) that $\mcL_{x}(w_i,w_j)=b^l_{ij}(x)\in \C$ for all $i,j<l$.
\end{rem}
\begin{defn}\label{08:56:00}
An elliptic pre-Lie algebroid $L$ is called \textsl{$q$-convex}, for $0\leq q \leq \text{rank}_\C(L)$ an integer, if for every non-elliptic point $x\in \del M$ the Levi-form $\mcL_{x}$ on $\mcC\mathcal{R}_x$ has either at least $(\text{rank}_\C(L)-q)$ positive eigenvalues or at least $(q+1)$ negative eigenvalues. 
\end{defn}
\begin{rem}
Note that for $L$ to be $q$-convex, some points may be elliptic while other points may have enough positive or negative eigenvalues for the Levi form. Note also that $L$ is always $q$-convex in top degree $q=\text{rank}_\C(L)$. The terminology $q$-convex is not standard, and perhaps slightly misleading as negative eigenvalues correspond to concave boundaries. In \cite{MR0461588} (in the context of complex structures) it is referred to as \lq\lq condition $Z(q)$\rq\rq.
\end{rem}
\begin{ex}
If $\rho(L)\cap \overline{\rho(L)}$ is everywhere transverse to $T\del M_\C$ then $L$ is $q$-convex for all $q$ because all points are elliptic. This is the case e.g.\ for generalized complex structures $\J$ for which the line field $\pi_\J(N^\ast_{\del M})\subset T\del M$ is nowhere zero. 
\end{ex} 

We discuss the cases of complex structures and holomorphic Poisson structures in detail.

\subsubsection*{Complex structures}

Let $I$ be an almost complex structure on $M$ and $L=T^{0,1}M$. Because $L\cap \overline{L}=0$, every point in $\del M$ is non-elliptic. Choose a frame $e_1,Ie_1,\ldots,e_n,Ie_n$ for $TM$ such that $-Ie_n|_{\del M}=:\nu$ points outwards, while the others are all tangent to $\del M$. Define 
$$w_j:=ie_j-Ie_j \hspace{10mm}  1\leq j \leq n,$$ 
so that $\{w_j\}_{ j<n}$ span $\mcC\mathcal{R}=T^{0,1}M\cap T\del M_\C$ and $w_n-\nu=ie_n\in T\del M_\C$. In particular, 
$i(\overline{w_n}-w_n)=2e_n$ 
defines a positive generator for $\mu$ (see Lem.\ref{12:31:25}). 
\begin{lem}\label{09:24:01}
Let $r$ be a smooth function around a point $x\in\del M$ satisfying $r|_{\mathring{M}}<0$, $r|_{\del M}=0$ and $d_xr\neq 0$. Then after a suitable rescaling of $\nu$ such that $dr(\nu)=1$, we have
\begin{align*}
\hspace{30mm} \mcL_{x}(u,v)=\delbar r([u,\overline{v}]) =d\delbar r(\overline{v},u) \hspace{15mm} \forall u,v\in \mcC\mathcal{R}_{x}.
\end{align*} 
\end{lem}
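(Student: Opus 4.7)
My plan is to deduce both equalities from two very simple facts about $\delbar r$, after fixing local extensions of $u$ and $v$. Choose $\tilde u,\tilde v\in C^\infty(T^{0,1}M)$ on a neighbourhood of $x$ in $M$ whose restrictions to $\del M$ lie in $C^\infty(\mcC\mathcal{R})$, i.e.\ are tangent to $\del M$; such extensions exist because $\mcC\mathcal{R}\subset T^{0,1}M|_{\del M}$ is a smooth subbundle. Since $\overline{\tilde v}$ has type $(1,0)$, $\delbar r(\overline{\tilde v})\equiv 0$; since $\tilde u|_{\del M}$ is tangent to $\del M$ and $r|_{\del M}=0$, $\delbar r(\tilde u)|_{\del M} = dr(\tilde u)|_{\del M}=0$. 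Plugging these into Cartan's formula at $x$ gives
\[
d\delbar r(\bar v, u) = \bar v\cdot\delbar r(\tilde u)|_x - u\cdot\delbar r(\overline{\tilde v})|_x - \delbar r([\bar v,\tilde u])|_x = \delbar r([u,\bar v]),
\]
where the first term vanishes because $\bar v\in T_x\del M_\C$ differentiates a function vanishing along $\del M$, the second is identically zero, and antisymmetry of the bracket handles the last. This proves the second equality.

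For the first equality, the same two observations show that $\delbar r$, viewed as a $\C$-linear functional on $T_x\del M_\C$, annihilates both $\mcC\mathcal{R}_x$ and $\overline{\mcC\mathcal{R}_x}$, hence descends to a map $\mu_x\to\C$. Using the frame $w_1,\dots,w_n$ introduced just before the lemma, $\mu_x$ is generated by $i(\overline{w_n}-w_n)=2e_n$; from $\rho(w_n)-\nu=ie_n\in T\del M_\C$ together with the normalization $dr(\nu)=1$ one computes $dr(w_n)=1$, so
\[
\delbar r(2e_n)=dr\bigl((2e_n)^{0,1}\bigr)=dr(-iw_n)=-i.
\]
Under the identification $\mu_x\cong\C$ sending $2e_n\mapsto 1$ (induced by $\nu$), this means $\delbar r|_{\mu_x}$ is multiplication by $-i$, which matches the factor $-i$ in the definition $\mcL_x(u,v)=-i[u,\bar v]\!\mod(\mcC\mathcal{R}_x+\overline{\mcC\mathcal{R}_x})$ and yields $\mcL_x(u,v)=\delbar r([u,\bar v])$.

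The computations themselves are completely elementary; the only genuine care required is the bookkeeping of the identification $\mu_x\cong\C$ implicit in the statement, i.e.\ checking that the rescaled outward normal $\nu$ is compatible with the frame vector $w_n$ used to generate $\mu_x$, so that the overall factor of $-i$ lands on the correct side.
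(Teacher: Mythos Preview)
Your proof is correct and follows essentially the same route as the paper's: both rest on the two observations that $\delbar r$ annihilates $\mcC\mathcal{R}$ (since $dr|_{T\del M}=0$) and $\overline{\mcC\mathcal{R}}$ (by type), together with the normalization $\delbar r(2e_n)=-i$ (equivalently $i\delbar r(2e_n)=dr(\nu)=1$), and then Cartan's formula for the second equality. The only cosmetic difference is that you treat the Cartan-formula step first and spell out the identification $\mu_x\cong\C$ more explicitly, whereas the paper packages the normalization as ``$i\delbar r$ realizes the identification $\mu_x\to\C$'' and then reads off both equalities in one line.
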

\begin{proof}
The one-form $ i\delbar r$ annihilates $\mcC\mathcal{R}$ and $\overline{\mcC\mathcal{R}}$ while $i\delbar r(2e_n)=dr(\nu)=1$. Consequently, 
\begin{equation*}
\mcL_{x}(u,v)=i\delbar r(-i[u,\overline{v}])=\delbar r([u,\overline{v}])=-d\delbar r(u,\overline{v}) \hspace{15mm} \forall u,v\in \mcC\mathcal{R}_{x}.
\end{equation*} 
In the last step we used that $\delbar r$ annihilates $\mcC\mathcal{R}$ and $\overline{\mcC\mathcal{R}}$ in a neighbourhood of $x$ in $\del M$. 
\end{proof}
\noindent If $M$ is a submanifold of a complex manifold $M'$ without boundary, then around any point in $\del M$ we can find holomorphic coordinates $(z_1,\ldots,z_n)$ in $M'$. Then $u=\sum_i u^i \del_{\overline{z}^i}\in T^{0,1}M$ lies in $ \mcC\mathcal{R}$ if and only if $\sum_i u^i  \del_{\bar{z}_i} r=0$, and 
\begin{align*}
\mcL(u,v)=\sum_{i,j} \frac{\del^2r}{\del z^i\del \overline{z}^j} \overline{v^i} u^j.
\end{align*} 
\begin{ex}
If the above Hessian of $r$ is positive definite on $T^{0,1}M_\C$ then its restriction to $\mcC\mathcal{R}$ is too and hence $(M,I)$ is $q$-convex for all $q\geq 1$. For instance, $r:=|z|^2-1$ on the unit ball in $\C^n$ has this property, which is therefore $q$-convex for all $1\leq q\leq n$. If we remove a smaller ball from its interior then the boundary has two components, on which $\mcL$ is positive and negative definite. This annular region is therefore $q$-convex for all $1\leq q\leq n-2$. 
\end{ex}   
\begin{ex}\label{10:11:15} 
Let $Y$ be an $(n-1)$-dimensional compact complex manifold without boundary and $p:X\rightarrow Y$ a holomorphic line bundle. Let $h$ be a Hermitian metric on $X$, let $M$ be the unit disc-bundle in $X$ and 
$L=T^{0,1}M$. We have $\mcC\mathcal{R}\cong p^\ast T^{0,1}Y$, and if $R_h$ denotes the curvature associated to $h$ then a direct calculation shows that 
 \begin{align*}
\hspace{15mm} \mcL(u,v)=R_h(u,\overline{v})\hspace{15mm} \forall u,v\in T^{0,1}Y.
\end{align*}
Consequently, $M$ is $q$-convex if and only if $-iR_h$ has, at each point $y\in Y$, either at least $n-q$ positive eigenvalues or at least $q+1$ negative eigenvalues\footnote{By definition, the eigenvalues of a real $(1,1)$-form $\tau$ are the eigenvalues of the Hermitian matrix $\tau_{ij}$ with respect to a decomposition $\tau=i\sum_{i,j}\tau_{ij}dz^i\wedge d\bar{z}^j$ in local coordinates.}. 
\end{ex}
Using the fact that $c_1(X)$ is represented by $\tfrac{i}{2\pi}R_h$, one can sometimes translate the previous example into a statement about $c_1(X)$.
\begin{lem}\cite{MR0137127}\label{11:16:39}
Let $Y$ satisfy the $\del\delbar$-lemma\footnote{Specifically, we need that every real $(1,1)$-form which is $d$-exact is also $\del\delbar$-exact.}. Then there exists a $q$-convex disc neighbourhood 
of $Y$ in $X$  if and only if $c_1(X)$ has a real representative $\tau\in \Omega^{1,1}(Y)$ which at each point has either at least $n-q$ negative eigenvalues or at least $q+1$ positive eigenvalues.
\end{lem}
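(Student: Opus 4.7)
The plan is to combine Example \ref{10:11:15}, which characterizes $q$-convexity of the unit disc bundle of a Hermitian metric $h$ on $X$ in terms of the eigenvalues of the real $(1,1)$-form $-iR_h$, with the $\del\delbar$-lemma, which provides the flexibility to realize any prescribed real $(1,1)$-representative of $c_1(X)$ as a Chern form $\tfrac{i}{2\pi}R_h$. The apparent sign flip between the eigenvalue conditions stated in the lemma and in Example \ref{10:11:15} is nothing but the identity $\tfrac{i}{2\pi}R_h=-\tfrac{1}{2\pi}(-iR_h)$: positive eigenvalues on one side correspond to negative eigenvalues on the other.

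For the direction $(\Rightarrow)$, I would take the hypothesized $q$-convex disc neighbourhood to be the unit disc bundle of some Hermitian metric $h$ on $X$. Example \ref{10:11:15} immediately yields that $-iR_h$ has at each point either $\geq n-q$ positive or $\geq q+1$ negative eigenvalues, and then $\tau:=\tfrac{i}{2\pi}R_h$ is the sought real $(1,1)$-representative of $c_1(X)$, with eigenvalue signs swapped.

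For the more substantive direction $(\Leftarrow)$, I would fix an arbitrary background Hermitian metric $h_0$ on $X$ and set $\omega_0:=\tfrac{i}{2\pi}R_{h_0}$. Since both $\tau$ and $\omega_0$ represent $c_1(X)$, the difference $\tau-\omega_0$ is a real $d$-exact $(1,1)$-form, so by the hypothesized $\del\delbar$-lemma there exists $\beta\in C^\infty(Y,\C)$ with $\tau-\omega_0=\del\delbar\beta$. To upgrade $\beta$ to a \emph{real} potential $v$ with $\tau-\omega_0=i\del\delbar v$, I would use the reality of $\tau-\omega_0$ together with the conjugation identity $\overline{\del\delbar\beta}=-\del\delbar\bar\beta$: this allows one to replace $\beta$ by the purely imaginary combination $\tfrac{1}{2}(\beta-\bar\beta)$ without changing $\del\delbar\beta$, from which $v:=\mathrm{Im}(\beta)$ reads off as real. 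Setting $h:=e^{-2\pi v}h_0$, a standard computation gives $\tfrac{i}{2\pi}R_h=\omega_0+i\del\delbar v=\tau$, so applying Example \ref{10:11:15} to $(X,h)$ produces a $q$-convex disc neighbourhood, namely the unit disc bundle of $h$.

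The main (minor) obstacle is the real-potential refinement of the $\del\delbar$-lemma just described; once this is established, the remainder of the argument is a matching of eigenvalue signs together with the direct curvature computation for the conformal rescaling $h=e^{-2\pi v}h_0$.
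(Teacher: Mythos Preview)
Your proposal is correct and follows essentially the same approach as the paper's own proof: both directions go via Example~\ref{10:11:15}, and the $(\Leftarrow)$ direction uses the $\del\delbar$-lemma to adjust an arbitrary background metric $h_0$ by a conformal factor so that its Chern form realizes the prescribed representative $\tau$. Your extra care in extracting a \emph{real} potential $v$ from the a priori complex $\del\delbar$-primitive is a detail the paper simply asserts.
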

\begin{ex}\label{11:18:37}
If $Y$ is compact and $X$ is negative, meaning that $c_1(X)$ admits a representative whose eigenvalues are all negative, then $Y$ is K\"ahler and the $\del\delbar$-lemma holds. Consequently, there exists a disc-neighbourhood which is $q$-convex for all $q\geq 1$. The line bundles $\mcO_{\mathbb{P}^m}(-n)$ over $\mathbb{P}^m$ for $n>0$ satisfy this for example.
\end{ex}

\begin{rem}\label{09:32:20}
Instead of $L=T^{0,1}M$ we can also consider $L':=T^{0,1}M\oplus T^{\ast 1,0}M$. Then $\mcC\mathcal{R}_{L'}=\mcC\mathcal{R}_{L}\oplus T^{\ast 1,0}M|_{\del M}$ and $\mcL_{L'}=\mcL_{L}\oplus 0$. So, if e.g.\ $\mcL_{L}$ is positive definite and $\text{dim}_\C(M)=n$, then $L$ is $q$-convex for all $0< q\leq n$, while $L'$ is $q$-convex for all $n<q\leq 2n$. 
In this situation Ex.\ref{09:30:04} can be used to provide more information on the Hodge theory of $L'$ than Thm.\ref{08:54:49} does when applied directly to $L'$.
\end{rem}

\subsubsection*{Holomorphic Poisson structures}

Let $(I,\sigma)$ be a holomorphic Poisson structure with corresponding Dirac structure (Ex.\ref{11:08:18})
\begin{align*}
L=T^{0,1}M\oplus \text{graph}(\sigma)
\cong T^{0,1}M\oplus T^{\ast 1,0}M.
\end{align*} 
A point $x\in \del M$ is non-elliptic precisely when $\sigma(N^\ast_{\del M,x})=0$ (c.f.\ Ex.\ref{11:21:28}), in which case
\begin{align*}
\mcC\mathcal{R}_{x}&=(T_x\del M_\C\cap T_x^{0,1}M)\oplus \text{graph}(\sigma) \cong  (T_x\del M_\C\cap T_x^{0,1}M)\oplus T^{\ast 1,0}M.
\end{align*} 
Let $r$ a function as in Lem.\ref{09:24:01} and denote by $X_r:=\sigma(dr)\in T\del M_\C\cap T^{1,0}M $ its Hamiltonian vector field. Since $X_r$ vanishes at $x$, it induces a linear map 
$$[X_r,\cdot]:T_x\del M_\C\cap T^{1,0}_xM\rightarrow T_x\del M_\C\cap T^{1,0}_xM.$$  
\begin{lem}\label{12:24:05}
For all $u,v\in T_x\del M_\C\cap T_x^{0,1}M$ and $\alpha,\beta \in T^{\ast 1,0}_xM$ we have
\begin{align*}
\mcL_{x}(u,v)=&\del\delbar r(\overline{v},u), \hspace{10mm}
\mcL_{x}(\alpha,u)=&\alpha([X_r,\overline{u}]), \hspace{10mm}  
\mcL_{x}(\alpha,\beta)=&\del\delbar r({\sigma(\alpha)},\overline{\sigma(\beta)}).
\end{align*}
\end{lem}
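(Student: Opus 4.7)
The plan is to mimic, in each of the three cases, the reduction used in the proof of Lemma \ref{09:24:01}. Fix a boundary-defining function $r$ near $x$ and rescale the outward normal $\nu$ so that $dr(\nu)=1$. As in Lemma \ref{09:24:01}, the $(0,1)$-form $i\delbar r$ annihilates both $\rho(\mcC\mathcal{R})$ and $\overline{\rho(\mcC\mathcal{R})}$ on $\del M$ and pairs to $1$ with the positive generator of $\mu_x$, so for $u,v\in\mcC\mathcal{R}_x$ with local $\mcC\mathcal{R}$-extensions $\tilde u,\tilde v$,
\begin{equation*}
\mcL_x(u,v)=\delbar r\bigl([\rho(\tilde u),\overline{\rho(\tilde v)}]\bigr)\big|_x.
\end{equation*}

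For the Poisson Dirac structure the anchor is $\rho(X+\alpha)=X+\sigma(\alpha)$, with $X\in T^{0,1}M$ and $\alpha\in T^{\ast 1,0}M$, so depending on the summand of $\mcC\mathcal{R}_x=(T_x\del M_\C\cap T_x^{0,1}M)\oplus T^{\ast 1,0}_xM$ in which each argument lies, $\rho$ produces either a $(0,1)$-vector tangent to $\del M$ or the $(1,0)$-vector $\sigma(\cdot)$. In all cases I would apply the Cartan formula $\omega([X,Y])=X\omega(Y)-Y\omega(X)-d\omega(X,Y)$ with $\omega=\delbar r$ and kill the Lie-derivative terms at $x$ using two observations: $\delbar r$ vanishes on $T^{1,0}M$, and for any $\mcC\mathcal{R}$-extension $\tilde w$ the function $\overline{\rho(\tilde w)}(r)$ vanishes on $\del M$ and is therefore annihilated at $x$ by any other tangential field. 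The leftover $-d\delbar r(\cdot,\cdot)=-\del\delbar r(\cdot,\cdot)$ then yields Case 1 (recovering Lemma \ref{09:24:01}) and Case 3 immediately, with $\sigma(\alpha),\overline{\sigma(\beta)}$ simply replacing the tangent vectors in the arguments.

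Case 2 is the one that requires an extra step, since the target expression $\alpha([X_r,\bar u])$ is not manifestly a $\del\delbar r$-pairing. The non-elliptic hypothesis (Ex.\ \ref{11:21:28}) reads $\sigma(N^\ast_{\del M,x})=0$, equivalently $\sigma(\del r|_x)=0$, so the Hamiltonian vector field $X_r=\sigma(\del r)$ vanishes at $x$. At such a zero $[X_r,\cdot]|_x$ is a well-defined linear endomorphism of $T_xM_\C$. In local holomorphic coordinates where $\sigma=\tfrac12\sigma^{ij}\del_{z^i}\wedge\del_{z^j}$ and $\overline{\tilde u}=g^l\del_{\bar z^l}$, the holomorphicity condition $\del_{\bar z^l}\sigma^{ij}=0$ forces
\begin{equation*}
[X_r,\overline{\tilde u}]\big|_x=-g^l\sigma^{ij}(\del_{\bar z^l}\del_{z^i}r)\,\del_{z^j}\big|_x,
\end{equation*}
and pairing with $\alpha=\alpha_k\,dz^k$ reproduces, modulo antisymmetry of $\sigma^{ij}$ and commutativity of mixed partials, the same expression that the Cartan-formula reduction produces for $\mcL_x(\alpha,u)$, completing Case 2.

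The main obstacle is this last identification: unlike Cases 1 and 3, the statement is not itself in the form of a $\del\delbar r$-pairing, and recognizing it as such requires exploiting the vanishing $X_r(x)=0$, which is precisely the non-elliptic condition at $x$. The other two cases are essentially formal analogs of Lemma \ref{09:24:01} once the anchor is substituted, so the real content of the lemma lies in Case 2.
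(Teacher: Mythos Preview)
There is a genuine gap in your argument: you treat the naive section $\alpha+\sigma(\alpha)\in C^\infty(L)$ as a $\mcC\mathcal{R}$-extension of $\alpha$, but it is not one. The non-elliptic hypothesis $\sigma(N^\ast_{\del M,x})=0$ only guarantees $\sigma(\alpha)\in T_x\del M_\C$ \emph{at the point $x$}; at nearby boundary points $\sigma(\alpha)$ need not be tangent to $\del M$, so $\alpha+\sigma(\alpha)\notin\mcC\mathcal{R}$ there. Your justification for killing the Lie-derivative terms in the Cartan formula (``$\overline{\rho(\tilde w)}(r)$ vanishes on $\del M$ for any $\mcC\mathcal{R}$-extension'') is correct as stated, but then does not apply to the extension you are actually using. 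The paper addresses exactly this by choosing $w\in T^{0,1}M$ with $dr(w)=1$ and using the genuine $\mcC\mathcal{R}$-extension $\alpha+\sigma(\alpha)+\alpha(X_r)w$.

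The missing correction term is not a technicality: it carries the entire content of Cases 2 and 3. In Case 2, with your naive extension both $\sigma(\alpha)$ and $\bar u$ lie in $T^{1,0}M$, so by integrability $[\sigma(\alpha),\bar u]\in T^{1,0}M$ and $\delbar r([\sigma(\alpha),\bar u])=0$ identically --- your Cartan-formula reduction yields zero, not $\alpha([X_r,\bar u])$. In the paper's computation the answer comes entirely from the correction: $\delbar r([\alpha(X_r)w,\bar u])|_x=-\bar u(\alpha(X_r))|_x\cdot\delbar r(w)=\alpha([X_r,\bar u])$, using $X_r(x)=0$ and $\delbar r(w)=1$. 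Similarly in Case 3 the naive term $\delbar r([\sigma(\alpha),\overline{\sigma(\beta)}])$ equals $-\del\delbar r(\sigma(\alpha),\overline{\sigma(\beta)})$ (wrong sign), and it is the two correction-term contributions, each equal to $\del\delbar r(\sigma(\alpha),\overline{\sigma(\beta)})$, that flip the sign to the stated $+\del\delbar r(\sigma(\alpha),\overline{\sigma(\beta)})$. So the ``extra step'' you flag in Case 2 is really the construction of a correct $\mcC\mathcal{R}$-extension for $\alpha$, and it is needed in Case 3 as well.
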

\begin{proof} Choose extensions of $u,v$ and $\alpha,\beta$ in $T\del M_\C\cap T^{0,1}M$ and $T^{\ast 1,0}M$, respectively, again denoted by $u,v,\alpha,\beta$. Then $u$ and $v$ are sections of $\mcC\mathcal{R}$ but $\alpha+\sigma(\alpha)$ and $\beta+\sigma(\beta)$ in general are not, because $\sigma(\alpha)$ and $\sigma(\beta)$ are not necessarily everywhere tangent to $\del M$. Choose $w\in T^{0,1}M$ satisfying $dr(w)=1$, so that $\sigma(\alpha)+\sigma(dr,\alpha)w$ is tangent to $\del M$ and hence
\begin{align*}
\alpha+\sigma(\alpha)+\sigma(dr,\alpha)w=\alpha+\sigma(\alpha)+\alpha(X_r)w
\end{align*}
is an extension of $\alpha+\sigma(\alpha)$ in $ \mcC\mathcal{R}$ (and similarly for $\beta$), which we can use to compute $\mcL_{x}$. The expression for $\mcL_x(u,v)$ follows immediately from Lem.\ref{09:24:01}, while
\begin{align*}
\mcL_{x}(\alpha,u)=\delbar r\big([\sigma(\alpha)+\alpha(X_r)w,\overline{u}]\big)=\alpha([X_r,\overline{u}]).
\end{align*} 
Finally, using that $X_r$ vanishes at $x$, we obtain
\begin{align*}
\mcL_x(\alpha,\beta)=&\delbar r\big([\sigma(\alpha)+\alpha(X_r)w,\overline{\sigma(\beta)+\beta(X_r)w}]	\big)
=\delbar r([\sigma(\alpha), \overline{\sigma(\beta)}]) + \alpha([X_r,\overline{\sigma(\beta)}])\\
=&-\del\delbar r (\sigma(\alpha), \overline{\sigma(\beta)})+ \overline{\beta([X_r,\overline{\sigma(\alpha)}])}+ \alpha([X_r,\overline{\sigma(\beta)}])
=\del\delbar r (\sigma(\alpha), \overline{\sigma(\beta)}),			
\end{align*}
where in the last step we used that $\alpha([X_r,\overline{\sigma(\beta)}])=\del\delbar r(\sigma(\alpha),\overline{\sigma(\beta)})$. 
\end{proof}

\begin{ex}\label{09:10:02}
Let $M$ be the unit ball in $\C^{2k+2}$ with holomorphic Poisson structure 
$$\sigma=x\frac{\del}{\del x}\wedge\frac{\del}{\del y}+\sum_{i=1}^k \frac{\del}{\del {z^i}}\wedge \frac{\del}{\del {w^i}}.$$
Consider the function $r=|x|^2+|y|^2+|z|^2+|w|^2-1$, with Hamiltonian vector field 
\begin{align*}
X_r
=&|x|^2\del_y-\overline{y}x\del_x+\overline{z}^i\del_{w^i}-\overline{w}^i\del_{z^i}.
\end{align*}
The non-elliptic points on $\del M$ are those where $X_r$ vanishes, i.e.\ $\{x=z=w=0\}\subset M$. At such a point $y\in M$ the Cauchy-Riemann structure is given by 
$$\mcC\mathcal{R}_{y}=\langle\del_{\overline{x}},\del_{\overline{z}^i},\del_{\overline{w}^i},dx,dy,dz^i,dw^i\rangle$$ 
and
 $[X_r,\del_x]=\overline{y}\del_x$, $[X_r,\del_{z^i}]=[X_r,\del_{w^i}]=0$. It follows from Lem.\ref{12:24:05} together with a quick calculation that $\mcL_y$ has $4k+1$ positive, $1$ negative and $1$ zero eigenvalues for every non-elliptic point $y\in M$. In particular, $L=T^{0,1}M\oplus \text{graph}(\sigma)$ is $q$-convex for $q=0$ and $3\leq q\leq 4k+4$. 
\end{ex}

\subsection{Deformations}\label{16:33:30}

In applications one often requires Hodge theory for Lie algebroids that vary in a family. We will describe a precise setting here and introduce some notation that will be useful in this context. Let $L$ and $H$ be fixed vector bundles on $M$. 
\begin{defn}
A \textsl{family of pre-Lie algebroid structures on} $L$ \textsl{(parametrized by} $H$\textsl{)} is a differential operator 
\begin{align*}
d_L:C^\infty(\Lambda^\bullet L^\ast )\times C^\infty (H)\rightarrow C^\infty(\Lambda^{\bullet+1} L^\ast )
\end{align*} 
such that for each fixed $\ve\in C^\infty(H)$ the induced operator $d_{L_\ve}:=d_L(\cdot, \ve)$ is a derivation.   
\end{defn}
\begin{rem} i) We will also call a family a \textsl{deformation}, regarding $d_{L_0}$ as an initial pre-Lie algebroid that is being deformed. In particular, $\ve$ is usually only considered in an open neighbourhood of zero in $C^\infty(H)$ (with respect to the $C^\infty$-topology). When there is no risk of confusion, we will denote the pair $(L,d_{L_\ve})$ by $L_\ve$ and abbreviate $L=L_0$.

ii) Because of the derivation property, $d_L$ is of order at most $1$ in the $L^\ast$-variable. It can however have any finite order in the deformation variable, although in the examples that we have in mind this will typically be $1$ as well. 

iii) We can include coefficients from another vector bundle $V$ by considering an operator
\begin{align*}
d_L^\nabla:C^\infty(\Lambda^\bullet L^\ast \otimes V)\times C^\infty (H)\rightarrow C^\infty(\Lambda^{\bullet+1} L^\ast \otimes V),
\end{align*}
which for each fixed $\ve$ defines an $L_\ve$-connection $d^\nabla_{L_\ve}$ on $V$. 
\end{rem}
\begin{ex}
Let $L\subset \T M_\C$ be an almost Dirac structure and let $\tL$ be an almost Dirac complement. Small deformations of $L$ can be described by elements $\ve\in \Lambda^2\tL\cong \Lambda^2L^\ast$ via 
\begin{align*}
L(\ve):=\{u+\iota_u\ve| \ u\in L\}\subset L\oplus \tL=\T M_\C.
\end{align*}
Since $\tL$ remains complementary to $L(\ve)$, we can identify $\tL\cong L(\ve)^\ast$. This induces derivations $d_{L_\ve}$ on $C^\infty(\Lambda^\bullet\tL)\cong C^\infty(\Lambda^\bullet L^\ast)$, so we obtain a family of pre-Lie algebroid structures on $L$ parametrized by $H:=\Lambda^2L^\ast$. If $\tL$ is integrable we have
$$d_{L_\ve}=d_L+\lb \ve,\cdot\rb.$$
\end{ex}
If a pre-Lie algebroid $L$ is elliptic and $q$-convex, then so is any small deformation of it because ellipticity and $q$-convexity are open conditions\footnote{This is actually not immediately obvious, but can be seen e.g.\ using Rem.\ref{12:51:01}.}. The reason to study families in this context is that we need to understand certain variational aspects of the family of Hodge decompositions. For that purpose we will use the following notation, borrowed from \cite{MR3128977}. For $\ve\in C^\infty(H)$ and $\varphi\in C^\infty(E)$, where $H$ and $E$ are vector bundles on $M$, we will write 
\begin{align}\label{10:19:58}
\mcL(|\ve|_k;||\varphi||_l):=Poly(|\ve|_{\lfloor k/2 \rfloor+1}) \cdot ||\varphi||_l+Poly(|\ve|_{\lfloor k/2 \rfloor+1})\cdot |\ve|_k\cdot || \varphi||_{\lfloor l/2 \rfloor+1}.
\end{align}
Here $|\cdot|_k$ and $||\cdot||_l$ denote the $C^k$-norm and $l$-th Sobolev norm on $H$ and $E$, respectively, $\lfloor x\rfloor$ denotes the biggest integer bounded by $x\in \R$, and $Poly$ denotes a polynomial with non-negative coefficients which depend only on $k$, $l$ and $|\ve|_0$. We also allow the degree of these polynomials to depend on $k$ and $l$. The main point is that $\mcL(|\ve|_k;||\varphi||_l)$ is linear in $\varphi$ and each monomial contains precisely one high derivative (of order $k$ or $l$), the rest are all lower derivatives (of order $k/2$ or $l/2$). The use of the symbol $\mcL$, standing for \lq\lq Leibniz\rq\rq, is unfortunate because we also use it for the Levi form, but since these two will never be used in the same context there should be no risk of confusion. The main example where this notation appears is when applying derivatives to a product. For example, if $d_{L_\ve}$ is a family of pre-Lie algebroids on $L$ of order $a$ in the deformation parameter $\ve$, we have (for every $k\in \mathbb{Z}_{\geq 0}$)
\begin{align*}
||d_{L_\ve}\varphi ||_k\leq \mcL(|\ve|_{k+a}; ||\varphi||_{k+1}).
\end{align*}
\begin{rem}
To bound a given quantity by $\mcL(|\ve|_k;||\varphi||_l)$, the main things to verify are that there are no derivatives higher than $k$ on $\ve$ or higher than $l$ on $\varphi$, and that we never apply at the same time more than $\lfloor k/2\rfloor +1$ derivatives on $\ve$ or more than $\lfloor l/2\rfloor +1$ derivatives on $\varphi$. 
\end{rem} 
\begin{rem}
Sometimes we use $\mcL(|\ve|^2_k;||\varphi||^2_l)$, which is defined by (\ref{10:19:58}) but with all norms on the right-hand side squared. One readily verifies that the bound $A\leq \mcL(|\ve|_k;||\varphi||_l) $ is equivalent to the bound $A^2\leq \mcL(|\ve|^2_k;||\varphi||^2_l)$ for any $A\in \R$. 
\end{rem}
\begin{rem}\label{16:50:34}
For $k_1,k_2,l_1,l_2\in \mathbb{Z}_{\geq 0}$ we have
\begin{align}\label{10:50:14}
\mcL(|\ve|_{k_1};||\varphi||_{l_1})\cdot \mcL(|\ve|_{k_2};||\varphi||_{l_2})\leq \mcL(|\ve|^2_{k};||\varphi||^2_{l}),
\end{align}
where $k:=\max(k_1,k_2)$ and $l:=\max(l_1,l_2)$. 
Moreover, if we are given bounds of the form $||\varphi||_{l_1}\leq \mcL( |\ve|_{k_2};||\psi||_{l_2})$ and $||\varphi||_{\lfloor l_1/2 \rfloor+1 }\leq \mcL( |\ve|_{\lfloor k_2/2\rfloor +1};||\psi||_{\lfloor l_2/2\rfloor +1})$, then 
\begin{align}\label{10:50:23}
\mcL(|\ve|_{k_1};||\varphi||_{l_1})\leq \mcL(|\ve|_{k};||\psi||_{l_2}),
\end{align}
where again $k=\max(k_1,k_2)$. We will use (\ref{10:50:14}) and (\ref{10:50:23}) often implicitly in calculations. 
\end{rem}

\noindent

\subsection{The main result}

Let $L$, $V$ and $H$ be fixed vector bundles on $M$, let $d_{L_\ve}$ be a family of pre-Lie algebroid structures on $L$ and $d_{L_\ve}^\nabla$ a family of $L_\ve$-connections on $V$, both parametrized by $H$. We will abbreviate $d_{L_\ve}^\nabla$ by $d_{L_\ve}$ when there is no risk of confusion. Fix a Riemannian metric on $M$ and Hermitian metrics on $L$ and $V$. Denote by $(d_{L_\ve})^\ast_f$ the formal adjoint of $d_{L_\ve}$ and by 
$$\Delta_{\ve}:=d_{L_\ve}  (d_{L_\ve})^\ast_f+(d_{L_\ve})^\ast_f d_{L_\ve}$$ the Laplacian, which is a second-order differential operator on $\Lambda^qL^\ast\otimes V$ for every $q\geq 0$. 

\begin{thm}\label{08:54:49}
For every $q\geq 0$ there exists a self-adjoint unbounded operator $\square_\ve$ on $L^2(\Lambda^qL^\ast\otimes V)$ that agrees with $\Delta_\ve$ on $\text{Dom}(\square_\ve)\cap C^\infty(\Lambda^q L^\ast\otimes V)$. 
If $L_0$ is elliptic and $q$-convex for some $0\leq q\leq \text{rank}_\C(L)$, then there exists a neighbourhood $B$ of $0\in C^\infty(H)$ such that the following holds for all $\ve\in B$:
\begin{itemize}
\item[1)\ ] \lq\lq Elliptic regularity\rq\rq: If $\varphi\in \text{Dom}(\square_\ve)\cap L^2(\Lambda^q L^\ast\otimes V)$ has the property that $\square_\ve\varphi$ is smooth, then so is $\varphi$. Furthermore, there exists an integer $a\in \mathbb{Z}_{\geq 0}$ 
such that
\begin{align}\label{21:18:00}
||\varphi||_{k+1}\leq \mcL(|\ve|_{k+a};||\square_\ve\varphi||_k)+\mcL(|\ve|_{k+a};||\varphi||) 
\end{align}
for all $\varphi\in \text{Dom}(\square_\ve)\cap C^\infty(\Lambda^q L^\ast\otimes V)$ and all $k\in \mathbb{Z}_{\geq 0}$.
\item[2)\ ] \lq\lq Hodge decomposition\rq\rq: There is a closed, orthogonal decomposition 
\begin{align}\label{21:21:28}
L^2(\Lambda^qL^\ast\otimes V)=\text{Ker}(\square_\ve)\oplus \text{Im}(\square_\ve),
\end{align}
with $\text{Ker}(\square_\ve)\subset C^\infty(\Lambda^q L^\ast\otimes V)$ finite-dimensional. 
\end{itemize}
The decomposition (\ref{21:21:28}) gives rise to the Neumann operator 
$N_\ve$ on $L^2(\Lambda^qL^\ast\otimes V)$, which by definition is zero on $\mcH_\ve:=\text{Ker}(\square_\ve)$ and satisfies $N_\ve \square_\ve\varphi=(1-\pi_\ve)\varphi$ for $\varphi\in \text{Dom}(\square_\ve)$, where $\pi_\ve$ is the projection to $\mcH_\ve$. 
\begin{itemize}
\item[3)\ ] For each fixed $\ve\in B$ the operator $N_\ve$ is bounded, self-adjoint and induces bounded operators $N_\ve:L^2_k(\Lambda^qL^\ast\otimes V)\rightarrow L^2_{k+1}(\Lambda^qL^\ast\otimes V)$. For every $\varphi\in L^2(\Lambda^qL^\ast\otimes V)$ we have $\varphi=\square_\ve N_\ve\varphi+\pi_\ve\varphi$, and if $\varphi\in \text{Dom}(\square_\ve)$ then also $\varphi= N_\ve\square_\ve\varphi+\pi_\ve\varphi$.
\item[4)\ ] If $\mcH_0=0$ then, after possibly shrinking $B$, we have $\mcH_\ve=0$ for all $\ve\in B$ and 
\begin{align}\label{12:14:33}
||N_\ve \varphi||_{k+1}\leq \mcL(|\ve|_{k+a};||\varphi||_k).
\end{align}  
\item[5)] If $d_{L_\ve}^2=0$, then (\ref{21:21:28}) decomposes further into  
\begin{align}\label{21:21:49}
L^2(\Lambda^qL^\ast\otimes V)=\text{Ker}(d_{L_\ve})\cap \text{Ker}(d^\ast_{L_\ve})\oplus \text{Im}(d_{L_\ve})\oplus \text{Im}(d_{L_\ve}^\ast)
\end{align}
with $\text{Ker}(d_{L_\ve})\cap \text{Ker}(d^\ast_{L_\ve})\cong H^q({L_\ve})$, where $d_{L_\ve}^\ast$ denotes the Hilbert-space adjoint of $d_{L_\ve}$. Moreover, if $\mcH_\ve=0$ for all $\ve\in B$ and if $\varphi_\ve\in C^\infty(\Lambda^qL^\ast\otimes V)$ is a smooth family of $d_{L_\ve}$-closed elements, then the family $\psi_\ve:= d_{L_\ve}^\ast N_\ve \varphi_\ve$ depends in a $C^1$-manner on $\ve\in B$ and satisfies $d_{L_\ve}\psi_\ve=\varphi_\ve$.
\end{itemize}
\end{thm}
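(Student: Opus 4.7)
The plan is to deduce Thm.~\ref{08:54:49} from the general functional-analytic theorem announced in Section 2 by verifying that, under $q$-convexity, the Chevalley-Eilenberg complex satisfies a Kohn-type \emph{basic estimate} in degree $q$. Concretely, I would first define $\square_\ve$ as the Friedrichs extension associated with the closed quadratic form
\begin{equation*}
Q_\ve(\varphi) := ||d_{L_\ve}\varphi||^2 + ||(d_{L_\ve})^\ast_f\varphi||^2,
\end{equation*}
initially defined on the \emph{Neumann domain} $\mcD^q_\ve := \{\varphi\in C^\infty(\Lambda^q L^\ast\otimes V) : \sigma((d_{L_\ve})^\ast_f,\nu)\varphi|_{\del M}=0\}$ and completed in the graph norm. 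By construction $\square_\ve$ is self-adjoint and agrees with $\Delta_\ve$ on smooth elements of its domain, giving the first sentence of the theorem for all $\ve$ with no further hypothesis.

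\emph{The basic estimate (main obstacle).} Assuming $L=L_0$ is elliptic and $q$-convex, I would prove the inequality $Q(\varphi) + ||\varphi||^2 \geq c\,||\varphi||_1^2$ for $\varphi \in \mcD^q_0$ by localizing near a boundary point and using the frame $w_1,\ldots,w_l$ from Rem.~\ref{12:51:01}. Integration by parts in $Q(\varphi)$ produces three classes of terms: an interior first-order expression bounded below by $||\varphi||_1^2$ via exactness of the symbol sequence (\ref{12:11:39}) (this is where (\ref{10:38:35}) enters); cross-terms at \emph{elliptic} boundary points that are absorbable since there one can arrange $b^l_{ij}|_{\del M}=0$; and a boundary integrand over \emph{non-elliptic} points whose integrand involves exactly the Levi-form components $b^l_{ij}$ of Rem.~\ref{12:51:01}, contracted with components of $\varphi$. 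Combinatorially, on $\Lambda^q$-forms this last integrand is a weighted sum of $q$-fold sums of eigenvalues of $\mcL_x$, and the pointwise hypothesis of Def.~\ref{08:56:00} (at least $\text{rank}_\C(L)-q$ positive or $q+1$ negative eigenvalues) is precisely the condition that makes this sum pointwise non-negative, up to a lower-order remainder absorbed by compactness. Verifying that the eigenvalue count in Def.~\ref{08:56:00} is the correct one in degree $q$ is the main obstacle.

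\emph{Family estimates and parts 2)-4).} Since $d_{L_\ve}$ depends smoothly on $\ve$ and both ellipticity and $q$-convexity are open conditions, the basic estimate extends uniformly to a neighbourhood $B$ of $0\in C^\infty(H)$, with all lower-order terms carrying polynomial dependence on $|\ve|_k$ in the Leibniz form of Section~\ref{16:33:30}. Iterating the basic estimate through tangential difference quotients and commutators of $\square_\ve$ with smoothing operators upgrades it to the Sobolev estimate (\ref{21:18:00}). The abstract theorem of Section 2 then supplies assertions 2) and 3): closed range, $\mcH_\ve$ finite-dimensional and smooth, the Hodge decomposition (\ref{21:21:28}), and the boundedness and mapping properties of $N_\ve$. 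Assertion 4) follows by applying (\ref{21:18:00}) to $\varphi := N_\ve\psi$ and using $\square_\ve N_\ve\psi = (1-\pi_\ve)\psi$ together with the openness argument to rule out $\mcH_\ve$ on a possibly smaller $B$.

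\emph{Part 5) and the $C^1$-statement.} When $d_{L_\ve}^2=0$, the orthogonality $\text{Im}(d_{L_\ve})\perp\text{Im}(d^\ast_{L_\ve})$ gives $\text{Ker}(\square_\ve)=\text{Ker}(d_{L_\ve})\cap\text{Ker}(d^\ast_{L_\ve})$, which combined with (\ref{21:21:28}) produces (\ref{21:21:49}); the identification with $H^q(L_\ve)$ is then routine. For the $C^1$-dependence of $\psi_\ve=d^\ast_{L_\ve}N_\ve\varphi_\ve$, the genuine subtlety is that $\text{Dom}(\square_\ve)$ varies with $\ve$ through the Neumann condition, so $N_\ve$ cannot be compared to $N_0$ by a naive resolvent identity. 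I would first verify $d_{L_\ve}\psi_\ve=\varphi_\ve$ using $d_{L_\ve}\varphi_\ve=0$, $\mcH_\ve=0$, and (\ref{21:21:49}) in degrees $q$ and $q+1$, then establish $C^1$-dependence by differentiating $\square_\ve N_\ve=1-\pi_\ve$ formally, solving the resulting equation for $\del_\ve N_\ve$ using $N_\ve$ itself, and controlling convergence of the corresponding difference quotients in a Sobolev topology via the Leibniz bound (\ref{12:14:33}). This is the secondary obstacle, where care is needed to check that the $\ve$-derivative of the boundary condition does not produce boundary contributions outside the range of $N_\ve$.
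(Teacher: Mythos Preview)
Your overall architecture matches the paper's: reduce Thm.~\ref{08:54:49} to the abstract elliptic-regularity criterion of Sect.~\ref{12:54:47} (Thm.~\ref{13:30:27}) by verifying its hypotheses for $P_\ve=d_{L_\ve}$, then read off parts 1)--5) from Sect.~\ref{21:39:11} and Sect.~\ref{10:54:51}. However, your proposed basic estimate contains a genuine gap.

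\textbf{The $H^1$ estimate is false at non-elliptic points.} You claim $Q(\varphi)+||\varphi||^2\geq c\,||\varphi||_1^2$ on $\mcD^q_0$. This fails already for the $\delbar$-Neumann problem on the unit ball in $\C^n$: the associated boundary value problem is \emph{not} elliptic (every point is non-elliptic in the sense of Lem.~\ref{12:31:25}), and one only gets a subelliptic $1/2$-estimate. What the paper actually proves is the weaker inequality
\[
||\text{D}\varphi||^2_{\del,-1/2}\;\leq\; C\,Q_\ve(\varphi,\varphi),
\]
which is precisely condition 3) of Thm.~\ref{13:30:27}. This is obtained in two steps via an auxiliary norm $E_\ve(\varphi)^2=||\nabla^\ve\varphi||^2+||\varphi||^2+\int_{\del M}|\varphi|^2$ measuring only derivatives in the directions $\rho_\ve(L)\subset TM_\C$: first $E_\ve\leq CQ_\ve$ (Prop.~\ref{14:03:13}, using $q$-convexity), then $||\text{D}\varphi||_{\del,-1/2}\leq CE_\ve(\varphi)$ (Prop.~\ref{09:06:25}, using only ellipticity). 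Your claim that the ``interior first-order expression'' controls $||\varphi||_1^2$ via exactness of (\ref{12:11:39}) conflates the interior G\aa rding inequality (Thm.~\ref{13:13:59}, valid only for $\varphi$ compactly supported in $\mathring{M}$) with the boundary case, where the missing $\overline{\rho(L)}$-directions are not controlled by $Q_\ve$. This is why (\ref{21:18:00}) gains only one derivative, not two; see the remark citing Folland after Thm.~\ref{08:54:49}.

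\textbf{The eigenvalue argument is more subtle than you describe.} The boundary integrand $\sum_{i\in I}\lambda_i\int_{\del M}\gamma_\ve|\varphi^\mu_I|^2$ appearing in Lem.~\ref{13:08:22}ii) is \emph{not} pointwise nonnegative under Def.~\ref{08:56:00}. Rather, one trades a fraction $(1-\zeta)$ of the positive terms $||v_{i,\ve}\cdot\varphi^\mu_I||^2$ with $\lambda_i<0$ for the boundary inequality (\ref{14:16:51}), and the resulting condition on $\zeta$ is the numerical inequality (\ref{17:52:22}). It is \emph{this} inequality which, for each index set $I$ of size $q$, is solvable for small $\zeta>0$ exactly under the eigenvalue hypothesis of Def.~\ref{08:56:00}. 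At elliptic boundary points the Levi contribution disappears entirely (Lem.~\ref{13:08:22}i)) and one needs (\ref{14:41:39}) to recover the boundary integral in $E_\ve$.

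\textbf{Part 5).} Your sketch of the $C^1$-statement via formal differentiation of $\square_\ve N_\ve=1$ is not the paper's route. Because $\text{Dom}(\square_\ve)$ genuinely moves with $\ve$, the paper instead builds explicit first-order operators $A_{t,t_0}$ mapping $\text{Dom}(\square_{t_0})\cap C^\infty$ into $\text{Dom}(\square_t)\cap C^\infty$ (using $\sigma(P_t^2,dr)=0$), and then the identity (\ref{18:01:30}) expresses $N_t\varphi_t$ so that the $N_t$ on the right multiplies a quantity vanishing at $t=t_0$; differentiability then follows from continuity of $N_t$ (Cor.~\ref{14:25:37}). Your resolvent-identity approach would have to confront the boundary-condition variation directly, which the paper deliberately avoids.
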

\begin{rem}
The exact value of $a$ in (\ref{21:18:00}) is not important to us, what matters is that it is independent of $k$ ($a$ will, among others, depend on the order of $d_{L_\ve}$ in $\ve$). Hence, we will often be implicit about the precise value of $a$ and even alter its value from one calculation to the next as long as it remains clear that it can, in the end, be chosen independent from $k$. 
\end{rem}
\begin{rem}
Since $N_\ve$ preserves the subspace $C^\infty(\Lambda^qL^\ast\otimes V)$, the decomposition stated in 3) implies the familiar decomposition $\varphi=\Delta_\ve N_\ve\varphi +\pi_\ve\varphi$ for all $\varphi\in C^\infty(\Lambda^qL^\ast\otimes V)$. 
\end{rem}
\begin{rem}
Since $\square_\ve$ coincides with the second order differential-operator $\Delta_\ve$ on $\text{Dom}(\square_\ve)\cap C^\infty(\Lambda^q L^\ast\otimes V)$, one would expect $||\varphi||_{k+2}$ on the left hand side in (\ref{21:18:00}). This is indeed the case if $\del M=\emptyset$ or if all points on the boundary are elliptic, but Folland \cite{MR0309156} showed (in the case of the $\delbar$-operator on the unit disc) that (\ref{21:18:00}) can not be improved in general.   
\end{rem}
\begin{rem}
For an explicit example of a complex manifold where the image of the $\delbar$-operator is not closed in the $L^2$-topology, see Remark 3 on page 75 in \cite{MR0461588}. 
\end{rem}

\subsection{Applications}

Our main application of Thm.\ref{08:54:49} concerns neighbourhood theorems in generalized complex geometry. It is in that setting that we really need the Leibniz-type estimate (\ref{12:14:33}), the latter being the main source of technicalities in the proof of Thm.\ref{08:54:49}. These Leibniz-type estimates are the input for a Nash-Moser style algorithm to linearize the geometric structure around a given submanifold, however since the details are quite involved they are discussed separately in \cite{NT}. In this section we discuss two direct applications of Thm.\ref{08:54:49}: a proof of a holomorphic version of the tubular neighbourhood theorem based on the Moser trick, and a finite-dimensionality result for holomorphic Poisson cohomology. 

\subsubsection*{Holomorphic tubular neighbourhoods}

Let $(X,I)$ be a complex manifold and $Y\subset X$ a compact, complex submanifold. Denote by $N_Y$ the normal bundle of $Y$ in $X$, which is a holomorphic vector bundle and so its total space is a complex manifold denoted by $(NY,I_{lin})$. 
We say that $Y$ admits a \textsl{holomorphic tubular neighbourhood} in $(X,I)$ if there exists a biholomorphic map $\varphi$ from a neighbourhood $U$ of $Y$ in $(NY,I_{lin})$ to a neighbourhood $\varphi(U)$ of $Y$ in $(X,I)$, satisfying $\varphi|_Y=\text{Id}$ and $d\varphi|_Y=\text{Id}:N_Y\rightarrow N_Y$. It is unknown to the author whether the following result occurs in the literature exactly as stated, although several variations of it have appeared in (among others) \cite{MR0137127} and \cite{MR0206980}. The main point here is the method of proof, based on the Moser trick, which the author believes has not appeared anywhere before in this context.   
\begin{prop}\label{09:19:15}
Let $Y\subset X$ be a compact complex submanifold with the property that there exists a $1$-convex neighbourhood $U$ of $Y$ in $(NY,I_{lin})$, satisfying $H^1(U,T^{1,0}U)=0$. Then $Y$ admits a holomorphic tubular neighbourhood in $(X,I)$.
\end{prop}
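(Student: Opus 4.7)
The plan is to combine the smooth tubular neighbourhood theorem with a Moser trick whose infinitesimal step is solved by the Hodge theory of Thm.\ref{08:54:49}. First, fix a Hermitian connection on $N_Y$ and use its exponential map to realize a neighbourhood of $Y$ in $X$ as a neighbourhood $U$ of $Y$ in $NY$; this transports $I$ to an integrable complex structure $I'$ on $U$. With this choice (possibly after one further adjustment) $I'$ agrees with $I_{lin}$ along $Y$ to first order. I can therefore encode $I'$ in Kodaira--Spencer form via $\ve\in\Omega^{0,1}(T^{1,0}U)$, with $T^{0,1}_{I'}=\{\bar{X}+\ve(\bar{X}):\bar{X}\in T^{0,1}_{I_{lin}}\}$, where $\ve$ vanishes to first order along $Y$ and satisfies the Maurer--Cartan equation $\delbar\ve+\tfrac{1}{2}[\ve,\ve]=0$.

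Next, I build a smooth family of integrable complex structures connecting $I_{lin}$ to $I'$ by the Grauert dilation trick: let $\sigma_t\colon NY\to NY$ be fibrewise scalar multiplication by $t$, and set $I_t:=\sigma_t^*I'$. The corresponding family $\ve_t\in\Omega^{0,1}(T^{1,0}U)$ extends smoothly across $t=0$ with $\ve_0=0$ (thanks to the first-order vanishing of $\ve$ along $Y$) on some fixed neighbourhood of $Y$, and each $\ve_t$ satisfies Maurer--Cartan. Differentiating the latter in $t$ gives $\delbar_t\dot{\ve}_t=0$, where $\delbar_t=\delbar+[\ve_t,\cdot\,]$ is the deformed differential on $T^{1,0}U$-valued $(0,\bullet)$-forms associated to the Lie algebroid $L_t=T^{0,1}_{I_t}U$ with coefficients in $T^{1,0}_{I_t}U$.

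Now I apply the Moser trick, seeking a $C^1$-family of real vector fields $V_t$ on $U$, vanishing to appropriate order on $Y$, whose flow $\varphi_t$ satisfies $\varphi_t^*I_t=I_{lin}$. Differentiating in $t$ and decomposing reduces the problem to the linear equation $\delbar_t\xi_t=\dot{\ve}_t$ for the $(1,0)$-part $\xi_t$ of $V_t$ with respect to $I_t$. The Lie algebroid $L_t$ is elliptic, and since $1$-convexity is an open condition and holds at $t=0$ by hypothesis, $L_t$ remains $1$-convex on $U$ for small $t$. The harmonic space $\mcH_0$ identifies via Dolbeault's theorem with $H^1(U,T^{1,0}U)$, which vanishes by hypothesis. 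Parts $4)$ and $5)$ of Thm.\ref{08:54:49} then guarantee that $\mcH_t=0$ for all $t\in[0,t_0]$ for some $t_0>0$ and that the $C^1$-family $\xi_t:=\delbar_t^*N_t\dot{\ve}_t$ solves the equation. Taking $V_t:=\xi_t+\overline{\xi_t}$ and integrating the flow on a smaller neighbourhood, the composition $\sigma_{t_0}\circ\varphi_{t_0}$ yields the desired biholomorphism between a neighbourhood of $Y$ in $(NY,I_{lin})$ and a neighbourhood of $Y$ in $(X,I)$.

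The main obstacle will be controlling $V_t$ near $Y$ so that both $\varphi|_Y=\mathrm{id}$ and $d\varphi|_Y=\mathrm{id}$ hold. Since $\dot{\ve}_t$ vanishes on $Y$ to appropriate order, one expects $\xi_t=\delbar_t^*N_t\dot{\ve}_t$ to vanish on $Y$; the second-order requirement $dV_t|_Y=0$ is more delicate and may require either strengthening the initial normalization so that $I'-I_{lin}=O(|v|^2)$ (by choosing the Hermitian connection compatibly with the holomorphic structure on $N_Y$), or a careful trace analysis of the Neumann solution near $Y$, where elliptic regularity applies since $Y\subset\mathring{M}$.
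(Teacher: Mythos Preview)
Your overall strategy---Grauert dilation $I_t=\sigma_t^\ast I$ followed by a Moser argument powered by Thm.~\ref{08:54:49}---matches the paper's exactly, and the linear step $\delbar_t\xi_t=\dot\ve_t$ solved by $\xi_t=\delbar_t^\ast N_t\dot\ve_t$ is precisely what the paper does (there phrased as $(Z_t)^{1,0}_{I_t}=\tfrac{i}{2}\delbar_t^\ast N_t\dot I_t$). So the heart of the proof is correct.

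However, the ``main obstacle'' you flag is where your proposal diverges from the paper, and your suggested fixes (arranging $\ve=O(|v|^2)$ along $Y$, or analyzing traces of $N_t$ near $Y$) are unnecessary and hard to carry out. The paper does \emph{not} try to make $V_t$ vanish on $Y$: it lets the flow $\varphi_{t,0}$ go wherever it wants and then repairs the map \emph{a posteriori} using the holomorphic automorphism group of $(NY,I_{lin})$. Concretely, the resulting $C^1$-map $\psi_t:=\varphi_{t,0}^{-1}\circ m_{1/t}$ from $(NY,I)$ to $(NY,I_{lin})$ need not fix $Y$; but for small $t$ the composition $p\circ\psi_t|_Y:Y\to Y$ is a diffeomorphism, so $\psi_t(Y)$ is the graph of a holomorphic section, and translating by that section makes the map fix $Y$. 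A further holomorphic bundle automorphism then fixes the normal derivative. This is much cleaner than trying to force pointwise vanishing of the Hodge-theoretic solution.

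There is also a genuine omission: you end with a $C^1$ map with complex-linear derivative, but the proposition asks for a biholomorphism, hence a $C^\infty$ map. The paper closes this gap by a bootstrap. First apply the whole argument with $Y$ a single point: this gives a $C^1$ embedding $\varphi:U\to\C^n$ whose components satisfy $\delbar\varphi_j=0$; interior elliptic regularity for $\delbar$ on functions makes them $C^\infty$, so every complex manifold has holomorphic charts (Newlander--Nirenberg). With charts in hand, the same elliptic argument shows that any $C^1$ map between complex manifolds with complex-linear derivative is $C^\infty$, finishing the proof. You should include this step.
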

\begin{proof}
Using a smooth tubular neighbourhood we may assume that $X=NY$ as smooth manifolds and that $I$ is a complex structure on $NY$ for which $Y\subset NY$ is a complex submanifold. 
Let $m_t:NY\rightarrow NY$ be multiplication by $t$ on the fibers. For small $t$ this induces a map from $U$ to itself and we consider the family of complex structures $m_t^\ast I:=I_t$ on $U$. As $t$ goes to zero, $I_t$ converges smoothly to the complex structure $I_0:=I_{lin}$ on $NY$. For each $t$ the time-derivative $\dot{I}_t$ defines a $\delbar_{I_t}$-closed element of $C^\infty(T^{\ast 0,1}_{I_t}U\otimes T^{1,0}_{I_t}U)$. Using the projections $T^{1,0}_{I_t}U\rightarrow T^{1,0}_{I_0}U=T^{1,0}U$, which are isomorphisms for small $t$, we can transfer everything to the fixed bundle $T^{1,0}U$. 
Since $H^1(U,T^{1,0}U)=0$, 
the time-dependent real vector field $Z_t$ on $U$, defined by
$$(Z_t)^{1,0}_{I_t}=\tfrac{i}2\delbar_t^\ast N_t\dot{I}_t,$$
depends in a $C^1$-manner on $t$ and satisfies $\mcL_{Z_t}I_t=-\dot{I}_t$ by part 5) of Thm.\ref{08:54:49}. The flow $\varphi_{t,0}$ of $Z_t$ is then $C^1$ and satisfies $\varphi_{t,0}^\ast I_t=I_0=I_{lin}$. Hence, the map $\psi_t:=\varphi_{t,0}^{-1}\circ m_{1/t}$ defines a $C^1$-embedding from a neighbourhood of $Y$ in $(NY,I)$ to a neighbourhood of $Y$ in $(NY,I_{lin})$ with complex-linear derivative.
Note that $\psi_t$ does not necessarily fix $Y$, but by choosing $t$ small enough we can arrange for $p\circ \psi_t :Y\rightarrow Y$ to be an isomorphism, where $p:NY\rightarrow Y$ is the projection. Then $\psi_t(Y)\subset NY$ is the image of a holomorphic section, which we may use to translate $\psi_t$ so that it fixes $Y$. Composing by a holomorphic automorphism of $(NY,I_{lin})$ if necessary we can also arrange for $d\psi_t|_Y=Id$. The inverse of $\psi_t$ defines the desired map $\varphi$. To see that $\varphi$ is smooth we first apply the above when $Y$ is a point, yielding a $C^1$-embedding $\varphi:U\rightarrow \C^n$ whose components satisfy $\delbar \varphi_j=0$. Standard elliptic theory then implies that the $\varphi_j$ are smooth when restricted to a smaller neighbourhood of the point $Y$. Knowing now that every point has a holomorphic coordinate chart around it, the same argument shows that any $C^1$-map between complex manifolds with complex-linear derivative is smooth.   
\end{proof}
\begin{rem}
Prop.\ref{09:19:15} has a version for generalized complex structures as well, the only problem being that the last step (upgrading from $C^1$ to $C^\infty$) does not work in that generality.  
\end{rem}
\begin{ex}
When $Y$ is a point, Prop.\ref{09:19:15} reduces to the Newlander-Nirenberg theorem \cite{MR0088770}, stating that every complex manifold (without boundary) admits a holomorphic atlas. 
\end{ex}
\begin{ex}
Let $X$ be a complex surface and suppose that $Y=\C\mathbb{P}^1$ with normal bundle $N_Y=\mathcal{O}(-1)$. Lem.\ref{11:16:39} implies that $Y$ has a $1$-convex neighbourhood $U$ in $N_Y$, and an explicit calculation on \v{C}ech-cohomology shows that\footnote{See Ex.\ref{10:38:53} for why \v{C}ech-cohomology agrees with $\delbar$-cohomology in this context. We abbreviate $TU=T^{1,0}U$.} $H^1(U,TU)=0$. 
\end{ex}
\begin{rem}
If $Y$ has codimension $1$ (which can be arranged by blowing up $Y$ in $X$), there are concrete ways of verifying whether $Y$ admits a $1$-convex neighbourhood $U$ in $NY$ (c.f.\ Lem.\ref{11:16:39}). In this case the condition $H^1(U,TU)=0$ can be studied via the exact sequences
$$ H^1(U,\mathfrak{m}^{i+1}_YTU)\rightarrow H^1(U,\mathfrak{m}^{i}_YTU)\rightarrow H^1(Y,(N_Y^\ast )^i\otimes TU|_Y),$$ 
where $\mathfrak{m}_Y$ is the ideal sheaf of holomorphic functions vanishing on $Y$. Together with a vanishing theorem for the groups $H^1(U,\mathfrak{m}^{i}_YTU)$ for large $i$ (see \cite{MR0206980}), it follows that $H^1(U,TU)=0$ whenever $H^1(Y,(N_Y^\ast )^i\otimes TU|_Y)=0$ for all $i\geq 0$. This will be the case for instance if $H^1(Y,(N_Y^\ast )^i\otimes TY)=H^1(Y,(N_Y^\ast )^{i-1})=0$ for all $i\geq 0$, because of the exact sequence
$$0\rightarrow TY\rightarrow TU|_Y \rightarrow N_Y\rightarrow 0.$$  
The condition $H^1(U,TU)=0$ is stronger than actually necessary, because the family $I_t$ in the proof of Prop.\ref{09:19:15} fixes both the complex structure on $Y$ and the holomorphic structure on $N_Y$. This allows one to relax the above conditions to the conditions $H^1(Y,(N_Y^\ast )^i\otimes TY)=H^1(Y,(N_Y^\ast )^{i})=0 $ for all $i\geq 1$. We refer to \cite{MR0137127} and \cite{MR0206980} for the precise statements and proofs.
\end{rem}

\subsubsection*{Holomorphic Poisson cohomology}

A holomorphic Poisson structure $\sigma$ induces, using the Schouten-Nijenhuis bracket, a differential $[\sigma,\cdot]$ on the space of holomorphic multivector fields $H^0(\Lambda^\bullet T^{1,0}M)$, whose cohomology we denote by $H^k(\sigma)$. 
\begin{prop}
Let $(M,I,\sigma)$ be a compact holomorphic Poisson manifold. If the Lie algebroid $L=T^{0,1}M\oplus \text{graph}(\sigma)$ is $q$-convex and if $H^r(M,\Lambda^{q-r-1} T^{1,0}M)=0$ for all $r>0$, then $H^q(\sigma)$ is finite dimensional.  
\end{prop}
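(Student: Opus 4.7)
The plan is to relate $H^q(\sigma)$ to the Lie algebroid cohomology $H^q(L)$ via a spectral sequence: by Thm.\ref{08:54:49}, $H^q(L)$ is finite-dimensional whenever $L$ is elliptic and $q$-convex, and $L$ here is elliptic because it is an integrable almost generalized complex structure. First, fix the vector bundle identification $L\cong T^{0,1}M\oplus T^{\ast 1,0}M$ (sending $\xi\mapsto \xi+\sigma(\xi)$ on the second factor), which yields $L^\ast\cong T^{\ast 0,1}M\oplus T^{1,0}M$ and the bigrading
\[
\Lambda^n L^\ast \cong \bigoplus_{p+r=n}\Lambda^p T^{\ast 0,1}M\otimes \Lambda^r T^{1,0}M.
\]
A direct computation using $\delbar\sigma=0$ shows that the Chevalley-Eilenberg differential decomposes as $d_L=\delbar+[\sigma,\cdot]$, with $\delbar$ of bidegree $(1,0)$ and the Schouten bracket $[\sigma,\cdot]$ of bidegree $(0,1)$. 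The identity $d_L^2=0$ then records $\delbar^2=0$, $\delbar[\sigma,\cdot]+[\sigma,\delbar\cdot]=0$, and $[\sigma,[\sigma,\cdot]]=0$, so $(C^\infty(\Lambda^\bullet L^\ast),d_L)$ is the total complex of a double complex.

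Next, I would consider the spectral sequence associated to the filtration by multivector degree $r$. On compact $M$ its $E_1$-page consists of Dolbeault cohomology with holomorphic coefficients,
\[
E_1^{r,p}=H^p(M,\Lambda^r T^{1,0}M),
\]
with $d_1=[\sigma,\cdot]$; in particular $E_2^{q,0}$ is the $q$-th cohomology of $[\sigma,\cdot]$ on $H^0(M,\Lambda^\bullet T^{1,0}M)$, which equals $H^q(\sigma)$.

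The key step is to show $E_2^{q,0}=E_\infty^{q,0}$. The outgoing differentials $d_k:E_k^{q,0}\to E_k^{q+k,1-k}$ vanish for all $k\geq 2$, since the target has negative $p$-index. Each incoming differential $d_k:E_k^{q-k,k-1}\to E_k^{q,0}$ has source a subquotient of $E_1^{q-k,k-1}=H^{k-1}(M,\Lambda^{q-k}T^{1,0}M)$; writing $r=k-1\geq 1$, this is $H^r(M,\Lambda^{q-r-1}T^{1,0}M)$, which vanishes by hypothesis. Since the filtration on $H^q(L)$ is bounded ($0\leq r\leq q$), it follows that $H^q(\sigma)=E_\infty^{q,0}=F^q H^q(L)$ sits as a subspace of $H^q(L)$. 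By Thm.\ref{08:54:49}, $H^q(L)$ is finite-dimensional, and hence so is $H^q(\sigma)$.

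The main technical point is verifying the decomposition $d_L=\delbar+[\sigma,\cdot]$ across all of $\Lambda^\bullet L^\ast$, which is a careful but essentially local Courant-bracket calculation hinging on $\delbar\sigma=0$ and the identification of the complementary Dirac structure $\widetilde{L}=T^{1,0}M\oplus T^{\ast 0,1}M$ with $L^\ast$; everything after that is a standard bounded-filtration spectral-sequence convergence argument.
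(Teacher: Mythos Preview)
Your proof is correct and follows the same approach as the paper's own argument: decompose $d_L=\delbar+[\sigma,\cdot]$ into anti-commuting differentials, run the associated spectral sequence, and use the vanishing hypothesis to identify $H^q(\sigma)$ with a subspace of the finite-dimensional $H^q(L)$. The paper states this in two sentences; you have simply filled in the spectral-sequence bookkeeping (checking that both incoming and outgoing differentials at $E_k^{q,0}$ vanish for $k\geq 2$, and that $F^{q+1}H^q(L)=0$), which is exactly what the paper's ``the stated conditions imply'' is gesturing at.
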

\begin{proof}
The differential of $L$ is given by $d_L=\delbar+[\sigma,\cdot]$, a sum of two anti-commuting differentials. In the associated spectral sequence converging to $H^\bullet(L)$, the stated conditions imply that $H^q(\sigma)$ gets identified with a subspace of $H^q(L)$, which is finite dimensional.  
\end{proof}

\begin{ex}\label{10:38:53}
H\"ormander \cite{MR0179443} proved that if $(M,I)$ is $q$- and $q+1$-convex, then for any holomorphic vector bundle $E$ on $M$ we have $H^q(M,E)\cong H^q(\mathring{M},E|_{\mathring{M}})$. In particular, if $(M,I)$ has positive definite Levi-form and if $(\mathring{M},I)$ is Stein, then $H^r(M,\Lambda^{s} T^{1,0}M)=0$ for all $r>0$ and $s\geq 0$. For example, the unit ball in $\C^n$ has this property, showing that in Ex.\ref{09:10:02} the groups $H^q(\sigma)$ are finite-dimensional for $q=0$ and $3\leq q\leq 2k+2$.   
\end{ex}

\section{Elliptic regularity}\label{10:23:50}

To separate the analysis from the geometry we use this section to discuss Hodge decompositions for first order operators in general, returning to the setting of Lie algebroids in Sect.\ref{14:11:23}.

\subsection{First order differential operators}\label{17:07:25}

Let $M$ be a compact manifold with boundary $\del M$ and interior $\mathring{M}=M\backslash \del M$. For a complex vector bundle $E$ on $M$ and an open subset $U\subset M$ we write 
\begin{align*}
C^\infty(U, E), \hspace{10mm} C^\infty_{c}(U, E)
\end{align*}
for the smooth sections of $E$ on $U$ and those with support contained in $U$, respectively. We emphasize that if $U\cap \del M\neq \emptyset$, sections in $C^\infty_{c}(U, E)$ are not required to vanish on $\del M$. We will abbreviate $C^\infty(E):=C^\infty(M,E)$. Fix a Riemannian metric $g$ on $M$ and a fiberwise Hermitian metric on $E$, inducing an $L^2$-inner product $(\cdot, \cdot)$ on $C^\infty(E)$. For $k\in \mathbb{Z}_{\geq 0}$ we denote by $||\cdot ||_k$ the $k$-th Sobolev norm on $C^\infty(E)$, write $L^2_k(E)$ for the associated Hilbert space and abbreviate $L^2(E):=L^2_0(E)$. We will also use the $C^k$-norms $|\cdot|_k$, endowing $C^\infty(E)$ with a Fr\'echet-topology. Rellich's lemma and Soblev's theorem hold in this setting, giving compact inclusions $L^2_{k+1}(E)\hookrightarrow L^2_k(E)$, and estimates $|\cdot|_k\leq C||\cdot ||_{k+a}$ for $a>\frac{1}{2}\text{dim}_\R(M)$, respectively.

\begin{rem}
Throughout we will use the letter $C$ to denote a positive constant which is allowed to vary from one estimate to the next. Such constants often depend on background data such as the metrics on $M$ and $E$, as well as on the degrees of the Sobolev- or $C^k$-norms involved. When there is no risk of confusion we will omit these dependences. 
\end{rem}
  Let $P:C^\infty(E)\rightarrow C^\infty(E)$ be a first-order partial differential operator. We will view $P$ as an \textsl{unbounded operator} on $L^2(E)$, i.e.\ as a linear map which is only defined on the dense subspace $C^\infty(E)\subset L^2(E)$. 
 \begin{ex}
The relevant example to us is $E=\bigoplus_q \Lambda^qL^\ast\otimes V$ and $P=d^\nabla_L$, for $L$ a pre-Lie algebroid and $V$ a vector bundle endowed with an $L$-connection (see Sect.\ref{10:41:57}). 
 \end{ex} 
Using integration by parts one can construct a \textsl{formal adjoint} $P^\ast_f$ of $P$, which is another first-order differential operator on $E$ characterized by  
\begin{align*}
\hspace{25mm} (P\varphi_1,\varphi_2)=(\varphi_1,P^\ast_f \varphi_2), \hspace{10mm} \forall \varphi_1,\varphi_2\in C^\infty_c(\mathring{M},E). 
\end{align*}
Using the formal adjoint one can show that the closure of the graph of $P$ in $L^2(E)\times L^2(E)$ is again the graph of an unbounded operator, called the \textsl{closure} of $P$.
Abusing notation slightly, we continue to denote this closure by $P$ and denote by $\text{Dom}(P)\subset L^2(E)$ its domain of definition. The \textsl{Hilbert space adjoint} $P^\ast$ of $P$ is defined as follows. Its domain is   
\begin{align*}
\text{Dom}(P^\ast):=\{ \varphi\in L^2(E)|\ \psi\mapsto (P\psi, \varphi) \ \text{is continuous on}\ \text{Dom}(P)\}. 
\end{align*}
Since $\text{Dom}(P)\subset L^2(E)$ is dense we have, for every $\varphi\in \text{Dom}(P^\ast)$, a unique element $P^\ast \varphi\in L^2(E)$ characterized by $(P\psi, \varphi)=(\psi,P^\ast \varphi)$ for all $\psi\in \text{Dom}(P)$, which defines $P^\ast$ on its domain. The precise relations between $P,P_f^\ast$ and $P^\ast$ are summarized in the following two lemmas, whose proofs can be found in \cite{MR0461588} (Prop.(1.3.2) and the discussion leading to it).
\begin{lem}  \label{15:10:32}
Let $\nu$ be the outward unit normal to $\del M$ and set $\nu^\flat:=g(\nu)\in N^\ast_{\del M}$. Then
\begin{align}\label{23:07:17}
(P\varphi,\psi)=(\varphi,P^\ast_f \psi)+\int_{\del M} \langle \sigma(P,\nu^\flat)\varphi,\psi\rangle=(\varphi,P^\ast_f \psi)-\int_{\del M} \langle \varphi,\sigma(P^\ast_f,\nu^\flat)\psi\rangle 
\end{align} 
for all $\varphi,\psi \in C^\infty(E)$. Here $\sigma$ denotes the principal symbol. 
\end{lem}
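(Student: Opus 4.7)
The plan is to establish the identity by a standard integration-by-parts argument, using a partition of unity to reduce to a local statement near the boundary and then computing explicitly in coordinates adapted to $\partial M$.

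First I would fix a collar neighbourhood $\partial M\times[0,\epsilon)$ of the boundary, with boundary-defining function $r$ such that $dr|_{\partial M}=\nu^\flat$, and a partition of unity subordinate to a cover of $M$ by coordinate charts. On interior charts the formal-adjoint relation $(P\varphi,\psi)=(\varphi,P_f^\ast\psi)$ holds with no boundary contribution, so the entire boundary term comes from the charts meeting $\partial M$. In such a chart, write $P=\sum_j A^j\partial_j+B$, where the $A^j$ are bundle-endomorphism-valued coefficients; then by definition of the principal symbol $\sigma(P,\xi)=\sum_j A^j\xi_j$ for $\xi=\sum_j\xi_j\,dx^j$. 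The formal adjoint in the same chart is $P_f^\ast=-\sum_j(A^j)^\ast\partial_j+(\text{lower order})$ with respect to the Hermitian structure and the Riemannian volume form, these zeroth-order corrections being precisely those arising from differentiating the coefficients and the volume density.

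The key step is then a direct computation: pairing $A^j\partial_j\varphi$ with $\psi$ and integrating against the Riemannian volume form, one integrates by parts in the $j$-th coordinate and picks up a boundary contribution supported on $\partial M$. Summing over $j$ and over the partition of unity, the boundary contributions coalesce into
\begin{equation*}
\int_{\partial M}\sum_j\nu_j\langle A^j\varphi,\psi\rangle=\int_{\partial M}\langle\sigma(P,\nu^\flat)\varphi,\psi\rangle,
\end{equation*}
where $\nu_j$ are the components of the outward unit normal in the chosen chart. This yields the first equality in \eqref{23:07:17}.

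For the second equality I would use the general fact that the principal symbol of the formal adjoint equals the fiberwise adjoint of the principal symbol, i.e.\ $\sigma(P_f^\ast,\xi)=\sigma(P,\xi)^\ast$ for every real $\xi\in T^\ast M$. This is immediate from the local formula $P_f^\ast=-\sum_j(A^j)^\ast\partial_j+\ldots$ (the sign is absorbed when one applies the identity with $\xi$ replaced by $-\xi$, or equivalently by noting that first-order real-linear symbols are formally self-dual up to sign in the convention used here). Inserting this into the boundary integral gives $\langle\sigma(P,\nu^\flat)\varphi,\psi\rangle=\langle\varphi,\sigma(P_f^\ast,\nu^\flat)\psi\rangle$ pointwise on $\partial M$, which produces the second equality. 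The only mildly delicate point is bookkeeping the conventions (principal symbol with or without a factor of $i$, and the sign arising from moving the derivative off $\varphi$), but no genuine obstacle appears since everything is a local first-order computation.
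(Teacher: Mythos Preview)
Your approach is correct and is exactly the standard one (the paper simply cites Folland--Kohn here, and the argument there is precisely the partition-of-unity plus local integration-by-parts computation you outline). There is, however, a sign slip in your derivation of the second equality. With the symbol convention $\sigma(P,\xi)=\sum_j A^j\xi_j$ (no factor of $i$), which is the one the paper uses, the local formula $P_f^\ast=-\sum_j(A^j)^\ast\partial_j+\ldots$ gives
\[
\sigma(P_f^\ast,\xi)=-\sum_j (A^j)^\ast\xi_j=-\sigma(P,\xi)^\ast,
\]
not $\sigma(P_f^\ast,\xi)=\sigma(P,\xi)^\ast$. The parenthetical remark about ``absorbing the sign by replacing $\xi$ with $-\xi$'' does not fix this: $\nu^\flat$ is fixed by the geometry, and you are not free to change its sign. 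Using the correct relation, the pointwise identity on $\partial M$ is
\[
\langle\sigma(P,\nu^\flat)\varphi,\psi\rangle=\langle\varphi,\sigma(P,\nu^\flat)^\ast\psi\rangle=-\langle\varphi,\sigma(P_f^\ast,\nu^\flat)\psi\rangle,
\]
which is what produces the minus sign in the statement. So your claimed pointwise identity $\langle\sigma(P,\nu^\flat)\varphi,\psi\rangle=\langle\varphi,\sigma(P_f^\ast,\nu^\flat)\psi\rangle$ would yield the wrong sign in \eqref{23:07:17}. Once you correct this, everything goes through.
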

\begin{rem}\label{11:43:09}
If $r\in C^\infty(M)$ satisfies $r|_{\mathring{M}}<0$, $r|_{\del M}=0$ and $|dr|=1$ on $\del M$, then $\nu^\flat=dr|_{\del M}$. One way to construct such a function $r$ is by considering geodesic distance to $\del M$ on a neighbourhood of $\del M$, and then extend it smoothly to the rest of $M$. We will often work with such a function $r$, and around a given boundary point complete it to a coordinate system of the form $(t_1,\ldots,t_{m-1},r)$.
\end{rem}
\begin{lem}\label{15:19:51}  
The space of smooth sections in the domain of $P^\ast$ is given by
\begin{align}\label{16:09:01}
\mcD:=\text{Dom}(P^\ast)\cap C^\infty(E)=\{\varphi\in C^\infty(E)| \ \sigma(P^\ast_f,\nu^\flat)\varphi=0 \},
\end{align}
on which $P^\ast$ is equal to $P^\ast_f$. 
\end{lem}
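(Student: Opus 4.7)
The plan is to prove both inclusions separately, each as a direct application of the integration-by-parts formula (\ref{23:07:17}) from Lem.\ref{15:10:32}.

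For the easy inclusion, I would fix a smooth $\varphi$ with $\sigma(P^\ast_f,\nu^\flat)\varphi=0$ on $\del M$ and verify that $\varphi\in \text{Dom}(P^\ast)$ with $P^\ast\varphi=P^\ast_f\varphi$. Indeed, the boundary integral in (\ref{23:07:17}) vanishes for such a $\varphi$, leaving
\begin{equation*}
(P\psi,\varphi)=(\psi,P^\ast_f\varphi)\qquad \forall\,\psi\in C^\infty(E).
\end{equation*}
Since $C^\infty(E)$ is the starting domain used to form the closure, every element of $\text{Dom}(P)$ is a graph-norm limit of smooth sections $\psi_n$, so passing to the limit shows the same identity persists for all $\psi\in\text{Dom}(P)$. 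The right-hand side is bounded by $\|P^\ast_f\varphi\|\cdot\|\psi\|$, hence continuous in $\psi\in L^2(E)$, which places $\varphi\in\text{Dom}(P^\ast)$ and identifies $P^\ast\varphi=P^\ast_f\varphi$.

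For the other inclusion, I would take $\varphi\in\text{Dom}(P^\ast)\cap C^\infty(E)$ and combine the defining identity $(P\psi,\varphi)=(\psi,P^\ast\varphi)$ with (\ref{23:07:17}) to obtain
\begin{equation*}
(\psi,\,P^\ast\varphi-P^\ast_f\varphi)=-\int_{\del M}\langle\psi,\sigma(P^\ast_f,\nu^\flat)\varphi\rangle
\end{equation*}
for every $\psi\in C^\infty(E)$. Testing first against $\psi\in C^\infty_c(\mathring M,E)$ kills the boundary term and forces $P^\ast\varphi=P^\ast_f\varphi$ in the interior, hence on all of $M$ by smoothness. The boundary integral must then vanish for every $\psi\in C^\infty(E)$, and since the trace map $C^\infty(E)\to C^\infty(E|_{\del M})$ is surjective I can prescribe $\psi|_{\del M}$ freely; a standard partition-of-unity plus pointwise argument then gives $\sigma(P^\ast_f,\nu^\flat)\varphi=0$ pointwise on $\del M$.

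There is no genuine obstacle to this argument; the only point requiring any care is the first step, where extending the identity $(P\psi,\varphi)=(\psi,P^\ast_f\varphi)$ from $C^\infty(E)$ to all of $\text{Dom}(P)$ uses the fact that smooth sections form a core for the closed operator $P$. That is built into the definition of the closure adopted in the text, so the entire proof reduces to (\ref{23:07:17}) together with the elementary surjectivity of the trace map.
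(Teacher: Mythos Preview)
Your argument is correct and is precisely the standard proof: both inclusions follow directly from the integration-by-parts identity (\ref{23:07:17}) of Lem.~\ref{15:10:32}, using compactly supported test sections to identify $P^\ast\varphi=P^\ast_f\varphi$ and then surjectivity of the boundary trace to force the Neumann condition. The paper itself does not write out a proof but refers to \cite{MR0461588}, Prop.~(1.3.2), where exactly this argument is carried out; your proposal matches it.
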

\begin{rem}
Lem.\ref{15:10:32} gives the boundary term due to integrating by parts, and Lem.\ref{15:19:51} identifies the smooth sections in $\text{Dom}(P^\ast)$ as those for which this boundary term vanishes. The condition on $\varphi$ in (\ref{16:09:01}) is also called a \textsl{(zeroth-order) Neumann boundary condition}.   
\end{rem}
The \textsl{Laplacian} of $P$ is the second order differential operator on $E$ defined by 
$$\Delta:=PP^\ast_f+P^\ast_fP,$$
which we consider as an unbounded operator on $L^2(E)$ with domain $C^\infty_c(\mathring{M},E)$. There is a procedure by Friedrichs \cite{MR1512905} that constructs a self-adjoint extension of $\Delta$, which goes as follows. On $\text{Dom}(P)\cap \text{Dom}(P^\ast)$ we consider the bilinear form 
\begin{align*}
Q(\varphi,\psi):=(\varphi,\psi)+(P\varphi,P\psi)+(P^\ast \varphi,P^\ast \psi).
\end{align*}
Since $P$ and $P^\ast$ are closed, $\text{Dom}(P)\cap \text{Dom}(P^\ast)$ is complete with respect to the associated norm $Q(\varphi):=Q(\varphi,\varphi)^{1/2}$. We denote by $\overline{\mcD}$ 
the $Q$-closure of $\mcD$ in $\text{Dom}(P)\cap \text{Dom}(P^\ast)$. 

For $\varphi\in L^2(E)$ the linear map $(\varphi,\cdot):\overline{\mcD}\rightarrow \C$ is continuous, hence there is a unique element $T\varphi\in \overline{\mcD}$ satisfying $(\varphi,\psi)=Q(T\varphi,\psi)$ for all $\psi\in \overline{\mcD}$. The resulting linear map $T:L^2(E)\rightarrow L^2(E)$ is injective, self-adjoint and satisfies $||T||\leq 1$. We denote by $F$ the inverse of $T$, which is an unbounded, self-adjoint operator with $\text{Dom}(F):=\text{Im}(T)\subset \overline{\mcD}$. Note that both $F$ and $\text{Dom}(F)$ are completely characterized by 
\begin{align}\label{15:26:59}
Q(\varphi,\psi)&=(F\varphi,\psi)  &&\forall \varphi\in \text{Dom}(F),  \psi\in \overline{\mcD}.
\end{align}
\begin{lem}  \label{13:48:22}
For every $\varphi\in \text{Dom}(F)\cap C^\infty(E)$ we have $F\varphi=(\Delta+1)\varphi$. In addition we have
\begin{align*}
\{\varphi\in C^\infty(E)| \ \varphi\in \mcD\ \text{and} \ P\varphi\in \mcD\}\subset \text{Dom}(F)\cap C^\infty(E),
\end{align*}
which is an equality if we make the extra assumption that $\sigma(P^2,\nu^\flat)=0$.
\end{lem}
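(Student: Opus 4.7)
The plan is to exploit the characterization~(\ref{15:26:59}): an element $\varphi\in\overline{\mcD}$ lies in $\text{Dom}(F)$ with $F\varphi=\xi$ precisely when $Q(\varphi,\psi)=(\xi,\psi)$ for all $\psi\in\overline{\mcD}$. The engine will be a single Green-type identity: for smooth $\varphi,\psi\in\mcD$ one has $P^\ast\varphi=P^\ast_f\varphi$ and $P^\ast\psi=P^\ast_f\psi$ by Lem.\ref{15:19:51}, and two applications of Lem.\ref{15:10:32} (moving the outer derivative from $\psi$ onto $\varphi$ in each of $(P\varphi,P\psi)$ and $(P^\ast_f\varphi,P^\ast_f\psi)$) produce
\begin{equation}\label{11:56:00}
Q(\varphi,\psi)=((1+\Delta)\varphi,\psi)-\int_{\del M}\big\langle \sigma(P^\ast_f,\nu^\flat)(P\varphi)+\sigma(P,\nu^\flat)(P^\ast_f\varphi),\,\psi\big\rangle.
\end{equation}

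For part (1), any $\varphi\in\text{Dom}(F)\cap C^\infty(E)$ lies in $\overline{\mcD}\cap C^\infty(E)\subset \text{Dom}(P^\ast)\cap C^\infty(E)=\mcD$, so (\ref{11:56:00}) applies. Testing $Q(\varphi,\psi)=(F\varphi,\psi)$ against $\psi\in C^\infty_c(\mathring{M},E)\subset\mcD$ kills the boundary integral, and density of $C^\infty_c(\mathring{M},E)$ in $L^2(E)$ yields $F\varphi=(1+\Delta)\varphi$.

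For part (2), assume $\varphi\in\mcD\cap C^\infty(E)$ with $P\varphi\in\mcD$. For any $\psi\in\mcD$, the first integrand in (\ref{11:56:00}) vanishes by hypothesis, while the second lies pointwise in $\text{range}(\sigma(P,\nu^\flat))=\ker(\sigma(P^\ast_f,\nu^\flat))^\perp$ and is therefore orthogonal to $\psi|_{\del M}\in\ker(\sigma(P^\ast_f,\nu^\flat))$; so the boundary integral vanishes, giving $Q(\varphi,\psi)=((1+\Delta)\varphi,\psi)$. Both sides are continuous in $\psi$ with respect to the $Q$-norm (the right-hand side because $\|\cdot\|_0\leq Q$), so the identity extends from $\mcD$ to $\overline{\mcD}$ by density, placing $\varphi\in\text{Dom}(F)$.

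For part (3), assuming $\sigma(P^2,\nu^\flat)=0$, I would take $\varphi\in\text{Dom}(F)\cap C^\infty(E)$; by part (1), $F\varphi=(1+\Delta)\varphi$, so (\ref{11:56:00}) forces the boundary integral to vanish for every $\psi\in\mcD$. Since $\psi|_{\del M}$ sweeps all smooth sections of $\ker(\sigma(P^\ast_f,\nu^\flat))$, this yields $\sigma(P^\ast_f,\nu^\flat)(P\varphi)\in\text{range}(\sigma(P,\nu^\flat))$ pointwise on $\del M$ (the other term being automatically there). The decisive step is then pointwise linear algebra: writing $s:=\sigma(P,\nu^\flat)$, the hypothesis reads $s^2=0$, so $\text{range}(s)\subset\ker(s)=\text{range}(s^\ast)^\perp$, whence $\text{range}(s)\cap\text{range}(s^\ast)=\{0\}$; since $\sigma(P^\ast_f,\nu^\flat)=-s^\ast$, this forces $\sigma(P^\ast_f,\nu^\flat)(P\varphi)=0$, i.e.\ $P\varphi\in\mcD$. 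I anticipate the main obstacle to be arranging the two integrations by parts so that the boundary expression in (\ref{11:56:00}) takes exactly this form; a different choice of ``form'' in Lem.\ref{15:10:32} produces a combination in which the algebraic identity $s^2=0$ does not immediately cut through to the desired conclusion.
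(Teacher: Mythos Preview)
Your proof is correct and follows essentially the same route as Folland--Kohn (which the paper cites). A couple of minor remarks on the organization:

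In your Green identity~(\ref{11:56:00}) you carry two boundary terms; the second one, $\int_{\del M}\langle\sigma(P,\nu^\flat)(P^\ast_f\varphi),\psi\rangle$, in fact vanishes for every $\psi\in\mcD$ already at the level of the integration by parts if one uses the \emph{first} form of Lem.~\ref{15:10:32} on $(P^\ast_f\varphi,P^\ast_f\psi)$ (it produces $\int_{\del M}\langle\sigma(P^\ast_f,\nu^\flat)\psi,\,P^\ast_f\varphi\rangle=0$). So the ``obstacle'' you anticipate is avoidable: the standard argument ends with a single boundary term $\int_{\del M}\langle\sigma(P^\ast_f,\nu^\flat)P\varphi,\psi\rangle$, and then one simply plugs in $\psi=$ an extension of $\sigma(P^\ast_f,\nu^\flat)P\varphi$, which lies in $\mcD$ because $\sigma(P^\ast_f,\nu^\flat)^2=(\sigma(P,\nu^\flat)^2)^\ast=0$. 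Your two-term version with the linear-algebra observation $\text{range}(s)\cap\text{range}(s^\ast)=\{0\}$ (from $s^2=0$) is an equally valid way to close the argument, just slightly more roundabout.
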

\begin{proof}
See \cite[Prop.1.3.5]{MR0461588}.
\end{proof}
The unbounded operator $\square:=F-1$, whose domain equals that of $F$, is self-adjoint and extends the unbounded operator $\Delta$ defined on $C^\infty_c(\mathring{M},E)$. The additional assumption on the symbol of $P$ (which always holds in the setting of pre-Lie algebroids) implies that the smooth sections in the domain of $\square$ are exactly those that satisfy the zeroth- and first-order Neumann boundary conditions $\varphi\in \mcD$ and $P\varphi\in \mcD$, respectively. 
\begin{rem}\label{15:39:48}
Consider the unbounded operator $\widetilde{\square}:=PP^\ast+P^\ast P$, defined on 
$$\text{Dom}(\widetilde{\square}):=\{\varphi\in \text{Dom}(P)\cap \text{Dom}(P^\ast)|\ P\varphi\in \text{Dom}(P^\ast) \ \text{and} \ P^\ast\varphi\in \text{Dom}(P)\},$$
which also extends the unbounded operator $\Delta$ and satisfies $\text{Ker}(\widetilde{\square})=\text{Ker}(P)\cap \text{Ker}(P^\ast)$. 
Under the extra assumption that $P^2=0$ one can show that $\widetilde{\square}$ is self-adjoint (\cite[Prop.1.3.8]{MR0461588}).
\end{rem}

\subsection{Elliptic regularity and Hodge theory}\label{21:39:11}

In this subsection we discuss elliptic regularity and its main consequences. 
Below, $F$ denotes the self-adjoint extension of $\Delta+1$ defined in the previous section. 
\begin{defn}\label{17:01:25}
A first-order differential operator $P:C^\infty(E)\rightarrow C^\infty(E)$ satisfies \textsl{elliptic regularity} if the following two conditions hold:
\begin{itemize}
\item[i)] If $\varphi\in \text{Dom}(F)$ has the property that $F\varphi$ is smooth, then $\varphi$ is smooth as well. 
\item[ii)] For every $k\in \mathbb{Z}_{\geq 0}$ there exists a constant $C$ such that
\begin{align}\label{16:09:03}
|| \varphi ||_{k+1}\leq C ||F \varphi ||_k \hspace{15mm} \forall \varphi\in \text{Dom}(F)\cap C^\infty(E). 
\end{align}
\end{itemize}
\end{defn}
 We now list the main consequences of this definition. Throughout, we assume that $P$ satisfies elliptic regularity and we use the notation of Sect.\ref{17:07:25}. 

\begin{prop}\label{16:49:10}
The operator $T=F^{-1}$ induces bounded linear operators $T:L^2_k(E)\rightarrow L^2_{k+1}(E)$, hence $T:L^2(E)\rightarrow L^2(E)$ is compact and its spectrum is a discrete subset of $(0,1]$. 
\end{prop}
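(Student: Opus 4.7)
The plan is to bootstrap the elliptic regularity estimate (\ref{16:09:03}) from smooth sections up to every Sobolev space, then deduce compactness via Rellich's lemma and the spectral picture via the standard spectral theorem for compact self-adjoint operators.

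First I would establish that $T$ restricts to a bounded operator $T:L^2_k(E)\rightarrow L^2_{k+1}(E)$ for every $k\in \mathbb{Z}_{\geq 0}$. For $\varphi\in C^\infty(E)$, the image $T\varphi=F^{-1}\varphi$ lies in $\mathrm{Dom}(F)$ with $F(T\varphi)=\varphi$ smooth, so condition i) of Def.\ref{17:01:25} forces $T\varphi\in C^\infty(E)$, and then (\ref{16:09:03}) gives
\begin{equation*}
\|T\varphi\|_{k+1}\leq C\|F(T\varphi)\|_k=C\|\varphi\|_k.
\end{equation*}
To extend this bound to a general $\varphi\in L^2_k(E)$, I would approximate $\varphi$ by a sequence $\varphi_n\in C^\infty(E)$ with $\varphi_n\to\varphi$ in $L^2_k(E)$. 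Since $T$ is bounded on $L^2(E)$ the sequence $T\varphi_n$ converges to $T\varphi$ in $L^2$, while the above estimate shows that $\{T\varphi_n\}$ is Cauchy in $L^2_{k+1}(E)$; its $L^2_{k+1}$-limit must therefore coincide with $T\varphi$, proving both $T\varphi\in L^2_{k+1}(E)$ and the desired bound $\|T\varphi\|_{k+1}\leq C\|\varphi\|_k$.

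Taking $k=0$, the bounded map $T:L^2(E)\rightarrow L^2_1(E)$ composed with the Rellich compact embedding $L^2_1(E)\hookrightarrow L^2(E)$ exhibits $T:L^2(E)\rightarrow L^2(E)$ as a compact operator. It is already known to be bounded, self-adjoint, and injective with $\|T\|\leq 1$, so the spectral theorem for compact self-adjoint operators yields that its spectrum consists of a sequence of real eigenvalues of finite multiplicity with $0$ as the only possible accumulation point. Positivity of the spectrum follows from positivity of $F$: by construction
\begin{equation*}
(F\varphi,\varphi)=Q(\varphi,\varphi)=\|\varphi\|^2+\|P\varphi\|^2+\|P_f^\ast \varphi\|^2\geq \|\varphi\|^2
\end{equation*}
on $\mathrm{Dom}(F)$, so $F\geq 1$ and hence $\mathrm{spec}(T)\subset (0,1]$.

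The main technical obstacle I foresee is the density/approximation step in the first paragraph: one must be sure that $C^\infty(E)$ is dense in $L^2_k(E)$ with no boundary condition imposed (which is standard since we are on a manifold with boundary and $C^\infty(E)$ denotes smooth sections up to the boundary), and that the limit of $T\varphi_n$ taken in $L^2_{k+1}$ genuinely matches the $L^2$-limit $T\varphi$. Beyond that, everything reduces to the elliptic regularity hypothesis and standard functional analytic machinery (Rellich, Hilbert--Schmidt style spectral theorem), which is why compactness and discreteness follow almost for free once the Sobolev-regularity of $T$ is in place.
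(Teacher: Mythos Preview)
Your proof is correct and follows essentially the same argument as the paper (which defers to \cite[Prop.~3.1.1]{MR0461588}): use condition~i) of Def.~\ref{17:01:25} to see $T$ preserves smooth sections, apply the estimate (\ref{16:09:03}) to get the Sobolev bound on smooth sections, extend by density, then invoke Rellich and the spectral theorem for compact self-adjoint operators. The only cosmetic slip is that in the displayed $Q$-formula you should write $P^\ast\varphi$ rather than $P_f^\ast\varphi$, since $Q$ is defined with the Hilbert-space adjoint; on $\mathrm{Dom}(F)\cap C^\infty(E)\subset\mcD$ the two agree (Lem.~\ref{15:19:51}), so the argument is unaffected.
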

\begin{proof}
See \cite[Prop.3.1.1]{MR0461588}.
\end{proof}
\noindent Prop.\ref{16:49:10} induces an orthogonal decomposition $L^2(E)=\bigoplus_{0<\lambda \leq 1} V_\lambda$ into eigenspaces for $T$, with each $V_\lambda$ finite-dimensional. We have $V_\lambda\subset \text{Im}(T)=\text{Dom}(F)$, on which $F$ acts by $\lambda^{-1}$. 
\begin{lem}\label{13:27:34}
Suppose that $\varphi\in L^2(E)$ satisfies $(F-\lambda)\varphi\in C^\infty(E)$ for some $\lambda\in \C$. Then $\varphi\in C^\infty(E)$ as well. In particular, $V_\lambda\subset C^\infty(E)$. 
\end{lem}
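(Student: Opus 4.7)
The plan is to reduce the lemma to the bounded mapping property $T:L^2_k(E)\to L^2_{k+1}(E)$ established in Prop.\ref{16:49:10}, and then run a standard bootstrap. First I would set $\psi:=(F-\lambda)\varphi\in C^\infty(E)$ and rewrite the hypothesis as $F\varphi=\lambda\varphi+\psi$. Since $\varphi$ must lie in $\text{Dom}(F)=\text{Im}(T)$ for the left-hand side to make sense, applying $T=F^{-1}$ to both sides gives the fixed-point style identity
$$\varphi=\lambda T\varphi+T\psi,$$
which is the relation that drives the whole argument: each use of $T$ on the right gains one Sobolev derivative.

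The contribution $T\psi$ is dispatched immediately: because $\psi\in C^\infty(E)$ lies in every $L^2_k(E)$, Prop.\ref{16:49:10} places $T\psi$ in every $L^2_{k+1}(E)$, so Sobolev's theorem gives $T\psi\in C^\infty(E)$. For the contribution $\lambda T\varphi$ I would induct on $k$: the a priori regularity is $\varphi\in L^2_0(E)=L^2(E)$, and if $\varphi\in L^2_k(E)$ has already been established, then $T\varphi\in L^2_{k+1}(E)$ by the same mapping property, and substituting into $\varphi=\lambda T\varphi+T\psi$ yields $\varphi\in L^2_{k+1}(E)$. Iterating and applying Sobolev embedding once more gives $\varphi\in C^\infty(E)$.

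For the final assertion, any $\varphi\in V_\lambda$ satisfies $T\varphi=\lambda\varphi$, equivalently $F\varphi=\lambda^{-1}\varphi$, so $(F-\lambda^{-1})\varphi=0\in C^\infty(E)$ and the first part applies (with $\lambda$ replaced by $\lambda^{-1}$). I do not anticipate a serious obstacle here: the analytic content of elliptic regularity has already been absorbed into Prop.\ref{16:49:10}, and the present lemma is essentially a cosmetic bootstrap consequence. The only point worth a sentence of care is the case where $\lambda$ could in principle be zero, but there the identity collapses to $\varphi=T\psi$ with $\psi$ smooth, which is handled by exactly the same mapping-plus-Sobolev step.
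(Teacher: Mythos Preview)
Your proof is correct and follows essentially the same bootstrap as the paper's argument (which simply cites \cite[Prop.3.1.2]{MR0461588}): rewrite $F\varphi=\lambda\varphi+\psi$ as $\varphi=T(\lambda\varphi+\psi)$ and use the mapping property $T:L^2_k(E)\to L^2_{k+1}(E)$ from Prop.~\ref{16:49:10} to climb through all Sobolev spaces. Your separate treatment of $T\psi$ and of the $\lambda=0$ case, as well as the $V_\lambda$ remark, are fine and amount to the same content.
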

\begin{proof}
See \cite[Prop.3.1.2]{MR0461588}.
\end{proof}
Recall the operator $\square=F-1$, whose domain equals that of $F$, and which is a self-adjoint extension of the Laplacian $\Delta$ defined on $C^\infty_c(\mathring{M},E)$.
\begin{prop}
There is an orthogonal decomposition of $L^2(E)$ into finite dimensional eigenspaces for $\square$. Each eigenspace consists of smooth sections, and the eigenvalues are non-negative without finite accumulation point. Moreover, for all $\varphi\in \text{Dom}(\square)\cap C^\infty(E)$ we have 
\begin{align}\label{12:35:01}
||\varphi ||_{k+1}^2\leq C (||\square \varphi||_k^2+||\varphi||^2).
\end{align}
\end{prop}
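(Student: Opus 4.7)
The plan is to deduce everything from the elliptic regularity of $P$ (which gives Prop.\ref{16:49:10} and Lem.\ref{13:27:34}) together with a standard interpolation argument. The spectral statements are essentially a translation from $T=F^{-1}$ to $\square=F-1$, and the key analytic content is already contained in (\ref{16:09:03}).

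First I would handle the spectral decomposition. By Prop.\ref{16:49:10} the operator $T$ is compact and self-adjoint on $L^2(E)$, so the spectral theorem produces an orthogonal decomposition $L^2(E)=\bigoplus_{\lambda\in \Sigma}V_\lambda$ into finite-dimensional eigenspaces, with $\Sigma\subset (0,1]$ discrete and accumulating only possibly at $0$. Since each $V_\lambda\subset \mathrm{Im}(T)=\mathrm{Dom}(F)$ and $F$ acts on $V_\lambda$ by $\lambda^{-1}$, the operator $\square=F-1$ acts on $V_\lambda$ by $\mu_\lambda:=\lambda^{-1}-1\geq 0$. The map $\lambda\mapsto \mu_\lambda$ is a decreasing bijection from $(0,1]$ onto $[0,\infty)$ sending $0^+$ to $+\infty$, so the spectrum of $\square$ is a discrete subset of $[0,\infty)$ without finite accumulation point. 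Lem.\ref{13:27:34} guarantees $V_\lambda\subset C^\infty(E)$, giving the smoothness claim.

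Next I would prove the estimate (\ref{12:35:01}). Take $\varphi\in \text{Dom}(\square)\cap C^\infty(E)$; since $\mathrm{Dom}(\square)=\mathrm{Dom}(F)$, the elliptic regularity inequality (\ref{16:09:03}) applies and gives
\begin{align*}
\|\varphi\|_{k+1}\leq C\|F\varphi\|_k=C\|\square\varphi+\varphi\|_k\leq C\|\square\varphi\|_k+C\|\varphi\|_k.
\end{align*}
The point is now to absorb the $\|\varphi\|_k$ term into $\|\varphi\|_{k+1}$ on the left. For $k\geq 1$ the standard interpolation (Ehrling) inequality on a compact manifold with boundary,
\begin{align*}
\|\varphi\|_k\leq \delta\|\varphi\|_{k+1}+C_\delta\|\varphi\|,
\end{align*}
holds for every $\delta>0$. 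Choosing $\delta$ so that $C\delta\leq 1/2$ and moving the $(1/2)\|\varphi\|_{k+1}$ to the left gives $\|\varphi\|_{k+1}\leq C'\|\square\varphi\|_k+C''\|\varphi\|$; the case $k=0$ is already immediate from the display above since $\|\varphi\|_0=\|\varphi\|$. Squaring and using $(a+b)^2\leq 2(a^2+b^2)$ yields (\ref{12:35:01}).

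There is no real obstacle here: the only nontrivial input is the interpolation inequality, which follows from the usual Sobolev interpolation on $M$ (and holds equally well on manifolds with boundary via an extension operator). The spectral part is a direct transport of the decomposition for $T$ through the relation $\square=T^{-1}-1$.
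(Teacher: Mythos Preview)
Your proof is correct and the spectral portion matches the paper's reasoning exactly: transport the spectral decomposition of the compact self-adjoint operator $T$ through $\square=T^{-1}-1$, and invoke Lem.~\ref{13:27:34} for smoothness of eigensections.

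For the estimate (\ref{12:35:01}) you take a slightly different route than the paper. The paper (following \cite[Prop.~3.1.11]{MR0461588}) simply iterates the basic inequality: from (\ref{16:09:03}) one has $\|\varphi\|_{k+1}^2\leq C\|F\varphi\|_k^2\leq C(\|\square\varphi\|_k^2+\|\varphi\|_k^2)$, and then applies the same bound to $\|\varphi\|_k^2$, then to $\|\varphi\|_{k-1}^2$, and so on down to $\|\varphi\|^2$; the accumulated $\|\square\varphi\|_j^2$ terms with $j\leq k$ are all dominated by $\|\square\varphi\|_k^2$. You instead absorb $\|\varphi\|_k$ in one shot via the Ehrling interpolation inequality. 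Both arguments are standard and equally short; the iteration is marginally more self-contained since it avoids quoting interpolation, while your approach avoids the finite induction. Either way the result follows.
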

\begin{proof}
See \cite[Prop.3.1.11]{MR0461588}.
\end{proof}
\begin{cor}\label{07:54:08}
We have a closed, orthogonal decomposition 
\begin{align}\label{18:59:40}
L^2(E)=\text{Im}(\square)\oplus \text{Ker}(\square)
\end{align}
with $\text{Ker}(\square)\subset C^\infty(E)$ finite-dimensional.
\end{cor}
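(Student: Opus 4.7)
The plan is to deduce the corollary directly from the spectral decomposition established in the preceding proposition. First I would note that since $\square$ is self-adjoint and $L^2(E)=\bigoplus_\lambda V_\lambda$ decomposes orthogonally into its finite-dimensional eigenspaces $V_\lambda$ with $\lambda\geq 0$, the zero-eigenspace $V_0=\text{Ker}(\square)$ is itself one of these finite-dimensional spaces and is contained in $C^\infty(E)$. This gives the finite-dimensionality and smoothness part of the statement immediately.

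Next I would show that $\text{Im}(\square)$ is closed. The key observation is that the eigenvalues of $\square$ form a non-negative set without finite accumulation point, so there is a gap: there exists $\lambda_0>0$ such that the nonzero spectrum lies in $[\lambda_0,\infty)$. On the orthogonal complement of $\text{Ker}(\square)$, namely $W:=\bigoplus_{\lambda>0} V_\lambda$, we have the estimate $\|\square\varphi\|\geq \lambda_0 \|\varphi\|$ for all $\varphi\in \text{Dom}(\square)\cap W$. Given any sequence $\square\varphi_n\to \psi$ in $L^2(E)$ with $\varphi_n\in \text{Dom}(\square)$, after subtracting the projection of $\varphi_n$ onto $\text{Ker}(\square)$ we may assume $\varphi_n\in W$, and then $\{\varphi_n\}$ is Cauchy by the lower bound, converging to some $\varphi\in W$; by closedness of $\square$ we get $\varphi\in \text{Dom}(\square)$ and $\square\varphi=\psi$, so $\psi\in \text{Im}(\square)$.

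Finally I would identify the orthogonal complement. Self-adjointness of $\square$ gives $\text{Ker}(\square)=\text{Im}(\square)^\perp$, hence $L^2(E)=\text{Ker}(\square)\oplus \overline{\text{Im}(\square)}$, and combining with the previous step yields the claimed closed orthogonal decomposition
\begin{equation*}
L^2(E)=\text{Im}(\square)\oplus \text{Ker}(\square).
\end{equation*}
I do not expect any real obstacle: the whole argument is a standard consequence of the spectral theorem once the spectral gap at zero is in hand, and this gap is already packaged into the preceding proposition. The only point worth being careful about is taking the orthogonal complement on the Hilbert-space level rather than only on smooth sections, which is why we use self-adjointness of $\square$ (established via the Friedrichs construction) rather than the unbounded operator $\Delta$ on $C^\infty_c(\mathring M,E)$ directly.
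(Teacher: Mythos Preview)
Your proposal is correct and follows essentially the same argument as the paper's (commented-out) proof, which in turn is the standard argument referenced from Folland--Kohn: use self-adjointness to write $L^2(E)=\overline{\text{Im}(\square)}\oplus \text{Ker}(\square)$, then use the spectral gap from the preceding proposition to show $\text{Im}(\square)$ is closed via the lower bound $\|\square\varphi\|\geq \delta\|\varphi\|$ on $\text{Ker}(\square)^\perp$, with finite-dimensionality and smoothness of $\text{Ker}(\square)$ coming directly from the eigenspace description.
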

\begin{proof}
See \cite[Prop.3.1.12]{MR0461588}.
\end{proof}
\noindent The orthogonal decomposition (\ref{18:59:40}) gives rise to the \textsl{Neumann operator}  
$N :L^2(E)\rightarrow L^2(E)$
as follows. If we denote by $\pi$ the orthogonal projection onto $\text{Ker}(\square)$, then 
\begin{align*}
N\varphi:=\begin{cases} 0 & \text{if}\ \varphi\in \text{Ker}(\square) \\ (1-\pi)\psi & \text{if}\ \varphi=\square\psi. \end{cases} 
\end{align*}
It is readily verified that $N$ is a self-adjoint linear map, whose image lies in $\text{Dom}(\square)$. 

\begin{thm}\label{08:47:18} 
\
\begin{itemize}
\item[(1)] For every $\varphi\in L^2(E)$ we have $\varphi=\square N \varphi+\pi \varphi$, and if $\varphi\in \text{Dom}(\square)$ we have $\varphi=N \square \varphi+\pi \varphi$. Moreover, $N \pi =\pi N =0$ and $N (C^\infty(E)) \subset C^\infty(E)$.
\item[(2)] The operator $N$ is bounded, and induces bounded operators $N:L^2_k(E)\rightarrow L^2_{k+1}(E)$. 
\end{itemize}
\end{thm}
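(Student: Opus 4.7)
The plan is to derive (1) directly from the definition of $N$ combined with the orthogonal decomposition (\ref{18:59:40}) of Cor.\ref{07:54:08}, and to obtain (2) by combining the spectral gap of $\square$ provided by Prop.\ref{16:49:10} with the elliptic regularity estimate (\ref{12:35:01}) and a density argument.

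For (1), I would begin by writing an arbitrary $\varphi \in L^2(E)$ as $\varphi = \square\psi + \pi\varphi$ via (\ref{18:59:40}), with $\psi \in \text{Dom}(\square)$. Since $N(\pi\varphi) = 0$ by definition and $N(\square\psi) = (1-\pi)\psi$, I get $N\varphi = (1-\pi)\psi$, whence $\square N\varphi = \square\psi = \varphi - \pi\varphi$, proving the first identity. If in addition $\varphi \in \text{Dom}(\square)$, then $\square\varphi = \square((1-\pi)\varphi) \in \text{Im}(\square)$, so $N\square\varphi = (1-\pi)\varphi$, giving the second identity. The equalities $N\pi = 0$ and $\pi N = 0$ are immediate: $N$ vanishes on $\text{Ker}(\square)$ by definition, and its image lies in $\text{Ker}(\square)^\perp$ because $(1-\pi)\psi$ does. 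To establish smoothness preservation, I would pick $\varphi \in C^\infty(E)$, note $\pi\varphi \in \text{Ker}(\square) \subset C^\infty(E)$ by Cor.\ref{07:54:08}, and conclude that $\square N\varphi = \varphi - \pi\varphi$ is smooth. Since $N\varphi \in \text{Dom}(\square) = \text{Dom}(F)$ and $(F-1)N\varphi$ is smooth, Lem.\ref{13:27:34} with $\lambda = 1$ then forces $N\varphi \in C^\infty(E)$.

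For (2), the $L^2$-boundedness follows immediately from Prop.\ref{16:49:10}: the operator $\square = F - 1$ has a discrete nonnegative spectrum without finite accumulation point, so there is a smallest positive eigenvalue $\delta > 0$, and $N$ acts as $\lambda^{-1}$ on each eigenspace of $\square$ with eigenvalue $\lambda \geq \delta$ and by $0$ on $\text{Ker}(\square)$, so $\|N\|_{L^2 \to L^2} \leq \delta^{-1}$. The key step is the Sobolev estimate. For $\varphi \in C^\infty(E)$ I would apply (\ref{12:35:01}) to $N\varphi \in \text{Dom}(\square) \cap C^\infty(E)$, obtaining
\begin{align*}
\|N\varphi\|_{k+1}^2 \leq C(\|\square N\varphi\|_k^2 + \|N\varphi\|^2) \leq C(\|\varphi\|_k^2 + \|\pi\varphi\|_k^2 + \|\varphi\|^2).
\end{align*}
Since $\pi$ has finite-dimensional range inside $C^\infty(E)$, it is bounded from $L^2(E)$ into every $L^2_k(E)$, so $\|\pi\varphi\|_k \leq C\|\varphi\|$, which together with the $L^2$-bound on $N$ yields $\|N\varphi\|_{k+1} \leq C\|\varphi\|_k$ for all smooth $\varphi$.

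The remaining step, which is bookkeeping rather than a genuine obstacle, is to extend this inequality from $C^\infty(E)$ to all of $L^2_k(E)$ by density. Given $\varphi \in L^2_k(E)$ I would take smooth approximants $\varphi_n \to \varphi$ in the $L^2_k$-norm; the estimate just established shows that $\{N\varphi_n\}$ is Cauchy in $L^2_{k+1}(E)$, hence converges to some $\psi \in L^2_{k+1}(E)$, and the already established $L^2$-continuity of $N$ identifies $\psi = N\varphi$. This yields $\|N\varphi\|_{k+1} \leq C\|\varphi\|_k$ for every $\varphi \in L^2_k(E)$, completing the proof.
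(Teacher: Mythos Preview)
Your proof is correct and follows essentially the same approach as the paper (which refers to \cite[Thm.3.1.14]{MR0461588}): part (1) is straight from the definitions plus Lem.~\ref{13:27:34}, and part (2) combines the spectral gap with the elliptic estimate and a density argument. Two minor differences: you bound $\|\pi\varphi\|_k$ directly via the finite-dimensionality of $\text{Ker}(\square)\subset C^\infty(E)$, whereas the paper obtains it by induction from $\|\pi\varphi\|_k\leq C\|F\pi\varphi\|_{k-1}=C\|\pi\varphi\|_{k-1}$; and you carry out the density extension explicitly at the end, while in the paper this step is already absorbed into Prop.~\ref{16:49:10} for the operator $T$.
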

\begin{proof}
See \cite[Thm.3.1.14]{MR0461588}.
\end{proof}

\begin{rem}
If $\varphi\in C^\infty(E)$ then $N \varphi\in C^\infty(E)\cap \text{Dom}(\square)$, so $\square N \varphi=\Delta N\varphi$. Similarly, if $\varphi\in C^\infty(E)\cap \text{Dom}(\square)$ then $\square \varphi=\Delta \varphi$. Hence, in these cases we obtain the usual Hodge-decompositions $\varphi=\Delta  N\varphi+\pi\varphi=N \Delta\varphi+\pi\varphi$.
\end{rem}

\begin{rem}\label{09:03:42}
If $E$ is graded and $P$ is of degree $1$ then $\square$ is of degree $0$. If $P$ satisfies elliptic regularity only in a certain degree $q$, i.e.\ if Def.\ref{17:01:25} holds only for $\varphi$ of degree $q$, then all the conclusions of this section hold in degree $q$. This will be the setting for pre-Lie algebroids. 
\end{rem}

Finally, we provide some additional information in the setting when $P^2=0$. In this case we have another self-adjoint extension of $\Delta$ given by $\widetilde{\square}=PP^\ast+P^\ast P$ (see Rem.\ref{15:39:48}).
\begin{prop}
If $P^2=0$ we have $\square=\widetilde{\square}$, in particular their domains coincide. 
\end{prop}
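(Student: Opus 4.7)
The plan is to show that $\text{Dom}(\square)=\text{Dom}(\widetilde{\square})$ with agreement of the two operators, by proving one inclusion. Both $\square$ (by its construction as $F-1$) and $\widetilde{\square}$ (by Rem.\ref{15:39:48}, since $P^2=0$) are self-adjoint extensions of the symmetric operator $\Delta$ defined on $C^\infty_c(\mathring M, E)$. For self-adjoint operators the containment $A \subset B$ forces $A = B$ (via $A = A^* \supset B^* = B$), so it suffices to establish $\text{Dom}(\widetilde{\square}) \subset \text{Dom}(F)$ together with $F\varphi = (\widetilde{\square}+1)\varphi$ on this domain.

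Fix $\varphi \in \text{Dom}(\widetilde{\square})$. By the characterization (\ref{15:26:59}), verifying $\varphi \in \text{Dom}(F)$ reduces to two conditions: (i) $\varphi \in \overline{\mcD}$, and (ii) there exists $g \in L^2(E)$ with $Q(\varphi, \psi) = (g, \psi)$ for every $\psi \in \overline{\mcD}$. Condition (ii) is the straightforward part: for any $\psi \in \overline{\mcD} \subset \text{Dom}(P)\cap \text{Dom}(P^*)$, the properties $P\varphi \in \text{Dom}(P^*)$ and $P^*\varphi \in \text{Dom}(P)$ (which come from $\varphi \in \text{Dom}(\widetilde{\square})$) permit the computation
\begin{align*}
Q(\varphi, \psi) = (\varphi, \psi) + (P\varphi, P\psi) + (P^*\varphi, P^*\psi) = (\varphi, \psi) + (P^*P\varphi, \psi) + (PP^*\varphi, \psi) = ((\widetilde{\square}+1)\varphi, \psi),
\end{align*}
so $g = (\widetilde{\square}+1)\varphi$ does the job and, once (i) is in hand, gives $\square\varphi = \widetilde{\square}\varphi$.

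The main obstacle is condition (i), which amounts to the density statement that every element of $\text{Dom}(P)\cap \text{Dom}(P^*)$ can be approximated in the $Q$-norm by smooth sections in $\mcD$. I would handle this in two stages: first apply a Friedrichs-type mollification to approximate $\varphi$ by smooth sections in $\text{Dom}(P)\cap \text{Dom}(P^*)$, using that $P$ and $P^*_f$ are first-order differential operators so mollification only introduces lower-order commutator errors controlled in $Q$-norm; then perform a boundary correction to enforce the Neumann condition $\sigma(P^*_f, \nu^\flat)\varphi = 0$ on $\del M$. The hypothesis $P^2 = 0$ enters at a critical juncture here, since it forces the symbol identity $\sigma(P^*_f, \nu^\flat)^2 = 0$, and this nilpotent structure is what allows the boundary correction to be carried out without creating an uncontrollable compensating error — the correction can be chosen in the image of $\sigma(P^*_f, \nu^\flat)$, which then lies in its own kernel.

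A fully detailed execution of this density argument is carried out in \cite[Prop.1.3.9]{MR0461588} in essentially this generality, and I would quote that result. Combined with the form computation above, it yields the required inclusion $\text{Dom}(\widetilde{\square}) \subset \text{Dom}(F) = \text{Dom}(\square+1)$ with $F\varphi = (\widetilde{\square}+1)\varphi$; self-adjointness of both sides then upgrades this to equality of operators.
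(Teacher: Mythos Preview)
Your proposal is correct and follows essentially the same route as the paper: both defer the substance to \cite{MR0461588}. The paper cites Prop.~3.1.10 there directly, while you unpack the argument---self-adjointness of both operators reduces equality to a single inclusion, the form computation $Q(\varphi,\psi)=((\widetilde{\square}+1)\varphi,\psi)$ handles the operator identity, and the crux is the $Q$-density of $\mcD$ in $\text{Dom}(P)\cap\text{Dom}(P^*)$---before likewise citing Folland--Kohn for that density step.
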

\begin{proof}
See \cite[Prop.3.1.10]{MR0461588}.
\end{proof}
\noindent The decomposition (\ref{18:59:40}), combined with the facts that $\text{Ker}(\widetilde{\square})=\text{Ker}(P)\cap\text{Ker}(P^\ast)$ and $\text{Im}(P)\perp \text{Im}(P^\ast)$ (due to $P^2=0$), yields the following refinement of (\ref{18:59:40}).
\begin{cor}\label{14:27:15}
If $P^2=0$ we have a closed, orthogonal decomposition 
\begin{align*}
L^2(E)=\text{Ker}(P)\cap\text{Ker}(P^\ast)\oplus \text{Im}(P)\oplus \text{Im}(P^\ast).
\end{align*}
\end{cor}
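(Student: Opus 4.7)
The plan is to refine the abstract Hodge decomposition of Cor.~\ref{07:54:08} by separately identifying both summands under the hypothesis $P^2=0$, using the preceding Proposition's statement that $\square=\widetilde{\square}=PP^\ast+P^\ast P$.

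First I would identify the kernel. For $\varphi\in \text{Dom}(\widetilde{\square})$ the characterization of this domain in Rem.~\ref{15:39:48} permits the pairing
\begin{align*}
(\widetilde{\square}\varphi,\varphi)=(PP^\ast\varphi,\varphi)+(P^\ast P\varphi,\varphi)=||P^\ast\varphi||^2+||P\varphi||^2,
\end{align*}
so $\widetilde{\square}\varphi=0$ iff $P\varphi=0$ and $P^\ast\varphi=0$. Combined with $\square=\widetilde{\square}$ this yields $\text{Ker}(\square)=\text{Ker}(P)\cap\text{Ker}(P^\ast)$.

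Next I would establish the orthogonality $\text{Im}(P)\perp \text{Im}(P^\ast)$. Since $C^\infty(E)$ is a core for $P$ by construction, the hypothesis $P^2=0$ passes to the $L^2$-closure: given $\varphi\in \text{Dom}(P)$, pick $\varphi_n\in C^\infty(E)$ with $(\varphi_n,P\varphi_n)\to (\varphi,P\varphi)$ in $L^2\times L^2$; then the graph sequence $(P\varphi_n,P(P\varphi_n))=(P\varphi_n,0)$ converges to $(P\varphi,0)$, and closedness of $P$ gives $P\varphi\in \text{Dom}(P)$ with $P(P\varphi)=0$, so $\text{Im}(P)\subset \text{Ker}(P)$. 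The general Hilbert-space identity $\overline{\text{Im}(P^\ast)}=\text{Ker}(P)^\perp$ for closed densely defined operators then forces $\text{Im}(P^\ast)\subset \text{Im}(P)^\perp$. This step is the one requiring the most care, as it relies both on the core property of $C^\infty(E)$ and on closedness of $P$; I expect it to be the main subtle point.

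Finally I would refine $\text{Im}(\square)=\text{Im}(P)\oplus \text{Im}(P^\ast)$. The inclusion $\text{Im}(\widetilde{\square})\subset \text{Im}(P)+\text{Im}(P^\ast)$ is immediate from $\widetilde{\square}\varphi=PP^\ast\varphi+P^\ast P\varphi$ on $\text{Dom}(\widetilde{\square})$. Conversely, both $\text{Im}(P)$ and $\text{Im}(P^\ast)$ are orthogonal to $\text{Ker}(\square)=\text{Ker}(P)\cap\text{Ker}(P^\ast)$ via the adjoint relation (e.g.\ $(P\varphi,\eta)=(\varphi,P^\ast\eta)=0$ when $P^\ast\eta=0$), so by Cor.~\ref{07:54:08} they lie in $\text{Im}(\square)$. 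Combined with the orthogonality from step two and the closedness of $\text{Im}(\square)$, this produces the announced closed orthogonal decomposition, with each of $\text{Im}(P)$ and $\text{Im}(P^\ast)$ closed as an orthogonal summand of a closed subspace.
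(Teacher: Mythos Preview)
Your proof is correct and follows exactly the approach the paper takes. The paper's argument is a single sentence invoking Cor.~\ref{07:54:08}, the identity $\text{Ker}(\widetilde{\square})=\text{Ker}(P)\cap\text{Ker}(P^\ast)$ from Rem.~\ref{15:39:48}, and the orthogonality $\text{Im}(P)\perp\text{Im}(P^\ast)$ from $P^2=0$; you have simply spelled out the details of each of these ingredients and of how they combine.
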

The condition $P^2=0$ also allows us to define the cohomology group
$$H(P):=\frac{\text{Ker} (P:C^\infty(E)\rightarrow C^\infty(E))}{\text{Im}(P:C^{{}^\infty}(E)\rightarrow C^{{}^\infty}(E))},$$
which is graded if $E$ is graded and $P$ is of degree $1$.
\begin{prop}\label{11:56:27}
There is a natural isomorphism $\text{Ker}(P)\cap\text{Ker}(P^\ast)\cong H(P)$.
\end{prop}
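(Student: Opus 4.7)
The plan is to verify that the inclusion-induced map $\Phi\colon \mcH := \text{Ker}(P)\cap \text{Ker}(P^\ast) \to H(P)$, sending a harmonic section $h$ to its cohomology class $[h]$, is a well-defined isomorphism. Well-definedness is automatic: by Cor.\ref{07:54:08} we have $\mcH \subset C^\infty(E)$, and $Ph=0$ by definition, so $[h]\in H(P)$ makes sense.

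For injectivity, I would suppose $h\in \mcH$ satisfies $h=P\varphi$ for some $\varphi\in C^\infty(E)$. Then $h$ lies in $\mcH$ and in $\text{Im}(P)$. By the orthogonal decomposition of Cor.\ref{14:27:15}, $\mcH \perp \text{Im}(P)$, so $\|h\|^2=(h,h)=0$ and $h=0$.

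For surjectivity, take any $\varphi\in C^\infty(E)$ with $P\varphi=0$. Since $\varphi$ is smooth, Thm.\ref{08:47:18}(1) gives $N\varphi\in C^\infty(E)\cap \text{Dom}(\square)$ and the Hodge decomposition $\varphi=\pi\varphi+\square N\varphi$. Because $P^2=0$ we have $\sigma(P^2,\nu^\flat)=0$, so by Lem.\ref{13:48:22} the smoothness of $N\varphi$ together with $N\varphi\in \text{Dom}(\square)$ implies that both $N\varphi$ and $PN\varphi$ lie in $\mcD$, i.e.\ satisfy the Neumann boundary condition. In particular $P^\ast N\varphi = P^\ast_f N\varphi\in C^\infty(E)$ and $P P^\ast N\varphi, P^\ast PN\varphi\in C^\infty(E)$, so we may write $\varphi=\pi\varphi+PP^\ast N\varphi+P^\ast PN\varphi$ inside $C^\infty(E)$. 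Now I would argue $P^\ast PN\varphi=0$ by a direct orthogonality argument: the three terms $\pi\varphi$, $PP^\ast N\varphi$, $P^\ast PN\varphi$ lie in $\mcH$, $\text{Im}(P)$, $\text{Im}(P^\ast)$ respectively, which are mutually orthogonal by Cor.\ref{14:27:15}. Since $\varphi\in \text{Ker}(P)=\text{Im}(P^\ast)^\perp$ (general operator theory, using that $P$ is the closure of a densely defined operator), the $\text{Im}(P^\ast)$-component of $\varphi$ must vanish. Hence $\varphi-\pi\varphi=P(P^\ast N\varphi)$ is $P$-exact in $C^\infty(E)$ and $[\varphi]=[\pi\varphi]\in \Phi(\mcH)$.

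The main obstacle, if any, is verifying the smoothness of $P^\ast N\varphi$; everything else is bookkeeping between the $L^2$-level orthogonal decomposition of Cor.\ref{14:27:15} and its smooth-level manifestation. This smoothness is however already encoded in Lem.\ref{13:48:22}, once one observes that the hypothesis $\sigma(P^2,\nu^\flat)=0$ is automatic from $P^2=0$, so that the Neumann boundary condition is propagated by $N$ in a manner strong enough to let $P^\ast$ act as the formal adjoint $P^\ast_f$.
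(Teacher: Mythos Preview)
Your argument is correct and follows the same overall outline as the paper: injectivity by orthogonality of $\mcH$ and $\text{Im}(P)$, surjectivity via the Hodge decomposition $\varphi=\pi\varphi+PP^\ast N\varphi+P^\ast PN\varphi$ and then killing the last term. The only difference is in this last step. You invoke the orthogonal decomposition of Cor.~\ref{14:27:15} together with $\text{Ker}(P)=\text{Im}(P^\ast)^\perp$ to conclude $P^\ast PN\varphi=0$ directly. The paper instead proves the slightly stronger statement $PN\varphi=0$: one checks $PN\varphi\in\text{Dom}(\square)$ (via Lem.~\ref{13:48:22}), that $PN\varphi\perp\mcH$, and then uses $N\square=1-\pi$ and $[P,\Delta]=0$ to compute $\|PN\varphi\|=\|N\square PN\varphi\|\le C\|\Delta PN\varphi\|=C\|P\Delta N\varphi\|=0$. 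Your route is a touch cleaner for the proposition as stated; the paper's route yields the extra fact recorded in Rem.~\ref{14:27:58}, namely that $P^\ast N$ is an explicit primitive operator on $\text{Ker}(P)$.
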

\begin{proof}
The natural map $\text{Ker}(P)\cap\text{Ker}(P^\ast)\rightarrow H(P)$ is injective because $\text{Ker}(P^\ast)$ is orthogonal to $\text{Im}(P)$. Given $\varphi\in C^\infty(E)$ with $P\varphi=0$, we have 
\begin{align}\label{16:27:05}
\varphi=\Delta N\varphi+\pi\varphi=P_f^\ast P N\varphi+PP_f^\ast  N\varphi+\pi\varphi.
\end{align}
We know that $P N\varphi\in \text{Dom}(P^\ast)$ and that $0=P(PN\varphi)\in \text{Dom}(P^\ast)$, hence $P N\varphi\in \text{Dom}(\square)$ by Lem.\ref{13:48:22}. Since $P N\varphi$ is perpendicular to $\text{Ker}(\square)$, it follows that
$$||PN\varphi||=||N\square PN\varphi||\leq C ||\square PN\varphi||=C||\Delta PN\varphi||=0 $$
because $P$ commutes with $\Delta$ and $P\varphi=P\pi \varphi=0$. Substituting this into (\ref{16:27:05}), we see that $\varphi$ is cohomologous to $\pi\varphi\in\text{Ker}(P)\cap\text{Ker}(P^\ast)$.
\end{proof}
\begin{rem}\label{14:27:58}
The above proof shows that if $H(P)=0$, the bounded operator $P^\ast N:L^2(E)\rightarrow L^2(E)$ selects a $P$-primitive for every $P$-closed element, i.e.\ $\varphi=P(P^\ast N)\varphi$ for all $\varphi\in \text{Ker}(P)$.
\end{rem}

\subsubsection*{Families of differential operators}

As explained in Sect.\ref{16:33:30}, our aim is to study elliptic regularity for families of operators depending on an additional parameter. We summarize the notation of Sect.\ref{16:33:30} here in the current notation. Let $H$ be another vector bundle on $M$ and $P:C^\infty(E)\times C^\infty(H)\rightarrow C^\infty(E)$ a differential operator with the property that for each $\ve\in C^\infty(H)$ the map $P_\ve:=P(\cdot,\ve)$ is a linear first-order differential operator on $E$. We will interpret these as deformations of $P:=P_0$, and only consider $\ve$ in a small neighbourhood $B\subset C^\infty(H)$ of zero with respect to the $C^\infty$-topology. In this context we want to specialize (\ref{16:09:03}) to include the dependence on the parameter $\ve$ as well. We will put a subscript $\ve$ on all relevant operators (and their domains) to emphasize their dependence on $\ve$, and use the notation introduced in Sect.\ref{16:33:30}. 

\begin{defn}\label{16:12:56}
The family $P:C^\infty(E)\times B\rightarrow C^\infty(E)$ satisfies \textsl{elliptic regularity} if: 
\begin{itemize}
\item[i)] If $\varphi\in \text{Dom}(F_\ve)$ has the property that $F_\ve \varphi $ is smooth, then $\varphi$ is smooth as well. 
\item[ii)] There exists an integer $a\in \mathbb{Z}_{\geq 0}$ such that for every $k\in \mathbb{Z}_{\geq 0}$ and all $\ve\in B$ we have
\begin{align}\label{16:21:04}
|| \varphi ||_{k+1}\leq \mcL(|\ve|_{k+a}; ||F_\ve \varphi ||_k) \hspace{15mm} \forall \varphi\in \text{Dom}(F_\ve)\cap C^\infty(E).
\end{align}
\end{itemize}
\end{defn}
\noindent If we fix $\ve$ then (\ref{16:21:04}) reduces to (\ref{16:09:03}) with $C$ incorporating the norm $|\ve|_{k+a}$. In particular, all the results of this subsection hold for each individual $P_\ve$. The reason we care about (\ref{16:21:04}) is that it allows us to study the dependence of the Neuman operators $N_\ve$ on the parameter $\ve$ (see Sect.\ref{10:54:51}). As mentioned before we are interested in $P_\ve$ for small $\ve$ which means that we can impose an arbitrary, but ultimately fixed, bound on a finite $C^k$-norm of $\ve$ (i.e.\ shrink the neighbourhood $B$), and we will do so wherever possible to simplify the estimates. Of course we cannot impose a bound on all $C^k$-norms of $\ve$ as that does not result in an open neighbourhood of $0$ in $C^\infty(H)$. Finally, the precise value of the integer $a$ above is not important to us, what matters is that it is fixed and independent of $k$. 
 
\subsection{A criterion for elliptic regularity}\label{12:54:47}

In this subsection we specify a set of conditions for a family $P_\ve$ of first order operators that guarantee elliptic regularity. In Sect.\ref{14:11:23} we will relate these conditions to the notion of convexity of pre-Lie algebroids. To phrase the exact conditions on $P_\ve$ we need to establish some notation. 

Let $(U,x^i)\subset \mathring{M}$ be a coordinate chart in the interior, which we may assume satisfies $U\cong \R^m$ and on which $E$ is trivialized. We denote by $\mathcal{S}$ the space of Schwartz functions on $\R^m$, and by $\mathcal{F}:\mathcal{S}\rightarrow \mathcal{S}$ the Fourier transform defined by
\begin{align*}
\mathcal{F}\varphi(\xi)=\widehat{\varphi}(\xi):= \int_{\R^m} e^{-i\langle \xi, x \rangle} \varphi(x)dx
\end{align*}
where all relevant factors of $2\pi$ are absorbed in the volume form on $\R^m$. The Fourier transform allows us to define the Sobolev norm $||\cdot ||_s$ on $C^\infty_c(U,E)$ for any real number $s$ by 
\begin{align*}
||\varphi||_s^2:=\int _{\R^m} (1+|\xi|^2)^s |\widehat{\varphi}(\xi)|^2 d\xi.
\end{align*}
Here we use the trivialization of $E$ on $U$ to identify $\varphi\in C^\infty_c(U,E)$ with a tuple $(\varphi_1,\ldots, \varphi_l)$ of functions in $C^\infty_c(U)$, and write $|\widehat{\varphi}|^2=\sum_i |\widehat{\varphi_i}|^2$. In particular, $||\varphi ||_s$ depends on both the chart and the trivialization.
 We have $||\varphi||_s=||\Lambda^s \varphi||$, where $\Lambda^s:\mathcal{S}\rightarrow \mathcal{S}$ is defined by 
\begin{align}\label{15:40:45}
\widehat{\Lambda^s \varphi}(\xi)=(1+|\xi|^2)^{s/2} \widehat{\varphi}(\xi).
\end{align}
Next, let $U$ be a chart that is a neighbourhood of a point in $\del M$, which we may assume satisfies $U\cong \R^m_-$, where $\R^m_-:=\R^{m-1}\times (-\infty,0]$ with coordinates $(t^1,\ldots,t^{m-1},r)=:(t,r)$ (see Rem.\ref{11:43:09}), and on which $E$ is again trivialized. We will refer to such a chart as a \textsl{boundary chart}. Denote by $\mathcal{S}(\R^m_-)$ the space of Schwarz functions\footnote{By definition, $\mathcal{S}(\R^m_-)$ is the set of functions obtained by restricting elements of $\mathcal{S}$ to $\R^m_{-}$.}  on $\R^m_-$ and let $\widetilde{\mathcal{F}}:\mathcal{S}(\R^m_-)\rightarrow \mathcal{S}(\R^m_-)$ denote the \textsl{tangential Fourier transform}, defined by 
\begin{align}\label{16:15:54}
\widetilde{\mathcal{F}}\varphi(\tau,r)=\widetilde{\varphi}(\tau,r):=\int_{\R^{m-1}} e^{-i\langle \tau,t \rangle } \varphi(t,r)dt.
\end{align}
For $s\in \R$, define the operator $
\Lambda^s_\del:\mathcal{S}(\R^m_-)\rightarrow \mathcal{S}(\R^m_-)$ by
\begin{align}\label{15:51:58}
\widetilde{\Lambda^s_\del \varphi} (\tau,r)=(1+|\tau|^2)^{s/2} \widetilde{\varphi}(\tau,r), 
\end{align}
and the \textsl{tangential Sobolev norm} $||\cdot ||_{\del,s}$ by
\begin{align*}
|| \varphi ||^2_{\del,s}:=||\Lambda^s_\del\varphi||^2=\int_{\R^m_-} (1+|\tau|^2)^s |\widetilde{\varphi}(\tau,r)|^2 d\tau dr =\int_{-\infty}^0 ||\varphi(\cdot, r)||^2_s \ dr,
\end{align*}
which measures the $L^2$-norm of $\varphi$ and of its tangential derivatives up to $s$. Since we are dealing with first order partial differential operators that are not necessarily tangential, we need to define one additional norm. For $s\in \R$ we define the norm $||\text{D}(\cdot)||_{\del,s}$ on $C^\infty_c(U,E)$ by 
\begin{align}\label{09:49:34}
||\text{D}\varphi||^2_{\del,s}:=\sum_{i=1}^m || \del_i\varphi||^2_{\del,s}+ ||\varphi||^2_{\del,s}=||\varphi||^2_{\del,s+1}+||\del_r \varphi||^2_{\del,s}.
\end{align}
Concretely, if $s\in \mathbb{Z}_{\geq 0}$ then $||\text{D}\varphi||_{\del,s}$ measures the $L^2$-norm of all those partial derivatives of $\varphi$ up to order $s+1$ that include at most one $r$-derivative. 

We can now state the main theorem of this section.

\begin{thm}\label{13:30:27}
Let $P:C^\infty(E)\times B\rightarrow C^\infty(E)$ be a family of first-order differential operators on $E$, where $B\subset C^\infty(H)$ is a neighbourhood of $0$ in the $C^\infty$-topology such that:
\begin{itemize}
\item[1)] The Laplacians $\Delta_\ve=P^\ast_{\ve,f}P_\ve+P_\ve P^\ast_{\ve,f}$ are elliptic for all $\ve\in B$,
\item[2)] The subspaces 
$\text{Ker}(\sigma(P^\ast_{\ve,f},dr))\subset E_x$ have the same rank for all $x\in \del M$ and $\ve\in B$,
\item[3)] For every $x\in \del M$ there exists a boundary chart $U$ around $x$ and a constant $C$\ such that \begin{align}\label{08:34:06}
\hspace{10mm} ||\text{D}\varphi||^2_{\del,-\frac{1}{2}}\leq C Q_\ve(\varphi,\varphi) \hspace{10mm} \forall \varphi\in \text{Dom}(P_\ve^\ast)\cap C^\infty_c(U,E), \ \ve\in B.
\end{align}
\end{itemize}
Then there exists a neighbourhood $B'\subset B$ of $0\in C^\infty(H)$ such that $P:C^\infty(E)\times B'\rightarrow C^\infty(E)$ satisfies elliptic regularity.
\end{thm}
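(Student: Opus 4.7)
The plan is to establish elliptic regularity by the classical Kohn-type bootstrap argument, adapted to the family setting so as to track the $\ve$-dependence in the form prescribed by $\mcL(|\ve|_{k+a};||\cdot||_k)$. Using a partition of unity subordinate to an open cover of $M$ by interior charts and boundary charts as in condition 3), the problem reduces to two local statements: interior regularity and boundary regularity. In the interior, condition 1) together with standard elliptic regularity for the second-order operator $\Delta_\ve$ already yields $||\varphi||_{k+2}\leq \mcL(|\ve|_{k+a};||\Delta_\ve\varphi||_k)+\mcL(|\ve|_{k+a};||\varphi||)$ on compactly supported sections, and since $F_\ve=\Delta_\ve+1$ this contributes the desired bound away from $\del M$.

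The real work is at the boundary. Fix a boundary chart $U\cong \R^m_-$ around $x\in \del M$ with coordinates $(t,r)$ as in Rem.\ref{11:43:09}, and write $P_\ve=\sum a^i_\ve \del_{i}+b_\ve$. The first step is to bootstrap the basic estimate (\ref{08:34:06}) to tangential Sobolev estimates of all orders. Applying (\ref{08:34:06}) to $\Lambda^s_\del \varphi$ (for $\varphi$ supported in $U$ and lying in $\text{Dom}(P_\ve^\ast)$), commuting $\Lambda^s_\del$ past $P_\ve$ and $P^\ast_{\ve,f}$, and absorbing the commutator terms (which are tangential of order $s$ in $\varphi$ and bounded by $\mcL(|\ve|_{s+a};||\varphi||_{\del,s+1/2})$ through the Leibniz rule applied to the coefficients), I obtain by induction on $s$ a tangential estimate
\[
||\text{D}\varphi||^2_{\del,s}\leq \mcL(|\ve|_{s+a}^2;||P_\ve\varphi||_{\del,s}^2+||P^\ast_{\ve,f}\varphi||_{\del,s}^2)+\mcL(|\ve|^2_{s+a};||\varphi||^2_{\del,s}).
\]
Condition 2) is used here to ensure that the boundary condition $\sigma(P^\ast_{\ve,f},dr)\varphi=0$ is preserved under $\Lambda^s_\del$ up to a lower-order term, so that the tangential Friedrichs-regularized version of $\varphi$ stays in $\mcD_\ve$.

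The second step is to convert tangential regularity into full regularity using the ellipticity of $\Delta_\ve$. Writing $\Delta_\ve=A_\ve \del_r^2 +B_\ve \del_r+C_\ve$ with $A_\ve$ invertible (by condition 1) and $B_\ve,C_\ve$ involving at most one $r$-derivative), the equation $\Delta_\ve\varphi=F_\ve\varphi-\varphi$ lets me solve for $\del_r^2\varphi$ in terms of $F_\ve\varphi$, $\varphi$, and quantities bounded in the tangential norms. Combined with the tangential estimate above applied to $\varphi$ and to $P_\ve\varphi$ (which is legitimate because $P_\ve\varphi\in \mcD_\ve$ whenever $\varphi\in \text{Dom}(F_\ve)\cap C^\infty(E)$, by Lem.\ref{13:48:22}), this yields by induction on the number of $r$-derivatives the desired estimate (\ref{16:21:04}) with full Sobolev norms $||\cdot||_{k+1}$, the $\ve$-dependence being controlled throughout by Rem.\ref{16:50:34} and the fact that $Q_\ve(\varphi,\varphi)\leq \mcL(|\ve|_a^2;||F_\ve\varphi||^2+||\varphi||^2)$.

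Elliptic regularity in the sense of Def.\ref{16:12:56} i) then follows by a standard Friedrichs mollifier / difference-quotient argument: given $\varphi\in \text{Dom}(F_\ve)$ with $F_\ve\varphi$ smooth, one regularizes $\varphi$ tangentially to produce a sequence in $\text{Dom}(F_\ve)\cap C^\infty(E)$ on which the a priori estimate just proved applies, and passes to the limit to conclude $\varphi\in \bigcap_k L^2_k(E)=C^\infty(E)$. The neighbourhood $B'$ appears when invoking the elliptic estimates uniformly in $\ve$: only a finite $C^k$-bound on $\ve$ is needed to absorb the commutator terms at each finite step, but the constants $C$ coming from condition 3) may themselves depend on $\ve$, and one must shrink $B'$ so that the invertibility of $A_\ve$ and the rank condition 2) hold uniformly. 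I expect the main technical obstacle to be the bookkeeping in the first (tangential) step: verifying that each commutator $[\Lambda^s_\del,P_\ve]$ and $[\Lambda^s_\del, P^\ast_{\ve,f}]$, when expanded and iterated, still fits inside the $\mcL(|\ve|_{s+a};||\varphi||_{\del,s+1/2})$ template, which requires repeated interpolation (Gagliardo–Nirenberg-type) inequalities to keep the high-order derivatives isolated to a single factor per monomial.
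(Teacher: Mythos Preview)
Your overall architecture matches the paper's: localize by a partition of unity, use G\aa rding's inequality on interior charts, bootstrap the basic estimate \eqref{08:34:06} tangentially on boundary charts, then trade tangential for normal derivatives via the ellipticity of $\Delta_\ve$ (exactly the equation $F_\ve\varphi=A_\ve^{rr}\del_r^2\varphi+\cdots$ you write). The smoothness statement i) is indeed handled by the classical Folland--Kohn argument, to which the paper simply refers.

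The one place where your sketch contains a genuine gap is the sentence ``the boundary condition is preserved under $\Lambda^s_\del$ up to a lower-order term, so that the tangential Friedrichs-regularized version of $\varphi$ stays in $\mcD_\ve$''. This is not enough: the estimate \eqref{08:34:06} can only be applied to an element that lies in $\mcD_\ve$ \emph{exactly}, and $\Lambda^s_\del$ (acting diagonally in a fixed trivialization) preserves $\mcD_0$ but in general not $\mcD_\ve$ for $\ve\neq 0$, since the subspace $\text{Ker}(\sigma(P^\ast_{\ve,f},dr))$ varies with $\ve$. The paper's fix is precise: condition 2) plus contractibility of $B$ yields a smooth family of unitary automorphisms $\kappa_\ve$ of $E$ with $\kappa_\ve(\mcD_0)=\mcD_\ve$ (Lem.~\ref{11:50:55}), and one replaces $\Lambda^s_\del$ by the conjugated operator $A_\ve=\rho_1\kappa_\ve\Lambda^s_\del\kappa_\ve^{-1}\rho$, which preserves $\mcD_\ve$ on the nose. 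All the commutator bookkeeping---your ``main technical obstacle''---is then carried out for $A_\ve$ via the $Q_\ve$-adjoint identity of Lem.~\ref{13:40:34},
\[
Q_\ve(A_\ve\varphi,A_\ve\varphi)=\text{Re}\,Q_\ve(\varphi,A_\ve^\ast A_\ve\varphi)+(\text{eight explicit remainder terms}),
\]
each remainder being bounded in Lem.~\ref{10:28:10} by $\mcL(|\ve|^2_{\lceil k+a\rceil};||\text{D}\varphi||^2_{\del,k-1})$. Since $Q_\ve(\varphi,A_\ve^\ast A_\ve\varphi)=(F_\ve\varphi,A_\ve^\ast A_\ve\varphi)$, this brings $F_\ve\varphi$ in directly---the paper never passes through your intermediate estimate in terms of $||P_\ve\varphi||_{\del,s}$ and $||P^\ast_{\ve,f}\varphi||_{\del,s}$ separately, and correspondingly never needs to apply the tangential estimate to $P_\ve\varphi$ as you propose in your second step. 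The tangential induction (Lem.~\ref{14:41:51}) also proceeds in half-integer steps from $-\tfrac12$ upward, using a nested sequence of cutoffs, which is the natural gait given the shape of \eqref{08:34:06}.
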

\begin{rem}\label{14:08:00}
In the specific case of $P=\delbar$, the above theorem is proved in \cite{MR0461588} (without considering families and the corresponding Leibniz estimates). As in Rem.\ref{09:03:42}, if $E$ is graded and $P_\ve$ are of degree $1$, the above theorem makes sense for every individual degree.
\end{rem}  
The proof of Thm.\ref{13:30:27} (see page \pageref{11:00:52}) requires some preliminary work. First, in order to reduce all calculations to coordinate charts we consider a local version of Def.\ref{16:12:56}.
\begin{defn} \label{13:16:26} 
The family $P:C^\infty(E)\times B \rightarrow C^\infty(E)$ satisfies \textsl{local elliptic regularity} if for all $\rho, \rho_1\in C^\infty(M)$ satisfying $\rho_1|_{\text{supp}(\rho)}=1$ we have:
\begin{itemize}
\item[i)] If $\varphi\in \text{Dom}(F_\ve)$ has the property that $\rho_1 F_\ve \varphi $ is smooth, then $\rho \varphi$ is smooth as well. 
\item[ii)] There exists an integer $a\in \mathbb{Z}_{\geq 0}$ such that for every $k\in \mathbb{Z}_{\geq 0}$ and all $\ve\in B$ we have
\begin{align}\label{12:11:54}
 ||\rho \varphi||_{k+1}\leq \mcL(|\ve|_{k+a};||\rho_1F_\ve \varphi||_k)+\mcL(|\ve|_{k+a};||F_\ve \varphi||)
\end{align}
for all $\varphi\in \text{Dom}(F_\ve)\cap C^\infty(E)$. 
\end{itemize}
\end{defn}
\begin{rem}\label{13:43:52}
Throughout we will abbreviate the condition $\rho_1|_{\text{supp}(\rho)}=1$ by $\rho\subset \rho_1$. 
\end{rem}
 We recover Def.\ref{16:12:56} by setting $\rho=\rho_1=1$, but Def.\ref{13:16:26} has the advantage of being local in the sense of the following lemma, whose proof follows immediately. 
\begin{lem}\label{13:37:02}
If $\{U_\alpha\}$ is an open cover of $M$ then it suffices to verify the conditions of Def.\ref{13:16:26} for functions $\rho\subset\rho_1$ with support contained in a single $U_\alpha$. 
\end{lem}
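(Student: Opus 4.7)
The proof is a routine partition-of-unity argument, so the plan is simply to arrange the cutoffs carefully and apply the local hypothesis. Since $M$ is compact I would replace $\{U_\alpha\}$ by a finite subcover and fix a subordinate partition of unity $\{\chi_\alpha\}$. Given $\rho\subset\rho_1$ as in Def.\ref{13:16:26}, I would insert an intermediate cutoff $\rho_2$ with $\rho\subset\rho_2\subset\rho_1$ and, for each $\alpha$, select cutoffs $\rho_\alpha\subset\rho_{1,\alpha}$ both supported in $U_\alpha$ so that $\rho_\alpha\equiv 1$ on a neighbourhood of $\mathrm{supp}(\chi_\alpha\rho)$ and $\rho_{1,\alpha}\subset\rho_2$. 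The two identities that drive the argument are
\begin{align*}
\chi_\alpha\rho=(\chi_\alpha\rho)\cdot\rho_\alpha,\qquad \rho_{1,\alpha}F_\ve\varphi=\rho_{1,\alpha}\cdot(\rho_1 F_\ve\varphi),
\end{align*}
the latter holding because $\rho_1\equiv 1$ on $\mathrm{supp}(\rho_{1,\alpha})\subset\mathrm{supp}(\rho_2)$.

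For part i) of Def.\ref{13:16:26}: if $\rho_1 F_\ve\varphi\in C^\infty(E)$ then by the second identity $\rho_{1,\alpha}F_\ve\varphi\in C^\infty(E)$ for every $\alpha$, so the localized hypothesis applied to the pair $\rho_\alpha\subset\rho_{1,\alpha}$ (both supported in a single $U_\alpha$) gives $\rho_\alpha\varphi\in C^\infty(E)$; the identity $\rho\varphi=\sum_\alpha(\chi_\alpha\rho)\rho_\alpha\varphi$ then displays $\rho\varphi$ as a finite sum of smooth sections. For part ii), the triangle inequality combined with a standard Sobolev-Leibniz bound (since the $\chi_\alpha$ and $\rho_\alpha$ are fixed smooth functions) yields
\begin{align*}
\|\rho\varphi\|_{k+1}\leq\sum_\alpha\|(\chi_\alpha\rho)\rho_\alpha\varphi\|_{k+1}\leq C\sum_\alpha\|\rho_\alpha\varphi\|_{k+1}.
\end{align*}
Applying the localized version of (\ref{12:11:54}) to each summand, and then using $\|\rho_{1,\alpha}F_\ve\varphi\|_k\leq C\|\rho_1 F_\ve\varphi\|_k$ (again by Sobolev-Leibniz, on the identity above) to replace $\rho_{1,\alpha}$ by $\rho_1$, produces the global estimate. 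The constants $C$ depend only on the fixed choice of cutoffs and can therefore be absorbed into the non-negative polynomials defining $\mcL(|\ve|_{k+a};\,\cdot\,)$.

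No step here poses a genuine obstacle; the only care required is the bookkeeping of the cascade of cutoffs, and in particular checking that all auxiliary functions $\chi_\alpha,\rho_\alpha,\rho_{1,\alpha}$ can be chosen once and for all, independently of $\ve\in B$, so that the Leibniz factors they introduce do not produce any new dependence on the deformation parameter and the integer $a$ in (\ref{12:11:54}) passes through unchanged.
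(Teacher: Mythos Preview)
Your proposal is correct and is exactly the standard partition-of-unity argument the paper has in mind; the paper itself offers no details, stating only that the proof ``follows immediately.'' Your careful arrangement of the nested cutoffs $\rho_\alpha\subset\rho_{1,\alpha}\subset\rho_2\subset\rho_1$ and the two key identities are precisely what is needed to make that immediacy explicit.
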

The aim is to show that the family $P_\ve$ of Thm.\ref{13:30:27} satisfies Def.\ref{13:16:26}. 
The most technical aspect of the proof consists of finding $Q_\ve$-adjoints for certain operators $A:L^2(E)\rightarrow L^2(E)$. 
Compared to \cite{MR0461588} there are some additional difficulties that need to be overcome. Firstly, since we are considering families of unbounded operators, all the operator domains are varying as well. Secondly, to obtain (\ref{12:11:54}) we need Leibniz rules for Sobolev norms. These are provided in the appendix, with a subtle difference between positive and negative Sobolev degrees (c.f.\ Prop.\ref{17:24:46}). Since these subtleties are not present in \cite{MR0461588}, we provide detailed proofs.

\begin{lem}\label{13:40:34}
For $\ve\in B$ fixed, let $A$ be an operator on $L^2(E)$ such that both $A$ and its Hilbert space adjoint $A^\ast$ are defined on $C^\infty(E)$, preserving both $C^\infty(E)$ and its subspace $\mcD_\ve=\text{Dom}(P_\ve^\ast)\cap C^\infty(E)$. 
Then for all $\varphi\in \mcD_\ve$ we have
\begin{align}\label{17:23:47}
Q_\ve(A\varphi,A\varphi)&=\text{Re}\ Q_\ve(\varphi,A^\ast A\varphi) + \underbrace{([P_\ve,A]\varphi,[P_\ve,A]\varphi)         \rule[-4pt]{0pt}{5pt}}_{{(1)}} +\underbrace{([P^\ast_\ve,A]\varphi,[P^\ast_\ve,A]\varphi)         \rule[-4pt]{0pt}{5pt}}_{(1^\ast)} \\
&+\text{Re}\Big( \underbrace{ (P_\ve \varphi,	(A^\ast-A)[P_\ve,A]\varphi)         \rule[-4pt]{0pt}{5pt}}_{(2)}	+\underbrace{(P_\ve  \varphi, [A^\ast-A,P_\ve]A\varphi)\rule[-4pt]{0pt}{5pt}}_{(3)}+\underbrace{(P_\ve \varphi,[A,[P_\ve,A]]\varphi)\rule[-4pt]{0pt}{5pt}}_{(4)}	\Big).			\nonumber\\
&+\text{Re}\Big(\underbrace{(P^\ast_\ve \varphi,	(A^\ast-A)[P^\ast_\ve,A]\varphi)         \rule[-4pt]{0pt}{5pt}}_{(2^\ast)}	+\underbrace{(P_\ve^\ast \varphi, [A^\ast-A,P^\ast_\ve]A\varphi)\rule[-4pt]{0pt}{5pt}}_{(3^\ast)}+\underbrace{(P^\ast_\ve \varphi,[A,[P^\ast_\ve,A]]\varphi)\rule[-4pt]{0pt}{5pt}}_{(4^\ast)}	\Big).			\nonumber
\end{align}
\end{lem}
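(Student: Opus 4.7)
The identity is a purely algebraic rearrangement, so the plan is to proceed by direct expansion using the defining formula $Q_\ve(\psi_1, \psi_2) = (\psi_1, \psi_2) + (P_\ve\psi_1, P_\ve\psi_2) + (P^\ast_\ve\psi_1, P^\ast_\ve\psi_2)$ and carefully track commutators. The hypothesis that $A$ and $A^\ast$ are both defined on $C^\infty(E)$ will allow moving either operator across an $L^2$-inner-product of smooth sections via the adjoint relation $(Au,v) = (u, A^\ast v)$, while preservation of $\mcD_\ve$ ensures that $A\varphi$ and $A^\ast A\varphi$ lie in $\text{Dom}(P^\ast_\ve)$ so every inner product appearing on either side of the claim is well-defined.

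The zeroth-order term immediately gives $(A\varphi, A\varphi) = (\varphi, A^\ast A\varphi)$, matching the corresponding summand of $Q_\ve(\varphi, A^\ast A\varphi)$. For the $P_\ve$-term I would write $P_\ve A = A P_\ve + [P_\ve, A]$ and expand
\begin{align*}
\|P_\ve A\varphi\|^2 = \|A P_\ve\varphi\|^2 + 2\,\text{Re}(A P_\ve\varphi, [P_\ve, A]\varphi) + \|[P_\ve, A]\varphi\|^2.
\end{align*}
The first summand equals $(P_\ve\varphi, A^\ast A P_\ve\varphi)$; using the elementary identity $[A^\ast A, P_\ve] = A^\ast[A, P_\ve] + [A^\ast, P_\ve] A$ it becomes $(P_\ve\varphi, P_\ve A^\ast A\varphi)$ plus commutator corrections. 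The middle summand rewrites as $2\text{Re}(P_\ve\varphi, A^\ast[P_\ve, A]\varphi)$ by the adjoint relation.

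Taking real parts (permissible because $\|P_\ve A\varphi\|^2$ is real) and combining, all the $P_\ve$-corrections collapse to $\text{Re}(P_\ve\varphi,\ (A^\ast[P_\ve, A] - [P_\ve, A^\ast] A)\varphi) + \|[P_\ve, A]\varphi\|^2$. The one non-routine step will be the commutator identity
\begin{align*}
A^\ast[P_\ve, A] - [P_\ve, A^\ast] A = (A^\ast - A)[P_\ve, A] + [A^\ast - A, P_\ve] A + [A, [P_\ve, A]],
\end{align*}
which I would verify by expanding each bracket on the right: the cross-terms $A[P_\ve,A]$ and $[P_\ve, A]A$ cancel in pairs, leaving precisely the left-hand side. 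Substituting it produces the terms $(2)$, $(3)$, $(4)$ of the statement applied inside $(P_\ve\varphi, \cdot)$.

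The derivation for $P^\ast_\ve A$ is then verbatim the same with $P_\ve$ replaced by $P^\ast_\ve$ throughout, producing $(1^\ast)$--$(4^\ast)$. Summing the three expanded pieces reconstructs the full identity. The ``hard part'' is really just bookkeeping together with the single commutator identity above; no analytic input is required beyond the adjoint relation on smooth sections. The structural payoff, which is presumably the reason this lemma is stated in this particular form, is that every remainder term contains at least one commutator with $A$, so that in later applications high derivatives from $A$ can be traded for low derivatives on $\varphi$ — exactly the mechanism needed for the Leibniz-type estimates in the subsequent sections.
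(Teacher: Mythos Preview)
Your proof is correct and follows the same approach as the paper, which simply states that the identity ``follows directly from writing out both sides, using only the definition of $Q_\ve$.'' You have supplied the bookkeeping the paper omits, including the key commutator identity $A^\ast[P_\ve,A]-[P_\ve,A^\ast]A=(A^\ast-A)[P_\ve,A]+[A^\ast-A,P_\ve]A+[A,[P_\ve,A]]$, which is exactly the algebraic rearrangement needed.
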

\begin{proof}
Follows directly from writing out both sides, using only the definition of $Q_\ve$.
\end{proof}
\begin{rem}
The lemma basically says that we can bring $A$ to the other side of the inner product $Q_\ve$, at the cost of the eight additional terms appearing on the right of (\ref{17:23:47}). In the applications below $A$ will be of the form $\Lambda^k$ or $\Lambda^k_\del$ (see Sect.\ref{12:54:47}), and combined with Prop.\ref{17:24:46} and Prop.\ref{11:36:52} we will see that the additional terms in (\ref{17:23:47}) are all of \lq\lq lower order\rq\rq\ in $\varphi$, and can therefore be ignored in induction-based proofs.   
\end{rem}
We will use Lemma \ref{13:40:34} in the following three cases. 
\newline
{ \textbf{I)}} Let $U\cong \R^m_-$ be a boundary chart with coordinates $(t^1,\ldots,t^{m-1},r)$ on which $E$ is trivialized, and let $\rho,\rho_1\in C^\infty_c(U)$ satisfy $\rho\subset \rho_1$ (see Rem.\ref{13:43:52}). For $k\in \R$ fixed we define $A:=\rho_1\Lambda^k_\del \rho$, where $\Lambda^k_\del$ was defined in (\ref{15:51:58}). We want to apply Lem.\ref{13:40:34} to $A$, however $A$ does not necessarily preserve $\mcD_\ve$. Here is where condition 2) of Thm.\ref{13:30:27} comes in. Recall that 
\begin{align*}
\mcD_\ve\cap C^\infty_c(U,E)=\{\varphi\in C^\infty_c(U,E)| \ \sigma(P_{\ve,f}^\ast,dr)\varphi|_{\del M}=0 \}.
\end{align*}
For $\ve=0$ we can use condition 2) to pick the unitary trivialization $E|_U\cong U\times \C^l$ in such a way that $\text{Ker}(\sigma(P^\ast_{0,f},dr))$ coincides with $(U\cap \del M)\times (\C^{l'}\times \{0\})$ for some $0\leq l'\leq l$. For this choice of trivialization the space $\mcD_0$ is preserved by $A$ because $\Lambda^k_\del$ is tangential and acts diagonally on sections of $U\times \C^l$. For nonzero $\ve \in B$ we have the following lemma, whose proof follows immediately from condition 2) and contractibility of $B$. 
\begin{lem}\label{11:50:55}
There exists a smooth family of unitary automorphisms $\kappa_\ve:E\rightarrow E$ with the property that $\kappa_\ve(\mcD_0\cap C^\infty(E))=\mcD_\ve\cap C^\infty(E)$.
\end{lem}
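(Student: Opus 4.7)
The plan is to build $\kappa_\ve$ first along the boundary and then extend into $M$ by a collar. Set $K_\ve := \ker\bigl(\sigma(P_{\ve,f}^\ast, dr)\bigr) \subset E|_{\del M}$. By hypothesis 2) each $K_\ve$ has constant rank over $\del M$, and since $\sigma(P_{\ve,f}^\ast, dr)$ depends smoothly on $(x,\ve)$ the $K_\ve$ form a smooth family of subbundles. The claim is that $\varphi \in \mcD_\ve \cap C^\infty(E)$ precisely when $\varphi|_{\del M}$ is a section of $K_\ve$, so it suffices to produce a smooth family of fiberwise unitaries of $E|_{\del M}$ sending $K_0$ to $K_\ve$, and then extend it to $E$ equal to $\mathds{1}$ off a collar.

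For the boundary step I would use Kato's interpolation trick. Let $\Pi_\ve \in C^\infty(\mathrm{End}(E|_{\del M}))$ denote fiberwise orthogonal projection onto $K_\ve$ and define
\begin{equation*}
T_\ve := \Pi_\ve \Pi_0 + (\mathds{1} - \Pi_\ve)(\mathds{1} - \Pi_0).
\end{equation*}
Since $T_0 = \mathds{1}$, after shrinking $B$ in the $C^\infty$ topology the endomorphism $T_\ve$ is fiberwise invertible; the polar decomposition then supplies a smooth family of fiberwise unitaries
\begin{equation*}
U_\ve := T_\ve (T_\ve^\ast T_\ve)^{-1/2} \in C^\infty(\mathrm{End}(E|_{\del M})),
\end{equation*}
and a short direct computation (using $T_\ve \Pi_0 = \Pi_\ve T_\ve$) yields $U_\ve \Pi_0 U_\ve^{-1} = \Pi_\ve$, i.e.\ $U_\ve(K_0) = K_\ve$ fiberwise.

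To extend to $M$, pick a collar $V \cong \del M \times [0,\delta)$ with projection $p : V \to \del M$, and a cutoff $\chi : M \to [0,1]$ with $\chi \equiv 1$ near $\del M$ and $\mathrm{supp}\,\chi \subset V$. Shrinking $B$ once more so that $U_\ve$ is uniformly close to $\mathds{1}$, the principal matrix logarithm $A_\ve := \log U_\ve$ is a well-defined smooth family of skew-Hermitian endomorphisms of $E|_{\del M}$. Set
\begin{equation*}
\kappa_\ve := \exp\bigl(\chi \cdot p^\ast A_\ve\bigr) \text{ on } V, \qquad \kappa_\ve := \mathds{1} \text{ elsewhere,}
\end{equation*}
which glue into a smooth family of unitary automorphisms of $E$ over all of $M$ with $\kappa_\ve|_{\del M} = U_\ve$. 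This gives the desired identification $\kappa_\ve(\mcD_0 \cap C^\infty(E)) = \mcD_\ve \cap C^\infty(E)$. The only delicate point, which is the author's appeal to contractibility of $B$, is ensuring that $T_\ve$ stays invertible and $U_\ve$ stays in the domain of the principal logarithm; both hold after a finite shrinking of $B$ because $T_0 = U_0 = \mathds{1}$ and all constructions are continuous in the $C^0$ norm of $\ve$.
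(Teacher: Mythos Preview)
Your argument is correct and considerably more explicit than what the paper offers: there the entire proof is the single sentence ``follows immediately from condition 2) and contractibility of $B$.'' That sketch presumably intends a section-lifting argument over the contractible parameter space, whereas you give a concrete construction via Kato's formula $T_\ve=\Pi_\ve\Pi_0+(\mathds{1}-\Pi_\ve)(\mathds{1}-\Pi_0)$ and polar decomposition on the boundary, followed by a cutoff-and-exponentiate extension into a collar. The intertwining identity $T_\ve\Pi_0=\Pi_\ve T_\ve$ indeed forces $(T_\ve^\ast T_\ve)^{1/2}$ to commute with $\Pi_0$, so $U_\ve\Pi_0 U_\ve^{-1}=\Pi_\ve$ as you claim.

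Two small remarks. First, when you form $p^\ast A_\ve$ you are implicitly using an identification $E|_V\cong p^\ast(E|_{\del M})$; this is harmless (parallel transport for any unitary connection, or any smooth unitary trivialisation along the normal direction, supplies it), but it is worth saying so that $p^\ast A_\ve$ is literally a skew-Hermitian endomorphism of $E|_V$. Second, your construction requires shrinking $B$ twice (to make $T_\ve$ invertible and to keep $U_\ve$ in the domain of the principal logarithm), whereas the abstract contractibility argument in the paper would in principle cover all of the original $B$ at once. In the context of the paper this is immaterial, since the statement and its uses already permit shrinking $B$.
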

Consequently, the operator 
$$A_\ve:= \kappa_\ve A \kappa^{-1}_\ve=\rho_1\kappa_\ve \Lambda^k_\del \kappa^{-1}_\ve\rho$$ preserves $\mcD_\ve$, as does $A^\ast_\ve$ because the latter is given by   
\begin{align}\label{14:42:49}
A^\ast_\ve =  \rho \kappa_\ve\big(\Lambda^k_\del  + \tfrac{1}{\sqrt{g}}  [\Lambda^k_\del,\sqrt{g}] \big) \kappa^{-1}_\ve\rho_1,
\end{align} 
where $\sqrt{g}dtdr$ denotes the Riemannian volume form of $(M,g)$ on the chart $U$.

\begin{lem}\label{10:28:10}
For sufficiently small $B\subset C^\infty(H)$ there exists an $a\in\mathbb{Z}_{\geq 0}$ such that 
\begin{align*}
Q_\ve(A_\ve \varphi,A_\ve \varphi)=\text{Re}\ Q_\ve(\varphi,A_\ve^\ast A_\ve \varphi)+\mathcal{L}(|\ve|^2_{\lceil k+a\rceil}; ||\text{D}\varphi||^2_{\del,k-1})
\end{align*}
for all $k\in \R_{\geq -\frac{1}{2}}$, $\ve\in B$ and $\varphi\in \mcD_\ve\cap C^\infty_c(U,E)$ (see (\ref{09:49:34}) for the definition of $||\text{D}(\cdot)||_{\del,k-1}$). 
\end{lem}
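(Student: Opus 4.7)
The plan is to apply Lemma \ref{13:40:34} with $A=A_\ve$, reducing the problem to bounding the six remainder terms on the right-hand side of (\ref{17:23:47}) by $\mcL(|\ve|^2_{\lceil k+a\rceil};\|\text{D}\varphi\|^2_{\del,k-1})$. First I would verify the hypotheses of Lemma \ref{13:40:34}: by construction of $\kappa_\ve$ (Lemma \ref{11:50:55}) and the fact that $\Lambda^k_\del$ acts diagonally in the chosen unitary trivialization of $E|_U$, the operator $A_\ve$ preserves $\mcD_\ve\cap C^\infty_c(U,E)$; the statement for $A_\ve^\ast$ follows from (\ref{14:42:49}), since the correction $\tfrac{1}{\sqrt{g}}[\Lambda^k_\del,\sqrt{g}]$ is still tangential and diagonal in that trivialization.

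\textbf{Operator-order bookkeeping.} Because $\Lambda^k_\del$ commutes with all coordinate derivatives $\partial_r,\partial_{t_j}$, both $A_\ve$ and $A_\ve^\ast$ are tangential pseudodifferential operators of order $k$ containing no $\partial_r$, and their principal symbols coincide, so $A_\ve^\ast-A_\ve$ is tangential of order $k-1$. Writing $P_\ve=a^r_\ve\partial_r+\sum_j a^j_\ve\partial_{t_j}+b_\ve$ in the chart, the commutator $[P_\ve,A_\ve]$ is purely tangential of order $k$, with symbol determined by one derivative of the coefficients of $P_\ve$, of the cutoffs $\rho,\rho_1$, and of $\kappa_\ve^{\pm 1}$. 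The analogous statement holds for $[P^\ast_\ve,A_\ve]$, while $[A_\ve^\ast-A_\ve,P_\ve]$ and $[A_\ve^\ast-A_\ve,P_\ve^\ast]$ are tangential of order $k-1$ and $[A_\ve,[P_\ve,A_\ve]]$, $[A_\ve,[P_\ve^\ast,A_\ve]]$ are tangential of order $2k-1$.

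\textbf{Bounding the remainder terms.} For the leading terms $(1)$ and $(1^\ast)$ the symbol calculus for $\Lambda^k_\del$ combined with the Leibniz estimates of Prop.\,\ref{17:24:46} yields $\|[P_\ve,A_\ve]\varphi\|\leq \mcL(|\ve|_{k+a};\|\varphi\|_{\del,k})\leq \mcL(|\ve|_{k+a};\|\text{D}\varphi\|_{\del,k-1})$, giving the required bound after squaring. For the mixed terms, e.g.\ $(2)=(P_\ve\varphi,(A_\ve^\ast-A_\ve)[P_\ve,A_\ve]\varphi)$, I would pair in tangential Sobolev spaces via $|(f,g)|\leq \|f\|_{\del,1-k}\|g\|_{\del,k-1}$: the operator $(A_\ve^\ast-A_\ve)[P_\ve,A_\ve]$ is tangential of order $2k-1$, so its $\|\cdot\|_{\del,k-1}$-norm applied to $\varphi$ is controlled by $\mcL(|\ve|_{k+a};\|\varphi\|_{\del,k})$, while for $k\geq -\tfrac12$ we have $\|P_\ve\varphi\|_{\del,1-k}\leq C\|\text{D}\varphi\|_{\del,1-k}\leq C\|\text{D}\varphi\|_{\del,k-1}$. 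Terms $(3),(3^\ast),(4),(4^\ast)$ are handled by exactly the same scheme, the decisive point in each being the order count above together with (\ref{10:50:14})--(\ref{10:50:23}) to combine the Leibniz factors.

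\textbf{Main obstacle.} The technically delicate part is the bookkeeping of $\ve$-dependence. Each commutator $[\Lambda^k_\del,f]$ admits a symbolic expansion involving arbitrarily many tangential derivatives of $f$, and with $f\in\{\kappa_\ve^{\pm 1},a^i_\ve,b_\ve\}$ these translate into arbitrarily many derivatives of $\ve$. The $\mcL$-notation is precisely tailored to such situations, but one must verify monomial-by-monomial that no more than $\lceil k+a\rceil$ derivatives fall on $\ve$, and that high derivatives on $\ve$ never occur simultaneously with high derivatives on $\varphi$. Carrying this out in the fractional, negative-index tangential Sobolev framework (where the Leibniz rules of Prop.\,\ref{17:24:46} behave less transparently than in positive degree) is the principal additional difficulty compared to the treatment in \cite{MR0461588}.
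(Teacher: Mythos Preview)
Your overall strategy—apply Lemma~\ref{13:40:34} and bound the eight remainder terms—matches the paper's. However, the specific scheme you propose for the mixed terms contains a genuine order-counting error that makes the argument fail as written.

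Consider your treatment of term $(2)$. You pair via $|(f,g)|\leq\|f\|_{\del,1-k}\|g\|_{\del,k-1}$ and claim that the order-$(2k-1)$ operator $(A_\ve^\ast-A_\ve)[P_\ve,A_\ve]$, measured in $\|\cdot\|_{\del,k-1}$, gives $\mcL(|\ve|_{k+a};\|\varphi\|_{\del,k})$. But an operator of tangential order $2k-1$ maps $\|\cdot\|_{\del,3k-2}$ to $\|\cdot\|_{\del,k-1}$, not $\|\cdot\|_{\del,k}$ to $\|\cdot\|_{\del,k-1}$; for $k>1$ this is far too many derivatives on $\varphi$. On the other side, the inequality $\|P_\ve\varphi\|_{\del,1-k}\leq C\|\text{D}\varphi\|_{\del,k-1}$ requires $1-k\leq k-1$, i.e.\ $k\geq 1$, so it fails for $-\tfrac12\leq k<1$. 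The same problem recurs in your handling of $(3)$ and $(4)$. (Incidentally, $[P_\ve,A_\ve]$ is not purely tangential: the term $[b^r_\ve,A_\ve]\partial_r$ survives, which is why the correct bound is by $\|\text{D}\varphi\|_{\del,k-1}$ rather than $\|\varphi\|_{\del,k}$.)

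The paper avoids this by never pairing at index $(1-k,k-1)$. For $(2)$ it moves $A_\ve^\ast-A_\ve$ across by adjointness and uses the plain $L^2$ pairing, so each factor is separately $\mcL(|\ve|_{\lceil k+a\rceil};\|\text{D}\varphi\|_{\del,k-1})$. For $(3)$ it expands $P_\ve=b_{\ve,i}\partial_i+c_\ve$, brings the $b_{\ve,i}$ across, and pairs at $(-1,1)$ or $(0,0)$ so that each side carries exactly one operator of order $\sim k$; this requires showing $[b_{\ve,i}^\ast,A_\ve-A_\ve^\ast]$ is of order $k-2$, which uses the explicit formula (\ref{16:46:01}) and the double-commutator estimate (\ref{11:45:07}). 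For $(4)$ the double commutator $[\Lambda^k_\del,[\Lambda^k_\del,b_{\ve,i}]]$ cannot be split by a simple duality, and the paper uses the factorization $\Lambda^k_\del=(\Lambda^{k/4}_\del)^4$ to redistribute powers symmetrically across the pairing, invoking (\ref{11:44:59}).

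Two smaller points: you invoke Prop.~\ref{17:24:46}, but in a boundary chart one needs the tangential version Prop.~\ref{11:36:52}; and the Leibniz bounds there do \emph{not} give a simple symbol calculus in negative tangential index—the paper's estimates are done commutator-by-commutator precisely because the Leibniz rules behave differently for $s\geq 0$ and general $s$.
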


\begin{proof}
We need to estimate the remainder terms $\textbf{(1)}$-$\textbf{(4)}$ and $\textbf{(1}^\ast\textbf{)}$-$\textbf{(4}^\ast\textbf{)}$ in (\ref{17:23:47}). By Lem.\ref{15:19:51} and the fact that $A_\ve$ preserves $\mcD_\ve$, we can replace $P_\ve^\ast$ by $P_{\ve,f}^\ast$ everywhere in the estimates. This puts $P_\ve$ and $P_{\ve,f}^\ast$ on equal footing (both are first-order differential operators), and so by symmetry it suffices to discuss $\textbf{(1)}$-$\textbf{(4)}$. On the chart $U$ we can write $P_\ve=b_{\ve,i}\del_i+c_\ve$ (repeated indices are implicitly summed over), where $b_{\ve,i} $ and $ c_\ve $ are endomorphisms of $E$ that depend on $\ve$ and finitely many of its derivatives. We will also use
\begin{align}\label{16:00:32}
A_\ve=\rho \Lambda^k_\del + \rho_1 \kappa_\ve [\Lambda^k_\del,\kappa_\ve^{-1}\rho]=:\rho \Lambda^k_\del+A_\ve',
\end{align}
where $A'_\ve$ is of order $k-1$ due to Prop.\ref{11:36:52}$ii)$ (even though $\kappa_\ve$ is a matrix, $\Lambda^k_\del$ acts diagonally so $[\Lambda^k_\del,\kappa_\ve^{-1}\rho]$ is a matrix whose entries are commutators of functions and $\Lambda^k_\del$ to which \ref{11:36:52}$ii)$ applies). 
To bound $\textbf{(1)}$, we first use (\ref{16:00:32}) to estimate
\begin{align*}
||[P_\ve,A_\ve]\varphi||\leq & ||\rho[P_\ve,\Lambda^k_\del]\varphi||+||[P_\ve,\rho]\Lambda^k_\del\varphi||+||[P_\ve,A'_\ve] \varphi||  \\ \leq & ||\rho[P_\ve,\Lambda^k_\del]\varphi||+ \mathcal{L}(|\ve|_{\lceil k+a\rceil}; ||\text{D}\varphi||_{\del,k-1}).
\end{align*}
Note that we have to use $||\text{D}\varphi||_{\del,k-1}$ instead of $||\varphi||_{\del,k}$ because $P_\ve$ is not necessarily tangential. Next, we use that $\del_i$ and $\Lambda^k_\del$ commute together with (\ref{11:44:35}) to obtain  
\begin{align*}
||\rho[P_\ve,\Lambda^k_\del]\varphi||=&||\rho([b_{\ve,i} ,\Lambda_\del^k] \del_i+[c_\ve,\Lambda_\del^k]) \varphi|| 	
\leq  \mathcal{L}(|\ve|_{\lceil k+a\rceil}; ||\text{D}\varphi||_{\del,k-1}).
\end{align*}
This gives the desired bound for $\textbf{(1)}$. For $\textbf{(2)}$, we compute
\begin{align*}
(P_\ve \varphi, (A_\ve^\ast-A_\ve)[P_\ve,A_\ve]\varphi)=((A_\ve-A_\ve^\ast)P_\ve \varphi, [P_\ve,A_\ve]\varphi)	\leq || (A_\ve-A_\ve^\ast)P_\ve \varphi|| \cdot ||[P_\ve,A_\ve]\varphi||.
\end{align*} 
The last term was bounded above, while for the first term 
we can use (\ref{11:44:35}) together with
\begin{align}\label{16:46:01}
A_\ve-A_\ve^\ast=&\kappa_\ve(A-A^\ast)\kappa^{-1}_\ve=\kappa_\ve(\rho_1[\Lambda^k_\del, \rho] -\rho [\Lambda^k_\del, \rho_1] -\tfrac{1}{\sqrt{g}} \rho [\Lambda^k_\del,\sqrt{g}]\rho_1)\kappa_\ve^{-1}
\end{align} 
(see (\ref{14:42:49})) to obtain $|| (A_\ve-A_\ve^\ast)P_\ve \varphi||\leq \mcL(|\ve|_{\lceil k+a\rceil};||D\varphi||_{\del,k-1})$, giving the desired bound for $\textbf{(2)}$. For $\textbf{(3)}$ we compute 
\begin{align}\label{08:59:55}
(P_\ve \varphi,[A_\ve^\ast-A_\ve,P_\ve]A_\ve\varphi)=&(P_\ve \varphi, \big([A_\ve^\ast-A_\ve, b_{\ve,i} ]\del_i+ b_{\ve,i} [A_\ve^\ast-A_\ve,\del_i] +[A_\ve^\ast-A_\ve, c_\ve ]	\big)A_\ve\varphi)\nonumber\\
=& ([b_{\ve,i}^\ast,A_\ve-A_\ve^\ast]P_\ve \varphi, \del_iA_\ve\varphi)+( b_{\ve,i}^\ast P_\ve \varphi, [A_\ve^\ast-A_\ve,\del_i]A_\ve\varphi)\nonumber\\
&+([c_\ve^\ast,A_\ve-A_\ve^\ast  ]P_\ve \varphi, A_\ve\varphi).
\end{align} 
We will bound these last three terms separately. Using $(\varphi,\psi)\leq ||\varphi||_{\del,-s}||\psi||_{\del,s}$, valid for all $s\in \R$, we obtain
\begin{align*}
([b_{\ve,i}^\ast,A_\ve-A_\ve^\ast]P_\ve \varphi, \del_iA_\ve\varphi)\leq & || [b_{\ve,i}^\ast,A_\ve-A_\ve^\ast]P_\ve \varphi||_{\del,1} \cdot ||\del_iA_\ve\varphi ||_{\del,-1}.
\end{align*} 
Using (\ref{11:44:44}), we obtain  
\begin{align*}
||\del_iA_\ve\varphi ||_{\del,-1}=||\Lambda^{-1}_\del \del_iA_\ve\varphi ||_{}\leq & ||\del_i[\Lambda^{-1}_\del, \kappa_\ve\rho_1] \Lambda^k_\del \rho\kappa_\ve^{-1}\varphi||_{}	+||\del_i \kappa_\ve\rho_1 \Lambda^{k-1}_\del \rho\kappa_\ve^{-1}\varphi||_{}\\	\leq & \mcL(|\ve|_{\lceil k+a\rceil};||\text{D}\varphi||_{\del,k-1})	
\end{align*}
To obtain the same bound on $|| [b_{\ve,i}^\ast,A_\ve-A_\ve^\ast]P_\ve \varphi||_{\del,1}$ we need to show that $[b_{\ve,i}^\ast,A_\ve-A_\ve^\ast]$ is of order $k-2$. To this end, we use (\ref{16:46:01}) to write $A_\ve-A_\ve^\ast=A-A^\ast + R_\ve$, where 
\begin{align}
R_\ve:=\kappa_\ve(\rho_1[[\Lambda^k_\del, \rho],\kappa_\ve^{-1}] -\rho [[\Lambda^k_\del, \rho_1],\kappa_\ve^{-1}] -\tfrac{1}{\sqrt{g}} \rho [[\Lambda^k_\del,\sqrt{g}],\kappa_\ve^{-1}]\rho_1).
\end{align} 
Since $R_\ve$ is of order $k-2$ so is $[b_{\ve,i},R_\ve]$, while $[b_{\ve,i},A-A^\ast]$ is of order $k-2$ due to (\ref{11:45:07}). All in all this yields 
\begin{align*}
|| [b_{\ve,i}^\ast,A_\ve-A_\ve^\ast]P_\ve \varphi||_{\del,1}\leq \mcL(|\ve|_{\lceil k+a\rceil};||\text{D}\varphi||_{\del,k-1}),
\end{align*}
completing the bound on the first term in (\ref{08:59:55}). The third term in in (\ref{08:59:55}), involving $c_\ve$, is treated similarly. For the second term in (\ref{08:59:55}), we first write
\begin{align}\label{11:26:50}
( b_{\ve,i}^\ast P_\ve \varphi, [A_\ve^\ast-&A_\ve,\del_i]A_\ve\varphi)= ( [\kappa_\ve,\del_i]^\ast b_{\ve,i}^\ast P_\ve \varphi, (A^\ast-A)\kappa_\ve^{-1}A_\ve\varphi)\\ & +(\kappa^{-1}_\ve b_{\ve,i}^\ast P_\ve \varphi, [A^\ast-A,\del_i]\kappa_\ve^{-1}A_\ve\varphi)+( \kappa_\ve^{-1}b_{\ve,i}^\ast P_\ve \varphi, (A^\ast-A)[\kappa_\ve^{-1},\del_i]A_\ve\varphi).\nonumber
\end{align}
Note that there are no boundary contributions because $[\kappa_\ve,\del_i]$ is of order zero. We have 
\begin{align*}
( [\kappa_\ve,\del_i]^\ast b_{\ve,i}^\ast P_\ve \varphi, (A^\ast-A)\kappa_\ve^{-1}A_\ve\varphi)\leq & ||[\kappa_\ve,\del_i]^\ast b_{\ve,i}^\ast P_\ve \varphi ||_{\del,k-1}||(A^\ast-A)\kappa_\ve^{-1}A_\ve\varphi ||_{\del,1-k}\\
\leq & \mcL(|\ve|_{\lceil k+a\rceil}; ||\text{D}\varphi||_{\del,k-1}) \cdot ||\kappa_\ve^{-1}A_\ve\varphi ||_{\del,0}\\
\leq & \mcL(|\ve|^2_{\lceil k+a\rceil}; ||\text{D}\varphi||^2_{\del,k-1}).
\end{align*}
The other two terms in (\ref{11:26:50}) can be bound similarly, completing the bound on the second term in (\ref{08:59:55}) and finishing the bound on $\textbf{(3)}$.
Finally, for \textbf{(4)} we compute 
\begin{align}\label{09:17:21}
(P_\ve \varphi,[A_\ve,[A_\ve,P_\ve]]\varphi)=&(P_\ve \varphi, [A_\ve,[A_\ve, b_{\ve,i} ]]\del_i\varphi)+(P_\ve \varphi,2[A_\ve, b_{\ve,i} ][A_\ve,\del_i]\varphi)\nonumber\\&+(P_\ve \varphi,b_{\ve,i}[A_\ve,[A_\ve,\del_i]]\varphi)+(P_\ve \varphi,[A_\ve,[A_\ve, c_\ve ]]\varphi).
\end{align} 
Using the same steps as before we see that the second term in this expression satisfies 
$$(P_\ve \varphi,2[A_\ve, b_{\ve,i} ][A_\ve,\del_i]\varphi)=2([ b_{\ve,i}^\ast ,A_\ve^\ast]P_\ve \varphi,[A_\ve,\del_i]\varphi)\leq \mcL(|\ve|_{\lceil k+a\rceil}^2;||\text{D}\varphi||^2_{\del,k-1}).
$$
For the third term in (\ref{09:17:21}) we first write
\begin{align}
(P_\ve \varphi,b_{\ve,i} [A_\ve,[A_\ve,\del_i]]\varphi)=&(b_{\ve,i}^\ast P_\ve \varphi, \kappa_\ve[A,[A,\kappa_\ve^{-1}\del_i\kappa_\ve]]\kappa_\ve^{-1}\varphi)\nonumber \\
=&(\kappa_\ve^{-1}b_{\ve,i}^\ast P_\ve \varphi,([A,[A,\del_i]]+[A,[A,\kappa_\ve^{-1}[\del_i,\kappa_\ve]]]) \kappa_\ve^{-1}\varphi).\label{12:50:23}
\end{align} 
Using (\ref{11:44:51}) it follows that
\begin{align*}
(\kappa_\ve^{-1}b_{\ve,i}^\ast P_\ve \varphi,[A,[A,\del_i]]\kappa_\ve^{-1}\varphi)\leq & ||\kappa_\ve^{-1}b_{\ve,i}^\ast P_\ve \varphi||_{\del,k-1} ||[A,[A,\del_i]]\kappa_\ve^{-1}\varphi||_{\del,1-k}\\
\leq & \mcL(|\ve|_{\lceil k+a\rceil}^2;||\text{D}\varphi||^2_{\del,k-1}),
\end{align*} 
while the second term in (\ref{12:50:23}) can be written as (by expanding out the commutator)
\begin{align*}
(A^\ast \kappa_\ve^{-1}b_{\ve,i}^\ast P_\ve \varphi,[A,\kappa_\ve^{-1}[\del_i,\kappa_\ve]] \kappa_\ve^{-1}\varphi)-([A,\kappa_\ve^{-1}[\del_i,\kappa_\ve]]^\ast\kappa_\ve^{-1}b_{\ve,i}^\ast P_\ve \varphi, A\kappa_\ve^{-1}\varphi),
\end{align*} 
which can be bounded as above, using $(\cdot,\cdot)\leq ||\cdot||_{\del,-1}||\cdot||_{\del,1}$ for the first term and $(\cdot,\cdot)\leq ||\cdot||_{\del,0}||\cdot||_{\del,0}$ for the second. We have now bounded two of the terms in (\ref{09:17:21}). The remaining two require some additional work. Writing $A_\ve=\rho\Lambda^k_\del+A_\ve'$ as in (\ref{16:00:32}), we have
\begin{align}\label{13:51:30}
[A_\ve,[A_\ve, b_{\ve,i} ]]=&[\rho\Lambda^k_\del,[\rho\Lambda^k_\del,b_{\ve,i}]]+[[\rho\Lambda^k_\del,A_\ve'],b_{\ve,i}]+2[A_\ve',[\rho\Lambda^k_\del,b_{\ve,i}]]+[A'_\ve,[A'_\ve, b_{\ve,i} ]].
\end{align} 
The last two terms are commutators of operators of order $k-1$, which we can expand out and distribute inside the inner product $(P_\ve \varphi, [A_\ve,[A_\ve, b_{\ve,i} ]]\del_i\varphi)$ and subsequently bound with the same steps as before. The first two terms in (\ref{13:51:30}) need additional care. 
We have
\begin{align*}
[\rho\Lambda^k_\del,[\rho\Lambda^k_\del, b_{\ve,i} ]]=&\rho[\rho,[\Lambda^k_\del,b_{\ve,i}]]\Lambda^k_\del +\rho [\Lambda^k_\del,\rho][\Lambda^k_\del,b_{\ve,i}]+\rho^2[\Lambda^k_\del,[\Lambda^k_\del,b_{\ve,i}]].
\end{align*} 
All terms, except for the last one, contain a product of two separate terms involving $\Lambda^k_\del$. Bringing one of these to the other side of the inner product $(P_\ve \varphi,[\rho\Lambda^k_\del,[\rho\Lambda^k_\del, b_{\ve,i} ]]\del_i\varphi)$, we obtain the desired estimate by using similar arguments as before. This trick fails with the last term as it involves the double commutator $[\Lambda^k_\del,[\Lambda^k_\del, b_{\ve,i} ]]$. Write $\Lambda^k_\del=(\Lambda^{k/4}_\del)^4$, so that 
\begin{align*}
[\Lambda^k_\del,[\Lambda^k_\del, b_{\ve,i} ]]=\sum_{j,l=0}^3\Lambda_\del^{\frac{(j+l)k}{4}} [\Lambda_\del^{\frac{k}{4}},[\Lambda_\del^{\frac{k}{4}}, b_{\ve,i} ]]\Lambda_\del^{\frac{(6-j-l)k}{4}}.
\end{align*} 
All of the summands, except for $j+l=3$, can be redistributed inside the inner product $(P_\ve \varphi,[\rho\Lambda^k_\del,[\rho\Lambda^k_\del, b_{\ve,i} ]]\del_i\varphi)$ so that both sides contain four factors of $\Lambda^{k/4}_\del$, and these can be estimated as before. The terms resulting from $j+l=3$ are treated as follows:
\begin{align*}
(P_\ve \varphi,\rho^2\Lambda_\del^{\frac{3k}{4}}[\Lambda_\del^{\frac{k}{4}},[\Lambda_\del^{\frac{k}{4}}, b_{\ve,i} ]]\Lambda^{\frac{3k}{4}}_\del \del_i\varphi)
=& (\Lambda^{\frac{3k}{4}}_\del\rho^2P_\ve \varphi,[\Lambda_\del^{\frac{k}{4}},[\Lambda_\del^{\frac{k}{4}}, b_{\ve,i} ]]\Lambda_\del^{\frac{3k}{4}} \del_i\varphi)\\
\leq & || \Lambda_\del^{\frac{3k}{4}}\rho^2P_\ve \varphi ||_{\del,\frac{k}{4}-1} ||[\Lambda_\del^{\frac{k}{4}},[\Lambda_\del^{\frac{k}{4}}, b_{\ve,i} ]]\Lambda_\del^{\frac{3k}{4}} \del_i\varphi||_{\del,1-\frac{k}{4}}\\
\leq & C||P_\ve \varphi||_{\del,k-1}\cdot |b_{\ve,i}|_{\lceil\frac{k}{4}+a\rceil}\cdot ||\text{D}\varphi||_{\del,k-1}\\
\leq & \mcL(|\ve|^2_{\lceil k+a\rceil};||\text{D}\varphi||^2_{\del,k-1}),
\end{align*} 
where in the third step we used (\ref{11:44:59}). This completes the bound for the term in (\ref{09:17:21}) involving $b_{\ve,i}$.  
The same type of reasoning works for the term involving $c_\ve$, finishing the bound on \textbf{(4)} and thereby completing the proof.
\end{proof}

\noindent \textbf{II)} Let $(U,x^i)$ be a coordinate chart with $\overline{U}\cap \del M=\emptyset$ over which $E$ is trivialized, and let $\rho,\rho_1\in C^\infty_c(U)$ satisfy $\rho\subset \rho_1$. For $k\in \mathbb{Z}_{\geq 0}$ fixed we consider the operator $A:=\rho_1\Lambda^k \rho$ on $L^2(E)$. Since $\overline{U}\cap \del{M}=\emptyset$, both $A$ and $A^\ast=  \rho \big(\Lambda^k_\del  + \tfrac{1}{\sqrt{g}}  [\Lambda^k_\del,\sqrt{g}] \big) \rho_1$ preserve $\mcD_\ve$ for all $\ve\in B$. 

\begin{lem}\label{10:03:26}
For $B\subset C^\infty(H)$ sufficiently small there exists an $a\in\mathbb{Z}_{\geq 0}$ such that
\begin{align*}
Q_\ve(A\varphi,A\varphi)=\text{Re}\ Q_\ve(\varphi,A^\ast A\varphi)+\mathcal{L}(|\ve|^2_{k+a}; ||\varphi||^2_k) 
\end{align*}
for all $k\in \mathbb{Z}_{\geq 0}$, $\ve\in B$ and $\varphi\in \mcD_\ve$.
\end{lem}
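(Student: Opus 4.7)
The plan is to apply Lem.\ref{13:40:34} to the operator $A=\rho_1\Lambda^k\rho$ and estimate each of the eight remainder terms $\textbf{(1)}$--$\textbf{(4)}$, $\textbf{(1}^\ast\textbf{)}$--$\textbf{(4}^\ast\textbf{)}$. The argument will follow the pattern of the proof of Lem.\ref{10:28:10}, but with two genuine simplifications. First, since $\overline{U}\cap\del M=\emptyset$, the subspace $\mcD_\ve$ is preserved by both $A$ and $A^\ast$ with no need for the unitary twist $\kappa_\ve$, and Lem.\ref{15:19:51} allows one to replace $P_\ve^\ast$ by the formal adjoint $P^\ast_{\ve,f}$ throughout. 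Secondly, since all operators in play are ordinary (not merely tangential) pseudodifferential operators on $\R^m$, one may work with the full Sobolev norms $||\cdot||_s$, exploit the identity $[\del_i,\Lambda^k]=0$, and replace the estimate $||P_\ve\varphi||_s\leq \mcL(|\ve|_{s+a};||\text{D}\varphi||_{\del,s})$ used in Lem.\ref{10:28:10} by the cleaner bound $||P_\ve\varphi||_s\leq \mcL(|\ve|_{s+a};||\varphi||_{s+1})$. By the symmetric roles of $P_\ve$ and $P^\ast_{\ve,f}$ it suffices to treat $\textbf{(1)}$--$\textbf{(4)}$.

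For $\textbf{(1)}$, write $P_\ve=b_{\ve,i}\del_i+c_\ve$ on $U$. Since $\del_i$ commutes with $\Lambda^k$, the commutator $[P_\ve,A]$ reduces to terms of the form $\rho_1[\Lambda^k,b_{\ve,i}]\rho\del_i+[\rho_1,b_{\ve,i}]\Lambda^k\rho\del_i+\rho_1\Lambda^k[\rho,b_{\ve,i}]\del_i+\ldots$, and Prop.\ref{17:24:46} applied to each piece immediately yields
\[
||[P_\ve,A]\varphi||\leq \mcL(|\ve|_{k+a};||\varphi||_k).
\]
For $\textbf{(2)}$, the difference $A^\ast-A$ is a sum of commutators of $\Lambda^k$ with smooth functions (the cutoffs and the density factor $\sqrt{g}$), hence of order $k-1$ by Prop.\ref{17:24:46}; this gives $||(A^\ast-A)P_\ve\varphi||\leq \mcL(|\ve|_{k+a};||\varphi||_k)$ which, combined with the bound on $\textbf{(1)}$ via Cauchy-Schwarz, settles $\textbf{(2)}$. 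Term $\textbf{(3)}$ is handled by expanding $[A^\ast-A,P_\ve]$ into the pieces coming from $b_{\ve,i}\del_i$ and $c_\ve$, and then bounding each contribution with dual pairings $(\cdot,\cdot)\leq ||\cdot||_{-1}||\cdot||_{1}$ together with the order counts for $A$, $A^\ast$ and their commutators with the coefficients of $P_\ve$.

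The main obstacle, exactly as in Lem.\ref{10:28:10}, is term $\textbf{(4)}$, containing the double commutator $[A,[P_\ve,A]]$. Expanding as
\[
[A,[P_\ve,A]]=[A,[A,b_{\ve,i}]]\del_i+2[A,b_{\ve,i}][A,\del_i]+b_{\ve,i}[A,[A,\del_i]]+[A,[A,c_\ve]],
\]
all but the terms containing the iterated commutator $[\Lambda^k,[\Lambda^k,b_{\ve,i}]]$ (or the analogous one with $c_\ve$) can be disposed of by the methods used above. For the remaining piece, which a priori is only of order $k$ rather than $k-1$, I would split $\Lambda^k=(\Lambda^{k/4})^4$, expand the double commutator, and for every summand except the balanced one redistribute one or more $\Lambda^{k/4}$ across the inner product so that both sides carry two factors of $\Lambda^{k/4}$; these are then estimated as in the earlier terms. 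The balanced summand is controlled directly by
\[
(\Lambda^{3k/4}\rho^2 P_\ve\varphi,\,[\Lambda^{k/4},[\Lambda^{k/4},b_{\ve,i}]]\Lambda^{3k/4}\del_i\varphi)\leq C\,||P_\ve\varphi||_{k-1}\,|b_{\ve,i}|_{\lceil k/4+a\rceil}\,||\varphi||_k,
\]
followed by $||P_\ve\varphi||_{k-1}\leq\mcL(|\ve|_{k+a};||\varphi||_k)$, giving the desired bound $\mcL(|\ve|^2_{k+a};||\varphi||^2_k)$. All remaining summands and the analogous manipulations for the $P^\ast_{\ve,f}$-side follow by identical reasoning, completing the proof.
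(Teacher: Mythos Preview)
Your proposal is correct and follows essentially the same approach as the paper. The paper's own proof is one sentence referring back to Lem.\ref{10:28:10} with precisely the two simplifications you identify: setting $\kappa_\ve=\text{Id}$ since $\overline{U}\cap\del M=\emptyset$, and replacing $||\cdot||_{\del,s}$, $\Lambda^k_\del$ by $||\cdot||_s$, $\Lambda^k$ throughout.
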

\begin{proof}
Almost identical to the proof of Lem.\ref{10:28:10}; one only has to replace $||\cdot ||_{\del,s}$ and $\Lambda^k_\del$ by $||\cdot ||_s$ and $\Lambda^k$, respectively. Also, since $A$ and $A^\ast$ already preserve $D_\ve$, the proof simplifies because we can set $\kappa_\ve$ equal to the identity.  
\end{proof}

\noindent \textbf{III)} Finally we present a lemma that can be thought of as a special case of Lem.\ref{10:28:10} and Lem.\ref{10:03:26} for $k=0$, the main difference being that it is not restricted to a coordinate chart.
\begin{lem}\label{14:50:22}
For every real-valued function $\rho\in C^\infty(M;\R)$ we have
\begin{align*}
\hspace{15mm} Q_\ve(\rho \varphi,\rho \varphi)=\text{Re} \ Q_\ve(\varphi,\rho^2\varphi)+\mathcal{O}(||\varphi||^2) \leq CQ_\ve(\varphi,\varphi)\hspace{15mm} \forall \varphi\in \mcD_\ve . 
\end{align*}
Here $\mathcal{O}(||\varphi||^2)$ denotes a term that can be bounded by $C||\varphi||^2$ for some constant $C$. 
\end{lem}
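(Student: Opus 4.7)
The plan is to apply Lem.\ref{13:40:34} with $A$ given by multiplication by the real scalar function $\rho$. Since $\rho$ is real, $A^\ast = A$ as operators on $L^2(E)$; since $\rho$ is scalar, $A$ commutes with the bundle endomorphism $\sigma(P^\ast_{\ve,f},\nu^\flat)$, so $A$ preserves the space $\mcD_\ve$ cut out by the Neumann boundary condition (c.f.\ Lem.\ref{15:19:51}). No local straightening of the boundary condition via the twist $\kappa_\ve$ is needed and the argument is global on $M$, which is why this statement is not restricted to a chart.

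With $A^\ast - A = 0$, the four terms $(2), (3), (2^\ast), (3^\ast)$ in (\ref{17:23:47}) vanish identically. Writing $P_\ve = b_{\ve,i}\del_i + c_\ve$ locally, the commutator $[P_\ve,\rho] = b_{\ve,i}(\del_i\rho)$ is a matrix-valued multiplication operator, and since $\rho$ is scalar it commutes with any such endomorphism; hence $[A,[P_\ve,A]] = 0$ and similarly for $P^\ast_\ve$, so $(4)$ and $(4^\ast)$ also vanish. Only the squared commutators $(1) = \|[P_\ve,\rho]\varphi\|^2$ and $(1^\ast) = \|[P^\ast_\ve,\rho]\varphi\|^2$ remain, and each is bounded by $C\|\varphi\|^2$ because the commutators are zeroth-order with coefficients depending only on $P_\ve$ and the first jet of $\rho$. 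This gives the equality $Q_\ve(\rho\varphi,\rho\varphi) = \text{Re}\,Q_\ve(\varphi,\rho^2\varphi) + \mathcal{O}(\|\varphi\|^2)$.

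For the inequality I would bound $|\text{Re}\,Q_\ve(\varphi,\rho^2\varphi)|$ piece by piece. The zeroth-order piece $|(\varphi,\rho^2\varphi)|$ is at most $C\|\varphi\|^2$. For the piece involving $P_\ve$, I would write $P_\ve(\rho^2\varphi) = \rho^2 P_\ve\varphi + [P_\ve,\rho^2]\varphi$, apply Cauchy--Schwarz, and use that the commutator is zeroth-order together with the elementary estimate $ab \leq a^2 + b^2$ to bound the whole piece by $C(\|P_\ve\varphi\|^2 + \|\varphi\|^2)$; the $P^\ast_\ve$ piece is treated identically. Summing gives $|\text{Re}\,Q_\ve(\varphi,\rho^2\varphi)| \leq C Q_\ve(\varphi,\varphi)$, which combined with the previous equality yields the stated inequality. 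There is no real obstacle: by using a scalar multiplier of order zero, every commutator of interest lands in the bounded zeroth-order range, bypassing the fractional Sobolev and tangential Fourier estimates that made Lem.\ref{10:28:10} and Lem.\ref{10:03:26} delicate.
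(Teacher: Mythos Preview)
Your proof is correct and follows essentially the same approach as the paper's: the paper also invokes Lem.~\ref{13:40:34} with $A=\rho$ (noting that $\rho$ is real, bounded, and preserves $\mcD_\ve$), and then bounds $(P_\ve\varphi,P_\ve\rho^2\varphi)$ exactly as you do via the commutator $[P_\ve,\rho^2]$. You simply spell out in more detail which of the eight remainder terms in (\ref{17:23:47}) vanish and why.
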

\begin{proof}
The equality follows from Lem.\ref{13:40:34} by setting $A$ equal to multiplication by $\rho$, which is self-adjoint, bounded and preserves $\mcD_\ve$ by Lem.\ref{15:19:51}. For the inequality we compute
\begin{align*}
(P_\ve \varphi, P_\ve \rho^2 \varphi)=(P_\ve \varphi,[P_\ve,\rho^2] \varphi)+(P_\ve \varphi, \rho^2 P_\ve \varphi) &\leq C ||P_\ve \varphi|| ||\varphi||+ C||P_\ve \varphi||^2\\&\leq C ||\varphi||^2+C||P_\ve \varphi||^2\leq CQ_\ve(\varphi,\varphi).
\end{align*}
The same estimate holds for $P_\ve$ replaced by $P^\ast_\ve$, so the inequality  follows.
\end{proof}


Having dealt with all the integration by parts, we can discuss the bounds on interior- and boundary charts that are needed for the proof of Thm.\ref{13:30:27}. 

\subsubsection*{Interior charts}

Let $P_\ve$ a family of operators as in Thm.\ref{13:30:27}.
\begin{thm}\label{13:13:59} (\lq\lq G\"arding's inequality\rq\rq) 
For $B$ sufficiently small 
there is a constant $C$ such that $||\varphi||_1^2\leq C Q_\ve(\varphi,\varphi)$ for all $ \varphi\in C^\infty_c(\mathring{M},E)$ and $\ve\in B$. 
\end{thm}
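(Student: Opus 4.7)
Since $\varphi$ has compact support in $\mathring{M}$, integration by parts produces no boundary contribution, so $P^\ast_\ve \varphi = P^\ast_{\ve,f}\varphi$ and hence
\[
Q_\ve(\varphi,\varphi) = ||\varphi||^2 + ||P_\ve\varphi||^2 + ||P^\ast_{\ve,f}\varphi||^2 = ||\varphi||^2 + (\Delta_\ve\varphi,\varphi).
\]
Thus the statement reduces to the classical G\"arding inequality
\[
\mathrm{Re}\,(\Delta_\ve\varphi,\varphi) \geq c\,||\varphi||_1^2 - C\,||\varphi||^2 \qquad \forall \varphi\in C^\infty_c(\mathring{M},E),\ \ve\in B,
\]
for the second-order elliptic operator $\Delta_\ve$, with $c>0$ and $C$ uniform in $\ve\in B$.

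To establish this uniform estimate, first I would note that $\Delta_\ve$ is formally self-adjoint and, by condition 1) of the theorem, elliptic; its principal symbol $\sigma_2(\Delta_\ve)(x,\xi)$ is therefore a Hermitian matrix that is positive definite for $\xi\neq 0$. Since the coefficients of $P_\ve$ (and hence of $\Delta_\ve$) depend continuously on finitely many derivatives of $\ve$, compactness of $M\times S^{m-1}$ together with continuous dependence on $\ve$ at $\ve=0$ yields a constant $c_0>0$ and a sufficiently small $C^\infty$-neighbourhood $B$ of $0$ on which
\[
\sigma_2(\Delta_\ve)(x,\xi)\geq c_0\,|\xi|^2\,\mathrm{Id} \qquad \forall \ve\in B,\ x\in M,\ \xi\in T^\ast_xM.
\]

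Next, I would fix a finite partition of unity $\{\chi_\alpha^2\}$ subordinate to coordinate charts $U_\alpha\subset M$ trivialising $E$, with supports small enough that the top-order coefficients of $\Delta_\ve$ oscillate by less than a prescribed $\delta>0$ over each $U_\alpha$ (uniformly in $\ve\in B$). In each chart, freezing these coefficients at a point $x_\alpha\in U_\alpha$ produces a constant-coefficient operator $\Delta^{x_\alpha}_\ve$, to which Plancherel applied to $\chi_\alpha\varphi$ gives
\[
\mathrm{Re}\,(\Delta^{x_\alpha}_\ve(\chi_\alpha\varphi),\chi_\alpha\varphi) \geq c_0\,||\chi_\alpha\varphi||_1^2 - C_1\,||\chi_\alpha\varphi||^2.
\]
The difference $\Delta_\ve-\Delta^{x_\alpha}_\ve$ is of second order with top-order coefficients $O(\delta)$ on $U_\alpha$; choosing $\delta$ small absorbs the induced second-order error into $\tfrac{c_0}{2}||\chi_\alpha\varphi||_1^2$, while the lower-order part is handled via $2ab\leq \eta a^2+\eta^{-1}b^2$. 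Summing over $\alpha$, with commutators $[\partial_i,\chi_\alpha]$ contributing only $L^2$-terms, delivers the required inequality, which combined with the first paragraph proves the theorem.

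The sole delicate point is the uniformity in $\ve$: every constant appearing in the argument (the ellipticity constant $c_0$, the oscillation threshold $\delta$, the bounds coming from lower-order coefficients, and the bounds on $[\partial_i,\chi_\alpha]$) depends only on finitely many $C^k$-norms of the coefficients of $P_\ve$, hence on finitely many $C^k$-norms of $\ve$ itself, and can therefore be made $\ve$-independent on a sufficiently small $C^\infty$-neighbourhood $B$ of $0$.
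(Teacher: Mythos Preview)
Your proposal is correct and follows essentially the same approach as the paper. The paper's own proof is just a one-line reference to the standard G\aa rding inequality for elliptic operators (cf.\ \cite[Thm.~2.2.1]{MR0461588}), together with the observation that uniformity in $\ve$ holds because only finitely many derivatives of $\ve$ enter the estimate---precisely the point you make in your final paragraph. Your sketch of the underlying argument (partition of unity, freezing coefficients, Plancherel, absorbing the small oscillation error) is the standard one and matches the paper's more detailed commented-out version, with the cosmetic difference that you phrase things via $(\Delta_\ve\varphi,\varphi)$ while the paper works with $||P_\ve\varphi||^2+||P^\ast_{\ve,f}\varphi||^2$ separately.
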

\begin{proof}
This is a standard result for elliptic operators, see \cite[Thm.2.2.1]{MR0461588} for the case of the $\delbar$-operator. We can choose $C$ independent of $\ve\in B$ because the estimate only involves finitely many derivatives, so it suffices to impose a finite (but arbitrarily large) $C^k$-bound on $B$.   
\end{proof}
Let $U$ be a chart in the interior and let $\rho,\rho_1\in C^\infty_c(U)$ satisfy $\rho\subset \rho_1$.
\begin{prop}\label{11:38:30}
For $B\subset C^\infty(H)$ sufficiently small there exists an $a\in \mathbb{Z}_{\geq 0}$ such that 
\begin{align}
||\rho \varphi||^2_{k+2} \leq & \mcL(|\ve|_{k+a}^2; ||\rho_1F_\ve \varphi||_{k}^2)+\mcL(|\ve|_{k+a}^2;||F_\ve \varphi||^2)  	\label{14:46:19}
\end{align}
for all $k\geq 0$, $\varphi \in \text{Dom}(F_\ve)\cap C^\infty(E)$ and $\ve\in B$.
\end{prop}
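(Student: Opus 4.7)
The strategy is a standard interior elliptic bootstrap, carried out by induction on $k$, with the key ingredients being G\"arding's inequality (Thm.\ref{13:13:59}), the $Q_\ve$-adjoint formula (Lem.\ref{10:03:26}), and the defining relation (\ref{15:26:59}) of $F_\ve$. The plan is to fix an intermediate cutoff $\rho\subset \tilde\rho\subset \rho_1$ with support still in $U$, and consider the operator
\begin{equation*}
A:=\tilde\rho\,\Lambda^{k+1}\rho
\end{equation*}
on $L^2(E)$. Since $\overline{U}\cap\del M=\emptyset$ the spaces $\mcD_\ve$ impose no boundary constraint on sections with support in $U$, so both $A$ and $A^\ast$ preserve $\mcD_\ve$ and Lem.\ref{10:03:26} applies in the variant with exponent $k+1$.

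First I would apply G\"arding (Thm.\ref{13:13:59}) to the interior-supported section $A\varphi\in C^\infty_c(\mathring M,E)$, giving $\|A\varphi\|_1^2\le C Q_\ve(A\varphi,A\varphi)$. Writing $A\varphi=\Lambda^{k+1}\rho\varphi+[\tilde\rho,\Lambda^{k+1}]\rho\varphi$ and using Prop.\ref{17:24:46} to estimate the commutator, this yields
\begin{equation*}
\|\rho\varphi\|_{k+2}^2 \le C\,Q_\ve(A\varphi,A\varphi)+C\|\rho_1\varphi\|_{k+1}^2.
\end{equation*}
Next, by Lem.\ref{10:03:26} (at level $k+1$),
\begin{equation*}
Q_\ve(A\varphi,A\varphi)=\operatorname{Re} Q_\ve(\varphi,A^\ast A\varphi)+\mcL(|\ve|^2_{k+1+a};\|\varphi\|^2_{k+1}),
\end{equation*}
and since $A^\ast A\varphi\in\mcD_\ve\subset\overline{\mcD}_\ve$, (\ref{15:26:59}) gives $Q_\ve(\varphi,A^\ast A\varphi)=(F_\ve\varphi,A^\ast A\varphi)$. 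Because $A^\ast A$ has an outermost factor of $\rho$ on each side and $\rho\rho_1=\rho$, we may replace $F_\ve\varphi$ by $\rho_1 F_\ve\varphi$ inside this pairing and then estimate by duality between $L^2_k$ and $L^2_{-k}$, producing
\begin{equation*}
|(F_\ve\varphi,A^\ast A\varphi)|\le \|\rho_1 F_\ve\varphi\|_k\cdot\|A^\ast A\varphi\|_{-k}\le \mcL(|\ve|_{k+a};\|\rho_1F_\ve\varphi\|_k)\cdot\|\rho\varphi\|_{k+2}.
\end{equation*}
Absorbing the last factor into the left-hand side via $ab\le\tfrac{1}{2\delta}a^2+\tfrac{\delta}{2}b^2$ reduces everything to controlling the residual term $\mcL(|\ve|^2_{k+1+a};\|\tilde\rho\varphi\|^2_{k+1})$.

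This residual term is exactly the quantity controlled by the induction hypothesis, applied with $k$ replaced by $k-1$ and the cutoff pair $(\rho,\rho_1)$ replaced by $(\tilde\rho,\rho_1)$ --- the reason for introducing the intermediate cutoff. The base case $k=0$ is handled in the same way but invoking Lem.\ref{10:03:26} at level $0$, where the remainder reduces to $\|\varphi\|^2_0$ and may be bounded by $\|F_\ve\varphi\|\cdot\|\varphi\|$ through a single application of (\ref{15:26:59}) (with $\psi=\varphi$). Combining everything via Rem.\ref{16:50:34} to handle products of $\mcL$-expressions yields (\ref{14:46:19}) with an $a$ depending only on the order of $P_\ve$ in $\ve$ and the number of induction steps built into the constants, but not on $k$. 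The main obstacle I anticipate is bookkeeping: one must arrange the nested cutoffs and the induction so that each step consumes only one cutoff level and so that the polynomial degrees in $|\ve|_{k+a}$ stay uniform, using the observations of Rem.\ref{16:50:34} to combine the $\mcL$-terms arising from Lem.\ref{10:03:26} with those from commutators produced in Step 1.
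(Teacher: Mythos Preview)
Your approach is essentially the paper's: G\"arding's inequality on $A\varphi$, Lem.~\ref{10:03:26} to move $A$ across $Q_\ve$, the defining relation~(\ref{15:26:59}) for $F_\ve$, duality, absorption, and induction on $k$ with an intermediate cutoff. The organization differs only cosmetically (the paper takes $A=\rho_1\Lambda^{k+1}\rho$ and inserts the intermediate cutoff $\rho_2$ separately; you build it into $A$).

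There is one bookkeeping point to tighten, exactly the one you flag at the end. As you have written it, Lem.~\ref{10:03:26} applied to $\varphi$ gives remainder $\mcL(|\ve|^2_{k+1+a};\|\varphi\|^2_{k+1})$ with the \emph{global} norm of $\varphi$, which the localized induction hypothesis for $(\tilde\rho,\rho_1)$ does not control. The fix (and this is what the paper does) is to apply Lem.~\ref{10:03:26} to $\tilde\rho\varphi$ rather than $\varphi$: since $\rho\subset\tilde\rho$ one has $A\tilde\rho\varphi=A\varphi$, so the left side is unchanged while the remainder becomes $\mcL(|\ve|^2_{k+1+a};\|\tilde\rho\varphi\|^2_{k+1})$, which \emph{is} controlled by the induction hypothesis for the pair $(\tilde\rho,\rho_1)$. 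One then checks (using that $A^\ast$ has an outer factor of $\rho$ and that $P_\ve$ is local) that $Q_\ve(\tilde\rho\varphi,A^\ast A\tilde\rho\varphi)=Q_\ve(\varphi,A^\ast A\varphi)$, so the $(F_\ve\varphi,A^\ast A\varphi)$ step goes through as you wrote. Similarly, the commutator residual from the G\"arding step is actually $\|\rho\varphi\|_{k+1}^2$ (not $\|\rho_1\varphi\|_{k+1}^2$), since $[\tilde\rho,\Lambda^{k+1}]$ acts on $\rho\varphi$; this is again handled by the same induction hypothesis.
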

\begin{proof} 
Using Thm.\ref{13:13:59}, Lem.\ref{14:50:22} and the bound $||\varphi||\leq ||F_\ve \varphi||$, we have
\begin{align}\label{18:19:44}
||\rho \varphi||_1^2\leq C Q_\ve(\rho \varphi,\rho \varphi)\leq C Q_\ve(\varphi,\varphi)=&C(F_\ve \varphi, \varphi)\leq C ||F_\ve \varphi||^2.
\end{align}
In the same way we get $||\rho_1\varphi||_1^2\leq C||F_\ve \varphi||^2$. Now we prove (\ref{14:46:19}) by induction on $k\geq 0$. For $k=0$, we use the equality $\rho=\rho\rho_1$ together with Prop.\ref{17:24:46} $ii)$ to obtain  
\begin{align*}
||\rho \varphi||_2^2=||\Lambda^1 \rho \varphi||^2_1=||\Lambda^1 \rho_1\rho \varphi||^2_1\leq ||[\Lambda^1,\rho_1]\rho \varphi||_1^2+ ||\rho_1\Lambda^1\rho \varphi||^2_1\leq C ||\rho \varphi ||_1^2 +  C Q_\ve(A\varphi,A\varphi),
\end{align*}
where $A:=\rho_1\Lambda^1\rho$. 
Here is where Lem.\ref{10:03:26} comes in, yielding (note that $A=A\rho_1$)
\begin{align*}
Q_\ve(A\varphi,A\varphi)= Q_\ve(A\rho_1\varphi,A\rho_1\varphi)&=\text{Re}\ Q_\ve(\rho_1\varphi,A^\ast A\rho_1\varphi)+\mcL(|\ve|_{a}^2;||\rho_1\varphi||_1^2).
\end{align*}
Using that $\rho\subset \rho_1$, together with the definition of $F_\ve$, we obtain 
\begin{align*}
Q_\ve(\rho_1\varphi,A^\ast A\rho_1 \varphi)= Q_\ve(\varphi,A^\ast A \varphi)=(F_\ve \varphi, A^\ast A \varphi)&= (A\rho_1 F_\ve\varphi, A \varphi) \leq ||A\rho_1F_\ve \varphi||_{-1}||A \varphi||_1\\
&\leq \tfrac{1}{\delta} ||\rho_1F_\ve \varphi||^2 + \delta ||\rho \varphi||_2^2
\end{align*}
where $\delta>0$ can be chosen arbitrarily. For $\delta$ small enough we obtain (\ref{14:46:19}) for $k=0$. 

Now suppose that (\ref{14:46:19}) holds in degrees $\leq k-1$ for any pair of functions $\rho_1\subset \rho_1$. Then
\begin{align*}
||\rho \varphi ||^2_{k+2}=||\Lambda^{k+1} \rho_1\rho \varphi ||^2_{1}\leq || [\Lambda^{k+1}, \rho_1]\rho \varphi||_1^2+|| \rho_1 \Lambda^{k+1}\rho \varphi||_1^2\leq C||\rho \varphi ||^2_{k+1} + C Q_\ve(A\varphi,A\varphi),
\end{align*}
where now $A:=\rho_1 \Lambda^{k+1} \rho$. The first term is bounded by induction, while for the second term we choose $\rho_2\in C^\infty_c(U)$ satisfying $\rho\subset\rho_2\subset \rho_1$ to compute
\begin{align*}
Q_\ve(A\varphi,A\varphi)= Q_\ve(A\rho_2\varphi,A\rho_2\varphi)&=\text{Re}\ Q_\ve(\rho_2\varphi,A^\ast A\rho_2\varphi)+\mcL(|\ve|_{k+a}^2;||\rho_2\varphi||_{k+1}^2).
\end{align*}
The last term can be bounded using the induction hypothesis (applied to the pair $(\rho_2,\rho_1)$ instead of $(\rho,\rho_1)$) together with Rem.\ref{16:50:34}, while the first term is treated as before;
\begin{align*}
Q_\ve(\rho_2 \varphi,A^\ast A \rho_2\varphi)=(F_\ve \varphi, A^\ast A \varphi)&=(A\rho_1 F_\ve \varphi, A \varphi)\leq 	||A\rho_1F_\ve \varphi||_{-1}||A \varphi||_1\\&\leq \tfrac{1}{\delta} ||\rho_1F_\ve \varphi||^2_k+\delta||\rho \varphi||^2_{k+2},
\end{align*}
which for $\delta>0$ sufficiently small yields (\ref{14:46:19}) in degree $k$. 
\end{proof}

\subsubsection*{Boundary charts}

Let $P_\ve$ again be a family of operators as in Thm.\ref{13:30:27} and $U$ a boundary chart.  
\begin{lem}\label{14:41:51}
Let $\{ \rho_k \}_{k\geq 1}$ be a sequence in $C^\infty_c(U)$ satisfying $\rho_{k+1}\subset\rho_k$. Then there exists an $a\in \mathbb{Z}_{\geq 0}$ such that for all $k\geq 1$, $\ve\in B$ and $\varphi \in \text{Dom}(F_\ve)\cap C^\infty(E)$ we have
\begin{align}\label{08:54:55}
|| \text{D} \rho_k \varphi ||_{\del,\frac{k}{2}-1} \leq \mcL(|\ve|_{\lceil \frac{k}{2}+a\rceil};||\rho_1 F_\ve \varphi||_{\del,\frac{k}{2}-1})+\mcL(|\ve|_{\lceil \frac{k}{2}+a\rceil};||F_\ve \varphi||). 
\end{align}
\end{lem}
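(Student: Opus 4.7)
The argument proceeds by induction on $k$, closely paralleling the proof of Prop.\ref{11:38:30}, but using the boundary ingredients: condition 3) of Thm.\ref{13:30:27} in place of G\"arding's inequality (Thm.\ref{13:13:59}), the integration-by-parts formula Lem.\ref{10:28:10} in place of Lem.\ref{10:03:26}, and the tangential Sobolev calculus of Prop.\ref{11:36:52} in place of Prop.\ref{17:24:46}.

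For the base case $k=1$, since $\rho_1\in C^\infty_c(U)$ and multiplication by $\rho_1$ preserves $\mcD_\ve$, condition 3) of Thm.\ref{13:30:27} gives $||\text{D}\rho_1\varphi||^2_{\del,-1/2}\leq CQ_\ve(\rho_1\varphi,\rho_1\varphi)$, and Lem.\ref{14:50:22} combined with $Q_\ve(\varphi,\varphi)=(F_\ve\varphi,\varphi)\leq ||F_\ve\varphi||^2$ (coming from the defining relation (\ref{15:26:59}) applied with $\psi=\varphi$) yields the desired bound.

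For the inductive step at level $k$, write $s=(k-1)/2$ and introduce auxiliary cutoffs $\rho_k\subset\tilde\rho\subset\rho_{k-1}\subset\rho_{k-2}$ chosen from $C^\infty_c(U)$, together with the operator
$$A_\ve:=\rho_{k-1}\kappa_\ve\Lambda_\del^{s}\kappa_\ve^{-1}\rho_k,$$
which preserves $\mcD_\ve$ by Lem.\ref{11:50:55} and satisfies $A_\ve\varphi=A_\ve\tilde\rho\varphi$ since $\tilde\rho\equiv1$ on $\mathrm{supp}(\rho_{k-1})$. First, commuting $\Lambda_\del^s$ past the cutoffs and the $\kappa_\ve$ using Prop.\ref{11:36:52} \emph{ii)} (each commutator drops the order by one) we obtain
$$||\text{D}\rho_k\varphi||^2_{\del,s-1/2}\leq C||\text{D}A_\ve\varphi||^2_{\del,-1/2}+\mcL(|\ve|^2_{\lceil s+a\rceil};||\text{D}\rho_{k-1}\varphi||^2_{\del,s-3/2}),$$
where the second term has Sobolev degree strictly less than at level $k$ and is therefore handled by the induction hypothesis applied at level $k-1$. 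For the main term, condition 3) of Thm.\ref{13:30:27} gives $||\text{D}A_\ve\varphi||^2_{\del,-1/2}\leq CQ_\ve(A_\ve\varphi,A_\ve\varphi)$, and Lem.\ref{10:28:10} applied to the compactly-supported element $\tilde\rho\varphi\in\mcD_\ve\cap C^\infty_c(U,E)$ yields
$$Q_\ve(A_\ve\varphi,A_\ve\varphi)=\mathrm{Re}\,Q_\ve(\tilde\rho\varphi,A_\ve^*A_\ve\tilde\rho\varphi)+\mcL(|\ve|^2_{\lceil s+a\rceil};||\text{D}\tilde\rho\varphi||^2_{\del,s-1}),$$
and again the remainder is of lower Sobolev degree and is absorbed by the induction hypothesis.

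The key step is converting the principal term into the right-hand side of (\ref{08:54:55}). Since multiplication by $\tilde\rho$ preserves both Neumann conditions defining $\mathrm{Dom}(F_\ve)$ (Lem.\ref{13:48:22}), we have $\tilde\rho\varphi\in\mathrm{Dom}(F_\ve)$ with $F_\ve(\tilde\rho\varphi)=\tilde\rho F_\ve\varphi+[\Delta_\ve,\tilde\rho]\varphi$. Using (\ref{15:26:59}) with $\psi=A_\ve^*A_\ve\tilde\rho\varphi\in\mcD_\ve$, together with the support relations $\tilde\rho\subset\rho_1$ and $\mathrm{supp}(A_\ve)\subset\mathrm{supp}(\rho_{k-1})\subset\{\rho_1=1\}$,
$$Q_\ve(\tilde\rho\varphi,A_\ve^*A_\ve\tilde\rho\varphi)=\bigl(A_\ve(\rho_1 F_\ve\varphi),A_\ve\varphi\bigr)+\bigl([\Delta_\ve,\tilde\rho]\varphi,A_\ve^*A_\ve\tilde\rho\varphi\bigr).$$
The commutator $[\Delta_\ve,\tilde\rho]$ is a first-order operator supported in $\mathrm{supp}(d\tilde\rho)\subset\{\rho_{k-1}=0\}$; distributing $A_\ve$ via duality and using the induction hypothesis on a refined cutoff sequence bounds this piece. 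For the main inner product, duality between $||\cdot||_{\del,-1/2}$ and $||\cdot||_{\del,1/2}$ combined with Prop.\ref{11:36:52} \emph{i)} (since $A_\ve$ has tangential order $s$) yields
$$\bigl(A_\ve(\rho_1 F_\ve\varphi),A_\ve\varphi\bigr)\leq \tfrac{1}{\delta}\mcL(|\ve|^2_{\lceil s+a\rceil};||\rho_1 F_\ve\varphi||^2_{\del,s-1/2})+\delta\,||\text{D}A_\ve\varphi||^2_{\del,-1/2}$$
and choosing $\delta>0$ small lets the second term be absorbed into the left-hand side. Unwinding the commutators and applying the induction hypothesis to all remainders of strictly lower Sobolev degree closes the induction.

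The main difficulty will not be any single estimate but the careful bookkeeping of auxiliary cutoffs (needed because the induction drops the Sobolev degree by $1/2$ per step while the commutator remainders in Lem.\ref{10:28:10} drop it by a full integer), and the verification that the exponent $a$ accumulated across the induction remains bounded independently of $k$; this is arranged as in Rem.\ref{16:50:34} by using $\mcL(|\ve|^2_*;\cdot)$-type bounds and keeping all intermediate cutoffs within the given sequence $\{\rho_j\}$.
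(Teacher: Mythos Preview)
Your overall scheme—induction on $k$, the base case via condition 3) of Thm.\ref{13:30:27} together with Lem.\ref{14:50:22}, and the inductive step via commuting $\Lambda_\del^{(k-1)/2}$ past cutoffs, applying Lem.\ref{10:28:10}, and then the defining relation of $F_\ve$—is exactly the paper's strategy. There is, however, a genuine gap in the step you single out as ``key''.

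You claim that multiplication by the scalar cutoff $\tilde\rho$ preserves both Neumann conditions, so that $\tilde\rho\varphi\in\mathrm{Dom}(F_\ve)$ by Lem.\ref{13:48:22}. The zeroth-order condition $\tilde\rho\varphi\in\mcD_\ve$ is clearly preserved, but the first-order one is not. Writing $P_\ve(\tilde\rho\varphi)=\tilde\rho\,P_\ve\varphi+\sigma(P_\ve,d\tilde\rho)\varphi$, the first summand lies in $\mcD_\ve$ while for the second one computes (in the Lie algebroid picture, with $\sigma(P_\ve,\xi)=\rho_\ve^*\xi\wedge$)
\[
\sigma(P_{\ve,f}^*,dr)\bigl(\sigma(P_\ve,d\tilde\rho)\varphi\bigr)\big|_{\partial M}
=-\langle\rho_\ve^*dr,\rho_\ve^*d\tilde\rho\rangle_{L^*}\,\varphi,
\]
which has no reason to vanish for a generic cutoff. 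Hence $\tilde\rho\varphi\notin\mathrm{Dom}(F_\ve)$ in general, the identity $Q_\ve(\tilde\rho\varphi,\psi)=(F_\ve(\tilde\rho\varphi),\psi)$ is unjustified, and the commutator $[\Delta_\ve,\tilde\rho]\varphi$ you go on to estimate is an artefact of this wrong route.

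The fix—and this is what the paper does—is to remove the cutoff from the \emph{first} argument of $Q_\ve$ by a support argument, not by reinterpreting $F_\ve$. Since $A_\ve^*$ carries a factor $\rho_k$ on the left, $A_\ve^*A_\ve\varphi$ is supported in $\mathrm{supp}(\rho_k)\subset\{\tilde\rho=1\}$; expanding $Q_\ve$ term by term, the commutators $[P_\ve,\tilde\rho]$ and $[P_{\ve,f}^*,\tilde\rho]$ are supported on $\mathrm{supp}(d\tilde\rho)$, which is disjoint from $\mathrm{supp}(\rho_k)$, so
\[
Q_\ve(\tilde\rho\varphi,\,A_\ve^*A_\ve\varphi)=Q_\ve(\varphi,\,A_\ve^*A_\ve\varphi)
\]
exactly. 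Now apply (\ref{15:26:59}) with $\varphi\in\mathrm{Dom}(F_\ve)$ itself and $\psi=A_\ve^*A_\ve\varphi\in\overline{\mcD}_\ve$ to get
\[
Q_\ve(\varphi,\,A_\ve^*A_\ve\varphi)=(F_\ve\varphi,\,A_\ve^*A_\ve\varphi)=(A_\ve\rho_1F_\ve\varphi,\,A_\ve\varphi),
\]
the last equality because $A_\ve=A_\ve\rho_1$ (the factor $\rho_k$ on the right satisfies $\rho_k\subset\rho_1$). From here your duality bound and absorption argument go through unchanged, and no commutator with the Laplacian ever appears.
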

\begin{proof}
By induction on $k\geq 1$. For $k=1$ we use assumption (\ref{08:34:06}) and Lem.\ref{14:50:22} to obtain
\begin{align*}
|| \text{D} \rho_1 \varphi ||^2_{\del,-\frac{1}{2}} \leq C Q_\ve(\rho_1 \varphi, \rho_1 \varphi) \leq C Q_\ve(\varphi,\varphi)
\leq   C||F_\ve \varphi||^2,
\end{align*}
proving (\ref{08:54:55}) for $k=1$. 
Assume (\ref{08:54:55}) holds in degrees $\leq k-1$ for some $k>1$. By definition,
\begin{align*}
||\text{D} \rho_k \varphi ||^2_{\del,\frac{k}{2}-1} =\sum_j ||\del_j \rho_k \varphi||^2_{\del,\frac{k}{2}-1} +||\rho_k \varphi||^2_{\del,\frac{k}{2}-1}. 
\end{align*}
Using $\rho_k=\rho_{k-1}\rho_k$ and $||\cdot ||_{\del,s}\leq || \text{D}(\cdot) ||_{\del,s-1}$, we see that the second term above satisfies 
\begin{align*}
||\rho_k \varphi||^2_{\del,\frac{k}{2}-1}\leq ||\text{D}\rho_{k-1}\rho_k \varphi||^2_{\del,\frac{k}{2}-2}\leq C||\text{D}\rho_{k-1}\varphi||^2_{\del,\frac{k-1}{2}-1},
\end{align*}
which can be bounded using the induction hypothesis. Next, observe that 
\begin{align}\label{15:02:20}
||\del_j \rho_k \varphi ||_{\del,\frac{k}{2}-1}=||\Lambda^{(k-1)/2}_\del \del_j \rho_k\varphi||_{\del,-\frac{1}{2}}=||\Lambda \del_j \rho_1\rho_k\varphi||_{\del,-\frac{1}{2}},
\end{align}
where $\Lambda:=\Lambda^{(k-1)/2}_\del$. To estimate this term, we first write 
\begin{align}\label{15:03:39}
\Lambda \del_j \rho_1\rho_k =[\Lambda,[\del_j,\rho_1]]\rho_k\rho_{k-1}+[\Lambda,\rho_1][\del_j,\rho_k]\rho_{k-1}+[\Lambda,\rho_1]\rho_k \del_j \rho_{k-1}+\del_j \rho_1 \Lambda \rho_k,
\end{align}
using that $\Lambda$ and $\del_j$ commute. Combining (\ref{15:02:20}) and (\ref{15:03:39}), we obtain
\begin{align*}
||\del_j \rho_k \varphi||_{\del,\frac{k}{2}-1}=||\Lambda \del_j \rho_1\rho_k \varphi||_{\del,-\frac{1}{2}}\leq&	C(||\text{D} \rho_{k-1}\varphi ||_{\del,\frac{k}{2}-2}+	 ||\text{D} \rho_1\Lambda\rho_k\varphi||_{\del,-\frac{1}{2}})\\
\leq & C(||\text{D} \rho_{k-1}\varphi ||_{\del,\frac{k-1}{2}-1}+||\text{D} A \varphi||_{\del,-\frac{1}{2}}),
\end{align*}
where $A:=\rho_1\Lambda\rho_k$. The first term is bounded by the induction hypothesis. Writing 
$
A=A_\ve + [A,\kappa_\ve]\kappa_\ve^{-1}$, 
where $A_\ve:=\kappa_\ve A \kappa_\ve^{-1}$ (see Lem.\ref{11:50:55}), we obtain 
\begin{align*}
||\text{D}A\varphi||_{\del,-\frac{1}{2}}\leq ||\text{D}A_\ve \varphi||_{\del,-\frac{1}{2}}+||\text{D}[A,\kappa_\ve]\kappa_\ve^{-1} \varphi||_{\del,-\frac{1}{2}}.
\end{align*}
The second term is of lower order in $\varphi$, but in order to use Prop.\ref{11:36:52} to obtain a Leibniz bound we need to get rid of the negative Sobolev norm. By definition we have 
$$||\text{D}[A,\kappa_\ve]\kappa_\ve^{-1} \varphi||_{\del,-\frac{1}{2}}=||\text{D}\Lambda^{-1/2}_\del[A,\kappa_\ve]\kappa_\ve^{-1} \varphi||_{\del,0},$$ and we we can write 
$$[A,\kappa_\ve]=\rho_1 (\Lambda^{1/2}_\del [\Lambda^{k/2-1}_\del,\kappa_\ve]+[\Lambda^{1/2}_\del,\kappa_\ve]\Lambda^{k/2-1}_\del)\rho_k.$$ 
In particular, 
\begin{align*}
||\text{D}[A,\kappa_\ve]\kappa_\ve^{-1} \varphi||_{\del,-\frac{1}{2}}=&||\text{D}\Lambda^{-\frac{1}{2}}_\del \rho_1 (\Lambda^{1/2}_\del [\Lambda^{k/2-1}_\del,\kappa_\ve]+[\Lambda^{1/2}_\del,\kappa_\ve]\Lambda^{k/2-1}_\del) \kappa_\ve^{-1} \rho_k\varphi||_{\del,0}\\
\leq & \mcL(|\ve|_{\lceil \frac{k}{2}+a\rceil}; ||\text{D}\rho_{k-1}\varphi||_{\del,\frac{k-1}{2}-1}),
\end{align*}
to which we can apply the induction hypothesis. Next, we use (\ref{08:34:06}) and Lem.\ref{10:28:10} to write 
\begin{align*}
||\text{D}A_\ve \varphi||^2_{\del,-\frac{1}{2}}&\leq CQ_\ve(A_\ve \rho_{k-1}\varphi,A_\ve \rho_{k-1}\varphi)\\&=C\text{Re} \ Q_\ve(\varphi,A^\ast_\ve A_\ve \varphi)+\mathcal{L}(|\ve|^2_{\lceil \frac{k}{2}+a\rceil }; ||\text{D}\rho_{k-1}\varphi||^2_{\del,\frac{k-1}{2}-1}).
\end{align*}
The second term is dealt with by the induction hypothesis, while the definition of $F_\ve$ yields
\begin{align}\label{15:25:09}
Q_\ve(\varphi,A^\ast_\ve A_\ve \varphi)= (F_\ve \varphi, A_\ve^\ast A_\ve \varphi)=(A_\ve \rho_1F_\ve \varphi,A_\ve \varphi)\leq \tfrac{1}{\delta} || A_\ve \rho_1F_\ve \varphi||^2_{\del,-\frac{1}{2}}+\delta || A_\ve \varphi||^2_{\del,\frac{1}{2}}   
\end{align}
for $\delta>0$ arbitrary. The first term is bounded by $\mcL(|\ve|^2_{\lceil \frac{k}{2}+a\rceil}; ||\rho_1F_\ve \varphi||^2_{\del,\frac{k}{2}-1})$, while
\begin{align*}
|| A_\ve \varphi||_{\del,\frac{1}{2}}\leq ||A\varphi||_{\del,\frac{1}{2}}+ || [A,\kappa_\ve]\kappa_\ve^{-1} \varphi||_{\del,\frac{1}{2}}\leq & C||\rho_k\varphi||_{\del,\frac{k}{2}}+\mcL(|\ve|_{\lceil \frac{k}{2}+a\rceil}; || \rho_{k-1}\varphi||_{\del,\frac{k}{2}-1})\\
\leq & C||\text{D}\rho_k \varphi||_{\del,\frac{k}{2}-1}+\mcL(|\ve|_{\lceil \frac{k}{2}+a\rceil }; || \text{D}\rho_{k-1}\varphi||_{\del,\frac{k}{2}-2}).
\end{align*}
Choosing $\delta$ in (\ref{15:25:09}) small enough and using the induction hypothesis we obtain (\ref{08:54:55}) in degree $k$ as well. 
\end{proof}
Let $\rho,\rho_1\in C^\infty_c(U)$ be functions satisfying $\rho\subset \rho_1$.
\begin{prop}\label{16:34:46}
For $B\subset C^\infty(H)$ sufficiently small there exists an $a\in \mathbb{Z}_{\geq 0}$ such that   
\begin{align}\label{14:55:12}
||  \rho  \varphi ||_{k+1} \leq \mcL(|\ve|_{k+a}; ||\rho_1 F_\ve \varphi||_k) + \mcL(|\ve|_{k+a}; ||F_\ve \varphi||) 
\end{align}
for all $k\geq 0$, $\varphi \in \text{Dom}(F_\ve)\cap C^\infty(E)$ and $\ve\in B$.
\end{prop}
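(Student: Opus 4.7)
The plan is to combine the tangential estimate of Lem.\ref{14:41:51} with the ellipticity of $\Delta_\ve$ (condition 1 of Thm.\ref{13:30:27}) in order to upgrade from bounds on $||\text{D}\rho\varphi||_{\del,\cdot}$, which only control derivatives with at most one $\del_r$, to bounds on the full Sobolev norm $||\rho\varphi||_{k+1}$. First I would apply Lem.\ref{14:41:51} with its parameter replaced by $2k+2$, for a suitable chain of cutoffs $\rho\subset\rho_{2k+2}\subset\cdots\subset\rho_1$, to obtain
\begin{align*}
||\text{D}\rho_{2k+2}\varphi||_{\del,k}\leq \mcL(|\ve|_{k+a};||\rho_1 F_\ve\varphi||_{\del,k})+\mcL(|\ve|_{k+a};||F_\ve\varphi||).
\end{align*}
Since $||\cdot||_{\del,k}\leq C||\cdot||_k$ on sections supported in $U$, this already controls every derivative of $\rho_{2k+2}\varphi$ of total order $\leq k+1$ that involves at most one $\del_r$.

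To handle the remaining mixed derivatives $\del_r^j\del_t^\alpha$ with $j\geq 2$, I would exploit that on the chart $U$ one has
\begin{align*}
\Delta_\ve=a_\ve\del_r^2+B_\ve\del_r+C_\ve,
\end{align*}
where $a_\ve=\sigma(\Delta_\ve,dr)$ is an invertible bundle endomorphism of $E$ depending smoothly on $\ve$ and finitely many of its derivatives (invertibility is uniform for $\ve\in B$ after shrinking $B$), and where $B_\ve,C_\ve$ are purely tangential. Solving $F_\ve\varphi=(\Delta_\ve+1)\varphi$ for $\del_r^2\varphi$ gives
\begin{align*}
\del_r^2\varphi=a_\ve^{-1}\bigl(F_\ve\varphi-B_\ve\del_r\varphi-C_\ve\varphi-\varphi\bigr),
\end{align*}
and iterating this identity rewrites any $\del_r^j\varphi$ with $j\geq 2$ as a sum of terms of the form $\del_r^{j'}F_\ve\varphi$ with $j'\leq j-2$, plus terms $\del_r^{j'}\varphi$ with $j'\leq j-1$, with coefficients built polynomially out of $a_\ve^{-1},B_\ve,C_\ve$ and their tangential derivatives. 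Substituting this into the decomposition of $||\rho\varphi||_{k+1}^2$ reduces each $||\del_r^j\del_t^\alpha\rho\varphi||$ with $j+|\alpha|\leq k+1$ to contributions already bounded by $||\text{D}\rho_{2k+2}\varphi||_{\del,k}$ or by $||\rho_1 F_\ve\varphi||_k$, each with an appropriate Leibniz prefactor in $\ve$.

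To organize this rigorously I would induct on $k\geq 0$, handling the commutators between $\del_r$, the cutoffs $\rho_i$, and the matrix-valued coefficients $a_\ve^{-1},B_\ve,C_\ve$ by the same mechanism as in the proof of Prop.\ref{11:38:30}: these commutators are of strictly lower order in $\varphi$ and can be absorbed by the induction hypothesis together with Rem.\ref{16:50:34}. The integer $a$ in the final estimate is the maximum of the constant provided by Lem.\ref{14:41:51} and the number of $\ve$-derivatives introduced by a bounded power of $a_\ve^{-1}$; in particular it is independent of $k$. Throughout, the base case $k=0$ follows by applying Lem.\ref{14:41:51} with parameter $2$ to control $||\text{D}\rho_2\varphi||_{\del,0}$, and then using the ellipticity reduction once to convert the single $\del_r^2$ derivative remaining in $||\rho\varphi||_1^2$ into an $F_\ve\varphi$ contribution.

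The main obstacle is the bookkeeping of $\ve$-dependencies: each appearance of $a_\ve^{-1}$ produces further $\ve$-derivatives through the matrix-inverse formula, and one must verify that these, together with the commutator terms generated by threading the chain $\rho\subset\rho_{2k+2}\subset\cdots\subset\rho_1$ through the computation, all stay inside the Leibniz framework $\mcL(|\ve|_{k+a};||\cdot||_k)$. Beyond this the argument runs in parallel to the interior case Prop.\ref{11:38:30}, with Lem.\ref{14:41:51} playing the role that G\"arding's inequality (Thm.\ref{13:13:59}) played there, and the extra input of ellipticity of $\Delta_\ve$ used solely to trade pairs of normal derivatives against $F_\ve\varphi$.
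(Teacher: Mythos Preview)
Your proposal is essentially correct and follows the same approach as the paper: induction on $k$, with Lem.\ref{14:41:51} (applied to a chain $\rho=\rho_{2k+2}\subset\cdots\subset\rho_1$) handling all derivatives of order $\leq k+1$ with at most one $\del_r$, and then the ellipticity of $\Delta_\ve$ used to solve for $\del_r^2\varphi$ and run a secondary induction on the number of normal derivatives $l\geq 2$.

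One small slip: in the base case $k=0$ there is no $\del_r^2$ term to deal with, since $||\rho\varphi||_1^2$ involves only derivatives of order $\leq 1$. In fact $||\rho\varphi||_1=||\text{D}\rho\varphi||_{\del,0}$ exactly, so the base case follows from Lem.\ref{14:41:51} alone (with its parameter equal to $2$), with no ellipticity reduction needed. The paper proceeds in precisely this way.
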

\begin{proof}
By induction on $k$. For $k=0$, we use Lem.\ref{14:41:51} with $k=2$ and $\rho=\rho_2$ to obtain
\begin{align*}
||\rho \varphi||_1=||\text{D}\rho \varphi ||_{\del,0}\leq \mcL(|\ve|_{a}; ||F_\ve \varphi||).
\end{align*}
Suppose that (\ref{14:55:12}) holds in degrees $\leq k-1$ for some $k\geq 1$. Then  
\begin{align*}
||\rho \varphi||^2_{k+1}=\sum_{|\alpha|=k+1}||\del^\alpha \rho \varphi ||^2+ ||\rho \varphi||^2_k,
\end{align*}
and it suffices to estimate the terms $||\del^\alpha \rho \varphi ||^2$. If $\del^\alpha$ contains at most one $r$-derivative, then 
\begin{align*}
||\del^\alpha\rho \varphi||\leq || D\rho \varphi||_{\del,k}\leq \mcL(|\ve|_{k+a};||\rho_1 F_\ve \varphi||_k)+\mcL(|\ve|_{k+a};||F_\ve \varphi||),
\end{align*}
where we used Lem.\ref{14:41:51} (applied to a sequence $\rho=\rho_{2k+2}\subset\ldots \subset \rho_1$) together with the bound $||\cdot ||_{\del,k}\leq ||\cdot ||_k$. It therefore remains to bound the terms $||\del^\alpha_t \del^l_r \rho \varphi||^2$, where $l\geq 2$ and $|\alpha|+l=k+1$. Since $\varphi$ is smooth we have $F_\ve \varphi =(\Delta_\ve+1) \varphi$, which is an elliptic second-order operator and hence we can write 
\begin{align}\label{16:13:27}
F_\ve \varphi=A_\ve^{rr}\del_r^2\varphi+\sum_i A_{\ve,i}^{tr} \del_{t^i} \del_r\varphi+\sum_{i,j} A^{tt}_{\ve, ij} \del_{t^i}\del_{t^j}\varphi + B_\ve^{r}\del_r\varphi+\sum_i B^t_{\ve,i}\del_{t^i}\varphi+     C_\ve \varphi,
\end{align}
with $A_\ve^{rr}$ invertible. This allows us to write a derivative $\del^\alpha_t\del^l_r \rho \varphi$ with $l\geq 2$ in terms of derivatives of $\rho_1F_\ve\varphi$ of two degrees less and of derivatives of $\varphi$ that contain at most $l-1$ $r$-derivatives. An extra induction on $2\leq l \leq k+1$ then concludes the induction step in degree $k$ and therefore completes the proof. 
\end{proof}

\noindent \textsl{Proof of Thm.\ref{13:30:27}.} \label{11:00:52} Let $P_\ve$ be a family of first-order differential operators satisfying conditions 1), 2) and 3) of Thm.\ref{13:30:27}. We claim that Def.\ref{13:16:26} is satisfied. Note that i) of this definition is a statement about each individual operators $P_\ve$, so for this part it is unnecessary to consider families and the proof follows mutatis mutandis from \cite{MR0461588} after replacing $\delbar$ by $P$. Specifically, for $\rho,\rho_1$ with support in an interior chart, see Lemmas 2.2.7, 2.2.8 and Thm.2.2.9 in \cite{MR0461588}. For $\rho,\rho_1$ with support in a boundary chart, see \cite[ChII.3]{MR0461588} and \cite[ChII.5]{MR0461588}. For part ii) of Def.\ref{13:16:26}, if $\rho,\rho_1$ are supported in an interior chart then (\ref{12:11:54}) follows from Prop.\ref{16:34:46}, while for $\rho,\rho_1$ with support in a boundary chart the bound (\ref{12:11:54}) is a consequence of Prop.\ref{11:38:30}.  
\qed

\subsection{Uniform properties of the family of Neumann operators}\label{10:54:51}

Thm.\ref{08:47:18} establishes the main properties of the individual Neumann operators $N_\ve$. For the applications in \cite{NT} however, we need to understand some variational properties of $\mathcal{H}_\ve:=\text{Ker}(\Delta_\ve)$ as well as uniform bounds on the operator norms of $N_\ve$. Below we will assume that $P_\ve$ satisfies the conditions of Thm.\ref{08:47:18}, as well as the condition $\sigma(P^2,\nu^\flat)=0$ of Lem.\ref{13:48:22}. 
The proofs of (1) and (2) below are based on \cite[Lem.4.1.1]{MR0461588} and \cite[Lem.4.1.2]{MR0461588}, respectively.

\begin{prop}\label{08:17:09} Let $B'$ be as in Thm.\ref{13:30:27}. Then there exists a neighbourhood $B''\subset B'$ of $0$ in $C^\infty(H)$ such that:
\begin{itemize}
\item[(1)] The projection $\pi_{0}:\mathcal{H}_\ve\rightarrow \mathcal{H}_0$ is injective for all $\ve\in B''$.
\end{itemize}
Moreover, if $\mathcal{H}_0=0$ then there exists an integer $a\geq 0$ such that  
\begin{itemize}
\item[(2)] $||N_\ve \varphi||_{k+1}\leq \mcL(|\ve|_{k+a};||\varphi||_k)$ for all $k\geq 0$, $\varphi\in L^2_k(E)$ and $\ve\in B''$, 
\item[(3)] the map $B''\rightarrow \text{End}(L^2_k(E))$ given by $\ve\mapsto N_\ve$ is continuous with respect to the uniform operator topology on $\text{End}(L^2_k(E))$, for all $k\geq 0$. 
\end{itemize}
\end{prop}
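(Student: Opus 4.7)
The plan is to leverage the uniform elliptic regularity estimate (\ref{21:18:00}) of Thm.\ref{08:54:49} together with Rellich's lemma in a contradiction argument for parts (1) and (2), and to use a resolvent-type identity (after pulling everything back to a fixed domain via the unitary maps $\kappa_\ve$ of Lem.\ref{11:50:55}) for part (3).

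For (1), I would argue by contradiction: if the claim fails, there exist $\ve_n\to 0$ and $\varphi_n\in\mcH_{\ve_n}$ with $||\varphi_n||=1$ and $\pi_0\varphi_n=0$. Iterating (\ref{21:18:00}) with $\square_{\ve_n}\varphi_n=0$ yields uniform bounds $||\varphi_n||_k\leq C(k)$ for every $k$, so by Rellich and a diagonal extraction a subsequence converges in $C^\infty$ to some $\varphi_0$ with $||\varphi_0||=1$. Passing to the limit in the weak equation $(\varphi_n,\Delta_{\ve_n}\psi)=0$ for $\psi\in C^\infty_c(\mathring M,E)$ gives $\Delta_0\varphi_0=0$, while the boundary conditions $\sigma(P_{\ve_n,f}^\ast,\nu^\flat)\varphi_n=0$ and $\sigma(P_{\ve_n,f}^\ast,\nu^\flat)P_{\ve_n}\varphi_n=0$ pass in $C^\infty(\del M)$; thus by Lem.\ref{13:48:22} $\varphi_0\in\text{Dom}(\square_0)$ with $\square_0\varphi_0=0$, so $\varphi_0\in\mcH_0$. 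But $\pi_0\varphi_n=0$ passes to $\varphi_0\perp\mcH_0$, forcing $\varphi_0=0$, contradicting $||\varphi_0||=1$.

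For (2), I would first prove a uniform bound $||N_\ve||_{L^2\to L^2}\leq C$ by the same compactness scheme applied to normalized approximate null vectors of $\square_\ve$: if no such bound exists, one constructs unit vectors $u_n$ with $\square_{\ve_n}u_n\to 0$ in $L^2$, and extracts an $L^2$-limit $u_0\in\mcH_0=0$, a contradiction. Granting this, for $\varphi\in L^2_k(E)$ the element $u:=N_\ve\varphi\in\text{Dom}(\square_\ve)$ satisfies $\square_\ve u=\varphi$ (using $\mcH_\ve=0$ from (1), after shrinking $B''$), and (\ref{21:18:00}) gives
$$||N_\ve\varphi||_{k+1}\leq\mcL(|\ve|_{k+a};||\varphi||_k)+\mcL(|\ve|_{k+a};||N_\ve\varphi||),$$
with the second term absorbed into the first via $||N_\ve\varphi||\leq C||\varphi||\leq C||\varphi||_k$.

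For (3), I would unify the varying smooth domains by setting $\widetilde\square_\ve:=\kappa_\ve^{-1}\square_\ve\kappa_\ve$ and $\widetilde N_\ve:=\kappa_\ve^{-1}N_\ve\kappa_\ve$, and use the resolvent identity
$$\widetilde N_\ve-\widetilde N_0=\widetilde N_0\,(\widetilde\square_0-\widetilde\square_\ve)\,\widetilde N_\ve,$$
valid on smooth sections and extended by density. The key subtlety is that the naive resolvent estimate is blocked because $N_\ve$ gains only one Sobolev degree while $\square_0-\square_\ve$ is second order; to circumvent this I would factor $\square_\ve=P_\ve P_{\ve,f}^\ast+P_{\ve,f}^\ast P_\ve$ and split $\widetilde\square_0-\widetilde\square_\ve$ into four terms of the shape (first-order difference) composed with (first-order operator), where each first-order difference has $C^\infty$-size $O(|\ve|_{k+a})$. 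Paired with $\widetilde N_0$ on the left and $\widetilde N_\ve$ on the right, each such term defines a bounded operator $L^2_k(E)\to L^2_k(E)$ of norm $O(|\ve|_{k+a})$ by part (2); combined with $N_\ve=\kappa_\ve\widetilde N_\ve\kappa_\ve^{-1}$ and the smooth convergence $\kappa_\ve\to\kappa_0=\text{Id}$, this yields the claimed uniform-operator continuity. The most delicate step will be verifying that $\kappa_\ve$ from Lem.\ref{11:50:55} identifies not merely $\mcD_\ve$ with $\mcD_0$ but also the secondary Neumann condition $P\varphi\in\mcD$ appearing in Lem.\ref{13:48:22}, so that the resolvent identity is indeed valid on a common smooth domain.
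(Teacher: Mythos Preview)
Your arguments for (1) and (2) are essentially correct and follow the paper's strategy. In fact your approach to (1) is slightly stronger than the paper's: by iterating (\ref{21:18:00}) on the harmonic forms $\varphi_n$ you obtain uniform $C^\infty$ bounds and hence $C^\infty$ convergence of a subsequence, after which the differential equation and both Neumann boundary conditions pass directly to the limit. The paper only extracts an $L^2$-convergent subsequence (via the basic estimate and the Rellich-type Prop.~\ref{18:03:02}) and must then verify $(\varphi,\square_0\psi)=0$ for test sections $\psi\in\text{Dom}(\square_0)\cap C^\infty(E)$ through an explicit integration by parts with boundary terms. For (2) note a small wrinkle: your ``same compactness scheme'' cannot be iterated beyond $k=0$ here, since $\square_{\ve_n}u_n\to 0$ only in $L^2$; so you only get $\|u_n\|_1$ bounded and $L^2$ convergence, and showing the limit lies in $\mcH_0$ then requires exactly the weak argument the paper carries out.

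Your approach to (3), however, has a genuine gap, and it is precisely the one you flagged as delicate. The unitary bundle automorphism $\kappa_\ve$ of Lem.~\ref{11:50:55} is zeroth order and only intertwines the zeroth-order Neumann condition $\sigma(P^\ast_{\ve,f},\nu^\flat)\varphi|_{\del M}=0$; it does \emph{not} in general intertwine the first-order condition $\sigma(P^\ast_{\ve,f},\nu^\flat)P_\ve\varphi|_{\del M}=0$ that characterizes $\text{Dom}(\square_\ve)\cap C^\infty(E)$ via Lem.~\ref{13:48:22}. Consequently $\widetilde N_\ve\varphi$ need not lie in $\text{Dom}(\square_0)$, and the resolvent identity $\widetilde N_\ve-\widetilde N_0=\widetilde N_0(\widetilde\square_0-\widetilde\square_\ve)\widetilde N_\ve$ is not valid. (The paper itself implicitly confirms this: in the proof of the subsequent $C^1$-proposition it needs the more elaborate first-order correction $A_{t,t_0}=\kappa_t\kappa_{t_0}^{-1}-r\,\sigma(\Delta_t,dr)^{-1}\sigma(P^\ast_{t,f},dr)P_t\kappa_t\kappa_{t_0}^{-1}$ precisely to land in $\text{Dom}(\square_t)$.) The paper instead proves (3) by a third contradiction argument of the same flavour as (1) and (2): assume a sequence $\ve_j\to 0$ with $\|N_{\ve_j}\varphi_j-N_0\varphi_j\|_k\geq\delta$, use the uniform bound from (2) to extract $L^2_k$-limits $\psi,\widetilde\psi$ of $N_{\ve_j}\varphi_j$ and $N_0\varphi_j$, and then show $(\psi-\widetilde\psi,\square_0\zeta)=0$ for all $\zeta\in\text{Dom}(\square_0)\cap C^\infty(E)$, forcing $\psi-\widetilde\psi\in\mcH_0=0$.
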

\begin{proof}
(1): 
If (1) is false, we can find a sequence $\ve_j$ converging to $0$ and a sequence $\varphi_j\in \mathcal{H}_{\ve_j}$ satisfying $||\varphi_j||=1$ and $\pi_{0}(\varphi_j)=0$, which in particular implies that $Q_{\ve_j}(\varphi_j,\varphi_j)=(\varphi_j,\varphi_j)=1$. Using Thm.\ref{13:13:59} and condition (3) of Thm.\ref{13:30:27}, Rellich's lemma (c.f.\ Prop.\ref{18:03:02}) implies that a subsequence of $\varphi_j$ converges to 
some $\varphi\in L^2(E)$, satisfiesfying $||\varphi||=1$ and $\pi_{0}\varphi=0$. For any $\psi\in \text{Dom}(\square_0)\cap C^\infty(E)$ we have
\begin{align*}
(\varphi, \square_{0}\psi)=\lim_j(\varphi_j,\Delta_{0}\psi)=\lim_j(\varphi_j,(\Delta_0-\Delta_{\ve_j})\psi)+\lim_j(\varphi_{j},\Delta_{\ve_j}\psi).
\end{align*}
The first term vanishes because $||\varphi_j||$ is bounded and $\psi$ is smooth, so that $\Delta_{\ve_j}\psi$ converges to $\Delta_0\psi$. The second inner product we can rewrite using Lem.\ref{15:10:32}:
\begin{align*}
 (\varphi_j,\Delta_{\ve_j}\psi)=& (\varphi_j,P_{\ve_j}P^\ast_{\ve_j,f}\psi)+(\varphi_j,P^\ast_{\ve_j,f}P_{\ve_j}\psi)\\
 =&(P^\ast_{\ve_j,f}\varphi_j,P^\ast_{\ve_j,f}\psi)+(P_{\ve_j}\varphi_j,P_{\ve_j}\psi)+\int_{\del M} \langle \varphi_j,\sigma(P^\ast_{\ve_j,f},dr)P_{\ve_j}\psi	\rangle.
\end{align*}
There is only one boundary contribution because $\varphi_j\in \text{Dom}(P^\ast_{\ve_j})$. The first two terms vanish since $\varphi_j\in \mathcal{H}_{\ve_j}$, while the last integral converges to zero because $\sigma(P^\ast_{f,0},dr)P_0\psi=0$ on $\del M$ (since $\psi\in \text{Dom}(\square_0)\cap C^\infty(E)$), and because (using that $\varphi_j\in \mcH_{\ve_j}$)
\begin{align*}
\int_{\del M}|\varphi_j|^2\leq C ||\varphi_j||^2_1\leq C ||\varphi_j||^2=C.
\end{align*}
Hence, $\varphi$ is perpendicular to $\square_{0}(\text{Dom}(\square_0)\cap C^\infty(E))$. 
We claim that this implies that $\varphi$ is perpendicular to the entire image of $\square_{0}$, which gives a contraction because then we would have $\varphi\in \mcH_0=0$. Regarding this last claim, observe that by definition of $F_0$ we have 
\begin{align*}
\text{graph}(F_0)=\{(\psi,F_0\psi)|\ \psi\in \text{Dom}(F_0)\}=\{(T_0\zeta,\zeta)|\ \zeta\in L^2(E)\}. 
\end{align*}
Since $C^\infty(E)\subset L^2(E)$ is dense and preserved by $T_0$, which is a bounded operator,  
\begin{align*}
\{(\psi,F_0\psi)|\ \psi\in \text{Dom}(F_0)\cap C^\infty(E)\}\subset \text{graph}(F_0)
\end{align*}
is dense as well. Since $\square_{0}=F_0-1$, this proves the claim.

(2): 
First we need a uniform bound on the $L^2$-operator norms of $N_\ve$, 
which we argue again by contradiction. If not, there would be a sequence $\ve_j$ converging to $0$ and a sequence $\varphi_j\in C^\infty(E)$ satisfying $||\varphi_j||=1$ but $||N_{\ve_j}\varphi_j||\geq j$. Set $\psi_j:=N_{\ve_j}\varphi_j/||N_{\ve_j}\varphi_j||$, so that 
\begin{align*}
Q_{\ve_j}(\psi_j,\psi_j)=((\square_{{\ve_j}}+1)\psi_j,\psi_j)=\frac{(\varphi_j,N_{\ve_j}\varphi_j)}{||N_{\ve_j}\varphi_j||^2}+1\leq 2,
\end{align*} 
where we used that $\mathcal{H}_{\ve_j}=0$, which we know from part (1). As in the proof of (1), a subsequence of $\psi_j$ converges to $\psi$ in $L^2(E)$, and by assumption $\square_{\ve_j}\psi_j=\varphi_j/||N_{\ve_j}\varphi_j||$ converges to $0$ in $L^2(E)$. Let $\zeta\in \text{Dom}(\square_0)\cap C^\infty(E)$. Then, similar to the proof of (1), we have
\begin{align*}
(\psi,\square_{0}\zeta)=\lim_j (\psi_j,\Delta_0\zeta)=&\lim_j \big( (\psi_j,(\Delta_0-\Delta_{\ve_j})\zeta) + (\psi_j,\Delta_{\ve_j}\zeta)\big)\\
=&\lim_j\Big( (P_{\ve_j}\psi_j,P_{\ve_j} \zeta)+(P^\ast_{\ve_j,f}\psi_j,P^\ast_{\ve_j,f} \zeta)	+\int_{\del M} \langle \psi_j,\sigma(P^\ast_{\ve_j,f},dr)P_{\ve_j}\zeta \rangle		\Big)\\
=&\lim_j \big( Q_{\ve_j}(\psi_j,\zeta)-(\psi_j,\zeta)\big).	
\end{align*}
The integral term vanishes because $\sigma(P^\ast_{\ve_j,f},dr)P_{\ve_j}\zeta$ goes to zero while $||\psi_j||_1\leq C ||F_{\ve_j}\psi_j||\leq C ||\square_{\ve_j}\psi_j||+C||\psi_j||$ is uniformly bounded. Using Lem.\ref{11:50:55} we deduce that
\begin{align*}
(\psi,\square_{0}\zeta)=&\lim_j \big( Q_{\ve_j}(\psi_j,\kappa_{\ve_j}\zeta)-(\psi_j,\kappa_{\ve_j}\zeta)\big)+\lim_j \big( Q_{\ve_j}(\psi_j,\zeta-\kappa_{\ve_j}\zeta)-(\psi_j,\zeta-\kappa_{\ve_j}\zeta)\big)\\
=&\lim_j  (\square_{\ve_j}\psi_j,\kappa_{\ve_j}\zeta)=0,
\end{align*}
where we used that $Q_{\ve_j}(\psi_j,\zeta-\kappa_{\ve_j}\zeta)\leq C ||\psi_j||_1||\zeta-\kappa_{\ve_j}\zeta||_1$ which goes to zero as $j$ goes to infinity.
Again, we deduce that $\psi$ is orthogonal to $\text{Im}(\square_{0})$, so $\psi \in\mathcal{H}_0=0$ which is a contradiction.
Now we can prove (2). 
Using (\ref{16:21:04}), we compute 
\begin{align*}
||N_\ve  \varphi ||_{k+1}\leq \mcL(|\ve|_{k+a};||F_\ve N_\ve \varphi||_k)
\leq& \mcL(|\ve|_{k+a};||\square_{\ve} N_\ve \varphi||_k) +  \mcL(|\ve|_{k+a}; ||N_\ve  \varphi ||_k)\\
\leq& \mcL(|\ve|_{k+a};||\varphi ||_k)+  \mcL(|\ve|_{k+a}; ||N_\ve  \varphi ||_k).
\end{align*}
For $k=0$ we use the uniform $L^2$-operator bound on $N_\ve$, and the remaining cases follow by induction over $k$. 

(3): By homogeneity it suffices to verify continuity at $0\in B''$, which we prove again by contradiction. If the map $B''\rightarrow \text{End}(L^2_k(E))$ were not continuous at $0$ for some $k\geq 0$, then there would exist sequences $\ve_j\rightarrow 0$, $\varphi_j\in C^\infty(E)$ and a constant $\delta>0$ such that $||\varphi_j||_k=1$ and $||N_{\ve_j}\varphi_j-N_0\varphi_j||_k\geq \delta$ for all $j$. From (2) we obtain uniform bounds on $||N_{\ve_j}\varphi_j||_{k+1}$ and $||N_{0}\varphi_j||_{k+1}$, so that after passing to a subsequence we may assume that $N_{\ve_j}\varphi_j$ and $N_{0}\varphi_j$ converge to elements $\psi$ and $\widetilde{\psi}$ in $L^2_k(E)$, respectively, satisfying $||\psi-\widetilde{\psi}||_k\geq \delta$. 
On the other hand, for $\zeta\in \text{Dom}(\square_0)\cap C^\infty(E)$ we have
\begin{align*}
(\psi-\widetilde{\psi},\square_0\zeta)=&\lim_j (N_{\ve_j}\varphi_j-N_{0}\varphi_j,\square_0\zeta)\\
=&\lim_j \big((N_{\ve_j}\varphi_j,\Delta_{\ve_j}\zeta)+(N_{\ve_j}\varphi_j,(\Delta_0-\Delta_{\ve_j})\zeta)-(\varphi_j,\zeta) \big).
\end{align*}
We can now proceed as in (2), using the uniform bound $||N_{\ve_j}\varphi_j||_1\leq C(||\varphi_j||+||N_{\ve_j}\varphi_j||)\leq C$, to derive that $(\psi-\widetilde{\psi},\square_0\zeta)=0$ and hence $\psi-\widetilde{\psi}=0$, a contradiction. 
\end{proof}

\begin{rem}
If $\del M=\emptyset$ then Prop.\ref{08:17:09} simplifies considerably. In that case $\Delta_\ve:L^2_{k+2}(E)\rightarrow L^2_k(E)$ is a smooth family of bounded operators, which if $\mcH_0=0$ are all invertible for small $\ve$, implying that the family of inverses $N_\ve:L^2_{k}(E)\rightarrow L^2_{k+2}(E)$ depends smoothly on $\ve$. 
\end{rem}
Using the Sobolev embedding theorem we can convert Prop.\ref{08:17:09} into a statement about the Fr\'echet space $C^\infty(E)$. 
\begin{cor}\label{14:25:37}
There exists an integer $a\geq 0$ such that for every $k\geq 0$ we have  
\begin{align*}
|N_{\ve}\varphi |_k\leq \mcL(|\ve|_{k+a};| \varphi |_{k+a}),
\end{align*}
for all $\varphi \in C^\infty(E)$ and $\ve\in B''$. Moreover, given a continuous map $\ve\mapsto \varphi_\ve$ from $B''$ to $C^\infty(E)$, the map $\ve\mapsto N_\ve\varphi_\ve$ is also continuous from $B''$ to $C^\infty(E)$.  
\end{cor}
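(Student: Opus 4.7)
The plan is to extract both claims from Proposition \ref{08:17:09} by interpolating through Sobolev spaces via the Sobolev embedding theorem. Recall from Section \ref{17:07:25} that on the compact manifold $M$ we have $|\cdot|_k \leq C\|\cdot\|_{k+b}$ whenever $b > \tfrac{1}{2}\dim_\R(M)$, and conversely $\|\cdot\|_l \leq C|\cdot|_l$ for smooth sections. Both inequalities will be used repeatedly, and $b$ will be fixed throughout.

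For the first assertion, I would fix $b$ as above and compute, for $\varphi\in C^\infty(E)$,
\begin{align*}
|N_\ve \varphi|_k \leq C\|N_\ve\varphi\|_{k+b}\leq \mcL\bigl(|\ve|_{k+b-1+a'};\|\varphi\|_{k+b-1}\bigr)\leq \mcL\bigl(|\ve|_{k+b-1+a'};|\varphi|_{k+b-1}\bigr),
\end{align*}
using Sobolev embedding in the first step, Proposition \ref{08:17:09}(2) with index $k+b-1$ in the second, and the trivial inequality $\|\varphi\|_l\leq C|\varphi|_l$ in the third. The passage between Sobolev and $C^k$-norms only introduces constants (depending on $k$ and the background geometry), so the right-hand side retains the Leibniz form in (\ref{10:19:58}). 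Setting $a := b-1+a'$ yields the desired bound.

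For the continuity claim, fix $\ve_0\in B''$ and a continuous curve $\ve\mapsto \varphi_\ve$ into $C^\infty(E)$, and split
\begin{align*}
N_\ve\varphi_\ve - N_{\ve_0}\varphi_{\ve_0} = N_\ve(\varphi_\ve-\varphi_{\ve_0}) + (N_\ve-N_{\ve_0})\varphi_{\ve_0}.
\end{align*}
The $C^k$-norm of the first summand is controlled by the bound already established: since $|\ve|_{k+a}$ stays bounded in a neighbourhood of $\ve_0$ and $|\varphi_\ve-\varphi_{\ve_0}|_{k+a}\to 0$ by assumption, the Leibniz structure of $\mcL$ forces this term to tend to $0$. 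For the second summand, I would apply Sobolev embedding to obtain $|(N_\ve-N_{\ve_0})\varphi_{\ve_0}|_k \leq C\|(N_\ve-N_{\ve_0})\varphi_{\ve_0}\|_{k+b}$, then invoke Proposition \ref{08:17:09}(3) in the Sobolev space $L^2_{k+b}(E)$, which says $N_\ve\to N_{\ve_0}$ in the uniform operator topology there; applied to the fixed element $\varphi_{\ve_0}$ this yields convergence to $0$.

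No step is really the main obstacle here, since all the hard analysis is already encoded in Proposition \ref{08:17:09}; the only routine care needed is keeping track of the fixed shift $b$ from Sobolev embedding when accumulating it into the integer $a$, and verifying that the Leibniz symbol $\mcL$ is stable under replacing Sobolev by $C^k$-norms on the right-hand side (which is immediate from its definition in (\ref{10:19:58})).
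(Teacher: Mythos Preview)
Your argument is correct and is exactly the approach the paper has in mind: the corollary is stated without a written proof, introduced only by the sentence ``Using the Sobolev embedding theorem we can convert Prop.~\ref{08:17:09} into a statement about the Fr\'echet space $C^\infty(E)$.'' Your proposal just spells this out---Sobolev embedding to pass from $|\cdot|_k$ to $\|\cdot\|_{k+b}$, then Proposition~\ref{08:17:09}(2) and (3), then back---which is precisely the intended route; the only cosmetic point is to take $b$ an integer so that $k+b-1\in\mathbb{Z}_{\geq 0}$.
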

As remarked above, when $\del M\neq \emptyset$ we can only prove that $N:B''\rightarrow \text{End}(L^2_k(E))$ is continuous, while if $\del M=\emptyset$ we know that $N:B''\rightarrow \text{Hom}(L^2_k(E),L^2_{k+2}(E))$ is smooth. It is unknown to the author whether $N:B''\rightarrow \text{End}(L^2_k(E))$ is smooth when $\del M\neq \emptyset$, but we will discuss here a special case that is relevant for the Moser trick (c.f.\ Prop.\ref{09:19:15}). 
\begin{prop}
Let $t\mapsto \ve_t$ and $t\mapsto \varphi_t$ be $C^1$-maps from $[0,1]$ to $B''$ and $C^\infty(E)$, respectively, such that $\sigma(P^2_{\ve_t},dr)=0$, $P_{\ve_t}\varphi_t=0$ and $\mcH_{\ve_t}=0$ (here $r$ is as in Rem.\ref{11:43:09}). Then the map $t\mapsto N_{\ve_t}\varphi_t$ from $[0,1]$ to $C^\infty(E)$ is $C^1$ as well..
\end{prop}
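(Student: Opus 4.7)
My strategy is to identify the candidate derivative by formal $t$-differentiation of the defining equations for $\psi_t := N_{\ve_t}\varphi_t$, and then verify the identification with a Taylor-remainder argument. The hypothesis $\sigma(P^2_{\ve_t},dr)=0$ allows Lem.~\ref{13:48:22} to characterize $\text{Dom}(\square_{\ve_t})\cap C^\infty(E)$ as those sections satisfying
$$\sigma(P^*_{\ve_t,f},dr)\psi\big|_{\del M}=0\quad\text{and}\quad \sigma(P^*_{\ve_t,f},dr)P_{\ve_t}\psi\big|_{\del M}=0,$$
on which $\square_{\ve_t}$ coincides with the formal Laplacian $\Delta_{\ve_t}$. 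Using $\mcH_{\ve_t}=0$, $\psi_t$ is the unique element of this smooth domain satisfying $\Delta_{\ve_t}\psi_t=\varphi_t$.

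Formally differentiating this system in $t$ yields an inhomogeneous boundary value problem for the candidate derivative: $\Delta_{\ve_t}\dot\psi_t = \dot\varphi_t - \dot\Delta_{\ve_t}\psi_t$ with boundary data $-\sigma(\dot P^*_{\ve_t,f},dr)\psi_t$ on $\del M$ and an analogous expression at first order. I would solve it by first selecting a continuous family $E_t\in C^\infty(E)$ realizing the inhomogeneous boundary data and then setting
$$\eta_t := E_t + N_{\ve_t}\!\big(\dot\varphi_t - \dot\Delta_{\ve_t}\psi_t - \Delta_{\ve_t}E_t\big).$$
Such a family $E_t$ exists because condition 2) of Thm.~\ref{13:30:27} makes $\text{Ker}(\sigma(P^*_{\ve,f},dr))\subset E|_{\del M}$ a smoothly varying subbundle, so the required extensions can be constructed smoothly in $t$ by solving a transverse linear algebra problem on $\del M$ and then extending inward with a fixed collar cutoff. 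Continuity of $\eta_t$ in $t$ with values in $C^\infty(E)$ then follows from Cor.~\ref{14:25:37}. A direct calculation verifies $\Delta_{\ve_t}\eta_t = \dot\varphi_t - \dot\Delta_{\ve_t}\psi_t$ with the prescribed boundary data, so $\eta_t$ is the unique solution of the formal linearization.

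The final step is to confirm $\eta_t$ is indeed the derivative. Let $R_h := \psi_{t+h}-\psi_t-h\eta_t$. Using $\Delta_{\ve_{t+h}}\psi_{t+h}=\varphi_{t+h}$ together with the $C^1$-hypotheses on $\ve_t$ and $\varphi_t$, a direct Taylor expansion yields $\Delta_{\ve_{t+h}}R_h = o_{C^\infty}(h)$. The defining property of $\eta_t$ similarly ensures that the zeroth- and first-order $\ve_{t+h}$-Neumann symbols applied to $R_h|_{\del M}$ vanish to order $o_{C^\infty}(h)$; correcting by $B_h\in C^\infty(E)$ of size $o_{C^\infty}(h)$ that absorbs these boundary discrepancies (constructed exactly as $E_t$ was above), we obtain $R_h-B_h\in\text{Dom}(\square_{\ve_{t+h}})\cap C^\infty(E)$ with $\Delta_{\ve_{t+h}}(R_h-B_h)=o_{C^\infty}(h)$. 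Applying $N_{\ve_{t+h}}$ and invoking the uniform Leibniz bound of Prop.~\ref{08:17:09}(2) gives $R_h-B_h = o_{C^\infty}(h)$ and hence $R_h = o_{C^\infty}(h)$, proving $C^1$-differentiability of $t\mapsto\psi_t$ with derivative $\eta_t$. The principal technical obstacle will be constructing the extensions $E_t$ and corrections $B_h$ with sharp simultaneous control of \emph{both} Neumann conditions in every Sobolev degree; this requires careful $t$-differentiation of the Neumann symbols and crucially uses the assumption $\sigma(P^2_{\ve_t},dr)=0$ to keep the two boundary conditions independent at leading order, so that the correction problem remains a regular elliptic one uniformly in $t$.
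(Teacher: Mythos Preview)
Your approach is essentially correct and follows the natural ``direct'' route: identify the formal derivative $\eta_t$, then verify it via a Taylor remainder estimate using the uniform Neumann bounds from Prop.~\ref{08:17:09}(2). The paper proceeds differently. Rather than a remainder argument, it constructs an explicit family of domain-correcting operators
\[
A_{t,t_0}\psi:=\kappa_t\kappa_{t_0}^{-1}\psi-r\,\sigma(\Delta_t,dr)^{-1}\sigma(P_{t,f}^\ast,dr)P_t\kappa_t\kappa_{t_0}^{-1}\psi
\]
mapping $\text{Dom}(\square_{t_0})\cap C^\infty(E)$ into $\text{Dom}(\square_t)\cap C^\infty(E)$, and then uses the algebraic identity
\[
N_t\varphi_t=A_{t,t_0}N_{t_0}\varphi_t+N_t\bigl((1-\Delta_tA_{t,t_0}N_{t_0})\varphi_t\bigr).
\]
The first summand is manifestly $C^1$ in $t$; the second has the form $N_t(g(t))$ with $g$ being $C^1$ and $g(t_0)=0$ (this is exactly where $P_{\ve_{t_0}}\varphi_{t_0}=0$ is used, via $P_{t_0}N_{t_0}\varphi_{t_0}=0$ as in the proof of Prop.~\ref{11:56:27}). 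The derivative at $t_0$ then follows from nothing more than the \emph{continuity} of $\ve\mapsto N_\ve$ established in Cor.~\ref{14:25:37}, avoiding your remainder estimate entirely.

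Two remarks on the comparison. First, the paper's $A_{t,t_0}$ is precisely the explicit boundary-correction you would need to construct your $E_t$ and $B_h$; so the ``principal technical obstacle'' you flag is already resolved by that formula. Second, your argument does not appear to use the hypothesis $P_{\ve_t}\varphi_t=0$ anywhere, whereas the paper's identity crucially needs it to force $g(t_0)=0$. If your sketch can be completed as written (and I believe it can, using $A_{t,t_0}$ as the model for $E_t$ and $B_h$), you would obtain a slightly stronger statement. This also explains the paper's own remark after the proof that the method does not bootstrap to higher regularity, since $P_t\dot\varphi_t=0$ fails in general; your direct approach is not subject to that obstruction at the $C^1$ level.
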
 
\begin{proof}
Abbreviate $N_t:=N_{\ve_t}$ and so forth. To study $N_{t}\varphi_t$ we would like to use the identity $\Delta_tN_{t}\varphi_t=\varphi_t$, but we need to be careful because $\Delta_t$ is unbounded.
Recall that 
\begin{align*}
\text{Dom}(\square_t)\cap C^\infty(E)=&\{\psi\in C^\infty(E)|\ \sigma(P_{t,f}^\ast,dr)\psi|_{\del M}=0=\sigma(P_{t,f}^\ast,dr)P_t\psi|_{\del M}\}=\mcD_t\cap P_t^{-1} \mcD_t
\end{align*} 
on which $\square_t=\Delta_t$. Lem.\ref{11:50:55} gives a smooth family of automorphisms $\kappa_t$ of $E$ satisfying $\kappa_0=\text{Id}$ and $\kappa_t(\mcD_0)=\mcD_t$. For $\psi\in \text{Dom}(\square_{t_0})\cap C^\infty(E)$ we claim that\footnote{Note that $\sigma(\Delta_t,dr)$ may only be invertible  around $\del M$, but we can extend it in an arbitrary (but smooth) way to the interior of $M$.}
\begin{align*}
A_{t,t_0}\psi:=\kappa_t\kappa_{t_0}^{-1}\psi-r\sigma(\Delta_t,dr)^{-1}\sigma(P_{t,f}^\ast,dr)P_t\kappa_t\kappa_{t_0}^{-1}\psi\in \  \text{Dom}(\square_{t})\cap C^\infty(E).
\end{align*} 
Indeed, $A_{t,t_0}\psi|_{\del M}=\kappa_t\kappa_{t_0}^{-1}\psi|_{\del M}$ shows that $A_{t,t_0}\psi\in\mcD_t$, while $P_tA_{t,t_0}\psi\in \mcD_t$ because 
\begin{align*}
\sigma(P_{t,f}^\ast,dr)P_tA_{t,t_0}\psi|_{\del M}=&\sigma(P_{t,f}^\ast,dr)P_t\kappa_t\kappa_{t_0}^{-1}\psi|_{\del M} \\ 
&-\sigma(P_{t,f}^\ast,dr)\sigma(P_{t},dr)\sigma(\Delta_t,dr)^{-1}\sigma(P_{t,f}^\ast,dr)P_t\kappa_t\kappa_{t_0}^{-1}\psi|_{\del M}
=0.
\end{align*} 
Here we used the assumption $\sigma(P^2_{\ve_t},dr)=0$ to deduce that $\sigma(\Delta_t,dr)$ commutes with $\sigma(P_t,dr)$ and $\sigma(P_{f,t}^\ast,dr)$. The map $t\mapsto A_{t,t_0}\varphi_{t_0}$ is $C^1$ from $B''$ to $C^\infty(E)$ for every fixed $t_0$, and we have $A_{t_0,t_0}\varphi_{t_0}=\varphi_{t_0}$ because of the assumption that $P_{t_0}\varphi_{t_0}=0$. We can now write
\begin{align}\label{18:01:30}
N_t\varphi_t=A_{t,t_0}N_{t_0}\varphi_t+N_t(1-\Delta_tA_{t,t_0}N_{t_0})\varphi_t,
\end{align} 
valid because $N_t\Delta_tA_{t,t_0}N_{t_0}\varphi_t=N_t\square_tA_{t,t_0}N_{t_0}\varphi_t=A_{t,t_0}N_{t_0}\varphi_t$. Even though $N_t$ appears on both sides, on the right it multiplies $(1-\Delta_tA_{t,t_0}N_{t_0})\varphi_t$, which depends $C^1$ on $t$ and vanishes when $t=t_0$ (see the proof of Prop.\ref{11:56:27} as to why $P_{t_0}N_{t_0}\varphi_{t_0}=0$).
Therefore, using (\ref{18:01:30}) and Cor.\ref{14:25:37}, we see that $N_t\varphi_t$ is differentiable at $t_0$ and we have
\begin{align}\label{18:14:52}
\left. \frac{d}{dt}\right|_{t=t_0}(N_t\varphi_t)=\left. \frac{d}{dt}\right|_{t=t_0} (A_{t,t_0}N_{t_0}\varphi_{t})
+N_{t_0}\left. \frac{d}{dt}\right|_{t=t_0} (1-\Delta_tA_{t,t_0}N_{t_0})\varphi_t.
\end{align} 
From Cor.\ref{14:25:37} again we deduce that the right hand-side is continuous with respect to $t_0$, hence the map $t\mapsto N_t\varphi_t$ is $C^1$. 
\end{proof}
\begin{rem}
It is tempting to try to conclude from (\ref{18:14:52}) by induction that $t\mapsto N_t\varphi_t$ is smooth whenever $t\mapsto \ve_t$ and $t\mapsto \varphi_t$ are. However, expressions like $N_{t}\dot{\varphi}_{t}$ that appear on the right in (\ref{18:14:52}) are continuous in $t$, but we can not conclude that they are $C^1$ (using the methods of this proof) because we do not have $P_t\dot{\varphi}_{t}=0$ in general. 
\end{rem}

\section{Proof of the main result}\label{14:11:23}  

In this section we will prove Thm.\ref{08:54:49}, by showing that the family of operators $P_\ve:=d_{L_\ve}$ on the bundle $\Lambda^{\bullet} L^\ast \otimes V$ satisfies the conditions of Thm.\ref{13:30:27}, combined with the results of sections \ref{21:39:11} and \ref{10:54:51}. We need some preparation. 
\newline
\newline
The main problem is verifying condition 3) of Thm.\ref{13:30:27}. As in \cite{MR0461588}, this will accomplished by introducing an auxiliary family of norms $E_\ve$ on $C^\infty(\Lambda^\bullet L^\ast \otimes V)$ and by showing that $|||\text{D}(\cdot)|||_{-1/2}\leq C E_\ve$ and $E_\ve\leq CQ_\ve$ for some constant $C$. We start by defining $E_\ve$. Let $\nabla$ be a $TM$-connection on $\Lambda^\bullet L^\ast \otimes V$, inducing $L_\ve$-connections 
$${\nabla}^\ve:\Lambda^q L^\ast \otimes V\rightarrow L^\ast\otimes \Lambda^{q} L^\ast \otimes V$$ 
defined by ${\nabla}^\ve_uv:=\nabla_{\rho_\ve(u)}v$ for all $u\in L$ and $v\in\Lambda^{q} L^\ast \otimes V$. Recall that $M$ is equipped with a Riemannian metric and $L$ and $V$ with Hermitian metrics.
\begin{defn}
The norm $E_\ve$ on $C^\infty(\Lambda^\bullet L^\ast \otimes V)$ is defined by
$$E_\ve(\varphi)^2:=||\nabla^\ve\varphi||^2+||\varphi||^2+\int_{\del M}|\varphi|^2$$
\end{defn}
We can think of $E_\ve$ as a weakened version of $||\cdot||_1$. Instead of measuring all first-derivatives, $E_\ve$ only measures derivatives in the directions of $\rho_\ve(L)\subset TM_\C$, and to compensate it also takes into account the boundary norm. One readily verifies that $E_\ve\lesssim ||\cdot ||_1$, and if $\rho_\ve$ is surjective then $E_\ve\sim ||\cdot ||_1$. Concretely, let $U\subset M$ be an open subset with frames $\{w_i\}_{i=1}^l$ and $\{e_\mu\}_{\mu=1}^{\text{rank}(V)}$ for $L$ and $V$, and let $\{\omega^i\}_{i=1}^l$ denote the dual frame for $L^\ast$. Every $\varphi\in C^\infty(U,\Lambda^qL^\ast\otimes V)$ can be written as
\begin{align}\label{13:31:10}
\varphi=\sum_{I, \mu} \varphi^\mu_I \ \omega^I\otimes e_\mu
\end{align}
for certain functions $\varphi^\mu_I\in C^\infty(U)$, where $I$ runs over all strictly increasing ordered subsets of $\{1,\ldots, l\}$ of length $|I|=q$, and $\omega^I:=\omega^{i_1}\wedge \ldots \wedge \omega^{i_q}$ if $I=\{i_1<\ldots <i_q\}$. Then
\begin{align}
E_{\ve}(\varphi)^2 \sim \sum_{I,\mu,i} || \rho_\ve(w_i)\cdot \varphi_I^\mu||^2  + ||\varphi||^2+ \int_{\del M} |\varphi|^2.
\end{align} 

We first consider the estimate $|||\text{D}(\cdot)|||_{-1/2}\leq C E_\ve$ on boundary charts, which only requires ellipticity of the pre-Lie algebroids $(L,d_{L_\ve})$. 

\begin{prop}\label{09:06:25}
For every $x\in \del M$ there exists a boundary chart $U$ containing $x$ and a constant $C$ such that $|||\text{D}\varphi|||_{-1/2}\leq C E_\ve(\varphi)$ for all $\ve\in B$ and $\varphi\in C^\infty_c(U,\Lambda^q L^\ast\otimes V)$.  
\end{prop}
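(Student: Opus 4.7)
The plan is to work in a fixed boundary chart $U\cong \R^m_-$ around $x$ with coordinates $(t,r)$ where $r$ is a defining function for $\del M$, and to trivialize $L$ and $V$ by local frames $\{w_i\}$ and $\{e_\mu\}$. Writing $\varphi=\sum \varphi^\mu_I\,\omega^I\otimes e_\mu$ and setting $v_i:=\rho_\ve(w_i)$, it is immediate that
$$E_\ve(\varphi)^2\asymp \sum_{i,I,\mu}\|v_i\varphi^\mu_I\|^2+\|\varphi\|^2+\int_{\del M}|\varphi|^2,$$
the difference being a zeroth-order term in $\varphi$ coming from $\nabla\omega^I$ and $\nabla e_\mu$. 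Since by definition $\|\text{D}\varphi\|^2_{\del,-1/2}\lesssim \sum_{j,I,\mu}\|\del_j\varphi^\mu_I\|^2_{\del,-1/2}+\|\varphi\|^2_{\del,-1/2}$ and $\|\varphi\|^2_{\del,-1/2}\leq \|\varphi\|^2\leq E_\ve(\varphi)^2$, the task reduces to bounding each $\|\del_j\varphi^\mu_I\|_{\del,-1/2}$ by $E_\ve(\varphi)$, uniformly in $\ve\in B$.

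Next I would invoke ellipticity. Since $\rho_\ve(L)+\overline{\rho_\ve(L)}=TM_\C$ pointwise, on a sufficiently small $U$ we can write each coordinate vector field as
$$\del_j=\sum_i a^i_j\,v_i+\sum_i b^i_j\,\overline{v_i}$$
for smooth bounded coefficients $a^i_j,b^i_j$ depending smoothly on $\ve$. Consequently
$$\|\del_j\varphi^\mu_I\|_{\del,-1/2}\leq C\sum_i\bigl(\|v_i\varphi^\mu_I\|_{\del,-1/2}+\|\overline{v_i}\varphi^\mu_I\|_{\del,-1/2}\bigr)+C\|\varphi\|,$$
and so the two kinds of terms $\|v_i u\|_{\del,-1/2}$ and $\|\overline{v_i}u\|_{\del,-1/2}$ need to be controlled. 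The first is trivial: $\|v_iu\|_{\del,-1/2}\leq\|v_iu\|_{\del,0}=\|v_iu\|\leq E_\ve(\varphi)$.

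The main obstacle is the second, $\|\overline{v_i}u\|_{\del,-1/2}$, since in general $\overline{v_i}$ has a normal component so partial integration produces boundary contributions. The plan is to use the duality formulation
$$\|\overline{v_i}u\|_{\del,-1/2}^2=(\overline{v_i}u,\Lambda^{-1}_\del\overline{v_i}u)$$
and integrate by parts to move one $\overline{v_i}$ across the inner product. The formal adjoint satisfies $(\overline{v_i})^\ast=-v_i+c_i$ for a smooth function $c_i$, while the boundary contribution produced by Lem.\ref{15:10:32} is of the form $\int_{\del M}\langle\overline{v_i},\nu\rangle\,u\cdot\overline{\Lambda^{-1}_\del\overline{v_i}u}$, which by Cauchy--Schwarz in the tangential $\pm 1/2$ pairing can be estimated by $C\int_{\del M}|u|^2+\epsilon\|\text{D}u\|^2_{\del,-1/2}$. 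The bulk contribution becomes $-(u,v_i\Lambda^{-1}_\del\overline{v_i}u)$ modulo lower-order terms. Commuting $v_i$ through $\Lambda^{-1}_\del$ (a tangential commutator of strictly lower order) and using $v_i\overline{v_i}=\overline{v_i}v_i+[v_i,\overline{v_i}]$, this reduces further to pairings controlled by $\|v_i u\|$ and $\|u\|$, together with bracket terms that are either of lower order or again absorbable into an $\epsilon\|\text{D}u\|^2_{\del,-1/2}$ contribution via Cauchy--Schwarz with carefully chosen tangential Sobolev exponents.

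Summing all these bounds over $i$, $I$, $\mu$, $j$, I obtain an inequality of the form
$$\|\text{D}\varphi\|^2_{\del,-1/2}\leq \epsilon\|\text{D}\varphi\|^2_{\del,-1/2}+C_\epsilon\,E_\ve(\varphi)^2,$$
valid for all $\varphi\in C^\infty_c(U,\Lambda^qL^\ast\otimes V)$ and $\ve\in B$. Choosing $\epsilon<1$ and absorbing the left-hand term then yields the desired estimate. Uniformity in $\ve$ follows because only finitely many derivatives of $\ve$ enter the argument (through the coefficients $a^i_j,b^i_j$, $c_i$, and the commutators), so shrinking $B$ around $0$ in the $C^\infty$-topology suffices. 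The chief technical subtlety, and hence the main obstacle, is to verify that the boundary term produced during the integration by parts of $\overline{v_i}$ against $\Lambda^{-1}_\del\overline{v_i}u$ is truly controllable by $\int_{\del M}|u|^2$ alone and not by a tangential Sobolev norm of $u|_{\del M}$ of positive order; this is where the factor $\Lambda^{-1}_\del$ and the compactness of the symbol $\langle\overline{v_i},\nu\rangle$ must be used together with the trace and Cauchy--Schwarz inequalities.
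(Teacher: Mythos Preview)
Your approach differs from the paper's in a significant way, and it contains a gap at exactly the point you flag as ``the main obstacle''.

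The paper does not attempt any integration by parts. It reduces the bundle-valued estimate to the scalar inequality
\[
\sum_{j=1}^{m}\|\partial_j f\|^2_{\partial,-1/2}\leq C\sum_{i=1}^{l}\|v_{i,\ve}f\|^2_{\partial,-1/2}+C\int_{\partial M}|f|^2,\qquad f\in C^\infty_c(U),
\]
and then cites Folland--Kohn, Thm.~2.4.5. That argument proceeds by (i) freezing the coefficients of the $v_{i,\ve}$ at the boundary point (the error is absorbable by shrinking $U$), (ii) for $f$ vanishing on $\partial M$, using the \emph{full} Fourier transform on $\R^m$ together with the symbol bound $\sum_i|\sigma(v_i,\xi)|^2\geq C|\xi|^2$ coming from ellipticity, and (iii) for general $f$, writing $f=h+g$ where $\widetilde g(\tau,r)=e^{(1+|\tau|^2)^{1/2}r}\widetilde f(\tau,0)$ is an explicit Poisson-type extension of the boundary trace and $h=f-g$ vanishes on $\partial M$. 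Step (ii) handles $h$, and a direct computation shows $\|\partial_j g\|_{\partial,-1/2}^2\leq C\int_{\partial M}|f|^2$. The boundary integral in $E_\ve$ is used precisely and only here, to control $g$.

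Your integration-by-parts route runs into the following difficulty. After one integration by parts you face the boundary term
\[
\int_{\partial M}\langle\overline{v_i},\nu\rangle\, u\cdot\overline{(\Lambda^{-1}_\partial\overline{v_i}u)\big|_{\partial M}}.
\]
Split $\overline{v_i}=\overline{v_i}^{T}+\gamma'\partial_r$ into tangential and normal parts. The tangential contribution is indeed bounded by $C\int_{\partial M}|u|^2$, since $\Lambda^{-1}_\partial$ composed with a tangential derivative is order zero on $\partial M$. But the normal contribution forces you to control $\|\partial_r u|_{\partial M}\|_{H^{-1}(\partial M)}$, and there is no trace inequality that bounds this by $\|\mathrm{D}u\|_{\partial,-1/2}$: the operator $\Lambda^{-1}_\partial$ smooths only in the tangential directions, so the normal derivative in $\overline{v_i}u$ is not regularised, and any attempt to estimate $\partial_r u|_{\partial M}$ via the fundamental theorem of calculus produces $\partial_r^2 u$, which is one derivative too many. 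Pairing instead in $H^{1/2}(\partial M)\times H^{-1/2}(\partial M)$ shifts the problem to $\|u|_{\partial M}\|_{H^{1/2}(\partial M)}$, which is likewise not controlled by $E_\ve(\varphi)$ (only the $L^2(\partial M)$ norm appears there). This is a genuine gap, not a technicality; the Folland--Kohn decomposition $f=h+g$ is the standard device that circumvents it.
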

\begin{proof}
Let $\{w_i\}_{i=1}^l$ be a unitary frame for $L|_U$ as above, and set $v_{i,\ve}:=\rho_\ve(w_i)$. By definition of $||\text{D}(\cdot)||_{\del,-1/2}$ (see (\ref{09:49:34})) and $E_\ve$ it suffices to prove the (slightly stronger) estimate
\begin{align}\label{08:23:20}
\sum_{i=1}^{m} ||\del_i f||^2_{\del,-1/2}\leq C \sum_{i=1}^l || v_{i,\ve} f||^2_{\del,-1/2}+C\int_{\del M} |f|^2 \hspace{10mm} \forall f\in C^\infty_c(U).
\end{align}
This is exactly what is proved in \cite[Thm. 2.4.5]{MR0461588} when the vector fields $v_1,\ldots,v_l$ are fixed, i.e.\ not varying in a family, so we only have to argue that $C$ can be chosen independently of $\ve\in B$. This follows because (\ref{08:23:20}) involves only finitely many derivatives, so it suffices to impose a bound on a finite $C^k$-norm on the elements of $B$. 
\end{proof}

Next, we consider the \lq\lq basic estimate\rq\rq\ $E_\ve\leq CQ_\ve$ (first introduced by Morrey \cite{MR0099060}), which forms the bridge between the analysis of Sect.\ref{10:23:50} and Lie algebroids. 
 
Let $U$ be a boundary chart, equipped with unitary frames $\{w_{\ve,i}\}_{i=1}^l$ and $\{e_\mu\}_{\mu=1}^{\text{rank}(V)}$ for $L$ and $V$ satisfying $\rho_\ve(w_{i,\ve})\in T\del M_\C$ for $i<l$ and 
\begin{align}\label{15:09:20}
\rho_\ve(w_{l,\ve})=\gamma_\ve\nu+T\del M_\C,
\end{align}
where $\nu$ denotes the outward normal vector to $\del M$ and $\gamma_\ve >0$. The conormal bundle $N^\ast_{\del M}$ is spanned by $\nu^\flat:=g(\nu)$, and by construction we have $\rho_\ve^\ast (\nu^\flat)=\gamma_\ve\omega_\ve^l$. Since the dual of wedging with $\omega_\ve^l$ is interior contraction by $w_{l,\ve}$, it follows from Lem.\ref{15:19:51} that, with respect to (\ref{13:31:10}), 
\begin{align}\label{14:24:20}
\text{Dom}(d^\ast_{L_\ve})\cap C^\infty_c(U,\Lambda^q L^\ast \otimes V)=\{\varphi \in C^\infty_c(U,\Lambda^q L^\ast \otimes V)| \ \varphi^\mu_{I,\ve}|_{\del M}=0 \ \text{if} \ l\in I\}.
\end{align}
We will also need explicit expressions for $d_{L_\ve}$ and $d_{L_\ve,f}^\ast$. Setting $v_{i,\ve}:=\rho_\ve(w_{i,\ve})$, we have 
\begin{align}\label{09:13:01}
d_{L_\ve}\varphi=&\sum_{I,\mu,i} (v_{i,\ve}\cdot\varphi^\mu_{I,\ve})\omega_\ve^i \wedge \omega_\ve^I\otimes e_\mu +\mathcal{O}(|\varphi|) = \sum_{IK,\mu,i} \delta^{iI}_K(v_{i,\ve}\cdot\varphi^\mu_{I,\ve})\omega_\ve^K\otimes e_\mu +\mathcal{O}(|\varphi|)
\end{align}
where $\delta^{iI}_K$ denotes the sign of the unique permutation taking $iI:=\{i,i_1,\ldots,i_q\}$ to the strictly increasing set $K$, or zero if there is no such permutation (in particular, it is zero for all $K$ if $i\in I$). Since the frames are unitary, it follows that 
\begin{align*}
d_{L_\ve,f}^\ast(\varphi)=-\sum_{IK,\mu,i} \delta^{iI}_K(\overline{v_{i,\ve}}\cdot\varphi^\mu_{K,\ve}) \omega_\ve^I\otimes e_\mu +\mathcal{O}(|\varphi|). 
\end{align*}
\begin{lem}\label{13:08:22}
Let $x\in \del M$ be a point on the boundary. 
\begin{itemize}
\item[i)] If $x$ is elliptic for $L_0$, then for $B$ sufficiently small there exists a boundary chart $U$ equipped with unitary frames as above, satisfying 
\begin{align}\label{14:44:23}
Q_\ve(\varphi,\varphi)=\sum_{I,\mu,i}  ||v_{i,\ve}\cdot \varphi_{I,\ve}^\mu||^2+ \mcO(E_\ve(\varphi)\cdot ||\varphi||)
\end{align}
for all $\varphi\in \text{Dom}(d^\ast_{L_\ve})\cap C^\infty_c(U,\Lambda^q L^\ast\otimes V)$ and $\ve\in B$.
Here $\mcO(E_\ve(\varphi)\cdot ||\varphi||)$ denotes a term that can be bounded from above by a uniform constant times $E_\ve(\varphi)\cdot ||\varphi||$. 
\item[ii)] If $x$ is non-elliptic for $L_0$, denote by $\lambda_{1},\ldots,\lambda_{l-1}$ the eigenvalues of the Levi form of $L_0$ at $x$, which we may arrange to be diagonal with respect to the frame $w_{1,0},\ldots,w_{l-1,0}$. Then for $B$ sufficiently small there exists a boundary chart $U$ such that\footnote{Since $\varphi_{I,\ve}^\mu|_{\del M}=0$ whenever $l\in I$, the sum over the eigenvalues is well-defined.} 
\begin{align*}
Q_\ve(\varphi,\varphi)=\sum_{I,\mu,i}  ||v_{i,\ve}\cdot \varphi_{I,\ve}^\mu||^2+\sum_{I,\mu}\sum_{i\in I} \lambda_{i} \int_{\del M} \gamma_\ve |\varphi^\mu_{I,\ve}|^2+R_\ve(\varphi)+\mathcal{O}(E_\ve(\varphi)\cdot ||\varphi||)
\end{align*}
for all $\varphi\in \text{Dom}(d^\ast_{L_\ve})\cap C^\infty_c(U,\Lambda^q L^\ast\otimes V)$ and $\ve\in B$. Here $\gamma_\ve$ is the constant of (\ref{15:09:20}), and $R_\ve(\varphi)$ denotes a term that can be bounded by $\delta E_\ve(\varphi)^2$ for $\delta$ as small as required by taking $U$ sufficiently small (independently of $\ve$). In addition we have, for each $i<l$, 
\begin{align}\label{14:16:51}
|| v_{i,\ve}\cdot \varphi_{I,\ve}^\mu||^2\geq -\lambda_{i} \int_{\del M} \gamma_\ve|\varphi_{I,\ve}^\mu|^2+ R_\ve(\varphi)+\mathcal{O}(E_\ve(\varphi)\cdot ||\varphi||).
\end{align}

\end{itemize}
\end{lem}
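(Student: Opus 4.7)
I would prove both parts by the standard Morrey--Kohn calculation adapted to the pre-Lie algebroid setting, splitting the argument into three steps: (a) an explicit local expansion of $Q_\ve(\varphi,\varphi)$ in components using the frame, (b) integration by parts to reorganize everything as squared $v_{i,\ve}$-derivatives plus commutator and boundary terms, and (c) identification of the surviving boundary terms with the Levi form via Remark \ref{12:51:01}.

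For (a), using (\ref{09:13:01}) and the analogous expression for $d^\ast_{L_\ve,f}$, expanding $||d_{L_\ve}\varphi||^2+||d^\ast_{L_\ve,f}\varphi||^2$ gives a sum of terms of the form $(v_{i,\ve}\varphi^\mu_{I,\ve},v_{j,\ve}\varphi^\mu_{J,\ve})$ and $(\overline{v_{i,\ve}}\varphi^\mu_{I,\ve},\overline{v_{j,\ve}}\varphi^\mu_{J,\ve})$, plus strictly lower-order terms that are immediately absorbed into $\mathcal{O}(E_\ve(\varphi)\cdot ||\varphi||)$. The diagonal pieces $(i=j, I=J)$ of the first sum already contribute $\sum_{i\notin I}||v_{i,\ve}\varphi^\mu_{I,\ve}||^2$. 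For (b), I would use Lem.\ref{15:10:32} to replace $||\overline{v_{i,\ve}}f||^2$ by $||v_{i,\ve}f||^2 + ([\overline{v_{i,\ve}},v_{i,\ve}]f,f)$ plus a boundary term, which vanishes for $i<l$ (tangential) and for $i=l$ vanishes on the allowed index patterns thanks to the Neumann condition (\ref{14:24:20}). The off-diagonal pieces from the two norms pair up: up to zeroth-order errors the sum $(v_{i,\ve}\varphi^\mu_{jK,\ve},v_{j,\ve}\varphi^\mu_{iK,\ve}) + (\overline{v_{j,\ve}}\varphi^\mu_{jK,\ve},\overline{v_{i,\ve}}\varphi^\mu_{iK,\ve})$ integrates by parts to a commutator term $([\overline{v_{j,\ve}},v_{i,\ve}]\varphi^\mu_{jK,\ve},\varphi^\mu_{iK,\ve})$ plus a boundary integral. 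The surviving boundary integrals all require at least one of $i,j$ to equal $l$, and (\ref{14:24:20}) kills those where the corresponding component has $l$ in its index set.

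For (c), by Remark \ref{12:51:01} the commutators decompose as $[\overline{v_{j,\ve}},v_{i,\ve}] = \sum_{k<l}(a^k_{ij}v_{k,\ve}+b^k_{ij}\overline{v_{k,\ve}}) + a^l_{ij}v_{l,\ve}+b^l_{ij}\overline{v_{l,\ve}}$. The $k<l$ terms integrate by parts against $\varphi^\mu_{iK,\ve}$ to produce contributions bounded by $E_\ve(\varphi)\cdot ||\varphi||$, as do the tangential components of the $v_{l,\ve}$- and $\overline{v_{l,\ve}}$-terms. The non-tangential parts, using (\ref{15:09:20}) and the boundary identity $a^l_{ij}|_{\del M}=-b^l_{ij}|_{\del M}$ from Remark \ref{12:51:01}, contribute a boundary integral of the shape $\int_{\del M}\gamma_\ve\,b^l_{ij}\,\varphi^\mu_{jK,\ve}\overline{\varphi^\mu_{iK,\ve}}$. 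For part i), at an elliptic point we pick the frame so $b^l_{ij}|_{\del M}=0$ near $x$ (again Remark \ref{12:51:01}), making all Levi boundary terms vanish and yielding (\ref{14:44:23}). For part ii), we diagonalize the Levi form in the frame $w_{1,0},\ldots,w_{l-1,0}$ so that $b^l_{ij}(x)=\lambda_i\delta_{ij}$; the deviations of $b^l_{ij}$ from their value at $x$ are small on a sufficiently small chart (so they are absorbed into $R_\ve(\varphi)\le \delta E_\ve(\varphi)^2$), and summing over $K$ yields the stated $\sum_{i\in I}\lambda_i\int_{\del M}\gamma_\ve|\varphi^\mu_{I,\ve}|^2$. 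The inequality (\ref{14:16:51}) follows by running exactly the same argument on a single component $f=\varphi^\mu_{I,\ve}$ and a single tangential direction $v_{i,\ve}$ with $i<l$: two integrations by parts convert $||v_{i,\ve}f||^2$ into $||\overline{v_{i,\ve}}f||^2+([\overline{v_{i,\ve}},v_{i,\ve}]f,f)$ plus lower-order terms, no boundary term appears from the integration by parts itself because $v_{i,\ve}$ is tangential, and the Levi-form analysis of the commutator produces the boundary contribution $-\lambda_i\int_{\del M}\gamma_\ve|f|^2$ modulo $R_\ve(\varphi)$ and $\mathcal{O}(E_\ve(\varphi)\cdot ||\varphi||)$.

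The main obstacle is the combinatorial bookkeeping in (b): one must verify that, after pairing, the off-diagonal cross-terms really collapse to pure commutators (so no uncontrolled first-order boundary pieces remain), and that the Neumann condition (\ref{14:24:20}) together with the tangentiality of $v_{i,\ve}$ for $i<l$ kills every boundary contribution except for the non-tangential $b^l_{ij}$-piece. Once this cancellation is in place, the identification with the Levi form is essentially Remark \ref{12:51:01}, and the remaining smallness of $R_\ve(\varphi)$ is just the uniform continuity of $b^l_{ij}$ together with a chart-shrinking argument.
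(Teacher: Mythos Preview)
Your proposal is correct and follows essentially the same Morrey--Kohn strategy as the paper: local expansion of $Q_\ve$ in the unitary frame, two integrations by parts producing commutator terms, careful verification that all boundary integrals except the $b^l_{ij}$-piece vanish via tangentiality of $v_{i,\ve}$ for $i<l$ together with the Neumann condition (\ref{14:24:20}), and finally the identification with the Levi form through Rem.~\ref{12:51:01}.

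The only organizational difference is that you expand $\|d_{L_\ve}\varphi\|^2$ and $\|d^\ast_{L_\ve,f}\varphi\|^2$ separately and then match their off-diagonal pieces, whereas the paper expands $\|d_{L_\ve}\varphi\|^2$ alone, integrates by parts the cross-terms $(v_{i,\ve}\varphi^\mu_{I,\ve},v_{j,\ve}\varphi^\mu_{J,\ve})$ into $(\overline{v_{j,\ve}}\varphi^\mu_{I,\ve},\overline{v_{i,\ve}}\varphi^\mu_{J,\ve})$, and then \emph{recognizes} this sum as $\|d^\ast_{L_\ve,f}\varphi\|^2$ up to $\mathcal{O}(E_\ve(\varphi)\|\varphi\|)$. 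The paper's route is marginally cleaner because it never has to convert $\|\overline{v_{i,\ve}}f\|^2$ to $\|v_{i,\ve}f\|^2$ directly, but both arrive at the same place. One subtlety you glossed over: the divergence remainders from the second integration by parts are of the form $(\overline{v_{j,\ve}}\varphi^\mu_{I,\ve},\varphi^\mu_{J,\ve})$, which is \emph{not} immediately $\mathcal{O}(E_\ve(\varphi)\|\varphi\|)$ since $E_\ve$ controls only the $v_{i,\ve}$-derivatives; one more integration by parts (with vanishing boundary term, by the same tangentiality/Neumann argument) is needed to absorb it. The paper makes this explicit. Also, your invocation of $a^l_{ij}|_{\del M}=-b^l_{ij}|_{\del M}$ is not actually needed: the $a^l_{ij}v_{l,\ve}$-terms are already $\mathcal{O}(E_\ve(\varphi)\|\varphi\|)$ without any integration by parts, so only the $b^l_{ij}\overline{v_{l,\ve}}$-piece has to be analyzed.
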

\begin{proof}
Using (\ref{09:13:01}) we obtain 
\begin{align*}
||d_{L_\ve}\varphi||^2=&\sum_{IJK,\mu,ij}  \delta^{iI}_K\delta^{jJ}_K (v_{i,\ve}\cdot \varphi_{I,\ve}^\mu,v_{j,\ve}\cdot \varphi_{J,\ve}^\mu) + \mcO(E_\ve(\varphi)\cdot ||\varphi||) \\
=&\sum_{I,\mu,i\notin I}  ||v_{i,\ve}\cdot \varphi_{I,\ve}^\mu||^2 + \sum_{IJK,\mu,i\neq j}  \delta^{iI}_K\delta^{jJ}_K (v_{i,\ve}\cdot \varphi_{I,\ve}^\mu,v_{j,\ve}\cdot \varphi_{J,\ve}^\mu) +\mcO(E_\ve(\varphi)\cdot ||\varphi||).
\end{align*}
For $i\neq j$, $\delta^{iI}_K\delta^{jJ}_K$ can only be nonzero if $I=\langle jM\rangle$ and $J=\langle iM\rangle$ for $M:=K\backslash\{i,j\}$, where for a set $Z$ we denote by $\langle Z\rangle$ the strictly ordered set having the same elements as $Z$. A quick inspection shows that in that case $\delta^{iI}_{K}\delta^{jJ}_{K}=-\delta^{I}_{jM }\delta^{J}_{ iM }$, hence
\begin{align*}
||d_{L_\ve}\varphi||^2=&\sum_{I,\mu,i}  ||v_{i,\ve}\cdot \varphi_{I,\ve}^\mu||^2-\sum_{IJM,\mu,ij}  \delta^{I}_{jM}\delta^{J}_{iM} (v_{i,\ve}\cdot \varphi_{I,\ve}^\mu,v_{j,\ve}\cdot \varphi_{J,\ve}^\mu)	 + \mcO(E_\ve(\varphi)\cdot ||\varphi||).
\end{align*}
Note that the terms with $i=j$ in the second summation cancel the terms with $i\in I$ in the first one. Using integration by parts, we can write
\begin{align}
(v_{i,\ve}\cdot \varphi_{I,\ve}^\mu,v_{j,\ve}\cdot \varphi_{J,\ve}^\mu)=& -(\overline{v_{j,\ve}}\cdot v_{i,\ve}\cdot  \varphi_{I,\ve}^\mu, \varphi_{J,\ve}^\mu)+\mcO(E_\ve(\varphi)\cdot ||\varphi||)\nonumber\\
=&-([\overline{v_{j,\ve}},v_{i,\ve}]\cdot\varphi_{I,\ve}^\mu, \varphi_{J,\ve}^\mu)-(v_{i,\ve}\cdot \overline{v_{j,\ve}}\cdot \varphi_{I,\ve}^\mu, \varphi_{J,\ve}^\mu)+\mcO(E_\ve(\varphi)\cdot ||\varphi||).\nonumber
\end{align}
The reason why there are no boundary terms above is the following. If $j<l$, then $v_{j,\ve}$ is tangent to $\del M$, while if $j=l$ and $i<l$, then $\varphi_{I,\ve}^\mu|_{\del M}=0$ because $j\in I$ and therefore so is $v_{i,\ve}\cdot \varphi_{I,\ve}^\mu$ as $v_{i,\ve}$ is tangent to $\del M$. Finally, if $i=j=l$ then $\varphi_{I,\ve}^\mu|_{\del M}=\varphi_{J,\ve}^\mu|_{\del M}=0$ by (\ref{14:24:20}).

We can apply another integration by parts to the term $(v_{i,\ve}\cdot \overline{v_{j,\ve}}\cdot \varphi_{I,\ve}^\mu, \varphi_{J,\ve}^\mu)$, using the same arguments to conclude that there are no boundary terms. One of the lower order terms that appears in this process is of the form $(\overline{v_{j,\ve}}\cdot \varphi_{I,\ve}^\mu, \varphi_{J,\ve}^\mu)$, which is not $\mcO(E_\ve(\varphi)\cdot ||\varphi||)$, but it will be after applying yet another integration by parts. We then end up with
\begin{align*}
||d_{L_\ve}\varphi||^2=&\sum_{I,\mu,i}  ||v_{i,\ve}\cdot \varphi_{I,\ve}^\mu||^2-\sum_{IJM,\mu,ij}  \delta^{I}_{jM}\delta^{J}_{iM} (\overline{v_{j,\ve}}\cdot \varphi_{I,\ve}^\mu,\overline{v_{i,\ve}}\cdot \varphi_{J,\ve}^\mu)	 \\ &+ \sum_{IJM,\mu,ij}  \delta^{I}_{jM}\delta^{J}_{iM} ([\overline{v_{j,\ve}},v_{i,\ve}]\cdot \varphi_{I,\ve}^\mu,\varphi_{J,\ve}^\mu)+ \mcO(E_\ve(\varphi)\cdot ||\varphi||).
\end{align*}
The second term coincides exactly with $-||d_{L_\ve}^\ast\varphi||^2+\mcO(E_\ve(\varphi)\cdot ||\varphi||)$, which implies that
\begin{align*}
Q_\ve(\varphi,\varphi)=\sum_{I,\mu,i}  ||v_{i,\ve}\cdot \varphi_{I,\ve}^\mu||^2 +  \sum_{IJM,\mu,ij}  \delta^{I}_{jM}\delta^{J}_{iM} ([\overline{v_{j,\ve}},v_{i,\ve}]\cdot \varphi_{I,\ve}^\mu,\varphi_{J,\ve}^\mu)+ \mcO(E_\ve(\varphi)\cdot ||\varphi||).
\end{align*}
Following Rem.\ref{12:51:01}, we expand 
\begin{align*}
[\overline{v_{j,\ve}},v_{i,\ve}]=\sum_{k=1}^la^k_{ij,\ve} v_{k,\ve}+\sum_{k=1}^lb^k_{ij,\ve} \overline{v_{k,\ve}}
\end{align*}
for some (not necessarily unique) functions $a^k_{ij,\ve},b^k_{ij,\ve}$. When inserted inside the inner product $([\overline{v_{j,\ve}},v_{i,\ve}]\cdot \varphi_{I,\ve}^\mu,\varphi_{J,\ve}^\mu)$, the terms involving $a^k_{ij,\ve}$ are $\mathcal{O}(E_\ve(\varphi)\cdot||\varphi||)$, while those involving $b^k_{ij,\ve}$ for either $k<l$, $i=l$ or $j=l$ can be integrated by parts (all boundary terms vanish), after which they become $\mathcal{O}(E_\ve(\varphi)\cdot||\varphi||)$. The only terms defying this procedure are those with $i,j<l$ and $k=l$. 

If $x$ is elliptic for $L_0$, then for $B$ sufficiently small there exists a boundary chart $U$ around $x$, all of whose points are elliptic for $L_\ve$ for every $\ve\in B$. As explained in Rem.\ref{12:51:01}, in this case we can arrange for $b^l_{ij,\ve}$ to vanish on $\del M$. In particular, the problematic terms involving $b^l_{ij,\ve}$ can also be integrated by parts without inducing boundary terms, proving i).

If $x$ is non-elliptic for $L_\ve$, Rem.\ref{12:51:01} implies that $b^l_{ij,\ve}=\lambda_{i}\delta_{ij}+c_{ij,\ve}$ for $i,j<l$, where $c_{ij,\ve}\in C^\infty(U)$ depends continuously on $\ve$ and vanishes at $x$ when $\ve=0$. We have
\begin{align*}
\big(\lambda_{i}\delta_{ij}\overline{v_{l,\ve}}\cdot \varphi^\mu_{I,\ve},\varphi_{J,\ve}^\mu\big)
=&-\lambda_{i}\delta_{ij}(\varphi^\mu_{I,\ve},v_{l,\ve}\cdot \varphi^\mu_{J,\ve})+\lambda_{i}\delta_{ij}\int_{\del M} |\varphi^\mu_{I,\ve}|^2 \iota_{\overline{v_{l,\ve}}} \text{vol}_g +\mathcal{O}(||\varphi ||^2)\\
=&\lambda_{i}\delta_{ij}\int_{\del M} \gamma_\ve |\varphi^\mu_{I,\ve}|^2+\mathcal{O}(E_\ve(\varphi)\cdot ||\varphi ||), 
\end{align*} 
using (\ref{15:09:20}). 
Similarly, we obtain
\begin{align*}
\big(c_{ij,\ve} \overline{v_{k,\ve}} \varphi^\mu_{I,\ve},\varphi^\mu_{J,\ve}\big)= \int_{\del M} \gamma_\ve c_{ij,\ve} \varphi^\mu_{I,\ve}  \overline{\varphi^\mu_{J,\ve}}+\mathcal{O}(E_\ve(\varphi)\cdot ||\varphi ||).
\end{align*} 
Now $\gamma_\ve$ depends smoothly on $\ve$ and can be bounded uniformly from above, so that 
\begin{align*}
|\int_{\del M}  \gamma_\ve c_{ij,\ve}\varphi^\mu_{I,\ve} \overline{\varphi^\mu_{J,\ve}}|\leq C (\sup_U  |c_{ij,\ve} |) E_\ve(\varphi)^2,
\end{align*} 
where $\text{sup}_U| c_{ij,\ve} |$ can be made arbitrarily small by taking $U$ and $B$ sufficiently small. This proves the first statement of ii). For the second statement we compute
\begin{align*}
||v_{i,\ve}\cdot \varphi^\mu_{I,\ve}||^2=&-(\overline{v_{i,\ve}} \cdot v_{i,\ve}\cdot \varphi^\mu_{I,\ve},\varphi^\mu_{I,\ve})+\mathcal{O}(E_\ve(\varphi)\cdot ||\varphi||)\\
=&-([\overline{v_{i,\ve}},v_{i,\ve}]\cdot \varphi^\mu_{I,\ve},\varphi^\mu_{I,\ve})-(v_{i,\ve}\cdot\overline{v_{i,\ve}}\cdot \varphi^\mu_{I,\ve},\varphi^\mu_{I,\ve})	+\mathcal{O}(E_\ve(\varphi)\cdot ||\varphi||)\\
=&-([\overline{v_{i,\ve}},v_{i,\ve}]\cdot \varphi^\mu_{I,\ve},\varphi^\mu_{I,\ve})+(\overline{v_{i,\ve}}\cdot \varphi^\mu_{I,\ve},\overline{v_{i,\ve}}\cdot \varphi^\mu_{I,\ve})+\mathcal{O}(E_\ve(\varphi)\cdot ||\varphi||)\\
\geq &-([\overline{v_{i,\ve}},v_{i,\ve}]\cdot \varphi^\mu_{I,\ve},\varphi^\mu_{I,\ve})+\mathcal{O}(E_\ve(\varphi)\cdot ||\varphi||)\\
=&-( (\lambda_{i}+c_{ii,\ve})\overline{v_{l,\ve}}\cdot \varphi^\mu_{I,\ve},\varphi^\mu_{I,\ve})	+\mathcal{O}(E_\ve(\varphi)\cdot ||\varphi||)\\
=&-\lambda_{i}\int_{\del M}\gamma_\ve|\varphi^\mu_{I,\ve}|^2-\int_{\del M}\gamma_\ve c_{ii,\ve}|\varphi^\mu_{I,\ve}|^2	+\mathcal{O}(E_\ve(\varphi)\cdot ||\varphi||). 
\end{align*} 
Using the same arguments as before we arrive at (\ref{14:16:51}). 
\end{proof}

\begin{prop}\label{14:03:13}
If $L_0$ is $q$-convex for some $q\geq 0$, then for $B$ sufficiently small we have 
 \begin{align}\label{16:43:18}
E_\ve(\varphi) \leq C Q_\ve(\varphi) 
 \end{align}  
for all $\varphi\in \text{Dom}(d^\ast_{L_\ve})\cap C^\infty(\Lambda^qL^\ast\otimes V)$ and all $\ve\in B$. Here $C$ is independent of $\ve$.
\end{prop}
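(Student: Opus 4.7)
Plan: By a partition-of-unity argument using Lemma \ref{14:50:22} (the commutator $[\rho_\alpha,\nabla^\ve]$ being of order zero, the same cutoff procedure applies to $E_\ve$), it suffices to prove \eqref{16:43:18} for $\varphi$ supported in a single chart of one of three types: interior, boundary around an elliptic point, or boundary around a non-elliptic point.

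The interior case is immediate from G\"arding's inequality (Theorem \ref{13:13:59}), since there $E_\ve(\varphi) \leq C||\varphi||_1$. In the elliptic boundary case, Lemma \ref{13:08:22}(i) gives $Q_\ve(\varphi,\varphi) = \sum_{I,\mu,i}||v_{i,\ve}\cdot\varphi^\mu_{I,\ve}||^2 + \mcO(E_\ve(\varphi)\cdot||\varphi||)$ with no boundary contribution, and the sum dominates $||\nabla^\ve\varphi||^2$ up to lower order. The remaining term $\int_{\del M}|\varphi|^2$ of $E_\ve$ is estimated using that at elliptic points $\rho_\ve(L)\cap\overline{\rho_\ve(L)}$ is transverse to $T\del M_\C$: a real vector field of the form $X = \tfrac{1}{2}(\rho_\ve(u)+\overline{\rho_\ve(u)})$ for a suitable $u\in L$ is outward-pointing at $\del M$, and the divergence theorem combined with the identity $X\cdot|\varphi|^2 = 2\,\text{Re}\,\langle\nabla^\ve_u\varphi,\varphi\rangle$ yields $\int_{\del M}|\varphi|^2 \leq \mcO(E_\ve(\varphi)\cdot||\varphi||) + C||\varphi||^2$. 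Standard absorption then concludes.

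The non-elliptic boundary case is the core of the argument and is where $q$-convexity enters. Choose frames as in Lemma \ref{13:08:22}(ii), diagonalising the Levi form at the centre point with eigenvalues $\lambda_1,\dots,\lambda_{l-1}$, and set $S := \{i<l : \lambda_i < 0\}$. The key algebraic input, equivalent to $q$-convexity, is the inequality
\begin{equation*}
\Lambda_I + \sum_{i\in S}|\lambda_i| \,\geq\, c \,>\, 0 \qquad \text{for every } I\subset\{1,\dots,l-1\} \text{ with } |I|=q,
\end{equation*}
where $\Lambda_I := \sum_{i\in I}\lambda_i$. Vanishing of the left-hand side would force both $I\subseteq\{i:\lambda_i\leq 0\}$ and $S\subseteq I$: the former gives $|I| \leq l-1-\#\{\lambda_i>0\} \leq q-1$ under the positive-eigenvalue alternative of $q$-convexity, the latter gives $|S| \leq q$ under the negative alternative, both contradicting $|I|=q$. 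Fix $t = 1-\delta$ close to $1$, and for each $i\in S$ and each $(I,\mu)$ substitute the term $||v_{i,\ve}\cdot\varphi^\mu_{I,\ve}||^2$ in Lemma \ref{13:08:22}(ii) by $(1-t)||v_{i,\ve}\cdot\varphi^\mu_{I,\ve}||^2 + t\bigl(-\lambda_i\int_{\del M}\gamma_\ve|\varphi^\mu_{I,\ve}|^2 + R_\ve + \mcO(E_\ve(\varphi)\cdot||\varphi||)\bigr)$ via \eqref{14:16:51}. The result is the lower bound
\begin{equation*}
Q_\ve(\varphi,\varphi) \geq \delta\sum_{I,\mu,i}||v_{i,\ve}\cdot\varphi^\mu_{I,\ve}||^2 + \sum_{I,\mu}\Bigl(\Lambda_I + t\sum_{i\in S}|\lambda_i|\Bigr)\int_{\del M}\gamma_\ve|\varphi^\mu_{I,\ve}|^2 + R_\ve + \mcO(E_\ve(\varphi)\cdot||\varphi||).
\end{equation*}
For $\delta$ small, the boundary coefficients remain $\geq c/2$, which combined with the tangential-derivative control from the first sum gives $\delta E_\ve(\varphi)^2 \leq CQ_\ve(\varphi,\varphi) + R_\ve + \mcO(E_\ve(\varphi)\cdot||\varphi||) + C||\varphi||^2$. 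Since $R_\ve \leq \delta' E_\ve(\varphi)^2$ for arbitrary $\delta'>0$ after shrinking the chart (part of Lemma \ref{13:08:22}(ii)), since $\mcO(E_\ve(\varphi)\cdot||\varphi||) \leq \delta'' E_\ve(\varphi)^2 + C_{\delta''}||\varphi||^2$, and since $||\varphi||^2\leq Q_\ve(\varphi,\varphi)$, a final absorption yields \eqref{16:43:18}.

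The main obstacle is the algebraic inequality on the Levi eigenvalues; this is exactly the analytic content of $q$-convexity, and identifying the correct non-negative combination of \eqref{14:16:51}-terms with the $Q_\ve$-identity is where the convexity hypothesis does its work. The surrounding analysis is routine integration by parts, the small-constant trick, and absorption.
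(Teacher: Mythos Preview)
Your overall strategy coincides with the paper's proof: partition of unity reduction, G\aa rding in the interior, Lemma~\ref{13:08:22} plus absorption on the boundary, and in the non-elliptic case the convex combination of the $Q_\ve$-identity with \eqref{14:16:51}. Your parameter $t=1-\delta$ is exactly the paper's $\zeta=\delta$, and your algebraic claim $\Lambda_I+\sum_{i\in S}|\lambda_i|>0$ is the paper's \eqref{17:52:22} at $\zeta=0$, with the same two-case verification from $q$-convexity.

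One point needs correction in the elliptic boundary case. The identity $X\cdot|\varphi|^2 = 2\,\text{Re}\,\langle\nabla^\ve_u\varphi,\varphi\rangle$ does not hold for an arbitrary $u\in L$ with $X=\text{Re}(\rho_\ve(u))$; for a metric connection one has instead
\[
X\cdot|\varphi|^2=\text{Re}\,\langle\nabla^\ve_u\varphi,\varphi\rangle+\text{Re}\,\langle\nabla_{\overline{\rho_\ve(u)}}\varphi,\varphi\rangle,
\]
and the second term is controlled by $E_\ve$ only if $\overline{\rho_\ve(u)}\in\rho_\ve(L)$. This is precisely what ellipticity buys you: since $\rho_\ve(L)\cap\overline{\rho_\ve(L)}$ is conjugation-invariant and transverse to $T\del M_\C$, it is the complexification of a real subspace transverse to $T\del M$, so you can choose $u$ with $\rho_\ve(u)$ already real and outward-pointing; then $X=\rho_\ve(u)$ and your identity holds. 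The paper achieves the same end differently, by expanding $\tfrac{d}{dr}=\sum_i a^i_\ve v_{i,\ve}+b^i_\ve\overline{v_{i,\ve}}$ and using that at elliptic points one can arrange $b^l_\ve|_{\del M}=0$ (Remark~\ref{12:51:01}), so every $\overline{v_{i,\ve}}$-term can be integrated by parts without a boundary contribution. With that fix your argument is complete and equivalent to the paper's.
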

\begin{proof} 
If we can prove that every point $x\in M$ has a neighbourhood $U$ such that (\ref{16:43:18}) holds for $\varphi\in \text{Dom}(d^\ast_{L_\ve})\cap C^\infty_c(U,\Lambda^qL^\ast\otimes V)$ and $B$ sufficiently small, then compactness of $M$ together with a partition of unity argument imply the desired result. If $x$ lies in the interior, we can simply use Thm.\ref{13:13:59} together with $E_\ve(\varphi)\leq C ||\varphi||_1$. Otherwise, if $x\in \del M$ we distinguish between the cases where $x$ is elliptic or non-elliptic for $L_0$.  

Suppose first that $x$ is elliptic for $L_0$. Using the notation of Lem.\ref{13:08:22}, we claim that 
 \begin{align}\label{14:41:39}
\int_{\del M}|\varphi|^2\leq C(\sum_{i,I,\mu}  ||v_{i,\ve}\cdot \varphi_{I,\ve}^\mu||^2+||\varphi||^2).
 \end{align} 
Indeed, in a boundary chart $(\R^m_-,(t^1,\ldots, t^{m-1},r))$ we use Stokes' theorem to obtain 
\begin{align*}
\int_{\del M}|\varphi|^2=\int_{\R^{m-1}} |\varphi|^2 \sqrt{g}dt=\int_{\R^{m-1}}\int_{-\infty}^0 \tfrac{d}{dr}(|\varphi|^2 \sqrt{g})drdt=\int_M 2\text{Re}\langle \tfrac{d}{dr}\varphi,\varphi\rangle+\mcO(||\varphi||^2).  
\end{align*}
Using that $x$ is elliptic, we expand (as before) $\tfrac{d}{dr}=\sum_i a^i_\ve v_{i,\ve}+b^i_\ve\overline{v_{i,\ve}}$ with $b^l|_{\del M}=0$. Applying integration by parts and Cauchy-Schwartz yields (\ref{14:41:39}). Combining this with (\ref{14:44:23}) gives 
\begin{align*}
E_{\ve}(\varphi)^2\leq &C \big(\sum_{I,\mu,i} || v_{i,\ve}\cdot \varphi_{I,\ve}^\mu||^2  + \int_{\del M} |\varphi|^2+ ||\varphi||^2\big)\leq C(\sum_{i,I,\mu}  ||v_{i,\ve}\cdot \varphi_{I,\ve}^\mu||^2+||\varphi||^2)\\
\leq & CQ_\ve(\varphi,\varphi)+\mcO(E_\ve(\varphi)\cdot ||\varphi||)\leq C Q_\ve(\varphi,\varphi)+C\delta E_\ve(\varphi)^2 + C\tfrac{1}\delta ||\varphi||^2,
\end{align*} 
where $\delta>0$ can be chosen arbitrarily. For $\delta<1/C$ we obtain the desired estimate.

Now suppose that $x$ is non-elliptic for $L_0$. Let $\zeta\in [0,1]$ be a real constant that will be determined later, and use Lem.\ref{13:08:22}ii) to estimate 
\begin{align*}
Q_\ve(\varphi,\varphi) \geq &\zeta \sum_{I,\mu,i} || v_{i,\ve}\cdot \varphi^\mu_{I,\ve}||^2+(1-\zeta) \sum_{I,\mu,i;\lambda_{i}<0} ||v_{i,\ve}\cdot \varphi^\mu_{I,\ve}||^2+\sum_{I,\mu}\sum_{i\in I} \lambda_{i} \int_{\del M}  \gamma_\ve|\varphi^\mu_{I,\ve}|^2\\ &-\delta E_\ve (\varphi)^2 -\mathcal{O}(E_\ve(\varphi)\cdot ||\varphi||)\\
\geq &\zeta \sum_{I,\mu,i} || v_{i,\ve}\cdot \varphi^\mu_{I,\ve}||^2 + \sum_{I,\mu} \Big(	(1-\zeta)\sum_{i;\lambda_{i}<0} (-\lambda_{i})	+\sum_{i\in I}\lambda_{i}	\Big)\int_{\del M} \gamma_\ve|\varphi^\mu_{I,\ve}|^2\\&-2\delta E_\ve (\varphi)^2 -\mathcal{O}(E_\ve (\varphi)\cdot||\varphi||).
\end{align*} 
We want to estimate the boundary term from below by $\zeta \int_{\del M} |\varphi^\mu_{I,\ve}|^2$. From the definition of $\gamma_\ve$ (see (\ref{15:09:20})) we see that there is a uniform bound $\gamma_\ve\geq \eta$ for some $\eta>0$, so it 
suffices to show that for each index set $I$ of size $q$ we can obtain an estimate of the form 
\begin{align}\label{17:52:22}
\frac{\zeta}{\eta}\leq (1-\zeta)\sum_{i;\lambda_{i}<0} (-\lambda_{i})	+\sum_{i\in I}\lambda_{i} = \zeta\sum_{i;\lambda_{i}<0} \lambda_{i}+\sum_{i\notin I;\lambda_{i}<0} (-\lambda_{i})+\sum_{i\in I;\lambda_{i}>0} \lambda_{i}.
\end{align}
If $l\in I$ then $\varphi^\mu_{I,\ve}=0$ on $\del M$ and there is nothing to prove, so assume that $l\notin I$. If $l-q$ of the $\lambda_{i}$ are positive, then there must be an $i_0\in I$ with $\lambda_{i_0}>0$. In this case we choose $\zeta$ small enough so that $\lambda_{i_0}\geq \zeta(\tfrac{1}\eta-\sum_{i;\lambda_{i}<0} \lambda_{i})$, which implies the above estimate. Alternatively, if $q+1$ of the $\lambda_{i}$ are negative then there is an $i_1\notin I$ with $\lambda_{i_1}<0$. In this case we take $\zeta$ small enough so that $-\lambda_{i_1}\geq \zeta(\tfrac{1}\eta-\sum_{i;\lambda_{i}<0} \lambda_{i})$, which also implies the above estimate. Hence, we can choose $\zeta$ small enough relative to the eigenvalues of the Levi-form of $L_0$ at $x$ so that (\ref{17:52:22}) holds for every index set $I$ of size $q$. In particular,
\begin{align*}
Q_\ve(\varphi,\varphi)\geq \zeta E(\varphi)^2-\zeta ||\varphi||^2-2\delta E_\ve(\varphi)^2-C\delta' E_\ve(\varphi)^2-\tfrac{C}{\delta'} ||\varphi||^2,
\end{align*}
where $\delta'>0$ can be chosen arbitrarily small. Taking both $\delta$ and $\delta'$ sufficiently small compared to $\zeta$ (this also requires shrinking the chart $U$) yields $E_\ve(\varphi)\leq CQ_\ve(\varphi)$. 
\end{proof} 

\noindent \textsl{Proof of Thm.\ref{08:54:49}.} Consider the family $P_\ve:=d_{L_\ve}$ of differential operators on the bundle $E=\oplus_s \Lambda^sL^\ast \otimes V$. We start by showing that conditions 1)-3) of Thm.\ref{13:30:27} are satisfied for $P_\ve$ in degree $q$ (c.f.\ Rem.\ref{14:08:00}). The first condition is exactly the ellipticity condition on $L_0$, which is an open condition and so will be satisfied for $B$ sufficiently small. For condition 2), observe that $\sigma(P_{\ve,f}^\ast,dr)=\rho_\ve^\ast(dr)\wedge$, where $\rho_\ve:L\rightarrow TM_\C$ denotes the anchor. Ellipticity of $L_\ve$ guarantees that $\rho_\ve^\ast(dr)$ is nowhere zero along $\del M$, which implies that the kernel has constant rank (for all $x\in \del M$ and $\ve\in B$). Finally, condition 3) follows immediately by combining Prop.\ref{09:06:25} and Prop.\ref{14:03:13}, and we conclude that $d_{L_\ve}$ satisfied elliptic regularity in degree $q$. The five statements of Thm.\ref{08:54:49} follow from the results of sections \ref{21:39:11} and \ref{10:54:51}. Specifically; 
\begin{itemize}
\item[1)] Follows from Lem.\ref{13:27:34} and the definition of elliptic regularity for families (c.f\ (\ref{16:21:04})). 
\item[2)] Follows from Cor.\ref{07:54:08}. 
\item[3)] Follows from Thm.\ref{08:47:18}.
\item[4)] Follows from Prop.\ref{08:17:09}.
\item[5)] Follows from Cor.\ref{14:27:15}, Prop.\ref{11:56:27} and Rem.\ref{14:27:58}. \qed
\end{itemize}

\appendix


\section{Appendix}

In this appendix we collect some \lq\lq Leibniz\rq\rq\ rules involving Sobolev norms. We will use the notation of Sect.\ref{12:54:47}, and start with some numerical estimates.  


\begin{lem} \label{09:41:11}
Let $k\in \R$ be a real number.
\begin{itemize}

\item[i)] For all $\xi,\eta\in\R^m$ we have $\Big(\frac{1+|\xi|^2}{1+|\eta|^2}\Big)^k  \leq 2^{|k|} (1+|\xi-\eta|^2)^{|k|}$. 

\item[ii)] Define $K_1(\xi,\eta):= (1+|\xi|^2)^{k/2}-(1+|\eta|^2)^{k/2}$, where $\xi,\eta\in\R^m$. Then
\begin{align*}
|K_1(\xi,\eta)| &\leq |k|\cdot |\xi-\eta| \cdot \big((1+|\xi|^2)^{\frac{k-1}{2}}+(1+|\eta|^2)^{\frac{k-1}{2}}\big).
\end{align*}

\item[iii)] Define $K_3(\xi,\eta_1,\eta_2):= (1+|\xi|^2)^{\frac{k}{2}}+(1+|\eta_1|^2)^{\frac{k}{2}}-(1+|\eta_2|^2)^{\frac{k}{2}}-(1+|\xi+\eta_1-\eta_2|^2)^{\frac{k}{2}}$, where $\xi,\eta_1,\eta_2\in\R^m$. Setting $C:=k(k-1)$, we have
\begin{align*}
|K_3(\xi,\eta_1,\eta_2)|\leq C |\xi-\eta_2| \cdot |\eta_1-\eta_2|\cdot  \int_0^1 \int_0^1 (1+\big|\xi+t(\eta_1-\eta_2)+t'(\eta_2-\xi)\big|^2)^{\frac{k-2}{2}} dtdt'.	
\end{align*}

\end{itemize}
\end{lem}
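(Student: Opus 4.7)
For part i), the strategy is to establish the classical Peetre-type inequality
\begin{align*}
1+|\xi|^2 \leq 2(1+|\xi-\eta|^2)(1+|\eta|^2)
\end{align*}
and then raise it to an appropriate power. One writes $|\xi|^2 = |(\xi-\eta)+\eta|^2 \leq 2|\xi-\eta|^2+2|\eta|^2$, so $1+|\xi|^2 \leq 1+2|\xi-\eta|^2+2|\eta|^2$, and bounding this by $2(1+|\xi-\eta|^2+|\eta|^2+|\xi-\eta|^2|\eta|^2)=2(1+|\xi-\eta|^2)(1+|\eta|^2)$ requires only that $1\leq 2+2|\xi-\eta|^2|\eta|^2$. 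Raising both sides to the power $k\geq 0$ yields the desired estimate in that case; for $k<0$ one simply interchanges the roles of $\xi$ and $\eta$, noting that $|\xi-\eta|=|\eta-\xi|$.

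For part ii), set $F(\zeta):=(1+|\zeta|^2)^{k/2}$ and apply the fundamental theorem of calculus along the segment $u(t):=\eta+t(\xi-\eta)$:
\begin{align*}
K_1(\xi,\eta)=F(\xi)-F(\eta)=\int_0^1 \nabla F(u(t))\cdot(\xi-\eta)\,dt.
\end{align*}
Direct differentiation gives $\nabla F(\zeta)=k(1+|\zeta|^2)^{(k-2)/2}\zeta$, whose norm is bounded by $|k|(1+|\zeta|^2)^{(k-1)/2}$ since $|\zeta|\leq (1+|\zeta|^2)^{1/2}$. Thus
\begin{align*}
|K_1(\xi,\eta)|\leq |k|\,|\xi-\eta|\int_0^1(1+|u(t)|^2)^{(k-1)/2}\,dt,
\end{align*}
so it remains to bound the integrand by $(1+|\xi|^2)^{(k-1)/2}+(1+|\eta|^2)^{(k-1)/2}$. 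For $k\geq 1$ this follows from the pointwise estimate $|u(t)|\leq\max(|\xi|,|\eta|)$, which follows from the triangle inequality applied to the convex combination $u(t)=(1-t)\eta+t\xi$; for $k<1$ one invokes part i) with appropriate exponent.

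For part iii), write $v:=\eta_1-\eta_2$ and regroup as $K_3=G(\xi)-G(\eta_2)$ with $G(\zeta):=F(\zeta)-F(\zeta+v)$. Applying the fundamental theorem of calculus twice yields a second-order representation:
\begin{align*}
K_3=-\int_0^1\int_0^1 v\cdot\nabla^2 F(w(t,t'))\cdot(\xi-\eta_2)\,dt'dt,
\end{align*}
where $w(t,t')=\eta_2+t(\xi-\eta_2)+t'v$, which upon substituting the definition of $v$ becomes precisely $\xi+t'(\eta_1-\eta_2)+(1-t)(\eta_2-\xi)$ after relabeling the integration variables. A direct computation gives
\begin{align*}
\nabla^2 F(\zeta)=k(k-2)(1+|\zeta|^2)^{(k-4)/2}\,\zeta\otimes\zeta+k(1+|\zeta|^2)^{(k-2)/2}\,\mathrm{Id},
\end{align*}
whose operator norm is bounded (using $|\zeta|^2\leq 1+|\zeta|^2$ to estimate the rank-one piece and collecting terms) by $k(k-1)(1+|\zeta|^2)^{(k-2)/2}$. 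Inserting this estimate into the double integral and using $|v|=|\eta_1-\eta_2|$ completes the proof.

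The main subtlety throughout is book-keeping the correct constant and the sign of various exponents depending on the range of $k$; in parts ii) and iii) the pointwise upper bound on $\max(|u|,|w|)$ must be replaced by Peetre's inequality from part i) whenever the relevant exponent is negative. Otherwise the argument is routine vector calculus.
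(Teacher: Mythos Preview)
Your treatment of i) and iii) is essentially the paper's argument (for i) the paper only cites a reference, and for iii) it applies the fundamental theorem of calculus twice exactly as you do). One minor point in iii): the crude triangle-inequality bound on the Hessian gives $|k|(1+|k-2|)(1+|\zeta|^2)^{(k-2)/2}$, which equals $|k(k-1)|(1+|\zeta|^2)^{(k-2)/2}$ only when $k\ge 2$; for smaller $k$ your claimed constant is too small. This is bookkeeping, not a structural issue.

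Part ii), however, has a real gap for $k<1$. After your multivariable mean-value step you need
\[
(1+|u(t)|^2)^{\frac{k-1}{2}}\;\le\;(1+|\xi|^2)^{\frac{k-1}{2}}+(1+|\eta|^2)^{\frac{k-1}{2}}
\]
pointwise on $[0,1]$, with $u(t)=(1-t)\eta+t\xi$. When $(k-1)/2<0$ this requires a \emph{lower} bound on $|u(t)|$, but a point on the segment can have much smaller norm than either endpoint: take $\eta=-\xi$ with $|\xi|=R$, so $u(1/2)=0$ and the left side equals $1$, while the right side is $2(1+R^2)^{(k-1)/2}\to 0$. Invoking part~i) does not rescue this: Peetre applied to $u(t)$ and $\eta$ yields an extra factor $(1+|u(t)-\eta|^2)^{|k-1|/2}\le (1+|\xi-\eta|^2)^{|k-1|/2}$, which is unbounded in $\xi,\eta$ and is not present in the stated estimate.

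The paper sidesteps this entirely by reducing to one real variable: writing $f(x)=(1+x^2)^{k/2}$ on $\R_{\ge 0}$ one has $K_1(\xi,\eta)=f(|\xi|)-f(|\eta|)$, and the mean-value theorem gives an intermediate point $z$ in the interval between $|\xi|$ and $|\eta|$. Since $|f'(z)|\le |k|(1+z^2)^{(k-1)/2}$ and $z\mapsto (1+z^2)^{(k-1)/2}$ is monotone (increasing or decreasing according to the sign of $k-1$), its maximum on that interval is attained at one of the endpoints, giving the bound for all $k$ simultaneously; finally $\big||\xi|-|\eta|\big|\le|\xi-\eta|$. The one-variable reduction is the missing idea in your argument.
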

\begin{proof}
i): See \cite[Lem.(A.1.3)]{MR0461588}. 

ii): Consider the smooth function $f(x):=(1+x^2)^{k/2}$ on $\R_{\geq 0}$. For all $x,y\in \R_{\geq 0}$ we have
\begin{align*}
|f(x)-f(y)|\leq |x-y|\sup_{x\leq z \leq y} |f'(z)| 
\leq |k| |x-y| \big((1+x^2)^{\frac{k-1}{2}}+(1+y^2)^{\frac{k-1}{2}}\big). 
\end{align*}
Setting $x=|\xi|$, $y=|\eta|$ and using $| |\xi|- |\eta| |\leq |\xi-\eta|$, we obtain ii).

iii): Define $g(x):=(1+|x|^2)^{k/2}$ so that $K_3(\xi,\eta_1,\eta_2)=g(\xi)+g(\eta_1)-g(\eta_2)-g(\xi+\eta_1-\eta_2)$. A couple of applications of the fundamental theorem of calculus gives
\begin{align*}
K_3(\xi,\eta_1,\eta_2)=\sum_{i,j}  (\eta_2-\xi)^i(\eta_1-\eta_2)^j \int_0^1\int_0^1 \del_i\del_jg(\xi+t'(\eta_2-\xi)+t(\eta_1-\eta_2))dtdt', 
\end{align*}
from which the desired estimate readily follows.
\end{proof}
Below we will write $\alpha \lesssim \beta$ if there is a constant $C$ such that $\alpha \leq C \beta$. The only constraint on $C$ is that it is independent of the functions $f$, $g$ and $\varphi$ appearing in the inequalities below. 
   
\begin{prop} \label{17:24:46} Set $a:=1+\frac{m}{2}$, where $m$ is the dimension of Euclidean space.
\
\newline
\noindent i) For $s\geq 0$ and $f,\varphi\in \mathcal{S}$ we have 
\begin{align}\label{16:10:17}
|| f\varphi||_s    \lesssim||f||_{s+a} ||\varphi||+||f||_{a} ||\varphi||_{s}.
\end{align}

For $s\in \R$ and $f,\varphi\in \mathcal{S}$ we have
\begin{align}
|| f\varphi||_s    \lesssim ||f||_{|s|+a} ||\varphi||_s.
\end{align}

\noindent ii) For $s\in \R_{\geq 0}$, $k\in \R_{\geq 1}$ and $f,\varphi\in \mathcal{S}$ we have  
\begin{align}
|| [\Lambda^k,f]\varphi||_s    \lesssim  &||f||_{s+k+a} ||\varphi||+||f||_{k+a} ||\varphi||_{s}+||f||_{s+1+a} ||\varphi||_{k-1}+||f||_{1+a} ||\varphi||_{s+k-1}.\label{13:49:50}
\end{align}

For $s,k\in \R$ and $f,\varphi\in \mathcal{S}$ we have
\begin{align}\label{14:01:04}
|| [\Lambda^k,f]\varphi||_s   & \lesssim (||f||_{|s+k-1|+1+a} +||f||_{|s|+1+a}) ||\varphi||_{s+k-1}.
\end{align}
\noindent iii) For $s\in \R_{\geq 0}$, $k\in \R_{\geq 1}$ and $f,\varphi\in \mathcal{S}$ we have 
\begin{align}
|| [\Lambda^k,[\Lambda^k,f]] \varphi||_s   \lesssim & ||f||_{2k+s+a} ||\varphi||+||f||_{2k+a}||\varphi||_{s}\nonumber \\&	+||f||_{2+a}||\varphi||_{2k+s-2}	+||f||_{s+2+a}||\varphi||_{2k-2}.\label{14:14:05}
\end{align}

For $s,k\in \R$ and $f,\varphi\in \mathcal{S}$ we have 
\begin{align}\label{14:14:14}
|| [\Lambda^k,[\Lambda^k,f]] \varphi||_s   & \lesssim (||f||_{|s+2k-2|+2+a}+||f||_{|s|+2+a} )|| \varphi ||_{s+2k-2}.
\end{align}

\noindent iv) For $s\in \R_{\geq 0}$, $k\in \R_{\geq 2}$ and $f,g,\varphi\in \mathcal{S}$ we have 
\begin{align}\label{18:03:18}
|| [[\Lambda^k,f],&g]\varphi||_s  \lesssim  \big( ||f||_{k-1+s+a} ||g||_{1+a}+||f||_{k-1+a} ||g||_{s+1+a}+||f||_{s+1+a} ||g||_{k-1+a}\nonumber\\
&+||f||_{1+a} ||g||_{k-1+s+a}\big)||\varphi||_{}+(||f||_{k-1+a} ||g||_{1+a}+||f||_{1+a} ||g||_{k-1+a})||\varphi||_{s}			\nonumber\\ 
&+ (||f||_{s+1+a} ||g||_{1+a}+||f||_{1+a} ||g||_{s+1+a})||\varphi||_{k-2}+||f||_{1+a} ||g||_{1+a}||\varphi||_{k-2+s}.
\end{align}

For $s,k\in \R$ and $f,g,\varphi\in \mathcal{S}$ we have
\begin{align}\label{18:18:02}
|| [[\Lambda^k,f],g]\varphi||_s  \lesssim \big(&||f||_{1+|s|+|k-2|+a} ||g||_{1+a}+||f||_{1+|s|+a} ||g||_{1+|k-2|+a}  \\ &+||f||_{1+|k-2|+a} ||g||_{1+|s|+a} +||f||_{1+a} ||g||_{1+|s|+|k-2|+a}  \big) ||\varphi||_{s+k-2}.\nonumber
\end{align}
\end{prop}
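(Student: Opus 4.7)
The plan is to carry out every estimate on the Fourier side. Writing $m(\xi) := (1+|\xi|^2)^{k/2}$ and using Plancherel, I would first observe that the four operators in question are represented by explicit integral kernels:
\begin{align*}
\widehat{f\varphi}(\xi) &= \int \hat{f}(\xi-\eta)\hat{\varphi}(\eta)\,d\eta,\\
\widehat{[\Lambda^k,f]\varphi}(\xi) &= \int K_1(\xi,\eta)\hat{f}(\xi-\eta)\hat{\varphi}(\eta)\,d\eta,\\
\widehat{[\Lambda^k,[\Lambda^k,f]]\varphi}(\xi) &= \int K_1(\xi,\eta)^2\hat{f}(\xi-\eta)\hat{\varphi}(\eta)\,d\eta,\\
\widehat{[[\Lambda^k,f],g]\varphi}(\xi) &= \iint K_3(\xi,\eta_2,\eta_1)\hat{f}(\xi-\eta_1)\hat{g}(\eta_1-\eta_2)\hat{\varphi}(\eta_2)\,d\eta_1\,d\eta_2,
\end{align*}
reducing each estimate to an $L^2$-bound on the corresponding convolution-type operator. (For the third identity, expanding $[\Lambda^k,[\Lambda^k,f]] = (\Lambda^k)^2M_f - 2\Lambda^k M_f \Lambda^k + M_f(\Lambda^k)^2$ directly gives the factor $(m(\xi)-m(\eta))^2 = K_1(\xi,\eta)^2$.)

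Two universal tools will drive all four parts. First, by Cauchy-Schwarz, $\int|\hat{h}(\eta)|\,d\eta \le C\,||h||_a$ whenever $a>m/2$, since $(1+|\eta|^2)^{-a/2}\in L^2$; combined with Young's inequality this converts a kernel $|\hat{h}(\xi-\eta)|$ acting by convolution into the scalar factor $||h||_a$ in an $L^2$-to-$L^2$ bound. Second, Lemma \ref{09:41:11} i) furnishes the weight-shift $(1+|\xi|^2)^{s/2}\le 2^{|s|/2}(1+|\xi-\eta|^2)^{|s|/2}(1+|\eta|^2)^{s/2}$ valid for all $s\in\R$, together with the sub-additive splitting $(1+|\xi|^2)^{s/2}\le C\bigl((1+|\xi-\eta|^2)^{s/2}+(1+|\eta|^2)^{s/2}\bigr)$ valid only for $s\ge 0$. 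Every extra factor of $(1+|\xi-\eta|^2)^{1/2}$ that gets absorbed into $\hat{f}(\xi-\eta)$ costs one derivative of $f$, i.e.\ raises the index in $||f||_{\cdot}$ by one.

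With these in place, part i) is immediate: the general-$s$ bound transfers the entire weight onto $\hat{f}$ via the weight-shift, while the $s\ge 0$ refinement produces the two-term bound. For part ii), I would dominate $|K_1(\xi,\eta)|$ using Lemma \ref{09:41:11} ii) by $|k|\,|\xi-\eta|\bigl((1+|\xi|^2)^{(k-1)/2}+(1+|\eta|^2)^{(k-1)/2}\bigr)$: the prefactor $|\xi-\eta|\le(1+|\xi-\eta|^2)^{1/2}$ adds one derivative to $f$, while the $(k-1)$-weight either merges with the outer $(1+|\xi|^2)^{s/2}$ to produce an $||\varphi||_{s+k-1}$ factor, or with $\hat\varphi(\eta)$ to produce an $||\varphi||_{k-1}$ factor; the two cases of $K_1$ combined with the sub-additive splitting of $(1+|\xi|^2)^{s/2}$ yield the four-term bound (\ref{13:49:50}), and the general-$s$ version (\ref{14:01:04}) uses only the weight-shift. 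Part iii) proceeds identically, applied to the squared bound $|K_1|^2 \lesssim |\xi-\eta|^2\bigl((1+|\xi|^2)^{k-1}+(1+|\eta|^2)^{k-1}\bigr)$, so two extra derivatives land on $f$.

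The heart of the proof, and the main technical obstacle, is part iv), which requires Lemma \ref{09:41:11} iii). That bound reads $|K_3(\xi,\eta_2,\eta_1)| \le C|\xi-\eta_1|\cdot|\eta_1-\eta_2|\int_0^1\!\!\int_0^1(1+|\zeta|^2)^{(k-2)/2}\,dt\,dt'$ with $\zeta := \xi+t(\eta_2-\eta_1)+t'(\eta_1-\xi)$ an affine combination of $\xi,\eta_1,\eta_2$. The key geometric observation is that for each reference point $\xi_0\in\{\xi,\eta_1,\eta_2\}$ one has $|\zeta-\xi_0|\le C(|\xi-\eta_1|+|\eta_1-\eta_2|)$, so Lemma \ref{09:41:11} i) yields
\begin{align*}
(1+|\zeta|^2)^{(k-2)/2} \le C\,(1+|\xi-\eta_1|^2+|\eta_1-\eta_2|^2)^{|k-2|/2}\,(1+|\xi_0|^2)^{(k-2)/2}.
\end{align*}
Choosing $\xi_0=\eta_2$ parks the $(k-2)$-weight on $\hat\varphi$ (producing $||\varphi||_{k-2+\cdots}$), while $\xi_0=\xi$ parks it on the outer Sobolev weight (producing $||\varphi||_{s+k-2}$ after weight-shift). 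The prefactors $|\xi-\eta_1|$ and $|\eta_1-\eta_2|$ are absorbed into $\hat{f}$ and $\hat{g}$ as one derivative each, and the polynomial $(1+|\xi-\eta_1|^2+|\eta_1-\eta_2|^2)^{|k-2|/2}$ is split between $\hat{f}$ and $\hat{g}$ using $(A+B)^\nu \le 2^{\nu}(A^\nu+B^\nu)$ for $\nu\ge 0$. Applying Young's inequality twice in the two convolution variables $\eta_1,\eta_2$ yields the required $L^2$-bound, and combining with the splittings of $(1+|\xi|^2)^{s/2}$ produces precisely the multi-term estimates (\ref{18:03:18}) and (\ref{18:18:02}). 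The principal technical burden is purely bookkeeping: keeping track of which derivative count lands on $f$, $g$, or $\varphi$ in each of the subcases generated by the various splittings, and checking that the collection of subcases exhausts the right-hand sides as written.
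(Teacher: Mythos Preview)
Your proposal is correct and follows essentially the same approach as the paper: Fourier representation via the kernels $K_1$, $K_1^2$, $K_3$, the pointwise bounds from Lemma~\ref{09:41:11}, the estimate $\int|\hat h|\le C\|h\|_a$, and the dichotomy between the sub-additive splitting $(1+|\xi|^2)^{s/2}\lesssim(1+|\xi-\eta|^2)^{s/2}+(1+|\eta|^2)^{s/2}$ for $s\ge0$ and Peetre's inequality for general $s$. The only cosmetic difference is that where you invoke Young's inequality, the paper writes out the equivalent Cauchy--Schwarz step explicitly, and in part~iv) for $s\ge0,\,k\ge2$ the paper bounds $(1+|\zeta|^2)^{(k-2)/2}$ directly by the three-term sub-additive splitting rather than via a reference point $\xi_0$; both routes yield the same nine-term expansion.
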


\begin{proof}
$i)$: 
Using $\widehat{f\varphi}=\widehat{f}\star \widehat{\varphi}$, where $\star$ denotes convolution product, we obtain
\begin{align*}
|| f\varphi||^2_s & = \int (1+|\xi|^2)^s |\widehat{f\varphi}(\xi)|^2 d\xi \leq \int \Big( \int (1+|\xi|^2)^{s/2}|\widehat{f}(\xi-\eta)| |\widehat{\varphi}(\eta)|d\eta \Big)^2 d\xi.
\end{align*}
If $s\geq 0$ we can use the triangle inequality (raised to a non-negative power) to estimate 
\begin{align*}
(1+|\xi|^2)^{s/2}\lesssim (1+|\xi-\eta|^2)^{s/2}+ (1+|\eta|^2)^{s/2}.
\end{align*}
Using the Cauchy-Schwarz inequality we get
\begin{align*}
 \int (1+|\eta|^2)^{s/2}|\widehat{f}(\xi-\eta)| |\widehat{\varphi}(\eta)|d\eta \leq   \Big( \int |\widehat{f}(\xi-\eta)| d\eta \Big)^{\frac{1}{2}} \cdot  \Big( \int (1+|\eta|^2)^{s}|\widehat{f}(\xi-\eta)| |\widehat{\varphi}(\eta)|^2d\eta \Big)^{\frac{1}{2}},
\end{align*}
which implies that 
\begin{align*}
\int \Big( \int (1+|\eta|^2)^{s/2}|\widehat{f}(\xi-\eta)| |\widehat{\varphi}(\eta)|d\eta \Big)^2 d\xi \leq  \Big( \int |\widehat{f}(\eta)| d\eta\Big)^2 \cdot ||\varphi||_s^2\lesssim ||f||^2_a ||\varphi||_s^2.
\end{align*}
Here in the last step we applied the Cauchy-Schwarz inequality again to estimate
\begin{align*}
\Big(\int |\widehat{f}(\eta)| d\eta\Big)^2 \leq \int (1+|\eta|^2)^{-a} d\eta \int (1+|\eta|^2)^{a} |\widehat{f}(\eta)|^2 d\eta \leq C ||f||^2_a
\end{align*}
where $C$ is finite because $a>m/2$. In a similar fashion one proves that 
\begin{align*}
\int \Big( \int (1+|\xi-\eta|^2)^{s/2}|\widehat{f}(\xi-\eta)| |\widehat{\varphi}(\eta)|d\eta \Big)^2 d\xi \lesssim ||f||_{s+a} ||\varphi||^2,
\end{align*}
proving (\ref{16:10:17}). For arbitrary $s\in \R$ we have to replace the triangle inequality with the bound 
$$(1+|\xi|^2)^{s/2}\lesssim (1+|\xi-\eta|^2)^{|s|/2}(1+|\eta|^2)^{s/2}		$$
which follows from Lem.\ref{09:41:11}i). The rest of the steps are then similar to the ones above.

$ii)$: The strategy is the same as in $i)$. First we observe that 
\begin{align*}
\mathcal{F}([\Lambda^k,f]\varphi)(\xi)= \int K_1(\xi,\eta) \widehat{f}(\xi-\eta) \widehat{\varphi}(\eta) d\eta,
\end{align*}
where $K_1$ 
was defined in Lem.\ref{09:41:11}ii). 
When $k-1\geq 0$ and $s\geq 0$, the triangle inequality together with Lem.\ref{09:41:11}ii) imply that
\begin{align*}
(1+|\xi|^2)^{s/2}|K_1(\xi,\eta)|&\lesssim  (1+|\xi-\eta|^2)^{\frac{s+k}{2}} +(1+|\xi-\eta|^2)^{\frac{k}{2}}(1+|\eta|^2)^{\frac{s}{2}} \\
&+ (1+|\xi-\eta|^2)^{\frac{s+1}{2}}(1+|\eta|^2)^{\frac{k-1}{2}}+(1+|\xi-\eta|^2)^{\frac{1}{2}}(1+|\eta|^2)^{\frac{s+k-1}{2}}.
\end{align*}
The same steps as in $i)$ then yield (\ref{13:49:50}).
For arbitrary $s$ and $k$ we use Lem.\ref{09:41:11}ii) to estimate 
\begin{align*}
(1+|\xi|^2)^{s/2}|K_1(\xi,\eta)|&\lesssim \big( (1+|\xi-\eta|^2)^{\frac{1+|s+k-1|}{2}}+(1+|\xi-\eta|^2)^{\frac{1+|s|}{2}}\big)(1+|\eta|^2)^{\frac{k+s-1}{2}},
\end{align*}
giving (\ref{14:01:04}).

$iii)$: We have 
\begin{align*}
\mathcal{F}([\Lambda^k,[\Lambda^k,f]]\varphi)(\xi)= \int K_2(\xi,\eta) \widehat{f}(\xi-\eta) \widehat{\varphi}(\eta) d\eta,
\end{align*}
where $K_2(\xi,\eta):= \big( (1+|\xi|^2)^{k/2}-(1+|\eta|^2)^{k/2} 	\big)^2=K_1(\xi,\eta)^2$. Using Lem.\ref{09:41:11}ii) we obtain
\begin{align*}
(1+|\xi|^2)^{s/2}|K_2(\xi,\eta)|\lesssim & \ (1+|\xi-\eta|^2)^{\frac{s}{2}+k} +(1+|\xi-\eta|^2)^{\frac{s}{2}+1}(1+|\eta|^2)^{k-1} \\
&+ (1+|\xi-\eta|^2)^{k}(1+|\eta|^2)^{\frac{s}{2}}+(1+|\xi-\eta|^2)^{}(1+|\eta|^2)^{\frac{s}{2}+k-1},
\end{align*}
valid for $s\geq 0$ and $k\geq 1$. The same steps as in i) then give (\ref{14:14:05}). For arbitrary $s$ and $k$ we use Lem.\ref{09:41:11}i) to estimate 
\begin{align*}
(1+|\xi|^2)^{s/2}|K_2(\xi,\eta)|&\lesssim  \big( (1+|\xi-\eta|^2)^{|\frac{s}{2}+k-1|+1}+(1+|\xi-\eta|^2)^{|\frac{s}{2}|+1}\big)(1+|\eta|^2)^{\frac{s}{2}+k-1},
\end{align*}
which gives (\ref{14:14:14}).

$iv)$: We have
\begin{align*}
\mathcal{F}([[\Lambda^k,f],g]\varphi)(\xi)= \iint  K_3(\xi,\eta_1,\eta_2) \widehat{f}(\xi-\eta_2) \widehat{g}(\eta_2-\eta_1) \widehat{\varphi}(\eta_1) d\eta_1d\eta_2,
\end{align*}
where $K_3$ was defined in Lem.\ref{09:41:11}iv). Consequently, for $s\geq 0$ and $k\geq 2$ we have
\begin{align*}
(1+|\xi|^2)^{\frac{s}{2}}|K_3(\xi,\eta&)|\lesssim   (1+|\xi-\eta_2|^2)^{\frac{1}{2}}(1+|\eta_1-\eta_2|^2)^{\frac{1}{2}}
 \Big( (1+|\xi-\eta_2|^2)^{\frac{s}{2}} + (1+|\eta_1-\eta_2|^2)^{\frac{s}{2}}\\&+(1+|\eta_1|^2)^{\frac{s}{2}}\Big)\Big((1+|\xi-\eta_2|^2)^{\frac{k-2}{2}} + (1+|\eta_1-\eta_2|^2)^{\frac{k-2}{2}}+(1+|\eta_1|^2)^{\frac{k-2}{2}}\Big). 
\end{align*} 
Expanding this out gives nine terms, leading to (\ref{18:03:18}) by using the same steps as in $i)$. 

For arbitrary $s$ and $k$ we use Lem.\ref{09:41:11} ii) to bound
\begin{align*}
(1+\big|\xi+t(\eta_1-\eta_2)+t'(\eta_2-\xi)\big|^2)&^{\frac{k-2}{2}}=\Big(\frac{1+\big|\xi+t(\eta_1-\eta_2)+t'(\eta_2-\xi)\big|^2}{1+|\eta_1|^2}\Big)^{\frac{k-2}{2}}  (1+|\eta_1|^2)^{\frac{k-2}{2}}\\
\lesssim & (1+\big|\xi-\eta_1+t(\eta_1-\eta_2)+t'(\eta_2-\xi)\big|^2)^{|\frac{k-2}{2}|}(1+|\eta_1|^2)^{\frac{k-2}{2}}\\
\lesssim & \big((1+|\xi-\eta_2|^2)^{|\frac{k-2}{2}|}+(1+|\eta_1-\eta_2|^2)^{|\frac{k-2}{2}|}\big)(1+|\eta_1|^2)^{\frac{k-2}{2}},
\end{align*}
where we used $\xi-\eta_1+t(\eta_1-\eta_2)+t'(\eta_2-\xi)=(1-t')(\xi-\eta_2)+(1-t)(\eta_2-\eta_1)$. Proceeding as before yields (\ref{18:18:02}).
\end{proof}
\noindent There is also a boundary version of Prop.\ref{17:24:46}. 
We continue with the notation of Sect.\ref{12:54:47}, and denote by $\lceil s \rceil$ the smallest integer greater or equal than $s$. For $K\subset \R^m_-$ we denote by $C^\infty_K(\R^m_-)  \subset \mathcal{S}(\R^m_-)$ the functions with support in $K$. 

\begin{prop} \label{11:36:52} Let $K\subset \R^m_-$ be a compact subset and let $a=1+\frac{m}{2}$.
\
\newline
\noindent i) For $s\geq 0$ and $f\in C^\infty_K(\R^m_-)$, $\varphi\in \mathcal{S}(\R^m_-)$ we have
\begin{align}
|| f\varphi||_{\del,s}    \lesssim |f|_{\lceil s+a\rceil } ||\varphi||_\del+|f|_{\lceil a\rceil} ||\varphi||_{\del,s}.
\end{align}

For $s\in \R$ and $f\in C^\infty_K(\R^m_-)$, $\varphi\in \mathcal{S}(\R^m_-)$ we have
\begin{align}
|| f\varphi||_{\del,s}    \lesssim |f|_{\lceil |s|+a\rceil} ||\varphi||_{\del,s}.
\end{align}
\noindent ii) For $s\in \R_{\geq 0}, k\in \R_{\geq 1}$ and $f\in C^\infty_K(\R^m_-) ,\varphi\in \mathcal{S}(\R^m_-)$ we have
\begin{align}\label{11:44:35}
|| [\Lambda^k_\del,f]\varphi||_{\del,s}    \lesssim  & \ |f|_{\lceil s+k+a\rceil} ||\varphi||_\del+|f|_{\lceil k+a\rceil} ||\varphi||_{\del,s}\nonumber \\&+|f|_{\lceil s+1+a\rceil} ||\varphi||_{\del,k-1}+|f|_{\lceil 1+a\rceil} ||\varphi||_{\del,s+k-1}.
\end{align}

For $s,k\in \R$ and $f\in  C^\infty_K(\R^m_-),\varphi\in \mathcal{S}(\R^m_-)$ we have
\begin{align}\label{11:44:44}
|| [\Lambda^k_\del,f]\varphi||_{\del,s}   & \lesssim (|f|_{\lceil |s+k-1|+1+a\rceil} +|f|_{\lceil |s|+1+a\rceil}) ||\varphi||_{\del,s+k-1}.
\end{align}

\noindent iii) For $s\in \R_{\geq 0}$, $k\in \R_{\geq 1}$ and $f\in C^\infty_K(\R^m_-), \varphi\in \mathcal{S}(\R^m_-)$ we have
\begin{align}\label{11:44:51}
|| [\Lambda^k_\del,[\Lambda^k_\del,f]] \varphi||_{\del,s}   \lesssim & \ |f|^2_{\lceil 2k+s+a\rceil} ||\varphi||^2_\del	+|f|^2_{\lceil 2+a\rceil }||\varphi||^2_{\del,2k+s-2}	\nonumber \\&+|f|_{\lceil 2k+a\rceil}||\varphi||_{\del,s}+|f|^2_{\lceil s+2+a\rceil}||\varphi||^2_{\del,2k-2}.
\end{align}

For $s,k\in \R$ and $f\in C^\infty_K(\R^m_-), \varphi\in \mathcal{S}(\R^m_-)$ we have 
\begin{align}\label{11:44:59}
|| [\Lambda^k_\del,[\Lambda^k_\del,f]] \varphi||_{\del,s}   & \lesssim (|f|_{\lceil |s+2k-2|+2+a\rceil}+|f|_{\lceil |s|+2+a\rceil} )|| \varphi ||_{\del,s+2k-2}.
\end{align}

\noindent iv) For $s\in \R_{\geq 0},k\in \R_{\geq 2}$ and $f,g\in C^\infty_K(\R^m_-), \varphi\in \mathcal{S}(\R^m_-)$ we have
\begin{align}\label{11:45:07}
|| &[[\Lambda^k_\del,f],g]\varphi||_{\del,s}  \lesssim  \big( |f|_{\lceil k-1+s+a\rceil} |g|_{\lceil 1+a\rceil}+|f|_{\lceil k-1+a\rceil} |g|_{\lceil s+1+a\rceil}+|f|_{\lceil s+1+a\rceil} |g|_{\lceil k-1+a\rceil}\nonumber\\
&+|f|_{\lceil 1+a\rceil} |g|_{\lceil k-1+s+a\rceil}\big)||\varphi||_{\del}+(|f|_{\lceil k-1+a\rceil} |g|_{\lceil 1+a\rceil}+|f|_{\lceil 1+a\rceil} |g|_{\lceil k-1+a\rceil})||\varphi||_{\del,s}			\nonumber\\ 
&+ (|f|_{\lceil s+1+a\rceil} |g|_{\lceil 1+a\rceil}+|f|_{\lceil 1+a\rceil} |g|_{\lceil s+1+a\rceil})||\varphi||_{\del,k-2}+|f|_{\lceil 1+a\rceil} |g|_{\lceil 1+a\rceil}||\varphi||_{\del,k-2+s}.
\end{align}

For $s,k\in \R$ and $f,g\in C^\infty_K(\R^m_-), \varphi\in \mathcal{S}(\R^m_-)$ we have
\begin{align}\label{11:45:14}
|| [[\Lambda^k_\del,f],g]\varphi||_{\del,s}  \lesssim \big(&|f|_{\lceil 1+|s|+|k-2|+a\rceil} |g|_{1+a}+|f|_{\lceil 1+|s|+a\rceil} |g|_{\lceil 1+|k-2|+a\rceil}  \\ &+|f|_{\lceil 1+|k-2|+a\rceil} |g|_{\lceil 1+|s|+a\rceil} +|f|_{\lceil 1+a\rceil } |g|_{\lceil 1+|s|+|k-2|+a\rceil}  \big) ||\varphi||_{\del,s+k-2}.\nonumber
\end{align}
\end{prop}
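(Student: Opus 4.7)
The plan is to derive each boundary estimate from its counterpart in Prop.\ref{17:24:46} by slicing in the $r$-variable. The crucial observation is the identity
\begin{align*}
||\varphi||_{\del,s}^2 = \int_{-\infty}^0 ||\varphi(\cdot,r)||_s^2 \, dr,
\end{align*}
together with the fact that the operator $\Lambda^k_\del$, being defined via the tangential Fourier transform (\ref{16:15:54}), commutes with $r$ in the sense that it acts slicewise: for each fixed $r$, $(\Lambda^k_\del \varphi)(\cdot,r) = \Lambda^k\bigl(\varphi(\cdot,r)\bigr)$, where on the right we regard $\varphi(\cdot,r)\in \mathcal{S}(\R^{m-1})$ and $\Lambda^k$ is the operator on $\R^{m-1}$ from (\ref{15:40:45}). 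Consequently commutators with multiplication operators behave slicewise: $([\Lambda^k_\del,f]\varphi)(\cdot,r)=[\Lambda^k, f(\cdot,r)]\,\varphi(\cdot,r)$, and the same holds for the iterated and double commutators in $ii)$-$iv)$.

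The strategy is then uniform across i)-iv): for each fixed $r$, apply the corresponding estimate from Prop.\ref{17:24:46} on the $(m{-}1)$-dimensional space $\R^{m-1}$ (so the constant appearing there is $a'=1+\tfrac{m-1}{2}<a$), square both sides, and integrate over $r\in(-\infty,0]$. After integration, every tangential Sobolev norm $||\cdot||_{\del,s}$ of $\varphi$ on the right reassembles from its slices, while the Sobolev norms of $f$ (and $g$), which are independent of $\varphi$ but depend on $r$, have to be pulled out of the integral. For this last step we use that $f,g$ have support in the fixed compact set $K\subset \R^m_-$: by the Sobolev embedding theorem on $\R^{m-1}$ and the compactness of $K$, one has
\begin{align*}
\sup_{r\leq 0} ||f(\cdot,r)||_{s} \leq C_K\, |f|_{\lceil s\rceil},
\end{align*}
and similarly for $g$. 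Since $a'<a$, a term like $||f(\cdot,r)||_{s+a'}$ arising on a slice from Prop.\ref{17:24:46} is bounded uniformly in $r$ by $C_K |f|_{\lceil s+a\rceil}$, which accounts for the slight enlargement of the derivative index from $s+a'$ in Prop.\ref{17:24:46} to $\lceil s+a\rceil$ in the statement of Prop.\ref{11:36:52}. After this book-keeping, the left-hand side $||\cdot||_{\del,s}^2$ appears squared, and taking square roots yields precisely the inequalities (\ref{11:44:35})-(\ref{11:45:14}).

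As a representative illustration, for $i)$ with $s\geq 0$ we estimate slicewise via Prop.\ref{17:24:46}$i)$:
\begin{align*}
||f(\cdot,r)\varphi(\cdot,r)||_s^2 \lesssim ||f(\cdot,r)||_{s+a'}^2 ||\varphi(\cdot,r)||^2 + ||f(\cdot,r)||_{a'}^2 ||\varphi(\cdot,r)||_s^2.
\end{align*}
Pulling the (uniformly bounded) $f$-norms out, applying the Sobolev embedding mentioned above, and integrating in $r$ gives
\begin{align*}
||f\varphi||_{\del,s}^2 \lesssim |f|_{\lceil s+a\rceil}^2\, ||\varphi||_{\del}^2 + |f|_{\lceil a\rceil}^2\, ||\varphi||_{\del,s}^2,
\end{align*}
which is i). Exactly the same scheme, applied to $ii)$-$iv)$ of Prop.\ref{17:24:46}, yields (\ref{11:44:35})-(\ref{11:45:14}); the arbitrary-real-$s$ variants follow in the same way from the arbitrary-real-$s$ estimates on the slices, using $|s|+1<|s|+a$ to absorb the slice constant.

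The main (minor) obstacle is purely bookkeeping: one must match the precise combinations of integer ceilings $\lceil\cdot\rceil$ in the $C^k$-norms of $f,g$ with the real Sobolev indices arising from Prop.\ref{17:24:46}, and check that in every term the uniform slice bound $\sup_r ||f(\cdot,r)||_{\sigma}\leq C_K |f|_{\lceil\sigma\rceil}$ (and analogously for $g$) is applicable. Aside from this, no new analysis is required beyond Prop.\ref{17:24:46} and the slice decomposition of the tangential Sobolev norm.
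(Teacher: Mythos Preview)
Your proposal is correct and follows essentially the same approach as the paper: slice in $r$, apply Prop.~\ref{17:24:46} on each $\R^{m-1}$-slice, integrate, and convert the slicewise Sobolev norms of $f,g$ into $C^k$-norms via the compact support in $K$. The paper's proof is in fact a one-line summary of exactly this argument, so your write-up simply fills in the details (including the observation that $a'=1+\tfrac{m-1}{2}<a$ accounts for the ceiling).
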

\begin{proof}
For each value of $r$ we can consider the inequalities of Prop.\ref{17:24:46} for the functions $f(\cdot,r)$,  $g(\cdot,r)$ and $\varphi(\cdot,r)$ on $\R^{m-1}$. Integrating these over $r$ and using $||f(\cdot,r) ||_{s}\leq C |f(\cdot,r)|_{\lceil s \rceil}\leq C |f|_{\lceil s \rceil}$ for $s\in \R_{\geq 0}$ (where $C$ depends on $K\subset \R^m_-$), we obtain (\ref{11:44:35})-(\ref{11:45:14}). 
\end{proof}

Finally we mention a version of Rellich's lemma for the norm $||\text{D}(\cdot)||_{\del,-1/2}$, whose proof is outlined in the appendix of \cite{MR0461588} (see the discussion preceding Prop.A.3.1). 
\begin{prop}\label{18:03:02}
If a sequence $\varphi_j\in C^\infty_K(\R^m_-)$ satisfies a uniform bound $||\text{D}\varphi_j||_{\del,-1/2}\leq C$, then a subsequence converges in $L^2(\R^m)$. 
\end{prop}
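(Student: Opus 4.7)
The plan is to extend each $\varphi_j$ by zero to all of $\R^m$ and then upgrade the hypothesis to a uniform bound in the standard fractional Sobolev space $L^2_{1/2}(\R^m)$; classical Rellich compactness applied to the fixed compact set $K$ will then yield a subsequence converging in $L^2$.

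To execute this, I would first rewrite
\begin{align*}
||\text{D}\varphi_j||^2_{\del,-1/2}=||\varphi_j||^2_{\del,1/2}+||\del_r\varphi_j||^2_{\del,-1/2}
\end{align*}
in terms of the full Fourier transform on $\R^m$. Since the $\varphi_j$ have uniformly compact support, their extensions by zero lie in $L^2(\R^m)$, so Plancherel in the $r$-variable is legal and expresses each summand as a weighted $L^2$-integral of $\hat{\varphi}_j$. Writing $\xi=(\tau,\xi_r)\in\R^{m-1}\times\R$, the hypothesis becomes
\begin{align*}
\int_{\R^m} w(\xi)\,|\hat{\varphi}_j(\xi)|^2\,d\xi\leq C',\qquad w(\xi):=(1+|\tau|^2)^{1/2}+(1+|\tau|^2)^{-1/2}\xi_r^2,
\end{align*}
with $C'$ depending only on the original constant.

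The central step is then the pointwise comparison $(1+|\xi|^2)^{1/2}\leq c\, w(\xi)$ for some constant $c$. From $(1+|\xi|^2)^{1/2}\leq (1+|\tau|^2)^{1/2}+|\xi_r|$ this reduces to bounding $|\xi_r|$ by $w(\xi)$: if $\xi_r^2\leq 1+|\tau|^2$ then $|\xi_r|\leq (1+|\tau|^2)^{1/2}$, while if $\xi_r^2>1+|\tau|^2$ then $(1+|\tau|^2)^{1/2}<|\xi_r|$ and hence $|\xi_r|<\xi_r^2(1+|\tau|^2)^{-1/2}$. Consequently the sequence is uniformly bounded in $L^2_{1/2}(\R^m)$, and ordinary Rellich applied on the fixed compact support set $K$ produces the desired $L^2$-convergent subsequence.

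I do not foresee any serious obstacle. The entire argument hinges on the weight-domination inequality above, which encodes the precise analytic reason why a gain of half a tangential derivative combined with a normal derivative controlled in the weakened tangential $-1/2$-norm is just enough to recover half a full derivative in the ordinary sense.
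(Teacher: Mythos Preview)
There is a genuine gap, and it lies precisely at the step you flag as unproblematic: the extension by zero. The functions $\varphi_j$ live on $\R^m_-=\R^{m-1}\times(-\infty,0]$ and their compact support $K$ is allowed to meet the boundary $\{r=0\}$; indeed, in the application (Prop.~\ref{08:17:09}) the whole point is that the $\varphi_j$ do \emph{not} vanish on $\del M$. If $\varphi_j(\cdot,0)\neq 0$, the zero-extension $\varphi_j^e$ has a jump at $r=0$, so as distributions
\[
\del_r(\varphi_j^e)=(\del_r\varphi_j)^e-\varphi_j(\cdot,0)\otimes\delta_0.
\]
Your claimed identity $\|\del_r\varphi_j\|^2_{\del,-1/2}=\int(1+|\tau|^2)^{-1/2}\xi_r^2|\hat\varphi_j|^2$ therefore fails: the left-hand side is the $\|\cdot\|_{\del,-1/2}$-norm of $(\del_r\varphi_j)^e$, while the right-hand side is that of $\del_r(\varphi_j^e)$, and these differ by a term whose Fourier transform is $\tilde\varphi_j(\tau,0)$, constant in $\xi_r$. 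Worse, the zero-extension $\varphi_j^e$ is typically not in $L^2_{1/2}(\R^m)$ at all: a compactly supported function with a jump across a hyperplane lies in $H^s$ only for $s<1/2$, so the Rellich step for $H^{1/2}$ never gets off the ground.

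The fix, which is the route taken in \cite{MR0461588} (and sketched in a commented-out passage of the paper), is to replace zero-extension by a $C^1$-extension operator such as
\[
E_1\varphi(t,r)=\begin{cases}\varphi(t,r)&r\le 0,\\ 3\varphi(t,-r)-2\varphi(t,-2r)&r>0,\end{cases}
\]
chosen so that both $E_1\varphi$ and $\del_r E_1\varphi$ match continuously across $r=0$. Then no $\delta$-term appears, your weight-domination inequality $(1+|\xi|^2)^{1/2}\lesssim (1+|\tau|^2)^{1/2}+(1+|\tau|^2)^{-1/2}\xi_r^2$ applies cleanly to $\widehat{E_1\varphi_j}$, and one obtains $\|E_1\varphi_j\|_{1/2}\leq C\|\mathrm{D}\varphi_j\|_{\del,-1/2}$, after which ordinary Rellich finishes the job.
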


\bibliographystyle{hyperamsplainnodash}
\addcontentsline{toc}{section}{References}
\bibliography{references}

\end{document}